\documentclass[10pt]{report}
\usepackage[utf8]{inputenc}
\usepackage{amsmath}
\usepackage{amsthm}
\usepackage{amsfonts}
\usepackage{amssymb}
\usepackage{color}
\usepackage[all]{xy}
\usepackage{pdfpages}
\usepackage{url}

\newtheorem{teo}{Theorem}[section]
\newtheorem{lema}[teo]{Lemma}
\newtheorem{cor}[teo]{Corollary}

\newtheorem{de}[teo]{Definition}

\newtheorem{pr}[teo]{Proposition}
\newtheorem{rk}[teo]{Remark}
\newtheorem{ex}[teo]{Example}
\newtheorem{say}[teo]{}
\newtheorem{defteo}[teo]{Definition/Proposition}

\def\frontpage{%
  \cleardoublepage
  \thispagestyle{empty}
  \begin{center}
       {\bfseries\MakeUppercase{{\rm \bf Birational Geometry of Toric Varieties}}}
  \vskip 1cm
  \begin{center}
    
    {\rm Edilaine Ervilha Nobili} 
  \end{center}
     \end{center}
  \vskip 1cm   
\begin{center} 

\textbf{ Abstract }    
\end{center} 

Toric geometry provides a bridge between algebraic geometry and combinatorics of fans and polytopes. For each polarized toric variety $(X,L)$ we have associated a polytope $P$. In this thesis we use this correspondence to study birational geometry for toric varieties. To this end, we address subjects such as Minimal Model Program, Mori fiber spaces, and chamber structures on the cone of effective divisors. We translate some results from these theories to the combinatorics of polytopes and use them to get structure theorems on space of polytopes. In particular, we treat toric varieties known as 2-Fano, and we classify them in low dimensions.  \\ \\

\textbf{Acknowledgements.} This work is a Ph.D. thesis under the supervision of
Carolina Araujo. I thank Carolina Araujo for proposing me the study of this interesting subject, for many hours of discussions, for important ideas, and for always encouraging me.
 I also thank Ana-Maria Castravet, Alicia Dickenstein, Sandra Di Rocco, and Diane Maclagan  for their suggestions and fruitful discussions.}

\begin{document}

\frontpage
\tableofcontents

\chapter{Introduction}
A toric variety is a normal algebraic variety $X$ containing an algebraic torus $T\simeq (\mathbb{C}^*)^n$ as an open dense subset, together with an action $T\times X\rightarrow X$ extending the natural action of $T$ on itself. To each toric variety $X$ one associates a fan $\Sigma_X$. Many geometric proprieties of $X$ are encoded as combinatorial proprieties of $\Sigma_X$. There are also connections between projective toric varieties and polytopes. Each polarized toric variety $(X,L)$, where $L$ is an ample $\mathbb{Q}$-divisor on $X$, defines a rational polytope $P$. This correspondence is one to one and again there are links between geometric proprieties of $(X,L)$ and combinatorial aspects of the associated polytope $P$.

Toric varieties play an important role in the algebraic geometry. Thanks to their combinatorial description, they provide several examples and have been a natural place to test general conjectures and theories. The main reference for an introduction to toric varieties is Fulton's book \cite{fulton}. \\

One of the most important achievements in birational classification of algebraic varieties is the so called
Minimal Model Program (MMP for short).  
The aim of the MMP is to run a succession of special birational transformations on $X$  in order to achieve a variety 
$X'$ that is birationally equivalent to $X$ satisfying one of the following:
\begin{enumerate}
\item $K_{X'}$ is nef (i.e., $K_{X'}\geq 0$), or
\item $X'$ admits a structure of Mori 
fiber space (i.e., there exists an elementary fibration $f:X'\rightarrow Y$ such that $-K_{X'}$ is $f$-ample).
\end{enumerate}

The birational transformations allowed in MMP are very special: they are either divisorial contractions or flips. 
Since a projective toric variety is birationally equivalent to $\mathbb{P}^n$, its canonical class can never be 
made nef,  so the MMP for projective toric varieties always ends with a Mori fiber space. \\

The MMP was proved by Mori for threefolds in \cite{mori} and for toric varieties in \cite{reid}. 
Recently, a special instance of the Minimal Model Program, the MMP with scaling, was established for arbitrary dimension in \cite{scaling}.

One of the problems that we are interested in is the study of MMP from the viewpoint of polytope theory. In \cite{reid} Reid has
established the MMP for toric varieties by interpreting it as a sequence of operations on the associated fans. In this thesis, we consider a similar
problem. Given a polarized toric variety $(X,L)$ we define operations on the associated polytope that describe each
step of the MMP with scaling for $(X,L)$. Our first task is to give a complete description of polytopes associated to Mori fiber spaces. We call these Cayley-Mori polytopes, and provide an explicit facet presentation for them.  \\

Next, we summarize the results established in this thesis:
\begin{enumerate}

\item We introduce a new class of polytopes, called Cayley-Mori polytopes that correspond precisely to Mori fiber spaces.
\item We describe the structure of spaces of polytopes.
\item We describe the Minimal Model Program with scaling as an operation on polytopes.
\item We investigate toric 2-Fano varieties, providing a classification in low dimension.
\end{enumerate}

\section{Cayley-Mori Polytopes and Mori Fiber Spaces}

We say that a simple polytope is a Cayley-Mori polytope if it is isomorphic to a polytope of the form 
$P_0*P_1*...*P_k=conv((P_0\times {w_0}),...,(P_k\times{w_k}))\subset\mathbb{R}^n\times\mathbb{R}^k$, where $P_0,...,P_k$ are $n$-dimensional strictly
 combinatorially equivalent polytopes, $\{w_1,...,w_k\}$ is a basis for $\mathbb{R}^k$, and $w_0=0$. We prove that these polytopes correspond precisely to Mori fiber spaces obtained from $\mathbb{Q}$-factorial projective toric varieties by running MMP.

\section{Spaces of Polytopes}
Let $v_i\in \mathbb{Z}^n$,  $1\leq i\leq r$, be distinct primitive vectors such that 
$\mbox{cone}(v_1,...,v_r)=\mathbb{R}^n$.
Set $\mathcal{H}=\big(v_1, \cdots, v_r\big)$.
For each $a=(a_1,..., a_r)\in \mathbb{R}^r$ define the polytope:

$$P_a=\Big\{x\in\mathbb{R}^n\Big| \langle v_i,x\rangle \geq -a_i, 1\leq i\leq r \Big\}.$$

We define the space of polytope presentations $\mathcal{PP}_{\mathcal{H}}$ as 
$$
\mathcal{PP}_{\mathcal{H}}=\Big\{a\in \mathbb{R}^r \ \Big| \ P_a \ \text{is a nonempty polytope} \ 
\Big\}\subset \mathbb{R}^r.
$$
Since two distinct element of $\mathcal{PP}_{\mathcal{H}}$ can define the same polytope, one is led to consider the quotient
$\mathcal{PP}_{\mathcal{H}}\ /\sim$, where $\sim$ is the equivalence relation that 
identifies elements $a, b\in \mathcal{PP}_{\mathcal{H}}$ such that $P_a=P_b$.  

We prove that $\mathcal{PP}_{\mathcal{H}}\ /\sim$ can be realized
as an $r$-dimensional closed convex polyhedral subcone 
of $\mathcal{PP}_{\mathcal{H}}\subset \mathbb{R}^r$, denoted by $\mathcal{P}_{\mathcal{H}}$. We show that there is a point $a_0\in\mathcal{PP}_{\mathcal{H}}$ such that $P_{a_0}$ is a simple polytope (i.e. each vertex is contained in exactly $n$ edges) which has exactly $r$ facets. Let $X$ be the toric variety defined by $P_{a_0}$. The cone $Eff(X)$ of effective divisors on $X$ admits a decomposition in convex cones called GKZ decomposition of $X$. We use this decomposition to get structure theorems for spaces of polytopes. We define a fan supported on $\mathcal{PP}_{\mathcal{H}}\subset \mathbb{R}^r$ satisfying the following conditions: Polytopes associated to elements in the relative interior of the same cone of this fan are strictly combinatorially isomorphic. Moreover, $\mathcal{P}_{\mathcal{H}}$ is the union of some of the maximal cones in this fan.

\section{Polytope MMP}
Let $(X,L)$ be a polarized $\mathbb{Q}$-factorial toric variety, where $L$ is an ample $\mathbb{Q}$-divisor on $X$. 
Let $P:=P_L$ be the polytope associated to $L$. For each $s\geq 0$ we define $P^{(s)}$ as the set of those points in $P$ 
whose lattice distance to every facet of $P$ is at least $s$. These polytopes are called  \textit{adjoint polytopes} in 
\cite{sandra}. Let~$\sigma(P):=sup\{s\in \mathbb{R}_{\geq 0} \mid P^{(s)}\neq\emptyset\}$. The polytope $P^{(\sigma(P))}$ 
is called the \textit{core} of P. When we increase $s$ from $0$ to $\sigma(P)$, $P^{(s)}$ will change its combinatorial 
type at some critical values. The first one is $$\lambda_1:=sup\{s\in \mathbb{R}_{\geq 0} \mid P \ and \ P^{(s)} \ have \ the \ same \ normal \ fan\}=$$ $$=sup\{s\in \mathbb{R}_{\geq 0} \mid L+sK_X \ is \ nef\}$$ the \textit{nef value} of $P$. 

Our aim is to describe the family of polytopes $P^{(s)}$ for values of $s$ between 0 and $\sigma(P)$. We will prove the following result. See Definition \ref{general} for the precise notion of a general polytope.\\
 
\textbf{Theorem.} Let $(X,L)$ be a polarized $n$-dimensional $\mathbb{Q}$-factorial toric variety associated to a ``general'' rational polytope $P\subset\mathbb{R}^n$.
Then there exist sequences 
$$0=\lambda_0<\lambda_1<...<\lambda_k=\sigma(P), \ \ \ \ \ X=X_1 \stackrel{f_1}{\dashrightarrow} X_2\stackrel{f_2}{\dashrightarrow} ...\stackrel{f_k}{\dashrightarrow} X_{k+1}$$ 

of rational numbers and rational maps, such that: 
\begin{enumerate}
\item For $i\in\{1,...,k-1\}$, $f_i$ is either a divisorial contraction or a flip. 
\item For $\lambda_i <s,t<\lambda_{i+1}, \ P^{(s)}$ and $P^{(t)}$ are $n$-dimensional simple polytopes with the same normal fan. 
\item At $s=\lambda_i$, one of the following occurs. 
\begin{enumerate}

\item Either $P^{(\lambda_i)}$ is simple and $P^{(\lambda_i)}$ has one less facet than $P^{(\lambda_{i-1})}$ (equivalently, $f_i$ is a divisorial contraction), or 
\item $P^{(\lambda_i)}$ is not simple and $P^{(\lambda_i)}$ has the same number of facets as $P^{(\lambda_{i-1})}$ (equivalently, $f_i$ is a flip). 
\item For each $i\in\{1,...,k\}$, denote by $m_i$ the dimension of the locus where $f_i$ is not an isomorphism. Then, for $\lambda_i<s<\lambda_{i+1}$, the polytope $P^{(s)}$ has exactly one more $m_i$-dimensional face than $P^{(\lambda_{i+1})}$, and this face is a Cayley-Mori polytope.  
\end{enumerate}
\item For $\lambda_{k-1}<s<\lambda_k=\sigma(P)$, $P^{(s)}$ is a Cayley-Mori polytope, (equivalently $f_k$ is a Mori fiber space, and $X_{k+1}$ is the toric variety associated to $P^{(\sigma(P))}$).

\item Let $K(P)$ be the linear space parallel to $Aff(Core(P))$ and consider the natural projection $\pi_P:\mathbb{R}^n\rightarrow \mathbb{R}^n/K(P)$ associated to $P$. The toric variety associated to the polytope $Q:=\pi_P(P)$ is the closure of the general fiber of the rational map $f:=f_k\circ...\circ f_1:X\dashrightarrow X_{k+1}$.
\end{enumerate}
Moreover, if $P^{(\lambda_i)}$ is simple then $X_{i+1}$ is the toric variety associated to it.
  Otherwise, if $P^{(\lambda_i)}$ is not simple, the toric variety associated to $P^{(\lambda_i)}$ is the image of the small contraction corresponding to the flip $f_i$ and $X_{i+1}$ is associated to $P^{(s)}$ for $\lambda_i<s<\lambda_{i+1}$.

\section{2-Fano Toric Varieties}
A $\mathbb{Q}$-factorial projective variety $X$ is said to be Fano if has ample anti-canonical divisor. One important aspect of Fano varieties is that they appear in the MMP as fibers of Mori fiber spaces.
In addition to their role in the MMP, Fano varieties are important for their own sake, and have been very much studied. 
Fano varieties are quite rare. It was 
proved by Kollár, Miyaoka and Mori that, for a fixed dimension, there exist only finitely many smooth Fano varieties up to 
deformation  (see \cite{kollar}, \cite{kol}). Further, in the toric case, there exist only finitely many isomorphism classes of 
them.\\

A smooth Fano variety $X$ is said to be 2-Fano if its second Chern character is positive (i.e., $ch_2(T_X)\cdot S >0$ for every
 surface $S\subset X$). These varieties were introduced by de Jong and Starr in \cite{starr} and \cite{jason} in connection 
with rationally simply connected varieties, which in turn are linked with the problem of finding rational sections for 
fibrations over surfaces.  2-Fano varieties are even more scarce than Fano varieties. Few examples of 2-Fano varieties are known. 
First de Jong and Starr gave some examples in \cite{jason}, then in \cite{ana} Araujo and Castravet found some more examples. 
Among all known examples, the only smooth toric 2-Fano varieties are projective spaces. So, it is natural to pose the following question:\\
   
   \textbf{Question 1:} Is $\mathbb{P}^n$ the only n-dimensional smooth projective toric 2-Fano variety? \\

In \cite{eu} we have answered this question positively when $n\leq 4$ by using the classification of toric Fano 4-folds given by Batyrev. Then, we used a database provided by  Øbro to answered the question positively in dimension 5 and 6. In order to approach this question in the general case, we investigate what happens with the 
second Chern character when we run the Minimal Model Program. Since in the toric case  the MMP ends with a Mori fiber space, we start investigating the second Chern character of a Mori fiber space. 
   
   Since Mori fiber spaces are associated to Cayley-Mori polytopes, we study
combinatorial proprieties of these polytopes, and translate them into geometric proprieties about Mori fiber spaces. 
We will show that is possible to find a birational model $X'$ of $X$ with structure of Mori fiber space, such that the general fibers are 
projective spaces. Then, we will show that such a variety  
 $X'$ cannot be 2-Fano.
 In particular, we will show that if $X$ is a smooth toric variety which is a Mori fiber space then $X$ cannot be a 2-Fano variety. As a corollary, the only $n$-dimensional smooth projective toric 2-Fano variety with Picard number $\leq 2$ is $\mathbb{P}^n$. On the other hand, if we allow singularities on 
$X$, we can give examples of Mori fiber spaces with Picard number $2$ that are 2-Fano. We also prove that a 2-Fano toric variety cannot admit certain types of divisorial contractions.\\ \\

This thesis is structured as follows. In Chapter 2 we give an overview of general theory for toric varieties and Mori theory for toric varieties. We warn that this review is rather concise. For more details we indicate \cite{cox}, \cite{fulton} and \cite{reid}.

In Chapter 3 we introduce the class of Cayley-Mori polytopes. We give a combinatorial description for these polytopes and prove that they are associated to Mori fiber spaces.

In Chapter 4 we present spaces of polytopes and give structure theorems on this space. Next, we define an operation on polytopes and relate this with the MMP with scaling.

In Chapter 5 we approach the problem of classification of Fano varieties having positive second Chern character. These varieties are called 2-Fano. We provide a classification of toric 2-Fano varieties in low dimension. We describe a strategy to classify these varieties in arbitrary dimension and give some partial results.
 We finalize this work in Chapter 6 giving the Maple code used in Chapter 5 to compute the second Chern character of a smooth projective toric variety.

\chapter{Preliminaries on Toric Varieties}
\section{Constructing Toric Varieties}
Throughout this thesis, a  \emph{variety} means an algebraic integral separated scheme of finite type over $\mathbb{C}$. A \emph{subvariety} of a variety is a closed subscheme which is a variety, and by a \emph{point} on a variety we mean a closed point.

In this chapter we will give an overview of basic facts about toric varieties. The definitions and statements of the theorems can be found in \cite{cox} and \cite{fulton}, unless otherwise noted.

 \begin{de} A \emph{toric variety} is a normal variety $X$ containing an algebraic torus $T\simeq (\mathbb{C}^*)^n$ as an open dense subset, together with an action $T\times X\rightarrow X$ extending the natural action of $T$ on itself. 
\end{de}

There is a combinatorial way to obtain toric varieties. Let $N\simeq \mathbb{Z}^n$ be a lattice and 
$M:=Hom_{\mathbb{Z}}(N,\mathbb{Z})\simeq \mathbb{Z}^n$ its dual lattice. Set $N_{\mathbb{R}}:=N\otimes_{\mathbb{Z}}\mathbb{R}$. Given $u\in M$ and $v\in N$ we denote 
$u(v)$ by $\langle u,v\rangle$. Each element $u=(u_1,...,u_n)\in M$ defines a character of the torus denoted by 
$\chi^u:=x_1^{u_1}\cdot ... \cdot x_n^{u_n}\in k[T]=k[x_1^{\pm 1},...,x_n^{\pm 1}]$. 

\begin{de} A convex rational polyhedral cone in $N_{\mathbb{R}}$ is a set of the form:
$$\sigma:=\left\lbrace\displaystyle\sum_{i=1}^{k}a_iv_i\in N_{\mathbb{R}} \ \Big| \ a_i\geq 0\right\rbrace,$$
for some finite collection of elements $\{v_1,...,v_k\}\subset N$. If $\sigma$ contains no line we say that it is strongly convex. We will call it a ``cone'' for short.
\end{de}
Let $\sigma\subset N_{\mathbb{R}}$ be a cone of dimension n and consider the dual cone of $\sigma$ given by $\sigma^{\vee}:=\{u\in M_{\mathbb{R}}:=M\otimes_{\mathbb{Z}}\mathbb{R}\mid \langle  u,v\rangle\geq 0 \ \text{for all} \ v\in\sigma\}$. 

The semigroup $S_{\sigma}:=\sigma^{\vee}\cap M$ is finitely generated. Then $A_{\sigma}:=\mathbb{C}[S_{\sigma}]=\mathbb{C}[\chi^u]_{u\in S_{\sigma}}$ is a finitely generated $\mathbb{C}$-algebra defining the affine toric variety                                                     
$U_{\sigma}:=Spec(A_{\sigma})$ of dimension $n$. 
Recall that a fan $\Sigma\subset N_{\mathbb{R}}\simeq\mathbb{R}^n$ is a finite collection of rational polyhedral strongly convex cones $\sigma\subset N_{\mathbb{R}}$ such that:

\begin{enumerate}

\item If $\sigma$ and $\tau$ belong to $\Sigma$ then $\sigma\cap\tau$ is a face of $\sigma$ and $\tau$;
\item If $\sigma\in\Sigma$ and $\tau$ is a face of $\sigma$ then $\tau\in\Sigma$.
\end{enumerate}
The set of the $m$-dimensional cones of $\Sigma$ will be denoted by $\Sigma(m)$.
A cone $\tau\in\Sigma(n-1)$ is called a wall when it is the intersection of two $n$-dimensional cones of $\Sigma$.
 
If $\Sigma\subset N_{\mathbb{R}}$ is a fan then the affine toric varieties $U_{\sigma}$ for $\sigma\in\Sigma$ glue together to a toric variety $X_{\Sigma}$.

A collection $\Sigma$ of convex cones  is called a degenerate fan if it satisfies the two conditions above and  there is a nontrivial rational linear subspace $U\subset N_{\mathbb{R}}$ such that, for every cone $\sigma\in\Sigma$, we have $\sigma\cap -\sigma=U$. In this case, $\Sigma /U$ defines a fan with respect to the quotient lattice $N/(U\cap N)$ whose associated toric variety is also denoted by $X_{\Sigma}$. 
It is a classical result that every toric variety is obtained from a fan (see for instance \cite[Corollary 3.1.8]{cox}). \\

There is a one to one correspondence between the points of an affine toric variety $U_{\sigma}$ and the semigroup homomorphisms $\varphi: S_{\sigma} \rightarrow \mathbb{C}$. For each cone $\sigma$ we have a distinguished point $x_{\sigma}\in U_{\sigma}$ that corresponds to the following semigroup homomorphism 
$$m\in S_{\sigma} \longmapsto
\left\{
\begin{array}{ll}
1 & \mbox{if } m\in S_{\sigma}\cap \sigma^{\perp} \\
0 & \textrm{otherwise}. \\
\end{array}
\right.
$$

Let $X_{\Sigma}$ be the toric variety associated to a fan $\Sigma\subset N_{\mathbb{R}}$. The distinguished points of $X_{\Sigma}$ determine $T_N$-invariant subvarieties of $X_{\Sigma}$. We have the following correspondence
(The Orbit-Cone Correspondence, see for instance \cite[3.2.6]{cox}):
\begin{enumerate}
\item There is a bijective correspondence
$$
\begin{array}{rl}
\{\sigma\in\Sigma\}\longleftrightarrow & \{T_N\text{-orbits in} \ X_{\Sigma}\}. \\
\sigma\longleftrightarrow & O_{\sigma}:=T_N\cdot x_{\sigma} \\
\end{array}
$$
\item Let $n=$dim $N_{\mathbb{R}}$. Then dim $O_{\sigma}=n-$dim $\sigma$.
\item $U_{\sigma}=\displaystyle\bigsqcup_{\tau\prec\sigma}O_{\tau}$.
\item $\tau\prec\sigma\Leftrightarrow O_{\sigma}\subseteq \overline{O_{\tau}}$, and $V(\sigma):=\overline{O_{\sigma}}=\displaystyle\bigsqcup_{\sigma\prec\gamma}O_{\gamma}$,
where $\overline{O_{\sigma}}$ denotes the closure in both the classical and Zariski topologies.
\end{enumerate}

 The $T$-invariant subvariety $V(\sigma)$ of $X_{\Sigma}$  has the structure of a toric variety given by the following fan:
 
 Consider the sublattice $N_{\sigma}:=span(\sigma)\cap N$ of $N$ and let $N(\sigma)=N/N_{\sigma}$. The collection of cones $Star(\sigma):=\{\bar{\tau}\subset N(\sigma)_{\mathbb{R}}\mid \sigma\prec\tau\in\Sigma\}$ is a fan, where $\bar{\tau}$ is the image of $\tau\in\Sigma$ in $N(\sigma)$, and $X_{Star(\sigma),N(\sigma)}\simeq V(\sigma)$.

Let $\Sigma$ and $\Sigma'$ be fans with respect to lattices $N$ and $N'$ respectively. Consider a lattice homomorphism $\Phi: N\rightarrow N'$. We say that $\Phi$ is \textit{compatible} with $\Sigma$ and $\Sigma'$ or, $\Sigma$ and $\Sigma'$ are compatible with $\Phi$, if for each cone $\sigma\in\Sigma$ there exists a cone $\sigma'\in\Sigma'$ with $\Phi_{\mathbb{R}}(\sigma)\subset\sigma'$. In this case, $\Phi$ induces an equivariant morphism $\phi:X_{\Sigma} \rightarrow X_{\Sigma'}$. Moreover, this is a toric morphism. This means that $\phi$ maps the torus $T_N$ of $X_{\Sigma}$ on the torus $T_{N'}$ of $X_{\Sigma'}$ and $\phi\mid_{T_N}$ is a group homomorphism. In fact, every toric morphism arises in this way.

We recall an important result involving distinguished points and toric morphisms (see for instance \cite[3.3.21]{cox}).

\begin{pr}\label{pr1} Let $\phi: X_{\Sigma}\rightarrow X_{\Sigma'}$ be a toric morphism induced by a map $\Phi:N\rightarrow N'$ that is compatible with $\Sigma$ and $\Sigma'$. Given $\sigma\in\Sigma$, let $\sigma'$ be the minimal cone of $\Sigma'$ such that $\Phi_{\mathbb{R}}(\sigma)\subset \sigma'$. Then:
\begin{enumerate}
\item $\phi(x_{\sigma})=x_{\sigma'}$.
\item $\phi(O_{\sigma})\subseteq O_{\sigma'}$ and $\phi(V(\sigma))\subseteq V(\sigma')$.
\item The induced map $\phi\mid_{V(\sigma)}:V(\sigma)\rightarrow V(\sigma')$ is a toric morphism.\\

\end{enumerate}

\end{pr}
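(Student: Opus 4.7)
My plan is to prove the three parts in order, with (1) being the computational heart of the statement and (2), (3) following by formal arguments once (1) is in hand. The main tool is the description of points on an affine toric variety $U_\tau$ as semigroup homomorphisms $S_\tau \to \mathbb{C}$, under which $x_\tau$ corresponds to the indicator of $\tau^\perp \cap S_\tau$ (value $1$ on $\tau^\perp \cap S_\tau$, $0$ elsewhere), and the fact that $\phi$ is induced on coordinate rings by the dual map $\Phi^*\colon M' \to M$, which takes $\chi^{m'}$ to $\chi^{\Phi^*(m')}$.

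For (1), I would pick an affine chart $\sigma \in \Sigma$ mapping to an affine chart containing $\sigma'$, and compute the semigroup homomorphism representing $\phi(x_\sigma)$: it sends $m' \in S_{\sigma'}$ to $x_\sigma(\chi^{\Phi^*(m')})$, which equals $1$ precisely when $\Phi^*(m') \in \sigma^\perp$, i.e., when $\langle m', \Phi_\mathbb{R}(v)\rangle = 0$ for every $v \in \sigma$, i.e., when $m' \in \Phi_\mathbb{R}(\sigma)^\perp$. The key geometric input is then the minimality of $\sigma'$: $\Phi_\mathbb{R}(\sigma)$ meets the relative interior of $\sigma'$ (otherwise a proper face of $\sigma'$ would still contain $\Phi_\mathbb{R}(\sigma)$, contradicting minimality). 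Since any $m' \in (\sigma')^\vee$ is nonnegative on $\sigma'$ and vanishing at a relative interior point forces vanishing on the whole cone, we get $\Phi_\mathbb{R}(\sigma)^\perp \cap S_{\sigma'} = (\sigma')^\perp \cap S_{\sigma'}$, which identifies $\phi(x_\sigma)$ with $x_{\sigma'}$.

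For (2), using that $\phi$ restricted to the torus is a group homomorphism and hence equivariant, I write $\phi(O_\sigma) = \phi(T_N \cdot x_\sigma) = \phi(T_N) \cdot \phi(x_\sigma) \subseteq T_{N'} \cdot x_{\sigma'} = O_{\sigma'}$, and then $\phi(V(\sigma)) = \phi(\overline{O_\sigma}) \subseteq \overline{\phi(O_\sigma)} \subseteq \overline{O_{\sigma'}} = V(\sigma')$ by continuity of $\phi$.

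For (3), I would exhibit the toric structure explicitly. Since $\Phi_\mathbb{R}(\sigma) \subseteq \sigma'$, we have $\Phi(N_\sigma) \subseteq N'_{\sigma'}$, so $\Phi$ descends to a lattice map $\overline{\Phi}\colon N(\sigma) \to N'(\sigma')$. To see $\overline{\Phi}$ is compatible with $\mathrm{Star}(\sigma)$ and $\mathrm{Star}(\sigma')$, take $\tau \in \Sigma$ with $\sigma \prec \tau$; compatibility of $\Phi$ gives $\tau' \in \Sigma'$ with $\Phi_\mathbb{R}(\tau) \subseteq \tau'$, and since $\Phi_\mathbb{R}(\sigma) \subseteq \tau'$, minimality of $\sigma'$ forces $\sigma' \prec \tau'$, so $\overline{\tau'} \in \mathrm{Star}(\sigma')$ contains $\overline{\Phi}_\mathbb{R}(\overline{\tau})$. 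This produces a toric morphism $X_{\mathrm{Star}(\sigma), N(\sigma)} \to X_{\mathrm{Star}(\sigma'), N'(\sigma')}$; the last step is to verify that under the canonical isomorphisms $V(\sigma) \cong X_{\mathrm{Star}(\sigma), N(\sigma)}$ and $V(\sigma') \cong X_{\mathrm{Star}(\sigma'), N'(\sigma')}$, this morphism agrees with $\phi|_{V(\sigma)}$. I expect this last identification to be the main bookkeeping obstacle: it is where one must chase through how the distinguished points $x_{\overline{\tau}}$ on $V(\sigma)$ correspond to $x_\tau$ on $X_\Sigma$, and then invoke part (1) again (applied both to $\phi$ at $\tau$ and to the induced map at $\overline{\tau}$) so that both morphisms send each $x_{\overline{\tau}}$ to the same distinguished point $x_{\overline{\tau'}}$; since toric morphisms are determined by their values on distinguished points together with equivariance, this forces the two to coincide.
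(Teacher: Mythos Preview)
Your proof is correct and follows the standard argument. Note, however, that the paper does not actually prove this proposition: it is stated as a recalled result with a reference to \cite[3.3.21]{cox}, so there is no ``paper's own proof'' to compare against. Your argument is essentially the one given in that reference, so nothing is missing or divergent.
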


There are deep connections between toric varieties and polytopes. We can construct toric varieties from rational polytopes. Recall that a set $P\subset M_{\mathbb{R}}$ is a rational polytope if $P$ is a convex hull of a finite set $S\subset M_{\mathbb{Q}}$. When $S\subset M$ we call $P$ a lattice polytope. When $P$ is full dimensional we can write a polytope $P$, in a minimal way, as intersection of finitely many closed half spaces $$P:=\Big\{m\in M_{\mathbb{R}}\ \Big| \ \langle m,u_F\rangle\geq -a_F, \text{for all facets } F\prec P\Big\},$$
where $u_F\in N$ is a primitive vector normal to the facet $F$ and $a_F\in\mathbb{Q}$. This is called a \textit{facet presentation} of $P$. When $P$ is full dimensional, it  has a unique facet presentation. Note that if $P$ is a lattice polytope then $a_F\in\mathbb{Z}$.
Each face $Q$ of $P$ defines a cone $\sigma_Q:=\mbox{Cone}(u_F)_{Q\prec F}\subset N_{\mathbb{R}}$. The collection of these cones forms a fan (also known as the normal fan of $P$) $\Sigma_P$ and then we get a toric variety $X_P$ associated to $P$. When $P$ is not a full dimensional polytope we set $X_P$ to be the toric variety associated to $R:=Aff(P)\cap P$ with respect to the lattice $Aff(P)\cap M$, where $Aff(P)$ denotes the smallest affine space containing $P$. In this case, $\Sigma_P$ will denote the degenerate fan each of whose cones is generated by a cone of $\Sigma_R$ and the linear space $span(P)^{\perp}$. \\

A fan encodes many algebraic proprieties of the variety associated to it. Let $\Sigma\subset N_{\mathbb{R}}$ be a fan. The support of $\Sigma$ is $|\Sigma|=\displaystyle\bigcup_{\sigma\in\Sigma}\sigma$.  We say that $\Sigma$  is \textit{complete} if $|\Sigma|=N_{\mathbb{R}}$. A cone $\sigma\in\Sigma$ is \textit{smooth (resp. simplicial)} if its minimal generators form part of a $\mathbb{Z}$-basis of $N$ (resp. $\mathbb{R}$-basis of $N_{\mathbb{R}}$). We say that $\Sigma$ is  smooth (resp. simplicial) if every cone of $\Sigma$ is smooth (resp. simplicial). We say that a polytope $P$ is smooth (resp. simplicial) if the same holds for $\Sigma_P$. 

Next proposition is a standard result for toric varieties. For the proof see for instance \cite[3.1.19 and 4.2.7]{cox}.
\begin{pr}  The toric variety $X_\Sigma$ is smooth, complete or $\mathbb{Q}$-factorial if and only if $\Sigma$ is, respectively, smooth, complete or simplicial. 
\end{pr}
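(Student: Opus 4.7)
The plan is to prove the three equivalences separately, since each invokes a different structural feature of the affine charts $U_\sigma$ and of the glueing.

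For the smooth case, I would use the fact that smoothness is Zariski-local, so $X_\Sigma$ is smooth iff each affine chart $U_\sigma = \mathrm{Spec}\,\mathbb{C}[S_\sigma]$ is smooth. The key lemma is therefore $U_\sigma$ smooth $\iff$ $\sigma$ smooth. For the ``if'' direction I would take the minimal generators $v_1,\dots,v_k$ of $\sigma$, extend them to a $\mathbb{Z}$-basis $v_1,\dots,v_n$ of $N$, and use the dual basis in $M$ to compute $S_\sigma = \mathbb{Z}_{\geq 0}e_1^\ast + \cdots + \mathbb{Z}_{\geq 0}e_k^\ast + \mathbb{Z} e_{k+1}^\ast + \cdots + \mathbb{Z} e_n^\ast$, so that $U_\sigma \cong \mathbb{C}^k \times (\mathbb{C}^\ast)^{n-k}$. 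For the converse, I would examine the Zariski tangent space at the distinguished point $x_\sigma$: the maximal ideal $\mathfrak{m}_\sigma$ is generated by $\chi^u$ for $u \in S_\sigma\setminus\sigma^\perp$, and $\mathfrak{m}_\sigma/\mathfrak{m}_\sigma^2$ is spanned by the images of the Hilbert-basis elements of $S_\sigma$; smoothness of $U_\sigma$ forces this basis to have exactly $\dim U_\sigma - \dim O_\sigma$ elements, which by a standard duality argument forces the rays of $\sigma$ to be part of a $\mathbb{Z}$-basis of $N$.

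For the $\mathbb{Q}$-factorial case I would reduce to the $T$-invariant situation: by averaging (or using the fact that the class group is generated by torus-invariant Weil divisors $D_\rho$ indexed by the rays $\rho \in \Sigma(1)$), $X_\Sigma$ is $\mathbb{Q}$-factorial iff every $D_\rho$ is $\mathbb{Q}$-Cartier, iff on each chart $U_\sigma$ the divisor $D_\rho|_{U_\sigma}$ (for $\rho \prec \sigma$) is the divisor of some $\chi^u$ with $u \in M_\mathbb{Q}$. This amounts to finding $u_\rho \in M_\mathbb{Q}$ with $\langle u_\rho, v_{\rho'}\rangle = \delta_{\rho,\rho'}$ for all rays $\rho'$ of $\sigma$, which is possible for every $\rho \prec \sigma$ iff the ray generators of $\sigma$ are linearly independent over $\mathbb{R}$, i.e.\ iff $\sigma$ is simplicial.

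For the complete case I would invoke the valuative criterion of properness. Given a DVR $(R,\mathfrak{m})$ with fraction field $K$ and residue field $\kappa$, a $K$-point of $X_\Sigma$ lies in some torus orbit, and, after translating by an element of $T(K)$, I may assume it lies in the open torus $T \subset X_\Sigma$. Such a point is classified by a group homomorphism $\varphi: M \to K^\ast$, and the associated valuation $v \circ \varphi : M \to \mathbb{Z}$ is an element of $N$. A standard calculation shows that $\varphi$ extends to an $R$-point of $U_\sigma$ iff $v\circ\varphi$ lies in $\sigma$. Hence every $K$-point extends (uniquely) iff every element of $N$ lies in some $\sigma \in \Sigma$, which over $\mathbb{Q}$ and then $\mathbb{R}$ amounts to $|\Sigma| = N_\mathbb{R}$.

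The main subtlety I expect is the converse direction in the smooth case, where one must avoid circularity when extracting combinatorial information from the regularity of the local ring; controlling $\mathfrak{m}_\sigma/\mathfrak{m}_\sigma^2$ in terms of the minimal Hilbert basis of $S_\sigma$ (and pairing with the rays of $\sigma$ via duality) is the delicate step. The completeness argument is essentially a bookkeeping exercise once the valuative criterion is set up, and the $\mathbb{Q}$-factoriality argument is pure linear algebra on each cone.
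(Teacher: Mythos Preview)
Your sketch is correct and follows the standard line of argument (essentially what one finds in Fulton or in Cox--Little--Schenck). Note, however, that the paper does not give its own proof of this proposition: it is stated in the preliminaries chapter as a standard fact, with the remark ``For the proof see for instance \cite[3.1.19 and 4.2.7]{cox}.'' So there is no paper-proof to compare against; your proposal simply supplies the argument that the paper delegates to the reference, and your three pieces (local charts for smoothness, linear algebra on ray generators for $\mathbb{Q}$-factoriality, valuative criterion for completeness) are exactly the ingredients used there.
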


 A polytope $P$ will be called smooth (resp. simple) if $\Sigma_P$ is smooth (resp. simplicial).
Note that if $P\subset M_{\mathbb{R}}$ is a rational polytope then for every $k\in\mathbb{Q}$ and $v\in M_{\mathbb{Q}}$, the poytopes $P, kP$ and $P+v$ define the same fan. For this reason, we will frequently suppose that $P$ is a lattice polytope.

Given a fan $\Sigma\subset N_{\mathbb{R}}$, a fan $\Sigma'$  in $N_{\mathbb{R}}$ \textit{refines} $\Sigma$ if $|\Sigma'|=|\Sigma|$ and every cone of $\Sigma'$ is contained in a cone of $\Sigma$. In this case, the identity map on $N$ induces a toric birational morphism $\phi:X_{\Sigma'}\rightarrow X_{\Sigma}$. 

There is a special type of refinement called \textit{star subdivision} which we describe below.

Given a fan $\Sigma\subset N_{\mathbb{R}}$ and a nonzero primitive element $v\in |\Sigma|\cap N$, let $\Sigma (v)$ be the set of the following cones:
\begin{itemize}
\item $\sigma$, where $v\notin\sigma\in\Sigma$.
\item cone$(\tau,v)$, where $v\notin\tau\in\Sigma$ and $\{v\}\cup\tau\subset\sigma\in\Sigma$.
\end{itemize} 
The collection $\Sigma (v)$ is a fan which refines $\Sigma$, called by \textit{star subdivision} of $\Sigma$ at $v$. When $\Sigma$ is smooth and there is a maximal cone $\sigma=\mbox{Cone}(v_1,...,v_n)\in\Sigma$ such that $v=v_1+...+v_n$, the refinement $\Sigma (v)$ induces a morphism $X_{\Sigma (v)}\rightarrow X_{\Sigma}$ that corresponds the blow up of $X_{\Sigma}$ at the point $V(\sigma)$. For details we refer to \cite[§11.1]{cox} and \cite{sato}. 

\section{Toric Varieties and Divisors}

Let $\Sigma$ be a fan in $N_{\mathbb{R}}$. We denote by $\Sigma(1):=\{v_1,...,v_r\}$ the set of all minimal generators of $\Sigma$. Given $\sigma\in\Sigma$ we set $\sigma(1):=\sigma\cap\Sigma(1)$. By the orbit-cone correspondence $D_i:=V(v_i)$ is a $T$-invariant subvariety of $X_{\Sigma}$ of codimension 1. Hence $D_i$ is an invariant prime divisor on $X_{\Sigma}$. By abuse of notation, we often write $i\in\Sigma(1)$  meaning $v_i\in\Sigma(1)$.  \\
For $m\in M$, the character $\chi^m$ is a rational function on $X_{\Sigma}$. We have the following nice description of its divisor.
\begin{pr} \label{div} $div(\chi^m)=\displaystyle\sum_{v_i\in\Sigma(1)}\langle m,v_i\rangle D_i$ for every $m\in M$.

\end{pr}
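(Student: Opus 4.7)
The plan is to compute the order of vanishing of $\chi^m$ along each $T$-invariant prime divisor $D_i$ and check that everything else is zero. Since $X_\Sigma \setminus (D_1 \cup \cdots \cup D_r) = T_N$ and $\chi^m$ is an invertible regular function on the torus, the divisor $\mathrm{div}(\chi^m)$ is supported on $D_1 \cup \cdots \cup D_r$. Hence one may write $\mathrm{div}(\chi^m) = \sum_i a_i D_i$ with $a_i = \mathrm{ord}_{D_i}(\chi^m)$, and it suffices to show $a_i = \langle m, v_i \rangle$.

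For this, I would work locally on the affine open $U_{\rho_i}$ where $\rho_i = \mathrm{cone}(v_i)$ is the ray through $v_i$. Since $v_i$ is primitive, I can extend $\{v_i\}$ to a $\mathbb{Z}$-basis $\{v_i, w_2, \ldots, w_n\}$ of $N$, and take the dual basis $\{e_1, e_2, \ldots, e_n\}$ of $M$, so $\langle e_1, v_i\rangle = 1$ and $\langle e_j, v_i\rangle = 0$ for $j \geq 2$. In these coordinates $\rho_i^\vee = \{ u \in M_{\mathbb{R}} : \langle u, v_i\rangle \geq 0\}$, so
\[
S_{\rho_i} = \rho_i^\vee \cap M = \mathbb{Z}_{\geq 0}\, e_1 \oplus \mathbb{Z} e_2 \oplus \cdots \oplus \mathbb{Z} e_n,
\]
and therefore $A_{\rho_i} = \mathbb{C}[S_{\rho_i}] \cong \mathbb{C}[x_1, x_2^{\pm 1}, \ldots, x_n^{\pm 1}]$ with $x_j = \chi^{e_j}$. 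Under this identification, the orbit--cone correspondence (Orbit-Cone, applied to $\rho_i \prec \rho_i$) gives $D_i \cap U_{\rho_i} = V(x_1)$.

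Writing $m = \sum_j b_j e_j$ with $b_1 = \langle m, v_i\rangle$, we have $\chi^m = x_1^{b_1} x_2^{b_2} \cdots x_n^{b_n}$. The localization of $A_{\rho_i}$ at the prime $(x_1)$ is a DVR with uniformizer $x_1$ (since $x_2, \ldots, x_n$ become units), so the valuation of $\chi^m$ there equals $b_1 = \langle m, v_i\rangle$. For general $m \in M$ (not necessarily in $S_{\rho_i}$) one writes $\chi^m$ as a ratio of two elements of $A_{\rho_i}$ and extends the valuation multiplicatively to $\mathbb{C}(X_\Sigma)$; the formula still gives $\mathrm{ord}_{D_i}(\chi^m) = \langle m, v_i\rangle$. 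Summing over $i$ yields the claim.

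The argument is essentially a local coordinate computation, so there is no serious obstacle; the only point that needs care is justifying that a primitive lattice vector can be completed to a $\mathbb{Z}$-basis of $N$ (standard) and that the local ring at the generic point of $D_i \cap U_{\rho_i}$ is indeed a DVR with uniformizer $\chi^{e_1}$, which follows from the explicit description of $A_{\rho_i}$ above.
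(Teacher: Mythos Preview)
Your proof is correct and is the standard argument (essentially the one in Fulton, \S3.3, or Cox--Little--Schenck, Proposition~4.1.2). Note that the paper does not actually prove this proposition: it is stated in the preliminaries chapter as a background result, with the blanket reference to \cite{cox} and \cite{fulton} covering it, so there is no ``paper's own proof'' to compare against.
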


\begin{de} A Weil divisor $D:=\sum a_iD_i$ on the toric variety $X_{\Sigma}$ is said to be $T$-invariant (or simply invariant) if every prime divisor $D_i$ such that $a_i\neq 0$ is invariant under the action of the torus. This happens exactly when $D_i=V(v_i)$ with $v_i\in\Sigma(1)$.
\end{de}

\begin{pr}\label{chow} Let  $X:=X_{\Sigma}$ be a toric variety of dimension $n$. Then the Chow group $A_k(X)$ of $X_{\Sigma}$ is generated by the classes of $k$-dimensional invariant subvarieties. In other words, $A_k(X)$ is generated by $[V(\sigma)]$ as $\sigma$ ranges over the $(n-k)$-dimensional cones from $\Sigma$.  In particular, the set $T$-$Div(X_{\Sigma})$ of invariant divisors by the torus action generate the class group $\operatorname{Cl}(X_{\Sigma})$.
\end{pr}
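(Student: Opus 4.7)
My plan is to exploit the orbit stratification of $X_\Sigma$ together with the right-exact localization sequence for Chow groups. For each $i \in \{0,1,\ldots,n\}$ set
\[
X^{(i)} := \bigcup_{\dim\sigma \,\geq\, n-i} V(\sigma) \;=\; \bigsqcup_{\dim O_\tau \,\leq\, i} O_\tau,
\]
which by the orbit–cone correspondence is a closed subset of $X := X_\Sigma$, giving a filtration $X^{(0)}\subset X^{(1)}\subset\cdots\subset X^{(n)}=X$ whose open strata $X^{(i)}\setminus X^{(i-1)}$ are disjoint unions of the $i$-dimensional orbits $O_\sigma$ (one per cone with $\dim\sigma=n-i$), each isomorphic as a variety to $(\mathbb{C}^*)^i$.

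I will then proceed by induction on $i$, showing that $A_k(X^{(i)})$ is generated by the classes $[V(\sigma)]$ with $V(\sigma)\subseteq X^{(i)}$ and $\dim V(\sigma)=k$. The base case $i=0$ is immediate: $X^{(0)}$ is a finite set of torus-fixed points $V(\sigma)$ with $\dim\sigma=n$, which generate $A_0(X^{(0)})$, and $A_k(X^{(0)})=0$ for $k>0$. For the inductive step I invoke the right-exact localization sequence
\[
A_k(X^{(i-1)}) \longrightarrow A_k(X^{(i)}) \longrightarrow A_k(X^{(i)}\setminus X^{(i-1)}) \longrightarrow 0.
\]
The crucial auxiliary input is the computation $A_k((\mathbb{C}^*)^i)=0$ for $k<i$ and $A_i((\mathbb{C}^*)^i)=\mathbb{Z}$, proved by viewing $(\mathbb{C}^*)^i$ as the complement of the coordinate hyperplanes in $\mathbb{A}^i$ and applying the localization sequence together with $A_k(\mathbb{A}^i)=0$ for $k<i$.

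Granted this, the induction splits into three cases. For $k>i$ both end terms vanish, forcing $A_k(X^{(i)})=0$. For $k=i$ the left term vanishes because $\dim X^{(i-1)}\leq i-1$, and the right term is freely generated by the classes $[O_\sigma]$ of the $i$-dimensional orbits; each of these lifts to $[V(\sigma)]\in A_i(X^{(i)})$ since $V(\sigma)\cap (X^{(i)}\setminus X^{(i-1)})=O_\sigma$. For $k<i$ the right term vanishes, so $A_k(X^{(i)})$ is a quotient of $A_k(X^{(i-1)})$, and the induction hypothesis finishes the job. Taking $i=n$ yields the desired generation of $A_k(X)$ by the classes $[V(\sigma)]$ with $\dim\sigma=n-k$.

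The final assertion about the class group is simply the specialization $k=n-1$: in that case the generators are the classes $[V(v_i)]=[D_i]$ for $v_i\in\Sigma(1)$, which are precisely the torus-invariant prime divisors, so they generate $\operatorname{Cl}(X_\Sigma)$ and hence so does $T$-$Div(X_\Sigma)$. The main obstacle is really just the bookkeeping in the induction; no step requires a genuinely new idea, since both inputs — the right-exact localization sequence and the vanishing $A_k((\mathbb{C}^*)^i)=0$ for $k<i$ — are standard from Fulton's intersection theory.
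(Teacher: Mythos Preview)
Your argument is correct and is essentially the standard proof via the orbit filtration and the localization exact sequence for Chow groups, as in \cite[Section~5.1]{fulton}. Note, however, that the paper does not supply its own proof of this proposition: it is stated in the preliminaries chapter as a basic fact, with the blanket reference to \cite{cox} and \cite{fulton} covering all unproved statements there. So there is no alternative argument in the paper to compare against; your write-up simply fills in what the paper cites.
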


Now, we will describe intersection products between invariant divisors and invariant varieties (see for instance \cite[Section 5.1]{fulton}).

\begin{pr}\label{inter} Let $X_{\Sigma}$ be a toric variety, $D=\displaystyle\sum_{i\in\Sigma(1)}a_iD_i$ an invariant Cartier divisor on $X_{\Sigma}$ and $V(\sigma)$ a $k$-dimensional subvariety of $X_{\Sigma}$ that is not contained in the support of $D$. Then $D\cdot V(\sigma)=\displaystyle\sum b_{\gamma}V(\gamma)$, where the sum is over all $(k+1)$-dimensional cones $\gamma\in\Sigma$ containing $\sigma$, and $b_{\gamma}$ is obtained in the following way:\\

Let $v_i\in\Sigma(1)$ be any primitive ray of $\gamma$ that is not a primitive ray of $\sigma$.  Let $e$ be the generator of the one-dimensional lattice $N_{\gamma}/N_{\sigma}$ such that the image of $v_i$ in $N_{\gamma}/N_{\sigma}$ is $s_i\cdot e$, with $s_i$ a positive integer. Then $b_{\gamma}=\displaystyle\frac{a_i}{s_i}$.
\end{pr}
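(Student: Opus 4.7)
The strategy is to reduce the computation of $D \cdot V(\sigma)$ to a divisor computation on the toric variety $V(\sigma)$ itself. Since $D$ is Cartier and $V(\sigma) \not\subset \mathrm{Supp}(D)$, the restriction $D|_{V(\sigma)}$ is a well-defined Cartier divisor on $V(\sigma)$, and $D \cdot V(\sigma)$ equals the class of $D|_{V(\sigma)}$ (pushed forward via $V(\sigma)\hookrightarrow X_\Sigma$). Using the description $V(\sigma) \simeq X_{Star(\sigma), N(\sigma)}$ and Proposition \ref{chow}, the invariant prime divisors on $V(\sigma)$ are exactly the $V(\gamma)$ for $\gamma \in \Sigma$ with $\sigma \prec \gamma$ and $\dim\gamma = \dim\sigma + 1$. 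So it suffices to determine the coefficient $b_\gamma$ of $V(\gamma)$ in $D|_{V(\sigma)}$ for each such $\gamma$.

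To compute $b_\gamma$, I would work locally around the distinguished point $x_\gamma$. The Cartier hypothesis produces some $m_\gamma \in M$ such that $D|_{U_\gamma} = \mathrm{div}(\chi^{m_\gamma})|_{U_\gamma}$; combining this equality with Proposition \ref{div} forces $\langle m_\gamma, v_j\rangle = a_j$ for every $v_j \in \gamma(1)$. The hypothesis $V(\sigma) \not\subset \mathrm{Supp}(D)$ implies $a_j = 0$ for all $v_j \in \sigma(1) \subset \gamma(1)$, hence $\langle m_\gamma, v_j\rangle = 0$ on $\sigma(1)$, so $m_\gamma \in \sigma^\perp \cap M$. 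Thus $m_\gamma$ descends to a character $\bar m_\gamma$ of the torus of $V(\sigma)$, and in a neighborhood of the generic point of $V(\gamma)$ the restriction $D|_{V(\sigma)}$ agrees with $\mathrm{div}(\chi^{\bar m_\gamma})$ on $V(\sigma) = X_{Star(\sigma), N(\sigma)}$.

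Applying Proposition \ref{div} a second time, now on $V(\sigma)$, yields $b_\gamma = \langle \bar m_\gamma, e\rangle$, where $e$ is the primitive generator of the ray $\bar\gamma \in Star(\sigma)(1)$. Writing $\bar v_i = s_i\, e$ for any chosen $v_i \in \gamma(1) \setminus \sigma(1)$, we get $b_\gamma = \langle m_\gamma, v_i\rangle / s_i = a_i/s_i$, which is the desired formula. Independence of the choice of $v_i$ is automatic, since $\langle \bar m_\gamma, e\rangle$ depends only on $\gamma$. The main delicate point is handling the Cartier condition: producing a single $m_\gamma$ satisfying $\langle m_\gamma, v_j\rangle = a_j$ simultaneously for every $v_j \in \gamma(1)$ is exactly what forces the argument to use that $D$ is Cartier rather than merely Weil, and is what allows the restriction to be described locally by a single character, especially when $\gamma$ is not simplicial and $\gamma(1)\setminus\sigma(1)$ has more than one element.
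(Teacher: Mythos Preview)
The paper does not actually supply a proof of this proposition; it is stated with a reference to \cite[Section 5.1]{fulton} and nothing more. Your argument is correct and is essentially the standard one found in Fulton: restrict $D$ to $V(\sigma)\simeq X_{Star(\sigma),N(\sigma)}$, use the Cartier data $m_\gamma$ together with the observation that $V(\sigma)\not\subset\mathrm{Supp}(D)$ forces $a_j=0$ for $v_j\in\sigma(1)$ so that $m_\gamma\in\sigma^\perp\cap M$, and then apply Proposition~\ref{div} on the quotient torus to read off the coefficient along the primitive generator $e$ of $\bar\gamma$. Your remark that the well-definedness of $b_\gamma$ (independence of the choice of $v_i\in\gamma(1)\setminus\sigma(1)$) hinges precisely on the Cartier condition is the right emphasis.
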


\begin{rk} \label{rkinter} \em{ When $X_{\Sigma}$ is smooth, $D_i$ is a Cartier divisor for every $i\in\Sigma(1)$ and we have:}
 $$D_i \cdot V(\sigma)  =
\left\{
\begin{array}{ll}
  V(\gamma)  & \text{if} \ v_i \ \text{and} \ \sigma \ \text{span a cone } \gamma \text{ of}\  \Sigma\\
  0 & \text{if} \ v_i \ and \ \sigma \ \text{do not span a cone of}\  \Sigma \ \ . \\
\end{array}
\right.
$$
\end{rk}

The following proposition gives a criterion for a Weil invariant divisor to be a Cartier invariant divisor.

\begin{pr} Let $\sigma\in\Sigma$ be a cone and $D$ a Weil invariant divisor on $X_{\Sigma}$. Then:
\begin{enumerate}
\item $D|_{U_{\sigma}}$ is a $T$-invariant Cartier divisor on $U_{\sigma}$ $\Leftrightarrow$ there exists $u\in M$ such that $D|_{U_{\sigma}}=div(\chi^u)$.
\item $Pic(U_{\sigma})=0$.
\item $D:=\displaystyle\sum_{v_i\in\Sigma(1)}a_iD_i$ is Cartier $\Leftrightarrow D$ is principal on the affine open subset $U_{\sigma}$ for all $\sigma\in\Sigma$ $\Leftrightarrow$ for each $\sigma\in\Sigma$, there is $u_{\sigma}\in M$ such that $\langle u_{\sigma},v_i\rangle =a_i$ $\forall \ v_i\in\sigma(1)$.
\end{enumerate}
\end{pr}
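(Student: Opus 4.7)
The three parts build on each other, so the plan is to prove (1) first (the core computation), then deduce (2) from (1), then assemble (3) from (1) together with Proposition \ref{div}.

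For (1), the direction ($\Leftarrow$) is immediate: if $D|_{U_\sigma}=\operatorname{div}(\chi^u)$ then $D|_{U_\sigma}$ is principal, hence Cartier, and it is $T$-invariant since $\chi^u$ is a $T$-eigenfunction. For the forward direction I would exploit the $M$-grading on the coordinate ring $A_\sigma=\mathbb{C}[S_\sigma]=\bigoplus_{m\in S_\sigma}\mathbb{C}\chi^m$ coming from the $T$-action. A $T$-invariant Cartier divisor on the affine variety $U_\sigma$ corresponds to an invertible fractional ideal $I\subset\operatorname{Frac}(A_\sigma)$ which is $M$-graded. The gradedness forces a generator of $I$ to be a single $T$-semi-invariant, i.e.\ a monomial $\chi^{-u}$ for some $u\in M$; indeed one can take a non-zero element $f=\sum c_m\chi^m$ in $I$ of minimal support, and show using invertibility and the grading that the support collapses to a single $m=-u$. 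This gives $D|_{U_\sigma}=\operatorname{div}(\chi^u)$.

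For (2) I would argue that any Cartier divisor class on $U_\sigma$ admits a $T$-invariant representative, and then invoke (1). Concretely, since $U_\sigma$ has a $T$-fixed point (the distinguished point $x_\sigma$, or that of any maximal cone containing $\sigma$) and the action has an open dense orbit, every line bundle on $U_\sigma$ admits a $T$-linearization; taking an eigensection of the fiber at $x_\sigma$ produces a $T$-invariant rational section, whose divisor is a $T$-invariant Cartier divisor $D'$ linearly equivalent to the given $D$. By (1), $D'=\operatorname{div}(\chi^u)$, so $[D]=0$ in $\operatorname{Pic}(U_\sigma)$.

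Part (3) is then essentially book-keeping. By definition, $D$ is Cartier on $X_\Sigma$ iff $D|_{U_\sigma}$ is Cartier for every $\sigma\in\Sigma$; since $D$ is $T$-invariant so is each restriction, and by (1) this is equivalent to the existence, for each $\sigma$, of $u_\sigma\in M$ with $D|_{U_\sigma}=\operatorname{div}(\chi^{u_\sigma})$. Applying Proposition \ref{div} gives
$$\operatorname{div}(\chi^{u_\sigma})=\sum_{v_i\in\Sigma(1)}\langle u_\sigma,v_i\rangle D_i,$$
and since on $U_\sigma$ only the prime divisors $D_i$ with $v_i\in\sigma(1)$ meet the open set, comparing with $D=\sum a_i D_i$ yields $\langle u_\sigma,v_i\rangle=a_i$ for all $v_i\in\sigma(1)$. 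The main obstacle is the forward direction of (1), where one needs the graded-module argument to pin the generator down to a single monomial; once that is in place, (2) and (3) follow with little extra work.
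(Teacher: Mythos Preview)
The paper does not actually prove this proposition; it is stated in the preliminaries chapter as a standard fact, with the blanket reference at the start of the chapter to \cite{cox} and \cite{fulton}. So there is no paper proof to compare against, and your outline is in fact the standard argument found in those references.

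Your treatment of (1) and (3) is fine. For (2), however, your justification has a gap: $U_\sigma$ need not contain a $T$-fixed point. The distinguished point $x_\sigma$ is fixed only when $\sigma$ is full-dimensional; for a lower-dimensional cone (e.g.\ a ray), $U_\sigma$ is a product of an affine space with a torus and has no fixed point at all, so ``taking an eigensection of the fiber at $x_\sigma$'' does not work as stated. The fix is much simpler than the linearization argument you sketch: the paper's Proposition~\ref{chow} already says that $T$-invariant divisors generate the class group, and this applies to $U_\sigma$ as well. Hence any Cartier divisor on $U_\sigma$ is linearly equivalent to a $T$-invariant one, and then (1) gives principality directly. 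With that adjustment your proof is complete and matches the standard one.
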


Let $X_P$ be the toric variety of a full dimensional rational polytope $P\subset M_{\mathbb{R}}$. The polytope $P$ has a facet presentation $$P:=\Big\{m\in M_{\mathbb{R}}\ \Big| \ \langle m,u_F\rangle\geq -a_F, \text{for all facets } F\prec P\Big\}.$$ The  fan $\Sigma_P$ tells us the facet normal $u_F$ but it does not give any information about the numbers $a_F$. In fact, we have already noted that $P$ and $kP$ give the same fan for all $k\in\mathbb{Q}$. We define a special $\mathbb{Q}$-divisor $D_P:=\sum a_FD_F$ on $X_P$, where $D_F=V(u_F)$.

\begin{pr} If $P\subset M_{\mathbb{R}}$ is a full dimensional rational polytope, then                                                                                                         $D_P$ is an ample $\mathbb{Q}$-Cartier invariant divisor on $X_P$.
\end{pr}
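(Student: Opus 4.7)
Since $D_P$ is by construction a sum of torus-invariant divisors, what remains is to show it is $\mathbb{Q}$-Cartier and ample. My plan is to clear denominators, apply the Cartier criterion from the previous proposition to exhibit explicit local equations, and then deduce ampleness via strict convexity of the associated support function.

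For the $\mathbb{Q}$-Cartier property, pick $k\in\mathbb{Z}_{>0}$ large enough that $ka_F\in\mathbb{Z}$ for every facet $F$ and $kv\in M$ for every vertex $v$ of $P$; it suffices to show that $kD_P$ is Cartier. The top-dimensional cones of $\Sigma_P$ are in bijection with the vertices of $P$ via $v\leftrightarrow\sigma_v=\mathrm{Cone}(u_F:v\in F)$, and the facet inequalities force $\langle v,u_F\rangle=-a_F$ whenever $v\in F$. Hence $u_{\sigma_v}:=-kv\in M$ satisfies $\langle u_{\sigma_v},u_F\rangle=ka_F$ for every ray $u_F\in\sigma_v(1)$. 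For a non-maximal cone $\tau\in\Sigma_P$, set $u_\tau:=u_{\sigma_v}$ for any maximal cone $\sigma_v\supseteq\tau$; the previous proposition then yields that $kD_P$ is Cartier.

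For ampleness, the data $\{u_{\sigma_v}=-kv\}_v$ assemble into a continuous piecewise-linear support function $\varphi\colon N_{\mathbb{R}}\to\mathbb{R}$, linear on each cone of $\Sigma_P$, whose value on $\sigma_v$ is $w\mapsto\langle -kv,w\rangle=-k\min_{m\in P}\langle m,w\rangle$; the last equality holds because for $w$ in the relative interior of $\sigma_v$ the unique minimizer of $\langle m,w\rangle$ on $P$ is exactly $v$. By the standard toric ampleness criterion (e.g.\ \cite[Theorem 6.1.14]{cox}), $kD_P$ is ample iff $\varphi$ is strictly convex across every wall $\tau=\sigma_v\cap\sigma_{v'}$ of $\Sigma_P$, equivalently $\langle v',w\rangle<\langle v,w\rangle$ for $w$ in the interior of $\sigma_{v'}$. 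This is immediate from the distinctness of the vertices $v\ne v'$ together with $v'$ being the unique $P$-minimizer of $\langle\cdot,w\rangle$ on $\mathrm{int}(\sigma_{v'})$. Hence $kD_P$ is ample Cartier, and $D_P$ is ample $\mathbb{Q}$-Cartier.

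The main pitfall is notational rather than conceptual: the facet presentation $\langle m,u_F\rangle\ge -a_F$ forces the $\min$ form of the support function and the minus sign in $u_{\sigma_v}=-kv$, and some care is required to avoid confusing $D_P$ with its negative while translating between polytope data and fan data.
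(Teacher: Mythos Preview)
Your proof is correct. The paper itself does not prove this proposition: it is stated without proof in the preliminaries chapter, which refers the reader to \cite{cox} and \cite{fulton} for such standard facts. Your argument is exactly the classical one found there---the vertices of $P$ supply the Cartier data for $kD_P$ once denominators are cleared, and the distinctness of adjacent vertices yields strict convexity of the support function, hence ampleness. There is nothing to compare, and your handling of the sign conventions is consistent with the paper's (where the Cartier criterion reads $\langle u_\sigma,v_i\rangle=a_i$ rather than $-a_i$).
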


A divisor $D\in T$-$Div(X_{\Sigma})$ on the toric variety $X_{\Sigma}$ defines a sheaf $\mathcal{O}_{X_{\Sigma}}(D)$. We give a description of the global sections $\Gamma (X_{\Sigma},\mathcal{O}_{X_{\Sigma}}(D))$.

\begin{pr} \label{p.global} If $D\in T$-$Div(X_{\Sigma})$ then $$\Gamma (X_{\Sigma},\mathcal{O}_{X_{\Sigma}}(D))\simeq\displaystyle\bigoplus_{div(\chi^m)+D\geq 0}\mathbb{C}\cdot\chi^m.$$

\end{pr}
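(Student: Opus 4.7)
The plan is to exploit the $T$-action on $\Gamma(X_\Sigma, \mathcal{O}_{X_\Sigma}(D))$ arising from $T$-invariance of $D$, decompose this representation into character eigenspaces, and identify the $m$-th eigenspace with $\mathbb{C} \cdot \chi^m$ subject to the divisor inequality.

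First I would recall the general identification
\[
\Gamma(X_\Sigma, \mathcal{O}_{X_\Sigma}(D)) = \{f \in \mathbb{C}(X_\Sigma)^\times \mid \mathrm{div}(f) + D \geq 0\} \cup \{0\}.
\]
For $f = \chi^m$, Proposition \ref{div} gives $\mathrm{div}(\chi^m) = \sum_i \langle m, v_i\rangle D_i$; combined with $D = \sum_i a_i D_i$, the condition $\mathrm{div}(\chi^m) + D \geq 0$ reduces to $\langle m, v_i\rangle + a_i \geq 0$ for every $i \in \Sigma(1)$, and nonnegativity is automatic on the non-invariant components since neither $\mathrm{div}(\chi^m)$ nor $D$ have any.

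Next, because $D$ is $T$-invariant, the sheaf $\mathcal{O}_{X_\Sigma}(D)$ admits a canonical $T$-linearization, so $T$ acts on $\Gamma(X_\Sigma, \mathcal{O}_{X_\Sigma}(D))$ by a rational representation. Since $T$ is a torus, every such representation splits into weight spaces indexed by the character lattice $M$:
\[
\Gamma(X_\Sigma, \mathcal{O}_{X_\Sigma}(D)) = \bigoplus_{m \in M} V_m, \qquad V_m = \{s \mid t \cdot s = \chi^m(t)\, s \ \forall\, t \in T\}.
\]
I would then show $V_m \subseteq \mathbb{C} \cdot \chi^m$. A weight-$m$ section, regarded as a rational function $f$ on $X_\Sigma$, restricts to a regular function on the open torus $T \subset X_\Sigma$ satisfying $f(ty) = \chi^m(t)f(y)$. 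Using the decomposition $\mathbb{C}[T] = \bigoplus_{m'} \mathbb{C} \cdot \chi^{m'}$ into $T$-weight spaces, this forces $f|_T \in \mathbb{C} \cdot \chi^m$, hence $f = c\,\chi^m$ for some $c \in \mathbb{C}$.

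Combining these steps, $V_m$ is nonzero precisely when $\chi^m \in \Gamma(X_\Sigma, \mathcal{O}_{X_\Sigma}(D))$, in which case $V_m = \mathbb{C} \cdot \chi^m$, and the stated direct-sum formula follows. The main subtlety is justifying the weight-space decomposition when $\Gamma$ is infinite-dimensional (e.g.\ for noncomplete $X_\Sigma$); this is handled by the standard fact that an algebraic $T$-representation is a union of finite-dimensional invariant subspaces, each of which is a direct sum of weight spaces. As an alternative sidestepping this point, one can argue locally: on each affine chart the equality $\Gamma(U_\sigma, \mathcal{O}_{X_\Sigma}(D)) = \bigoplus_{\langle m, v_i\rangle \geq -a_i,\, v_i \in \sigma(1)} \mathbb{C}\,\chi^m$ is immediate from the description of $A_\sigma$ and the Cartier data for $D$, and then $\Gamma(X_\Sigma,\mathcal{O}_{X_\Sigma}(D)) = \bigcap_{\sigma \in \Sigma} \Gamma(U_\sigma, \mathcal{O}_{X_\Sigma}(D))$ yields the global statement.
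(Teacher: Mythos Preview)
Your proof is correct and follows the standard textbook argument (as in Fulton or Cox--Little--Schenck). Note that the paper does not actually supply its own proof of this proposition: it is stated in the preliminaries chapter as a background fact, with the blanket reference to \cite{cox} and \cite{fulton} covering all unproved statements there. So there is no ``paper's proof'' to compare against; your argument via the $T$-linearization and weight-space decomposition (with the local alternative you mention) is precisely the proof one finds in those references.
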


\begin{de} Let $X_{\Sigma}$ be a toric variety and let $D:=\displaystyle\sum_{v_i\in\Sigma(1)}a_iD_i$ be an invariant Weil divisor on $X_{\Sigma}$. We define a polyhedron associated to $D$ as
$$P_D:=\Big\{m\in M_{\mathbb{R}} \ \Big| \  \langle m,v_i\rangle \geq -a_i \text{ for all }v_i\in\Sigma(1)\Big\}.$$
\end{de}

Note that $$\Gamma(X_{\Sigma},\mathcal{O}_{X_{\Sigma}}(D))\simeq\displaystyle\bigoplus_{m\in P_D\cap M}\chi^m.$$

 When  $\Sigma$ is a complete fan, $P_D$ is a polytope.  Furthermore, if $D$ is an ample $\mathbb{Q}$-divisor on  $X_{\Sigma}$ then the toric variety associated to $P_D$ is isomorphic to  $X_{\Sigma}$ and $D_{P_D}=D$.
In other words, a full dimensional rational polytope $ P\subset M_{\mathbb{R}}$ defines a polarized toric variety, that is, a pair $(X_P,D_P)$ where $D_P$ is an ample invariant $\mathbb{Q}$-divisor on $X_P$. Conversely, a pair $(X_{\Sigma},D)$ where D is an ample invariant $\mathbb{Q}$-divisor on $X_{\Sigma}$, defines a rational polytope $P_D$. These constructions are inverses to each other. In particular, a toric variety is projective if and only if it is the variety associated to a full dimensional rational polytope.\\

\begin{rk}\label{r.global} \em{ It follows from Propositions \ref{div} and \ref{p.global} that  if $D=D'+div(\chi^m)$ for some $m\in M$ then $P_D=P_{D'}+m$, and conversely. In particular, $D$ is linearly equivalent to an effective
divisor if and only if $P_D\neq~\emptyset$.}
\end{rk}

\begin{de} Let $P\subset M_{\mathbb{R}}$ and $P'\subset M'_{\mathbb{R}}$ be full dimensional rational polytopes. We say that $P$ and $P'$ are isomorphic if there exists an isomorphism between the corresponding polarized toric varieties $(X_P,D_P)$ and $(X_{P'},D_{P'})$. Equivalently, if there is a bijective affine transformation $\rho:M_{\mathbb{R}}\rightarrow M'_{\mathbb{R}}$ 
induced by a bijective affine transformation on the lattices, satisfying $\rho(P)=P'$.
\end{de}

\section{Mori Theory for Toric Varieties}

\subsection{Definitions and Standard Results}
\begin{de} Let $X$ be a complete variety and $D$ a Cartier divisor on $X$. We say that $D$ is nef (numerically effective) if $D\cdot C\geqslant 0$ for every irreducible complete curve $C$.
\end{de}

\begin{pr} Let $X$ be a complete toric variety and $D$ a Cartier divisor on $X$. The following are equivalent:
\begin{enumerate}
\item $D$ is nef
\item $\mathcal{O}_X(D)$ is generated by global sections.
\item $D\cdot C\geqslant 0$ for every invariant irreducible curve $C\subset X$.
\end{enumerate} 
\end{pr}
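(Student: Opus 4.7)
The plan is to prove the chain $(1) \Rightarrow (3) \Rightarrow (2) \Rightarrow (1)$. The implication $(1) \Rightarrow (3)$ is immediate, since every invariant irreducible curve is in particular a complete irreducible curve. The implication $(2) \Rightarrow (1)$ is a standard fact valid for any complete variety: if $\mathcal{O}_X(D)$ is globally generated, then for any irreducible complete curve $C \subset X$ one can pick a global section of $\mathcal{O}_X(D)$ not identically vanishing on $C$, thereby exhibiting $D$ as linearly equivalent to an effective divisor meeting $C$ properly, so $D \cdot C \geq 0$.

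The substance lies in $(3) \Rightarrow (2)$. Write $D = \sum a_i D_i$. Since $D$ is Cartier, the last proposition gives, for every maximal cone $\sigma \in \Sigma$, an element $m_\sigma \in M$ with $\langle m_\sigma, v_i\rangle = -a_i$ for all $v_i \in \sigma(1)$. I claim that it suffices to show $m_\sigma \in P_D$ for every maximal $\sigma$. Indeed, once $m_\sigma \in P_D$, Proposition~\ref{p.global} gives $\chi^{m_\sigma} \in \Gamma(X_\Sigma, \mathcal{O}_{X_\Sigma}(D))$; by construction $\chi^{m_\sigma}$ trivializes $\mathcal{O}_X(D)|_{U_\sigma}$, so it generates this sheaf at every point of $U_\sigma$. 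Since $X_\Sigma = \bigcup_\sigma U_\sigma$ as $\sigma$ ranges over maximal cones, $\mathcal{O}_X(D)$ would then be globally generated.

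To verify the claim, I introduce the piecewise linear support function $\psi_D : N_\mathbb{R} \to \mathbb{R}$ defined by $\psi_D|_\sigma = \langle m_\sigma, \cdot\rangle$ on each maximal $\sigma$; this is well defined because the $m_\sigma, m_{\sigma'}$ of adjacent cones agree on their common wall, and it satisfies $\psi_D(v_i) = -a_i$ for each $v_i \in \Sigma(1)$. For a wall $\tau = \sigma \cap \sigma'$ I would apply Proposition~\ref{inter} to the invariant Cartier divisor $D' := D + \operatorname{div}(\chi^{m_\sigma}) \sim D$, which has coefficient zero at every $D_i$ with $v_i \in \sigma(1)$, so in particular $V(\tau) \not\subset \mathrm{Supp}(D')$; the computation yields
\[
D \cdot V(\tau) \;=\; \frac{a_{v'} + \langle m_\sigma, v'\rangle}{s_{v'}},
\]
where $v'$ is the primitive ray of $\sigma'$ not lying in $\tau$ and $s_{v'}$ is a positive integer. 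Hypothesis $(3)$ therefore forces $\langle m_\sigma, v'\rangle \geq -a_{v'} = \langle m_{\sigma'}, v'\rangle$ at every wall, which is exactly the local concavity (upper convexity) of $\psi_D$ across each wall.

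The main obstacle is promoting this wall-by-wall concavity to the global inequality $\psi_D(w) \leq \langle m_\sigma, w\rangle$ for every maximal $\sigma$ and every $w \in N_\mathbb{R}$. This is where completeness of $\Sigma$ is essential: it ensures $|\Sigma| = N_\mathbb{R}$ and that any ray $v_j \in \Sigma(1)$ can be reached from a given maximal $\sigma$ through a chain of adjacent maximal cones whose walls transmit the local concavity inductively. Specializing the resulting global bound to $w = v_j$ gives $-a_j = \psi_D(v_j) \leq \langle m_\sigma, v_j\rangle$ for every $v_j \in \Sigma(1)$, which is exactly $m_\sigma \in P_D$, completing the proof.
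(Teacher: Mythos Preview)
The paper does not supply a proof of this proposition; it is stated in the preliminaries as a standard fact with implicit reference to \cite{cox} and \cite{fulton}. Your argument is essentially the classical one found there (e.g.\ Fulton \S3.4, or \cite[Theorem~6.1.7 and Lemma~6.1.13]{cox}): reduce to a $T$-invariant representative, use the Cartier data $\{m_\sigma\}$ and the support function $\psi_D$, translate $(3)$ into the wall-convexity inequalities via Proposition~\ref{inter}, and then upgrade wall-convexity to global convexity to conclude $m_\sigma\in P_D$ and hence global generation. One small omission: the proposition is stated for an arbitrary Cartier divisor, so you should remark at the outset that every Cartier divisor on a toric variety is linearly equivalent to a $T$-invariant one, and that all three conditions depend only on the linear equivalence class.

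The only genuinely soft step is the one you yourself flag: passing from wall-by-wall concavity to the global bound $\psi_D(w)\le\langle m_\sigma,w\rangle$. Your ``chain of adjacent cones transmits the inequality inductively'' is suggestive but not literally correct as stated, because the inequality $\langle m_{\sigma_{i-1}},\,\cdot\,\rangle\ge\langle m_{\sigma_i},\,\cdot\,\rangle$ obtained at the wall $\tau_i=\sigma_{i-1}\cap\sigma_i$ holds only on the $\sigma_i$-side of the hyperplane spanned by $\tau_i$, not globally. The clean fix is the standard line-segment argument: pick $x$ in the interior of $\sigma$ and a generic $w\in N_{\mathbb R}$, and restrict $\psi_D$ to the segment from $x$ to $w$. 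Completeness guarantees the segment passes through a finite chain of maximal cones, and at each wall crossing the slope of $\psi_D$ (in the direction of travel) weakly decreases by the local inequality; hence $\psi_D$ is concave along the segment, lies below the initial linear piece $\langle m_\sigma,\,\cdot\,\rangle$, and in particular $\psi_D(w)\le\langle m_\sigma,w\rangle$. With this made precise your proof is complete and matches the textbook treatment.
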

 
The theorem below is a well known criterion of ampleness for a Cartier divisor. We will enunciate it in the toric case.
\begin{teo} \label{kleiman} (Toric Kleiman Criterion). Let $D$ be a Cartier Divisor on a complete toric variety $X_\Sigma$. Then $D$ is ample if and only if 
 $D\cdot C>0$ for every invariant curve $C\subset X_{\Sigma}$, that is, $D\cdot V(\sigma)>0$ for every cone $\sigma\in\Sigma$ of dimension equal to $(rank(\Sigma)-1)$.
\end{teo}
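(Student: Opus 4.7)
The forward implication is immediate from the definition of ampleness: some tensor power of $\mathcal{O}_{X_{\Sigma}}(D)$ yields a projective embedding of $X_{\Sigma}$, under which every complete irreducible curve maps to a curve of positive degree, so $D\cdot C>0$. In particular this holds for each invariant curve $V(\sigma)$ with $\sigma\in\Sigma$ of dimension $\operatorname{rank}(\Sigma)-1$.

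For the converse, the plan is to invoke the general Kleiman ampleness criterion: on a complete variety, $D$ is ample if and only if $D\cdot\gamma>0$ for every nonzero $\gamma\in \overline{NE}(X_{\Sigma})$. The task then reduces to showing that the hypothesis---positivity on the classes $[V(\sigma)]$ for $\sigma$ a wall---forces positivity on the entire closed Mori cone. This is exactly the content of the so-called Toric Cone Theorem: $\overline{NE}(X_{\Sigma})$ is a finitely generated rational polyhedral cone, generated precisely by the (finitely many) classes of invariant curves $[V(\tau)]$, $\tau\in\Sigma(n-1)$. Granting this, the linear functional $D\cdot(-)$ is strictly positive on each generator, hence on the whole cone away from the origin, and Kleiman's criterion yields ampleness.

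To justify the Toric Cone Theorem I would combine two ingredients. First, Proposition~\ref{chow} shows that the invariant curve classes already generate $A_1(X_\Sigma)$, so they span $N_1(X_\Sigma)_{\mathbb{R}}$. Second, one needs that every effective class actually lies in the \emph{non-negative} cone they span; this follows from a $T$-equivariant degeneration: any irreducible curve $C\subset X_\Sigma$ degenerates, along a generic one-parameter subgroup of the torus, to an effective $T$-invariant $1$-cycle, which is a non-negative integer combination of $V(\tau)$'s, and numerical equivalence is preserved under this specialization. This degeneration step is the main obstacle---classical but not entirely elementary---so I would cite it as a black box from \cite{reid} or \cite{cox} rather than redo the argument. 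A more combinatorial alternative, better aligned with the polytopal spirit of this thesis, would be to translate the problem into the support function $\psi_D$ of $D$: using Proposition~\ref{inter} one rewrites $D\cdot V(\tau)$ as the wall-crossing discrepancy of $\psi_D$ at $\tau$, so the hypothesis becomes strict upper convexity of $\psi_D$ on $\Sigma$, and then the standard toric equivalence between ampleness of $D$ and strict convexity of $\psi_D$ concludes the proof.
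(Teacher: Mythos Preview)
The paper does not actually prove this theorem: it is introduced as ``a well known criterion of ampleness'' and merely stated, with the implicit reference being \cite{cox}. So there is no in-paper argument to compare yours against, and your proposal should be judged on its own.

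Your primary route has a gap worth flagging. You invoke ``the general Kleiman ampleness criterion: on a complete variety, $D$ is ample if and only if $D\cdot\gamma>0$ for every nonzero $\gamma\in\overline{NE}(X_\Sigma)$.'' As usually stated and proved, Kleiman's criterion requires $X$ to be \emph{projective}, not merely complete; for arbitrary complete schemes it can fail. Since the present theorem is phrased for a complete toric variety with no projectivity hypothesis, you cannot simply import Kleiman. What rescues the argument in the toric setting is precisely your alternative: the hypothesis $D\cdot V(\tau)>0$ for every wall $\tau$ translates, via Proposition~\ref{inter}, into strict upper convexity of the support function $\psi_D$ across each wall, and strict convexity of $\psi_D$ is equivalent to ampleness of $D$ directly from the toric dictionary (see \cite[\S6.1]{cox} or \cite[\S3.4]{fulton}). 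No appeal to general Kleiman is needed, and projectivity of $X_\Sigma$ is then a \emph{consequence} rather than an input. So your second approach is the correct and standard one; the first approach would need this detour anyway just to justify the projectivity hypothesis before Kleiman can be invoked.

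Your sketch of the Toric Cone Theorem via one-parameter degeneration is correct in spirit and matches \cite{reid} and \cite[\S6.3]{cox}. Note, however, that in the paper's logical order this is Theorem~\ref{teo cone}, stated \emph{after} the present result and itself quoted without proof; relying on it here would invert the exposition. This is one more reason to prefer the self-contained support-function argument.
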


\begin{de} Let $X$ be a normal variety, $D$ and $F$ be Cartier divisors on $X$. We say that $D$ and $F$ are numerically equivalent, and we write $D\equiv F$, if $D\cdot C=F\cdot C$ for every irreducible complete curve $C\subset X$.
\end{de}

\begin{pr} \cite[6.3.15]{cox} For complete toric varieties numerical equivalence and linear equivalence coincide. 
\end{pr}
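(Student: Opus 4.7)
The easy direction, linear implies numerical, follows from standard intersection theory on any complete variety, so the real content is the converse: a numerically trivial Cartier divisor on a complete toric variety $X_\Sigma$ must be linearly equivalent to zero. I would first reduce to the case of a $T$-invariant Cartier divisor. By Proposition \ref{chow} combined with the standard surjection $T$-$Div(X_\Sigma) \twoheadrightarrow Pic(X_\Sigma)$ (obtained from the sequence $M \to T\text{-}Div(X_\Sigma) \to Pic(X_\Sigma) \to 0$ induced by $m \mapsto div(\chi^m)$), every Cartier divisor class has a representative of the form $D = \sum_{v_i \in \Sigma(1)} a_i D_i$. Numerical and linear equivalence both descend through this quotient, so it suffices to show that if such a $D$ is numerically trivial then $D = div(\chi^m)$ for some $m \in M$.

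Next I would encode $D$ by its local data. Since $D$ is Cartier, for every maximal cone $\sigma \in \Sigma(n)$ there exists $u_\sigma \in M$ with $\langle u_\sigma, v_i\rangle = a_i$ for all $v_i \in \sigma(1)$. Via Proposition \ref{div}, the statement $D \sim 0$ is equivalent to the $u_\sigma$'s all being equal to a single $m \in M$. Fixing a reference maximal cone $\sigma_0$ and subtracting the principal divisor $div(\chi^{u_{\sigma_0}})$, I may assume $u_{\sigma_0} = 0$; the goal becomes $u_\sigma = 0$ for every $\sigma \in \Sigma(n)$.

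The key propagation step is across walls. Let $\tau \in \Sigma(n-1)$ be a wall between two maximal cones $\sigma_1, \sigma_2$. The characters $u_{\sigma_1}$ and $u_{\sigma_2}$ agree on the rays of $\tau$, hence $u_{\sigma_1} - u_{\sigma_2} \in \tau^\perp \cap M$, a rank-one sublattice. Applying Proposition \ref{inter} to the $1$-cycle $V(\tau)$ and the divisor $D$, a direct computation expresses $D \cdot V(\tau)$ as $\tfrac{1}{s}\,\langle u_{\sigma_1} - u_{\sigma_2},\, v_i\rangle$, where $v_i$ is any ray of $\sigma_2$ not lying in $\tau$ and $s$ is the positive integer from the proposition. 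Because $v_i \notin \tau^\perp$ (its class generates $N_\gamma/N_\tau$ for the $n$-cone $\gamma = \sigma_2$), this linear form is nonzero on the one-dimensional lattice $\tau^\perp \cap M$; the hypothesis $D \cdot V(\tau) = 0$ therefore forces $u_{\sigma_1} = u_{\sigma_2}$.

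To finish I would invoke connectedness: in a complete fan the dual graph on $\Sigma(n)$ with edges given by walls is connected, so any $\sigma \in \Sigma(n)$ can be reached from $\sigma_0$ by a chain of walls, and iterating the wall step yields $u_\sigma = u_{\sigma_0} = 0$ throughout, hence $a_i = 0$ for every $i$ and $D \sim 0$. The main obstacle I anticipate is the intersection-number computation along walls: getting the formula of Proposition \ref{inter} to collapse into a clean statement about $u_{\sigma_1} - u_{\sigma_2}$ requires care, particularly because the proposition is phrased for arbitrary (possibly non-simplicial) cones and one must verify that the ray $v_i$ genuinely pairs nontrivially with the one-dimensional lattice $\tau^\perp \cap M$. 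A secondary, milder, technical point is justifying that every Cartier divisor is linearly equivalent to a $T$-invariant Cartier divisor (not merely a $T$-invariant Weil one), which is the reason for invoking the $M \to T\text{-}Div(X_\Sigma) \to Pic(X_\Sigma) \to 0$ exact sequence at the start rather than only Proposition \ref{chow}.
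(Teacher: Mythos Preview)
The paper does not supply its own proof of this proposition; it simply records the statement with a citation to \cite[6.3.15]{cox}. Your argument is correct and is essentially the standard proof found in that reference (and in Fulton \cite[\S3.4]{fulton}): reduce to a $T$-invariant Cartier divisor via the exact sequence $M\to T\text{-}CDiv(X_\Sigma)\to Pic(X_\Sigma)\to 0$, encode it by the local data $\{u_\sigma\}_{\sigma\in\Sigma(n)}$, compute $D\cdot V(\tau)$ for each wall $\tau$ to see that numerical triviality forces $u_{\sigma_1}=u_{\sigma_2}$ across every wall, and conclude by connectedness of the wall graph of a complete fan. The one cosmetic point is a sign: after normalizing $u_{\sigma_1}=0$ and picking $v_i\in\sigma_2(1)\setminus\tau(1)$, Proposition~\ref{inter} gives $D\cdot V(\tau)=a_i/s_i=\langle u_{\sigma_2},v_i\rangle/s_i=-\tfrac{1}{s_i}\langle u_{\sigma_1}-u_{\sigma_2},v_i\rangle$, but this does not affect the conclusion. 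Your justification that $v_i$ pairs nontrivially with $\tau^\perp$ is fine: since $\tau$ is a facet of the full-dimensional cone $\sigma_2$, any ray of $\sigma_2$ outside $\tau$ lies strictly on one side of the supporting hyperplane $\mathrm{span}(\tau)$, hence not in $\tau^{\perp\perp}$.
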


We also have a numerical equivalence on curves. 

\begin{de} Let $X$ be a toric variety and denote by $Z_1(X)$ the free abelian group generated by irreducible complete curves contained in $X$. Given $C$ and $C'$ cycles in $Z_1(X)$, we say that $C$ and $C'$ are numerically equivalent, written $C\equiv C'$, if $D\cdot C=D\cdot C'$ for every Cartier divisor $D$ on X. 
 
\end{de}

\begin{de} For a toric variety $X$ we define the vector spaces $N^1(X):=(CDiv(X)/\equiv)\otimes_{\mathbb{Z}}\mathbb{R}=Pic(X)_{\mathbb{R}}$ and $N_1(X):=(Z_1(X)/\equiv)\otimes_{\mathbb{Z}}\mathbb{R}$. 
\end{de}

The linear map $$N^1(X)\times N_1(X)\rightarrow \mathbb{R}$$ induced by the intersection product makes $N_1(X)$ and $N^1(X)$ dual vector spaces to each other.\\

In $N^1(X)$ and  $N_1(X)$ there are some important cones for Mori theory.

\begin{defteo} Let $X$ be an $n$-dimensional complete toric variety.
\begin{enumerate}

\item The effective cone $Eff(X)$ of $X$ is the cone in $\operatorname{Cl}(X)\otimes\mathbb{R}$ generated by effective divisors. Equivalently, by Remark \ref{r.global}, the effective cone is generated by the divisors $D$ such that $P_D$ is not empty.
\item When $X$ is $\mathbb{Q}$-factorial, $Eff(X)\subset N^1(X)$ and the interior of $Eff(X)$ is the big cone $Big(X)$ (see \cite[2.2.26]{robert}). 
\item The nef cone $Nef(X)$ of $X$ is the cone in $N^1(X)$ generated by nef Cartier divisors.
\item The interior of $Nef(X)$ is the ample cone $Amp(X)$ (see \ref{kleiman}). Equivalently, the ample cone of $X$ is generated by the Cartier divisors $D$ such that the toric variety associated to $P_D$ is isomorphic to $X$.
\item The Mori cone $\mbox{NE}(X)$ of $X$ is the cone in $N_1(X)$ dual to $Nef(X)$. Equivalently, it is the cone generated by irreducible complete curves.
\end{enumerate}
All these cones are  strongly convex polyhedral cones.
\end{defteo}

Now we state a toric version of one of the first results in the Mori theory.

\begin{teo}\label{teo cone}(Toric Cone Theorem)\cite[6.3.20]{cox} Let $X:=X_{\Sigma}$ be a complete  toric variety of dimension $n$. Then $$\mbox{NE}(X)=\displaystyle\sum_{\tau\in\Sigma(n-1)}\mathbb{R}_{\geq 0}[V(\tau)].$$\\ 
\end{teo}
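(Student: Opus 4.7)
The natural strategy is to prove the equality by comparing duals in the finite-dimensional pairing between $N^1(X)$ and $N_1(X)$. Set
$$C := \sum_{\tau\in\Sigma(n-1)}\mathbb{R}_{\geq 0}[V(\tau)] \subseteq N_1(X).$$
This is a finitely generated polyhedral cone, hence closed and convex. The inclusion $C\subseteq \mbox{NE}(X)$ is immediate: by the Orbit-Cone Correspondence each $V(\tau)$ for $\tau\in\Sigma(n-1)$ has dimension $n-\dim\tau = 1$ and is complete (as a closed subvariety of the complete variety $X$), so it is an invariant irreducible complete curve, and its class therefore lies in $\mbox{NE}(X)$.

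For the reverse inclusion, I would pass to the dual side in $N^1(X)$. By definition, $C^{\vee}\subseteq N^1(X)$ consists of those classes $[D]$ with $D\cdot V(\tau)\geq 0$ for every wall $\tau\in\Sigma(n-1)$. But the proposition giving equivalent conditions for nefness on a complete toric variety states precisely that a Cartier divisor $D$ is nef if and only if $D\cdot V(\tau)\geq 0$ for every invariant irreducible curve, and these are exactly the $V(\tau)$ with $\tau\in\Sigma(n-1)$. Extending linearly over $\mathbb{R}$, we conclude $C^{\vee}=\mbox{Nef}(X)$.

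Finally, I would invoke the bidual theorem for closed convex polyhedral cones (i.e. the Farkas lemma applied in the finite-dimensional vector space $N_1(X)$, which is dual to $N^1(X)$ under the intersection pairing as recalled in the text). Since $C$ is polyhedral and closed, $C=C^{\vee\vee}$. Combining with the previous step,
$$C \;=\; C^{\vee\vee} \;=\; \mbox{Nef}(X)^{\vee} \;=\; \mbox{NE}(X),$$
where the last equality is the given definition of the Mori cone. This proves the theorem.

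\textbf{Anticipated difficulty.} There is no serious obstacle: the theorem is essentially a formal consequence of the previously established characterization of nefness on complete toric varieties via invariant curves, together with finite-dimensional cone duality. The only point requiring minor care is ensuring that the nefness criterion, stated for Cartier divisors, applies cleanly after tensoring with $\mathbb{R}$ inside $N^1(X)=\mbox{Pic}(X)_\mathbb{R}$, and that $N_1(X)$ is genuinely finite-dimensional and in perfect pairing with $N^1(X)$ so that biduality is available — both facts are already recorded in the preceding material.
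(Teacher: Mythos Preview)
Your proof is correct. Note that the paper itself does not supply a proof of this statement: it is quoted as a standard result from \cite[6.3.20]{cox} and left without argument. Your duality argument---showing $C^{\vee}=\mbox{Nef}(X)$ via the toric nefness criterion and then concluding $C=C^{\vee\vee}=\mbox{Nef}(X)^{\vee}=\mbox{NE}(X)$ by polyhedral biduality and the paper's definition of the Mori cone as the dual of $\mbox{Nef}(X)$---is exactly the standard proof found in the cited reference, so there is nothing to compare.
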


\begin{pr}\label{seq}(See for instance \cite[6.4.1]{cox}) Let  $\Sigma$ be a simplicial fan  and $X:=X_{\Sigma}$. Then there is an exact sequence:

$$
\begin{array}{ccccccccc}
 0 &\longrightarrow& N_1(X) &\longrightarrow& \mathbb{R}^r&\longrightarrow&  N_{\mathbb{R}} &\longrightarrow &0 \ \ ,\\
  &&\xi&\longmapsto &\displaystyle\sum_{v_i\in\Sigma(1)}(V(\langle v_i \rangle)\cdot \xi)e_i  \\
&&&&e_i&\longmapsto & v_i \\
\end{array}
$$

 \noindent where $r:=|\Sigma(1)|$, and ${e_1,...,e_r}$ is the standard basis for $\mathbb{R}^r$.\\
\end{pr}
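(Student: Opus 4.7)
The plan is to obtain the displayed exact sequence as the $\mathbb{R}$-linear dual of the standard short exact sequence
$$0 \to M \to \mathbb{Z}^r \to \operatorname{Cl}(X) \to 0,$$
in which $m \mapsto (\langle m, v_i\rangle)_i$ and $e_i \mapsto [D_i]$. This auxiliary sequence is exact: the $D_i$ generate $\operatorname{Cl}(X)$ by Proposition \ref{chow}, their relations are precisely the principal divisors $\operatorname{div}(\chi^m) = \sum_i \langle m, v_i\rangle D_i$ by Proposition \ref{div}, and injectivity of the leftmost map follows from completeness of $\Sigma$ (implicit here, as it is needed already for the Toric Cone Theorem description of $N_1(X)$), which forces the $v_i$ to span $N_\mathbb{R}$.

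Tensoring with $\mathbb{R}$ and using (a) that $\Sigma$ simplicial makes $X$ $\mathbb{Q}$-factorial, so $\operatorname{Cl}(X)_\mathbb{R} = \operatorname{Pic}(X)_\mathbb{R}$, and (b) the earlier proposition identifying numerical with linear equivalence on complete toric varieties, I obtain the short exact sequence of finite-dimensional real vector spaces
$$0 \to M_\mathbb{R} \to \mathbb{R}^r \to N^1(X) \to 0.$$

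Now I dualize. The perfect pairings $M_\mathbb{R} \times N_\mathbb{R} \to \mathbb{R}$, the standard inner product on $\mathbb{R}^r$, and the intersection pairing $N^1(X) \times N_1(X) \to \mathbb{R}$ (perfect by construction) identify the duals with $N_\mathbb{R}$, $\mathbb{R}^r$, and $N_1(X)$ respectively. Under these identifications the transpose of $m \mapsto (\langle m, v_i\rangle)_i$ is exactly $e_i \mapsto v_i$, while the transpose of $e_i \mapsto [D_i]$ sends a curve class $\xi \in N_1(X)$ to $\sum_i (D_i \cdot \xi)\, e_i = \sum_i (V(\langle v_i\rangle) \cdot \xi)\, e_i$. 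Since dualization is exact on finite-dimensional vector spaces, the desired short exact sequence drops out.

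The only delicate point is the identification $\operatorname{Cl}(X)_\mathbb{R} \cong N^1(X)$: this simultaneously uses the $\mathbb{Q}$-factoriality furnished by simpliciality and the coincidence of numerical with linear equivalence on complete toric varieties. Both ingredients are already in hand from the preceding results of this chapter, so the argument is essentially a formal duality once those identifications are recorded.
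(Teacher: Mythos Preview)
Your argument is correct and is precisely the standard route to this result. Note, however, that the paper does not actually supply its own proof of Proposition~\ref{seq}: it is stated with the citation to \cite[6.4.1]{cox} and used as a black box thereafter. So there is no in-paper proof to compare against; what you have written is essentially the argument one finds in the cited reference, namely dualizing the sequence $0\to M_{\mathbb{R}}\to\mathbb{R}^r\to \operatorname{Cl}(X)_{\mathbb{R}}\to 0$ after identifying $\operatorname{Cl}(X)_{\mathbb{R}}\cong N^1(X)$ via $\mathbb{Q}$-factoriality and the coincidence of numerical and linear equivalence.

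One small remark: the proposition as stated only assumes $\Sigma$ simplicial, but as you correctly flag, completeness (or at least that the $v_i$ span $N_{\mathbb{R}}$) is needed both for injectivity of $M\to\mathbb{Z}^r$ and for the identification $\operatorname{Pic}(X)_{\mathbb{R}}=N^1(X)$ via the proposition on numerical versus linear equivalence. This is implicit throughout the chapter, which works in the projective setting, so your handling of it is appropriate.
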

Proposition \ref{seq} says that $N_1(X)$ can be interpreted as the space of linear relations among the $v_i's$.

Now, consider a projective $\mathbb{Q}$-factorial toric variety $X$. Let $Z_2(X)$ and $Z^2(X)$ be the free $\mathbb{Z}$-modulo of 2-cycles and 2-cocycles respectively on $X$. We define the vector space $N_2(X):=(Z_2(X)/\equiv)\otimes_{\mathbb{Z}}\mathbb{R}$, where $``\equiv "$ denotes numerical equivalence on $Z_2(X)$: A 2-cycle $S\in Z_2(X)$ is numerically  equivalent to $0$ if $E\cdot S=0$ for every 2-cocycle $E\in N^2(X)$. 
The cone of effective 2-cycles is defined as:  $$NE_2(X):=\Big\{\Big[\displaystyle\sum_i a_iS_i\Big]\in N_2(X)\ \Big| \ a_i\geq 0\Big\},$$
\noindent where the $S_i's$ are irreducible surfaces in $X$.

 Proposition \ref{dm} gives a version of the ``Toric Cone Theorem" for the cone of effective 2-cycles. 

\subsection{Description of Contractions of Extremal Rays}\label{cont}

Throughout this section we will suppose that $X:=X_{\Sigma}$ is a projective $\mathbb{Q}$-factorial $n$-dimensional toric variety.\\

Now we discuss the connection between the faces of the nef cone and fiber spaces. We recall that if $X$ is projective 
%veja cox 7.0.6 e criterio de propriedade
then a fiber space is a morphism $f:X\rightarrow Y$ onto a normal variety with connected fibers. From the Stein factorization theorem \cite[11.5]{hartshorne} it follows that a fiber space is determined by the irreducible complete curves that are contracted by $f$.\\

Let $D$ be a nef divisor on $X$. By replacing it with a linearly equivalent divisor if necessary, we may suppose that $D$ is effective. Denote by $\Sigma_D$ the fan defined by the polytope $P_D$ and $X_{\Sigma_D}$ the toric variety defined by $\Sigma_D$.
From \cite[6.2.5]{cox} it follows that $\Sigma$ is a refinement of  $\Sigma_D$. Then we have a morphism $f_D:X\rightarrow X_{\Sigma_D}$ such that $D=f^*(A)$ for some ample divisor $A$ on $X_{\Sigma_D}$. This means that an irreducible complete  curve $C\in X$ is contracted to a point by $f_D$ if and only if $D\cdot C=0$. Therefore, $C$ is contracted by $f_D$ if and only if the class $[C]$ belongs to the face of the Mori cone defined by $\mbox{NE}(X)\cap D^{\perp}$. From the duality of the nef cone and the Mori cone, we conclude that $f_D$ only depends on the face $\tau\prec Nef(X)$ which contains $D$ in its interior.
Moreover, $\tau =f_D^*(Nef(X_{\Sigma_D}))$. 
On the other hand, if $g:X\rightarrow Y$ is a fiber space with $Y$ projective, then, up to isomorphism, $g=f_{f^*A}$ for any ample divisor $A$ on $Y$. 

We summarize our discussion in the following theorem. \\

\begin{teo}(Toric Contraction Theorem) \label{contracao} There is a bijective correspondence between the faces $\tau$ of the nef cone $Nef(X)$ and the fiber spaces $f:X\rightarrow Y$ with $Y$ projective, given by  $\tau =f^*(Nef(Y))$. Conversely, the irreducible complete curves contracted by $f$ are the curves in $\tau ^{\perp}$. Each such fiber space is a toric morphism.
\end{teo}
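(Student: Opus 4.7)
My plan is to make precise the informal discussion that directly precedes the statement, and then verify that the two maps (face $\mapsto$ fiber space, fiber space $\mapsto$ face) are mutually inverse.

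First I would construct the map from faces to fiber spaces. Given a face $\tau \prec Nef(X)$, pick $D$ in the relative interior of $\tau$; replacing $D$ by a linearly equivalent effective divisor (possible by Remark \ref{r.global}, since any nef divisor on a projective variety is in particular pseudo-effective and in the toric case linear equivalence equals numerical equivalence), form the polytope $P_D$ and its toric variety $X_{\Sigma_D}$. As recalled from \cite[6.2.5]{cox}, $\Sigma$ refines $\Sigma_D$, and the identity on $N$ induces a toric morphism $f_D: X \to X_{\Sigma_D}$ with $D = f_D^* A$ for some ample $A$ on $X_{\Sigma_D}$. Because $X_{\Sigma_D}$ is the projective toric variety of the full-dimensional polytope $P_D$ (with the degenerate-fan convention in lower dimensions handled by the $Aff(P_D)$ construction from Section 2.1), this $f_D$ is surjective; connected fibers follow from the Stein factorization theorem applied to the toric situation, since the Stein factorization of a toric morphism is again toric and an intermediate finite toric morphism between normal toric varieties with the same torus must be an isomorphism. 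Hence $f_D$ is a fiber space with $Y := X_{\Sigma_D}$ projective.

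Next I would verify that $f_D$ depends only on the face $\tau$, not on the chosen $D$. By the projection formula, an irreducible complete curve $C \subset X$ satisfies $D \cdot C = 0$ if and only if $C$ is contracted by $f_D$; equivalently, $[C] \in \mbox{NE}(X) \cap \tau^\perp$. Since by duality (Theorem \ref{teo cone} and the pairing $N^1 \times N_1 \to \mathbb{R}$) this subset of $\mbox{NE}(X)$ depends only on $\tau$, and since by Stein factorization a fiber space is determined by the curves it contracts, $f_D$ is well-defined on faces. This simultaneously establishes the ``converse'' statement: the curves contracted by $f_D$ are exactly those whose classes lie in $\tau^\perp$.

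For the reverse direction, given a fiber space $g: X \to Y$ with $Y$ projective, pick any ample $A$ on $Y$ and set $D := g^*A$. Then $D$ is nef and lies in a unique face $\tau$ of $Nef(X)$, namely the face whose relative interior contains $D$. I then claim $g \cong f_D$: both are fiber spaces contracting the same set of curves (namely those $C$ with $D \cdot C = 0$), so by Stein factorization they coincide up to isomorphism of the target. Applying this with $D = f^*A$ yields $f \cong f_D$, and it is immediate that every $f^*B$ with $B$ nef on $Y$ lies in the face $\tau$ (since $f^*B \cdot C = B \cdot f_* C = 0$ for every contracted $C$). The reverse inclusion $\tau \subset f^*(Nef(Y))$ follows because $f^*: N^1(Y) \to N^1(X)$ is injective (a consequence of $f$ being a surjective fiber space) and maps $Nef(Y)$ onto a full-dimensional subcone of $\tau$ whose relative interior contains $f^*A$; since $\tau$ is a face, this forces equality $\tau = f^*(Nef(Y))$.

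Finally, the toric structure of each $f_D$ was built into the construction, so the last assertion is automatic. The step I expect to be the most delicate is pinning down the equality $\tau = f^*(Nef(Y))$ as cones (rather than just $f^*(Nef(Y)) \subset \tau$): it requires knowing that $f^*$ is injective and that the dimension of $f^*(Nef(Y))$ matches that of $\tau$, which in turn uses the projectivity of $Y$ and the fact that numerical and linear equivalence coincide on complete toric varieties.
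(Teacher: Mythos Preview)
Your proposal is correct and follows essentially the same route as the paper: the paper does not give a separate proof but rather states the theorem as a summary of the discussion immediately preceding it, and you have faithfully formalized that discussion (nef $D \leadsto P_D \leadsto f_D$, independence of $D$ via contracted curves and Stein factorization, and the reverse direction via $g^*A$). If anything, you supply more detail than the paper does---in particular on the equality $\tau = f^*(Nef(Y))$, which the paper simply asserts; your sketch there is on the right track, and can be made airtight by noting that any $E\in\tau$ lies in $\operatorname{span}(\tau)=f^*N^1(Y)$, writing $E=f^*B$, and checking $B$ is nef by lifting curves from $Y$ to $X$.
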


Let $\tau$ be a facet of $Nef(X)$ and $f:X\rightarrow Y$ be the corresponding fiber space. Then an irreducible complete curve $C\subset X$ is contracted by $f$ if and only if the class $[C]$ belongs the  1-dimensional face $\mbox{NE}(X)\cap\tau ^{\perp}$ of the Mori cone. By \ref{teo cone} this face is of the form $\mathbb{R}_{\geq 0}V(\sigma)$ for some cone $\sigma\in\Sigma(n-1)$. In this case, we say that $f$ is a contraction of a ray of $\mbox{NE}(X)$ or simply an extremal contraction.\\

Given a contraction $f_R:X\rightarrow Y$ of a ray $R\subset \mbox{NE}(X)$ we will describe how to obtain the  fan of the variety $Y$. We give an overview of the results in \cite{reid,matsuki}.\\

 Let $V(W)$ be an invariant curve which generates the ray $R$. Then we can write the wall $W:=\mbox{cone}(v_1,...,v_{n-1})$ as intersection of two maximal cones: $\mbox{cone}(v_1,...,v_{n-1},v_n)$ and $\mbox{cone}(v_1,...,v_{n-1},v_{n+1})$. Since $\Sigma_X$ is a simplicial fan there are uniquely determined rational numbers $a_1,...,a_n$  such that $v_{n+1}=-a_1v_1-...-a_nv_n$. 
Reordering the vectors $v_i's$ if necessary we may suppose that:
$$a_i
\left\{
\begin{array}{ll}
  <0  & \text{for } 1\leq i\leq\alpha ,\\
    =0  & \text{for } \alpha +1\leq i\leq\beta ,\\
      >0  & \text{for } \beta +1\leq i\leq n+1 ;\\
\end{array}
\right.
$$
with $0\leq\alpha\leq\beta\leq n-1.$ \\

For each $j\in\{1,...,n+1\}$, set $\Sigma_j:=\mbox{cone}(v_1,...,\widehat{v_j},...,v_{n+1})$ and $\Sigma_W:=\mbox{cone}(v_1,...,v_{n+1})$. One can show that $\Sigma_W=\displaystyle\cup_{j=1}^{\alpha}\Sigma_j=\displaystyle\cup_{j=\beta +1}^{n+1}\Sigma_j$.

The fan $\Sigma_Y$ is determined by the collection of maximal cones formed by the maximal cones of $\Sigma_X$ which contain no wall $W\in\Sigma_X$ with $[V(W)]\in R$ and the cones $\Sigma_W$ for every wall $W\in\Sigma_X$ such that $[V(W)]\in R$. This fan is degenerate if and only if $\alpha=0$.\\

\begin{pr}\label{fibrapicard} Let $f_R:X\rightarrow Y$ be the contraction of a ray $R$ of the Mori cone $\mbox{NE}(X)$. Denote by $\mbox{Exc}(f_R)$ the exceptional locus of $f_R$. This morphism is classified into one of the following three types:

\begin{enumerate}
\item Contraction of Fibering Type (or 
Mori Fiber Space): dim $Y<$ dim $X$.

In this case, we have

$$
\begin{array}{l}
  \alpha=0,  \\
\beta=\mbox{dim } Y. \\
\end{array}
$$

The variety $Y$ is $\mathbb{Q}$-factorial (equivalently the non degenerate fan corresponding to $\Sigma_Y$ is simplicial).

\item Divisorial Contraction: The morphism $f_R$ is birational and codim $(Exc(f_R))=\alpha=1$. In this case $Y$ is $\mathbb{Q}$-factorial and $Exc(f_R)$ is the prime $T$-invariant divisor $V(v_1)$.

\item Contraction of Flipping type (or small contraction): The morphism $f_R$ is birational and codim $Exc(f_R)=\alpha\geq 2$. In this case $Y$ is necessarily non $\mathbb{Q}$-factorial. The exceptional locus is the irreducible $T$-invariant variety $V(\mbox{cone}(v_1,...,v_{\alpha}))$.

In all cases, $Y$ is projective, the exceptional locus is an invariant irreducible variety and $f_R$ restricted to $Exc(f_R)$ is a flat equivariant morphism. 
There exists a toric $\mathbb{Q}$-factorial Fano variety $G$ with Picard number one such that $(f_R)^{-1}(p)_{red}\simeq G$ for every $p\in f_R(Exc(f_R))$.  
\end{enumerate} 
\end{pr}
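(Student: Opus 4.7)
The plan is to read off the classification from the wall relation and the explicit recipe for $\Sigma_Y$ given just before the statement. Fix a wall $W=\mbox{cone}(v_1,\ldots,v_{n-1})\in\Sigma(n-1)$ with $[V(W)]\in R$ and write the wall relation as $\sum_{i=1}^{n+1} b_i v_i=0$ with $b_i=a_i$ for $i\le n$ and $b_{n+1}=1$; the reordering puts the negative coefficients first, then the zeros, then the positives. I will first compute, for each $i$, the intersection $D_i\cdot V(W)$ using Proposition~\ref{inter}: a short computation gives $D_i\cdot V(W)=b_i\cdot c_i$ with $c_i>0$ (and $=0$ when $i\notin\{1,\ldots,n+1\}$), so by the toric Kleiman criterion the ray $R$ is ``one-sided'' in exactly the way dictated by the sign pattern $(\alpha,\beta)$. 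This simultaneously shows that $\alpha,\beta$ are intrinsic invariants of $R$, not of the particular wall $W$ chosen.

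Next I analyze $\Sigma_Y$ case by case. If $\alpha=0$ then the relation $\sum_{i=\beta+1}^{n+1}b_iv_i=0$ with all $b_i>0$ forces $\Sigma_W=\mbox{cone}(v_1,\ldots,v_{n+1})$ to contain the linear subspace $U:=\mbox{span}(v_{\beta+1},\ldots,v_{n+1})$ of dimension $n-\beta$, so $\Sigma_Y$ is degenerate with lineality space $U$ and $\dim Y=\beta<n$; the quotient cones $\Sigma_j/U$ for $j>\beta$ together with the images of the untouched maximal cones are still simplicial, so $Y$ is $\mathbb Q$-factorial. If $\alpha=1$, rewriting the relation gives $v_1=\frac{1}{-b_1}\sum_{i=\beta+1}^{n+1} b_i v_i$ with positive coefficients, so $v_1$ lies in the relative interior of $\mbox{cone}(v_{\beta+1},\ldots,v_{n+1})$; therefore $\Sigma_W$ simplifies to this cone and the ray $\varrho_1$ disappears in $\Sigma_Y$. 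The fan $\Sigma_Y$ is still non-degenerate and simplicial, $f_R$ is birational, and the only invariant divisor not appearing on $Y$ is $V(v_1)$, which is thus the exceptional divisor. If $\alpha\ge 2$, the two subcones $\mbox{cone}(v_1,\ldots,v_\alpha)$ and $\mbox{cone}(v_{\beta+1},\ldots,v_{n+1})$ share an interior ray by the relation, so $\Sigma_W$ is non-simplicial and $Y$ fails to be $\mathbb Q$-factorial; however every ray of $\Sigma_X$ survives in $\Sigma_Y$, so $f_R$ contracts no divisor. The only cone of $\Sigma_X$ whose image becomes interior to a larger cone of $\Sigma_Y$ (hence whose associated orbit closure is contracted) is $\mbox{cone}(v_1,\ldots,v_\alpha)$, identifying the exceptional locus as $V(\mbox{cone}(v_1,\ldots,v_\alpha))$, of codimension $\alpha\ge 2$.

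The identification of the general fiber is then uniform: in each case the fibre $F$ over the distinguished point of the cone of $\Sigma_Y$ containing $\overline{\Sigma_W}$ is, by Proposition~\ref{pr1} and the description of invariant subvarieties after~\ref{chow}, a toric variety whose fan lies in the sublattice $N_W:=\mbox{span}(v_1,\ldots,v_{n+1})\cap N$ modulo the image of $v_1,\ldots,v_\alpha$ in Cases 2 and 3 (and the whole lineality $U$ in Case 1). Its rays are the images $\bar v_{\beta+1},\ldots,\bar v_{n+1}$ and its maximal cones are $\mbox{cone}(\bar v_{\beta+1},\ldots,\widehat{\bar v_j},\ldots,\bar v_{n+1})$, i.e.\ the fan of a simplicial complete toric variety with $\dim F+1$ rays satisfying the single positive relation $\sum_{i=\beta+1}^{n+1}b_i \bar v_i=0$. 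Such a fan has Picard number one (by Proposition~\ref{seq}) and its $-K$ is ample because the wall relation among its rays has strictly positive coefficients; hence $F$ is a $\mathbb Q$-factorial Fano toric variety with $\rho(F)=1$. That $f_R|_{\mbox{Exc}(f_R)}$ is an equivariant flat morphism follows from the equidimensionality just established together with the fact that a dominant toric morphism between toric varieties with connected, irreducible, and reduced fibers of constant dimension is flat (Cohen--Macaulay source and target, equidimensional fibers).

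The main subtlety I expect is the flatness/irreducibility of the general fiber in Cases 2 and 3: while the combinatorics of $\Sigma_Y$ is transparent, one has to check that every fiber of $f_R$ over a point of $f_R(\mbox{Exc}(f_R))$ has the same reduced structure as the generic one, so that the abstract isomorphism type $G$ is well-defined. I would handle this by showing that $f_R$ restricted to the exceptional locus factors as a toric morphism whose target is a single orbit closure $V(\Sigma_W)\subset Y$ and whose fan on the source is obtained from $\Sigma_W$ by the Orbit--Cone correspondence, so the equivariant structure forces all fibres to be $G$ up to isomorphism.
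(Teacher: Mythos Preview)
The paper does not supply its own proof of this proposition: it is stated as part of an ``overview of the results in \cite{reid,matsuki}'' and left unproved, so there is nothing to compare against directly. Your sketch follows the standard argument from those references---reading the classification off the wall relation, building $\Sigma_Y$ from the cones $\Sigma_W$, and identifying the fiber fan as a complete simplicial fan with $\dim F+1$ rays in a single positive relation---and is essentially correct as an outline.

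One point worth tightening: you analyze a single wall $W$ with $[V(W)]\in R$, but the description of $\Sigma_Y$ (and of $\mbox{Exc}(f_R)$) involves \emph{all} walls in the class $R$. You correctly observe that the intersection numbers $D_i\cdot V(W)$ depend only on $R$, so the sign pattern $(\alpha,\beta)$ is intrinsic; but to conclude that $\mbox{Exc}(f_R)=V(\mbox{cone}(v_1,\ldots,v_\alpha))$ is a single irreducible invariant subvariety you also need that the ``negative'' face $\mbox{cone}(v_1,\ldots,v_\alpha)$ is the same cone for every wall in $R$ (equivalently, that all such walls share this face). This is the content of Reid's analysis of the ``$J_-$'' set and is what guarantees irreducibility of the exceptional locus and constancy of the reduced fiber $G$. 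Your final paragraph gestures at this but does not quite pin it down.
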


\begin{rk} \em{ When $f_R:X\rightarrow Y$ is a contraction of fibering type, that is, $\alpha=0$, we can use the linear relation $v_{n+1}=-a_1v_1-...-a_nv_n$ and Proposition \ref{inter} to show that $-K_X$ is $f_R$-ample, that is, $-K_X\cdot R>0$.} 
\end{rk}

\begin{pr} If $f_R:X\rightarrow Y$ is a contraction of flipping type, then there exists a birational morphism $f_R^+:X^+\rightarrow Y$ called the flip of $f_R$ such that:
\begin{enumerate}
\item $X^+$ is projective and $\mathbb{Q}$-factorial.
\item $f_R^+$ is also a contraction of flipping type.
\item $X$ and $X^+$ are isomorphic in codimension 1 and so we have an isomorphism $N^1(X)\cong N^1(X^+)$ defined by taking strict transforms of divisors. Hence, this induces an isomorphism between their dual spaces $N_1(X)\cong N_1(X^+)$.
\item $f_R^+$ is the contraction of the extremal ray $-R$ of $\mbox{NE}(X^+)$, where $-R$ is the image of $R$ by the isomorphism $N_1(X)\cong N_1(X^+)$.
\item The divisor $-K_X$ is $f_R$-ample if and only if $K_{X^+}$ is $f_R^+$-ample. 
\end{enumerate}
The maximal cones of the fan $\Sigma_{X^+}$ are: $$\Sigma_{X^+}(n):=\Sigma_X(n)\setminus\{\Sigma_W\mid [V(W)]\in R\}\cup\{\Sigma_j\mid [V(W)]\in R, j=1,...,\alpha\}.$$ 
\end{pr}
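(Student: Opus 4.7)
The plan is to define $\Sigma_{X^+}$ directly by the displayed formula and then verify the five properties, most of which are straightforward combinatorial consequences of the linear relation $v_{n+1}=-a_1v_1-\ldots-a_nv_n$ governing each flipping wall; only projectivity of $X^+$ requires substantial work. I follow the strategy of Reid \cite{reid}.

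Step 1 (Fan and $\mathbb{Q}$-factoriality). I would first check that the collection
\[
\Sigma_{X^+}(n)=\bigl(\Sigma_X(n)\setminus\{\Sigma_W\mid [V(W)]\in R\}\bigr)\cup\{\Sigma_j\mid [V(W)]\in R,\ 1\le j\le\alpha\},
\]
together with all faces, is a fan. The key combinatorial input is the identity $\Sigma_W=\bigcup_{j=1}^{\alpha}\Sigma_j=\bigcup_{j=\beta+1}^{n+1}\Sigma_j$ from Section \ref{cont}: the cones $\Sigma_j$ with $j\le\alpha$ re-triangulate exactly the support cone $\Sigma_W$ that was previously triangulated by $\Sigma_n,\Sigma_{n+1}$. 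Hence $|\Sigma_{X^+}|=|\Sigma_X|$ and the face-intersection axiom holds. Each $\Sigma_j$ is generated by $n$ vectors taken from $\{v_1,\ldots,v_{n+1}\}$, and since $v_1,\ldots,v_n$ are $\mathbb{R}$-independent (as $\Sigma_X$ is simplicial), linear independence of each $\Sigma_j$ follows from the relation. So $\Sigma_{X^+}$ is simplicial and $X^+$ is $\mathbb{Q}$-factorial.

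Step 2 (Isomorphism in codimension 1 and the morphism to $Y$). By construction $\Sigma_{X^+}(1)=\Sigma_X(1)$, so the invariant prime divisors of $X$ and $X^+$ are in canonical bijection; taking strict transforms gives an isomorphism $N^1(X)\cong N^1(X^+)$, and by duality $N_1(X)\cong N_1(X^+)$. Via Proposition \ref{seq} both spaces are identified with the space of linear relations among $v_1,\ldots,v_r$, so the isomorphism is canonical. Since $\{\Sigma_j:1\le j\le\alpha\}$ covers $\Sigma_W$, the identity of $N$ is compatible with $\Sigma_{X^+}$ and $\Sigma_Y$, producing a toric morphism $f_R^+:X^+\to Y$.

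Step 3 (Flipping type, ray $-R$, canonical sign). The exceptional locus of $f_R^+$ is $V(\mathrm{cone}(v_{\beta+1},\ldots,v_{n+1}))$, of codimension $n+1-\beta\ge 2$, so $f_R^+$ is small. The new walls inside $\Sigma_W$ have the form $W^+=\mathrm{cone}(v_1,\ldots,\widehat{v_i},\ldots,\widehat{v_j},\ldots,v_{n+1})$ with $i,j\le\alpha$; rewriting the relation as $-v_i\cdot a_i=\sum_{k\ne i}a_kv_k$ and applying Proposition \ref{inter} shows that for any invariant divisor $D=\sum b_kD_k$ the intersection numbers $D\cdot V(W^+)$ differ by a positive rational factor from $-D\cdot V(W)$. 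This simultaneously gives $[V(W^+)]=c\cdot(-[V(W)])$ in $N_1(X)\cong N_1(X^+)$ with $c>0$ (so the contracted ray is $-R$), implies $\alpha^+\ge 2$ for the analogous data on the $X^+$ side (so $f_R^+$ is again flipping by Proposition \ref{fibrapicard}), and yields the equivalence $-K_X\cdot R>0\iff K_{X^+}\cdot(-R)>0$ in (5).

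Step 4 (Projectivity — the main obstacle). The hardest part is showing $X^+$ is projective. My plan is to build a strictly convex support function $\varphi^+$ on $\Sigma_{X^+}$ by modifying, on each affected $\Sigma_W$ only, a strictly convex support function $\varphi$ given by projectivity of $X$. Because $\Sigma_{X^+}$ agrees with $\Sigma_X$ away from the walls $W$ with $[V(W)]\in R$, it suffices to check strict convexity across each new wall $W^+\subset\Sigma_W$. By the intersection calculation in Step 3, the convexity defect of $\varphi^+$ across $W^+$ is a positive multiple of the negative of the defect of $\varphi$ across $W$; since $\varphi$ was strictly convex across $W$ and $\varphi$ can be replaced by $-\varphi$ locally on $\Sigma_W$ after normalizing on the fixed boundary rays (the rays $v_i$ with $\alpha<i\le\beta$ not appearing in the relation), one obtains strict convexity across every $W^+$ simultaneously. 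Hence $\Sigma_{X^+}$ is polytopal and $X^+$ is projective, completing the verification of (1)–(5).
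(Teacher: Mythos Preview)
The paper does not supply its own proof of this proposition: it is stated in the preliminaries as part of the ``overview of the results in \cite{reid,matsuki}'' and no argument is given. So there is nothing to compare against, and your attempt must stand on its own.

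Steps 1--3 are essentially correct in outline, if somewhat sketchy. (In Step 1 you should also say a word about why, when several walls $W$ lie in the same class $R$, the retriangulations of the various $\Sigma_W$ glue compatibly along their shared faces; this is true but not automatic.)

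Step 4, however, has a genuine gap. Your proposed construction ``replace $\varphi$ by $-\varphi$ locally on $\Sigma_W$ after normalizing on the fixed boundary rays'' does not define a piecewise linear function on $N_{\mathbb{R}}$. A support function must be continuous across the boundary $\partial\Sigma_W$, and it must remain strictly convex across the walls of $\Sigma_{X^+}$ that lie on $\partial\Sigma_W$ (these separate a new cone $\Sigma_j$, $j\le\alpha$, from an unchanged cone of $\Sigma_X$ outside $\Sigma_W$). Flipping the sign of $\varphi$ inside $\Sigma_W$ destroys continuity on $\partial\Sigma_W$, and ``normalizing on the rays $v_i$ with $\alpha<i\le\beta$'' does not repair this: those rays do not span $\partial\Sigma_W$, and in any case adjusting values on rays alone does not force convexity across the boundary walls. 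Your observation that the convexity defect across $W^+$ has the opposite sign to that across $W$ is correct and useful, but it only tells you that the \emph{same} $\varphi$, reinterpreted on the new fan, is strictly concave on the interior of $\Sigma_W$; it does not hand you a new strictly convex function.

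A clean way to finish is to avoid support functions and argue with divisors directly. Take $L$ ample on $X$ and $A$ ample on $Y$ (which is projective as the image of projective $X$). Let $L^+$ be the strict transform of $L$ on $X^+$. Your Step~3 computation gives $(-L^+)\cdot(-R)=L\cdot R>0$. Since $X^+$ has only finitely many invariant curves, and every invariant curve not in $-R$ maps to a curve in $Y$, the toric Kleiman criterion (Theorem \ref{kleiman}) shows that $(f_R^+)^*A-\varepsilon L^+$ is ample on $X^+$ for $0<\varepsilon\ll1$: on curves in $-R$ the first term vanishes and the second is positive, while on all other invariant curves the first term is positive and dominates for small $\varepsilon$. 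This gives projectivity of $X^+$ without constructing $\varphi^+$ by hand.
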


\section{MMP for Toric Varieties}
Let $X$ be a projective $\mathbb{Q}$-factorial variety with klt singularities.
The aim of the Minimal Model Program (MMP for short) is to run a succession of divisorial contractions and flips on $X$  in order to achieve a variety 
$X'$ that is birationally equivalent to $X$ satisfying one of the following:
\begin{enumerate}
\item $K_{X'}$ is nef, or
\item $X'$ admits a structure of Mori 
fiber space (i.e., there exists an extremal contraction $f:X'\rightarrow Y$ such that $-K_{X'}$ is $f$-ample).
\end{enumerate}
In the toric case, we do not need require any condition on singularities since every projective $\mathbb{Q}$-factorial toric variety is klt.

\begin{say} \em{ Now, we recall the steps of the MMP in the toric case. 
We start with a projective $\mathbb{Q}$-factorial toric variety $X$. If $K_X$ is not nef, then there is an extremal ray $R$ of the Mori cone of $X$ such that $K_X\cdot R<0$. We consider the contraction $f_R:X\rightarrow Y$ of the ray $R$. If $f_R$ is a Mori fiber space, then we stop. If $f_R$ is a divisorial contraction we replace $X$ by $Y$ and repeat the process. Finaly, if $f_R$ is a small contraction  we replace $X$ by $X^+$ and repeat the process.
   
 For toric varieties it was proved in \cite{reid} that this process stops. 
Since a projective toric variety is birationally equivalent to $\mathbb{P}^n$ its canonical class can never be 
made nef,  so the MMP for projective toric varieties always ends with a Mori fiber space. 

\section{MMP with scaling} \label{mmp}
In the general case termination of flips is still an open problem for dimension bigger than three. However, an instance of the MMP, called MMP with scaling, was proved in \cite{scaling} for arbitrary dimension.}	 
\end{say}
Next, we recall the steps of the MMP with scaling, established in \cite{scaling}. See also \cite[3.8]{carolina}.\\

\begin{say} \em{ We start with a polarized toric variety $(X,L)$, where $X$ is projective, $\mathbb{Q}$-factorial. Here $L$ is an ample $\mathbb{Q}$-divisor on $X$. Since $X$ is  a $\mathbb{Q}$-factorial projective toric variety, $X$ is klt (see \cite[14.3.2]{matsuki}). Moreover, $-K_X$ is equal to $\displaystyle\sum_{i\in\Sigma(1)}D_i$ and therefore this divisor is always big (see for instance \cite[8.2.3]{cox}).

 Given a big $\mathbb{R}$-divisor $E$ on $X$ set $\sigma(E):=sup\{s\in \mathbb{R}_{\geq 0} \mid [E+sK_X]\in Eff(X)\}$. When $E$ is a $\mathbb{Q}$-divisor the number $\sigma(E)$ is rational (see for instance \cite[5.2]{carolina}. In addition, we consider the nef threshold $$\lambda(E,X):=sup\Big\{s\in \mathbb{R}_{\geq 0} \ \Big| \ [E+sK_X]\in Nef(X)\Big\}\leq \sigma(E).$$
This is defined provided that $[E+sK_X]$ is nef for some $s\geq 0$.
Moreover, if $E$ is a $\mathbb{Q}$-divisor, then by the Rationality Theorem $\lambda(E,X)$ is a rational number (see for instance \cite[1.5.5]{mauro}). Note that $\lambda(E,X)=\sigma(E)$ if and only if the class $[E+\lambda(E,X)K_X]$ is not big.

Set $(X_1,L_1)=(X,L)$, $\lambda_1=\lambda(L,X)$ and $\sigma=\sigma(L).$
 We will define inductively a finite sequence of pairs $(X_i,L_i)$, $1\leq i\leq k$ and rational numbers $0<\lambda_1<...<\lambda_k=\sigma$ such that the following holds. 

\begin{itemize}
\item For each $0\leq i\leq k$, the variety $X_i$ will be projective, $\mathbb{Q}$-factorial. 

\item  There are birational maps $\varphi_i:X\dashrightarrow X_i$ which are compositions of divisorial contractions and flips. 
\item  $L_i=(\varphi_i)_*L$ and $[L_i+\lambda_{i}K_{X_i}]\in\partial Nef(X_i)$. 
\end{itemize}
 Suppose we have constructed $(X_i,L_i)$ and $\lambda_{i}$. Since $[L_i+\lambda_iK_{X_{i}}]\in\partial Nef(X_i)$, there exists an extremal ray $R_i\subset \overline{NE}(X_i)$ such that  $(L_i+\lambda_iK_{X_i})\cdot R_i=0$ and $K_{X_i}\cdot R_i<0$ .\newline  

 Let $f:X_i\rightarrow Y$ be the contraction of $R_i$. 

As in \ref{fibrapicard}, we have three possibilities:

\begin{enumerate} 
 \item If dim $Y<$  dim $X_i$ then $\lambda_i=\sigma$, $Y$ is $\mathbb{Q}$-factorial, and $f$ is a Mori fiber space. In this case we stop.
 
 \item If $f:X_i\rightarrow Y$ is a divisorial contraction, then $Y$ is $\mathbb{Q}$-factorial. We set $X_{i+1}=Y$, $\varphi_{i+1}=f\circ\varphi_i:X\dashrightarrow X_{i+1}$, and $L_{i+1}=f_*L_i=(\varphi_{i+1})_*L$.
 Notice that $$L_i+\lambda_iK_{X_i}=f^*(L_{i+1}+\lambda_iK_{X_{i+1}}).$$ 
 This implies that $L_{i+1}+\lambda_iK_{X_{i+1}}$ is  nef since so is $L_{i}+\lambda_iK_{X_{i}}$. Thus $$\lambda_i\leq\lambda_{i+1}:=\lambda(L_{i+1},X_{i+1})\leq\sigma.$$
 
\item If $f:X_i\rightarrow Y$ is a small contraction, then $Y$ is not $\mathbb{Q}$-factorial. (In fact, $K_Y$ is not a $\mathbb{Q}$-Cartier divisor. Otherwise, $K_{X_i}=f^*(K_Y)$ and therefore $K_{X_i}\cdot R_i=0$). Consider the flip diagram:
$$\xymatrix{
X_i \ar[rd]_f \ar@{-->}[rr]^{\psi} & & X_i^+ \ar[ld]^{f^+} \\ 
& Y &  
}$$ 

\end{enumerate}

\noindent where $\psi$ is the associated flip, $X_i^+$ is $\mathbb{Q}$-factorial and
$f^+$ is the contraction of a $K_{X_i^+}$-positive extremal ray of $\overline{NE}(X_i^+)$.
We set $X_{i+1}=X_i^+$, $\varphi_{i+1}~=\psi\circ\varphi_i:~X\dashrightarrow X_{i+1}$ and $L_{i+1}=\psi_*L_i=(\varphi_{i+1})_*L$.
Since $(L_i+\lambda_iK_X{_i})\cdot R_i=0$, there exists a $\mathbb{Q}$-Cartier $\mathbb{Q}$-divisor $D_Y$ on $Y$ such that $L_i+\lambda_iK_{X_i}=f^*D_Y$. Then $L_{i+1}+\lambda_iK_{X_{i+1}}=(f^+)^*D_Y$. By hypothesis $L_i+\lambda_iK_X{_i}$ is nef. Thus $D_Y$ is nef and so is $L_{i+1}+\lambda_iK_{X_{i+1}}$. Therefore $$\lambda_i\leq\lambda_{i+1}:=\lambda(L_{i+1},X_{i+1})\leq\sigma.$$}
\end{say}

\begin{rk} \em{ If $[L_i+\lambda_iK_{X_i}]$ is in the relative interior of a facet of $Nef(X_i)$, then there is only one extremal ray $R_i$ satisfying $(L_i+\lambda_iK_{X_i})\cdot R_i=0$. 
Moreover, $f:X_i\rightarrow Y$ is the morphism associated to the complete linear system $\mid m(L_i+\lambda_iK_{X_i})\mid$ for $m$ sufficiently large and divisible.}
\end{rk}

\section{GKZ Decomposition}\label{gkzdecomposition}

In this section we describe the GKZ decomposition for a 
complete toric variety, and we recall some of its properties.
We refer to \cite{hering_et_all_GKZ} and \cite{mustata} for details.

Let $X=X_{\Sigma}$ be a complete toric variety and $D_1,...,D_r$ be the $T$-invariant prime divisors. Consider a possibly degenerate complete fan $\Delta$ and $I$ a subset of $\Sigma(1)=\{v_1,...,v_r\}$ such that:
\begin{enumerate}
\item Every cone of $\Delta$ is generated by rays in $\Sigma(1)\setminus I$.
 \item $X_{\Delta}$ is projective.
\end{enumerate}

There is a rational map $f:X\dashrightarrow X_{\Delta}$ induced by the natural projection $\pi:N\rightarrow N/W\cap N$, where $W$ is the maximal linear space contained in every cone of $\Delta$. Since $\Delta$ is complete, $f$ is defined on the toric variety $U=\displaystyle\bigcup_{i=1}^rU_{\langle v_i\rangle}$. The codimension of the complement of $U$ in $X$ is at least 2. Therefore we have a pull-back map $f^*$ which takes $\mathbb{Q}$-Cartier divisors on $X_{\Delta}$ to $\mathbb{Q}$-Weil divisors on $X$.

Let $A$  be an invariant nef  $\mathbb{Q}$-Cartier divisor on $X_{\Delta}$ and $\lambda=(\lambda_i)_i\in\mathbb{R}_{\geq 0}^{|I|}$. There is an injective linear map 

$$
\begin{array}{crcl}
 \Phi_{\Delta,I}: \  & Nef(X_{\Delta})\times\mathbb{R}_{\geq 0}^{|I|} & \rightarrow & \operatorname{Cl}(X)_{\mathbb{R}} \\
 & A\times\lambda & \mapsto & f^*(A)+\displaystyle\sum_{i\in I}\lambda_iD_i. \\
\end{array}
$$

The image of this map is a cone denoted by $GKZ(\Delta,I)$.\\

Let $\mathcal{S}$ be the set of such pairs $(\Delta,I)$.

The set $GKZ(X):=\{GKZ(\Delta,I)\mid (\Delta,I)\in\mathcal{S}\}$ is a fan supported on the effective cone $Eff(X)$ of $X$ satisfying the following proprieties:

\begin{enumerate}

\item The cone $GKZ(\Delta,I)$ is a maximal cone in $GKZ(X)$ 
if and only if $\Delta$ is non degenerate and simplicial, and 
$I= \Sigma(1)\setminus \Delta(1)$. Thus, each maximal cone $GKZ(\Delta,I)$ of $GKZ(X)$ determines a unique variety $X_{\Delta}$.

\item The cone $GKZ(\Delta',I')$ is a face of $GKZ(\Delta,I)$ if and only if $\Delta$ refines $\Delta'$ and $I'$ is contained in $I$.
\item If $[D]$ and $[D']$ belong to the relative interior of the cone $GKZ(\Delta,I)$, then the toric varieties associated to $D$ and $D'$, that is, the varieties corresponding to $P_D$ and $P_{D'}$, are isomorphic.
\item If $D=\displaystyle\sum_{i=1}^{r}a_iD_i$ is an effective $\mathbb{Q}$-divisor on $X$, then $[D]$ lies in the interior of the cone $GKZ(\Delta,I)$ if and only if  the normal fan $\Delta_D$ of $P_D$ is equal to $\Delta$ and $I=\{v_i\mid \langle u,v_i\rangle >-a_i \textit{ for every }u\in P_D\}$. 
\end{enumerate}

The next proposition describes the walls of the fan $GKZ(X)$.
\begin{pr} \label{gkz} Let $X=X_{\Sigma}$ be a complete toric variety. Suppose that $GKZ(\Delta,I)$ is a maximal cone in $GKZ(X)$. Then: 

\begin{enumerate}
\item If $f:X_{\Delta}\rightarrow X_{\Delta'}$ is a divisorial contraction with exceptional divisor corresponding to the primitive vector $v\in\Delta(1)$ then the cone $GKZ(\Delta',I\cup\{v\})$ is a maximal cone in $GKZ(X)$ which intersects $GKZ(\Delta,I)$ along the wall $GKZ(\Delta',I)$.

\item Suppose $v\in I$, and let $\Delta(v)$ be the fan obtained by star subdivision of $\Delta$. Then $GKZ(\Delta(v),I\setminus\{v\})$ is a maximal cone in $GKZ(X)$ which intersects $GKZ(\Delta,I)$ along the wall $GKZ(\Delta,I\setminus\{v\})$. 

\item If $f:X_{\Delta}\rightarrow X_{\Delta'}$ is a small contraction and  $f^+:X_{\Delta^+}\rightarrow X_{\Delta'}$ is the associated flip, then $GKZ(\Delta^+,I)$ is a maximal cone in $GKZ(X)$ whose intersection with $GKZ(\Delta,I)$ is the wall $GKZ(\Delta',I)$.

\item If $f:X_{\Delta}\rightarrow X_{\Delta'}$ is a contraction of fibering type, then $GKZ(\Delta',I)$ is an exterior wall of $GKZ(X)$.
\end{enumerate}  
Moreover, every interior wall in $GKZ(X)$ arises as in 1,2 or 3, and every exterior wall appears as in 4.\\
\end{pr}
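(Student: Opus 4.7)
The strategy is to enumerate all codimension-one faces of the maximal cone $GKZ(\Delta, I)$ by exploiting the product structure of $\Phi_{\Delta, I}$, and then match each such wall against the four listed cases. Since $(\Delta, I)$ is maximal, $\Delta$ is non-degenerate simplicial with $I = \Sigma(1) \setminus \Delta(1)$, giving $\dim GKZ(\Delta, I) = \rho(X_\Delta) + |I| = r - n$ by Proposition \ref{seq}. Because $\Phi_{\Delta, I}$ is injective and linear on the product cone $Nef(X_\Delta) \times \mathbb{R}^{|I|}_{\geq 0}$, every wall of $GKZ(\Delta, I)$ is the image of either (a) $Nef(X_\Delta) \times \{\lambda_v = 0\}$ for some $v \in I$, or (b) $F \times \mathbb{R}^{|I|}_{\geq 0}$ for some facet $F$ of $Nef(X_\Delta)$.

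First I would handle case (a): the star subdivision $\Delta(v)$ is a non-degenerate simplicial fan with $\Delta(v)(1) = \Delta(1) \cup \{v\}$, so $GKZ(\Delta(v), I \setminus \{v\})$ is a maximal cone. On the shared wall $GKZ(\Delta, I \setminus \{v\})$ the refinement morphism $X_{\Delta(v)} \to X_\Delta$ pulls $Nef(X_\Delta)$ into $Nef(X_{\Delta(v)})$, so the two $\Phi$ maps have the same image there; this is Case 2. For case (b), the Toric Contraction Theorem \ref{contracao} identifies $F$ with an extremal contraction $f: X_\Delta \to X_{\Delta'}$ onto a projective toric variety, which by Proposition \ref{fibrapicard} is of divisorial, small, or fibering type, giving Cases 1, 3, and 4 respectively. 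For a divisorial contraction with exceptional $D_v$, we have $\Delta'(1) = \Delta(1) \setminus \{v\}$, and $GKZ(\Delta', I \cup \{v\})$ is a maximal cone of dimension $\rho(X_{\Delta'}) + |I| + 1 = r - n$ meeting $GKZ(\Delta, I)$ along $GKZ(\Delta', I)$. For a small contraction, the flip $f^+: X_{\Delta^+} \to X_{\Delta'}$ satisfies $\Delta^+(1) = \Delta(1)$, so $GKZ(\Delta^+, I)$ is the adjacent maximal cone, again meeting $GKZ(\Delta, I)$ along $GKZ(\Delta', I)$. For a fibering contraction, crossing the facet $F$ in the direction $-K_X$ produces classes of the form $L + sK_X$ with $s$ beyond the nef threshold, which fail to be big precisely because $f$ is fibering; hence the wall lies on the boundary of $Eff(X)$ and is exterior.

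The main technical obstacle will be verifying, in Cases 1 and 3, that the two adjacent maximal cones really agree on the entire claimed wall inside $Cl(X)_{\mathbb{R}}$, and not merely that the formal descriptions match. For this I would use the commutativity of the natural toric (and rational) maps between $X$, $X_\Delta$, $X_{\Delta^{+}}$, and $X_{\Delta'}$: in the divisorial case the coordinate $\lambda_v$ of the new maximal cone vanishes along $GKZ(\Delta', I)$ while on the old side the pullback of $D_v$ to $X_{\Delta'}$ is trivial, so both restrict to $f^*Nef(X_{\Delta'}) + \sum_{i \in I} \lambda_i D_i$; in the small case $\Delta$ and $\Delta^+$ share the same set of rays, so the coordinates $\lambda_i$ match on the nose. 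Finally, the converse statements --- that every interior wall arises in Cases 1, 2, or 3, and every exterior wall arises in Case 4 --- follow automatically because (a) and (b) exhaust all facets of the product cone $Nef(X_\Delta) \times \mathbb{R}^{|I|}_{\geq 0}$, and the classification in Proposition \ref{fibrapicard} exhausts the types of extremal contractions producing facets of $Nef(X_\Delta)$.
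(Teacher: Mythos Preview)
Your approach is essentially the same as the paper's: enumerate the facets of $GKZ(\Delta,I)$ via the product structure $Nef(X_\Delta)\times\mathbb{R}_{\geq 0}^{|I|}$, then classify each using Theorem~\ref{contracao} and Proposition~\ref{fibrapicard}. Cases 1--3 and the ``moreover'' clause are handled correctly, and your remark that Case~2 is just Case~1 read from the other side is exactly how the paper argues it.

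The one genuine gap is your treatment of Case~4. Showing that the ray through $[L]$ and $[-K_X]$ exits $Eff(X)$ upon crossing the wall only tells you that \emph{one} line segment leaves the effective cone; it does not by itself prove that the entire facet $GKZ(\Delta',I)$ lies on $\partial\,Eff(X)$, nor that no other maximal cone sits on the far side. (Indeed, $-K_X$ need not be transverse to this wall, and points in the relative interior of $GKZ(\Delta',I)$ can still be big when $I\neq\emptyset$, since the $\lambda_iD_i$ contributions may push the class into the interior of $Eff(X)$ even though $f^*A$ alone is not big.) The paper instead argues by contradiction: if some maximal cone $GKZ(\Delta'',I'')$ were adjacent along $GKZ(\Delta',I)$, then either $I''\neq I$, which via Case~2 would force $\Delta=\Delta'(v)$ and make $f$ divisorial, or $I''=I$ and the contraction $X_{\Delta''}\to X_{\Delta'}$ is again of fibering type (else Cases 1--3 applied from the $\Delta''$ side would contradict $f$ being fibering), forcing $\Delta''=\Delta$. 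You should replace your bigness argument with this exclusion argument, or else prove directly that every class in the relative interior of $GKZ(\Delta',I)$ lies on $\partial\,Eff(X)$, which requires more care than the $-K_X$ direction alone provides.
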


\textit{Proof}. This result was proved in \cite{mustata}. For the reader's convenience we give the proof here. First of all, note that by Theorem \ref{contracao} a cone $GKZ(\Delta ',I')$ is a facet of $GKZ(\Delta,I)$ if and only if one of the following conditions holds:
\begin{enumerate}
\item $\Delta=\Delta'$ and $|I\setminus I'|=1$.
\item $I=I'$ and there is a contraction $X_{\Delta}\rightarrow X_{\Delta '}$ of a ray of $\mbox{NE}(X_{\Delta})$.
\end{enumerate}

If $f:X_{\Delta}\rightarrow X_{\Delta '}$ is a divisorial contraction then, as described in  Section \ref{cont}, $X_{\Delta'}$ is a $\mathbb{Q}$-factorial projective variety and $v$ is the only primitive vector in $\Delta (1)\setminus\Delta '(1)$. Thus, $GKZ(\Delta',I\cup\{v\})$ is a maximal cone in $GKZ(X)$, which intersects $GKZ(\Delta,I)$ along the wall $GKZ(\Delta',I)$.

The second claim follows from the first one since we have the divisorial contraction $X_{\Delta(v)}\rightarrow X_{\Delta}$ and the cone $GKZ(\Delta(v),I\setminus\{v\})$ is maximal.

To prove $3.$, note that  the cone $GKZ(\Delta^+,I)$ is maximal in $GKZ(X)$ since $X_{\Delta^+}$ is a $\mathbb{Q}$-factorial projective variety isomorphic to $X_{\Delta}$ in codimension one. So, the inclusion $GKZ(\Delta',I)\subseteq GKZ(\Delta,I)\cap GKZ(\Delta^+,I)$ has to be an equality.

Finally suppose that $f:X_{\Delta}\rightarrow X_{\Delta'}$ is a contraction of fibering type. Assume that there is a maximal cone $GKZ(\Delta'',I'')$ which intersect $GKZ(\Delta,I)$ along the wall $GKZ(\Delta',I)$. We must have $I''=I$. Otherwise, $\Delta''=\Delta'$, $I''\setminus I=\{v\}$ and $2.$ would imply $GKZ(\Delta'(v),I)=GKZ(\Delta,I)$. In this case, $f$ would be a divisorial contraction. Moreover, the morphism $X_{\Delta''}\rightarrow X_{\Delta'}$ is again a contraction of fibering type. Indeed, otherwise it follows from $1,2$ and $3$ that the maximal cone $GKZ(\Delta,I)$, adjacent to $GKZ(\Delta'',I'')$, does not correspond to a contraction of fibering type.  It follows that $\Delta''=\Delta$, a contradiction. 

The last assertion follows from the fact that we already have considered all possible walls in $GKZ(X)$. \qed \\

\chapter{Cayley-Mori Polytopes} \label{Cayley}

\section{Cayley-Mori Polytopes} \label{Cayley}

In this section we characterize the normal fans of a special class of polytopes called Cayley-Mori polytopes. These polytopes play an important role in the spaces of polytopes as described in Chapter \ref{bpg}. From the viewpoint of toric varieties, this is reflected by the fact that these polytopes are associated to Mori fiber spaces.\\

Let $X:=\mathbb{P}_Y(\mathcal{E})$ be a toric variety which is a toric projective bundle over a toric variety $Y$. 
It is well known that in this case $\mathcal{E}$ will be a decomposable bundle, that is, it is a direct sum of line 
bundles (see for instance \cite{oda}, p.41). We adopt the Grothendieck notation, that is, $\mathbb{P}_Y(\mathcal{E})=\mbox{Proj}_Y\mbox{ Sym}(\mathcal{E})$. Let $\Sigma$ be the fan of $Y$ with respect to the lattice $N$. For each $v_j\in\Sigma(1)$ set $D_j:=V(\langle v_j\rangle)$. Let $L_i:=\displaystyle\sum_{j\in\Sigma(1)}a_{i_j}D_j$, $i\in\{0,...,k\}$, be divisors on $Y$ such that
$\mathcal{E}=\mathcal{O}_Y(L_0)\oplus ... \oplus\mathcal{O}_Y(L_k)$. The fan of $X$ can be described as follows.\\

Let $e_1,...,e_k$ be the canonical basis for $\mathbb{Z}^k$ and $e_0:=-(e_1+...+e_k)$. By abuse of notation we also denote by $e_i$  the element $0\times e_i$ in the lattice $N\times \mathbb{Z}^k$. Similarly given $v\in N$ we also denote by $v$ the element $v\times 0$ in $N\times\mathbb{Z}^k$.\\

For each cone $\sigma\in\Sigma$ and $i=0,...,k$ we define $$\sigma_i:=\mbox{Cone}\big(v_j+(a_{1_j}-a_{0_j})e_1+...+(a_{k_j}-a_{0_j})e_k)\mid v_j\in\sigma(1)\big)+\mbox{Cone}(e_0,...,\widehat{e_i},...,e_k).$$\\

The fan of $X$ is built up from the cones $\sigma_i$ and their faces, as $\sigma$ ranges over the cones from $\Sigma$.\\

Now, consider $Y$ a projective toric variety and for $i=1,...,k$ let $L_i:=\displaystyle\sum_{j\in\Sigma(1)}a_{i_j}D_j$ be ample divisors on $Y$. Each $L_i$ defines a polytope $P_i:=P_{L_i}\subset M_{\mathbb{R}}$ whose associated fan is $\Sigma_Y$.

 Consider the polytope $P:=conv\big((P_0\times 0),(P_1\times e_1),...,(P_k\times e_k)\big)\subset M_{\mathbb{R}}\times\mathbb{R}^k$. In \cite[§3]{cox2}  it was proved that $P$ is defined by the following inequalities: \\

$$
\begin{array}{lc}
 \langle \hat{m},\hat{v_j}\rangle \geq-a_{0_j}, & v_j\in \Sigma_Y(1) \\
 \langle\hat{m},e_0\rangle\geq -1 &  \ \ \\
 \langle\hat{m},e_j\rangle\geq 0 &  j=1,...,k  \\
\end{array}
$$

where $\displaystyle \hat{v_j}=v_j+\sum_{i=1}^k(a_{i_j}-a_{0_j})e_i$.\\

Moreover, $\big(\mathbb{P}_Y(\mathcal{O}_Y(L_0)\oplus ... \oplus\mathcal{O}_Y(L_k)),\xi\big)$ is the polarized toric variety associated to $P$, where $\xi$ is the tautological line bundle.\\

The polytope $P$ is called a \textit{Cayley polytope}. In \cite{alicia} a generalization of Cayley polytopes was introduced, namely polytopes of the form $Cayley^ s(P_0,...,P_k):=conv\big((P_0\times 0),(P_1\times se_1),...,(P_k\times se_k)\big)$ for some positive integer $s$. Such a polytope is called an \textit{sth order generalized Cayley polytope} associated to $P_0,...,P_k$.

Now, we will define a further generalization of  Cayley polytopes. 
We will use the inequalities above and the description of the fan $\Sigma_P$ to describe the fan of this new 
object.

 \begin{de} \label{cayley} Let $P_0,...,P_k\subset \mathbb{R}^n$ be $n$-dimensional polytopes, $\{w_1,...,w_k\}$ a basis for $\mathbb{R}^k$ and $w_0=0$. Suppose 
that $P_0,...,P_k$ are strictly combinatorially equivalent polytopes, that is, they have the same normal fan $\Sigma$.
Let $P_0*P_1*...*P_k=conv\big((P_0\times {w_0}),...,(P_k\times  {w_k})\big)\subset\mathbb{R}^n\times\mathbb{R}^k$. Any simple polytope isomorphic to a polytope  of the form
$P_0*P_1*...*P_k$ will be called a Cayley-Mori polytope associated to $P_0,...,P_k$.
\end{de}
\begin{rk}\em{ Note that a Cayley-Mori polytope is defined in the same way as a Cayley polytope, replacing the canonical basis $\{e_1,...,e_k\}\subset\mathbb{R}^k$ by an arbitrary basis $\{w_1,...,w_k\}\subset\mathbb{R}^k$. }
\end{rk}

 We want to describe the fan of a Cayley-Mori rational polytope $P:=P_0*P_1*...*P_k$. We may suppose that $P$ is a lattice polytope. 
 
 Consider a linear isomorphism $A:\mathbb{R}^{n+k}\rightarrow \mathbb{R}^{n+k}$ which 
is the identity on the first $n$ coordinates and maps $e_i$ to $w_i$ for each $i \in \{1,...,t\}$. We write 
$D_{P_i}:=\displaystyle\sum_{j\in\Sigma(1)}a_{i_j}D_j$ for $i=0,...,k$.  \\
Note that the polytope $Q:=conv\big((P_0\times 0),...,(P_k\times e_k)\big)$ is mapped onto $P$. Thus 
$P$ is defined by the inequalities:

$$
\begin{array}{lc}
 \langle \hat{m},(A^*)^{-1}(\hat{v}_j)\rangle \geq-a_{0_j}, & v_j\in \Sigma_{P_i}(1) \\
 \langle \hat{m},(A^*)^{-1}(e_0)\rangle\geq -1 &  \ \ \\
 \langle \hat{m},(A^*)^{-1}(e_j)\rangle\geq 0 &  j=1,...,k  \\
\end{array}
$$

where $\displaystyle \hat{v}_j=v_j+\sum_{i=1}^k(a_{i_j}-a_{0_j})e_i$.\\

It follows that the 1-dimensional cones of $\Sigma_P$ are generated by the vectors:
\begin{itemize}

 \item $\displaystyle \tilde{v}_j:=d_j[v_j+\sum_{i=1}^k(a_{i_j}-a_{0_j})(A^*)^{-1}(e_i)],$ for every $v_j\in \Sigma(1)$.
 
\item $u_i:=s_i(A^*)^{-1}(e_i), i=0,...,k$,

\noindent where $d_j$ and $s_i$ are rational numbers which make  $\tilde{v}_j$ and $u_i$ primitive vectors for every $v_j\in \Sigma(1)$ and $i=0,...,k$.
\end{itemize} 

Hence, the fan of $P$ is given by the cones $$\sigma_i:=\mbox{cone}\big(\tilde{v}_j\mid v_j\in\sigma(1)\big)+\mbox{Cone}(u_0,...,\widehat{u_i},...,u_k)$$ and their faces, as $\sigma$ ranges over the cones $\sigma\in\Sigma$ and $i$ ranges from $0$ to $k$.\\

\section{Cayley-Mori Polytopes and Mori Fiber Spaces}

Let $N$ and $N'$ be lattices  and  $\Phi : N \rightarrow N'$ a surjective $\mathbb{Z}$-linear map.\\
If $\Sigma$ and $\Sigma'$ are fans in $N_{\mathbb{R}} $ and $N'_{\mathbb{R}} $ respectively, compatible with 
$\Phi$, then there is a toric morphism $\phi:X_{\Sigma}\rightarrow X_{\Sigma'}$.\\
Let $N_0:=ker(\Phi)$. The following sequence is exact:
$$0\longrightarrow N_0 \longrightarrow N \stackrel{\Phi}{\longrightarrow} N' \longrightarrow 0 .$$\\
Now, consider the subfan of $\Sigma$: $$\Sigma_0:=\{\sigma \in \Sigma \ | \ \sigma \subset (N_0)_{\mathbb{R}} \subseteq N_{\mathbb{R}}\}.$$
We have that $X_{\Sigma_0,N}\simeq X_{\Sigma_0,N_0}\times T_{\frac{N}{N_0}}\simeq X_{\Sigma_0,N_0}\times T_{N'}$.\\
Furthermore, $\Phi$ is compatible with $\Sigma_0 \subset N_{\mathbb{R}}$ and $\{0\}\subset N'_{\mathbb{R}}$.\\
Thus, there is a toric morphism $\phi_{{|}_{X_{\Sigma_0,N}}}:X_{\Sigma_0,N}\to T_{N'}$ such that $\phi^{-1}(T_{N'})\simeq X_{\Sigma_0,N_0}\times T_{N'}$.\\
Note that if $X_{\Sigma,N}$ is smooth then so is $X_{\Sigma_0,N_0}$.

 \begin{de} \label{d.split} We say $\Sigma$ is \textbf{ weakly split by} $\Sigma_0$ and $\Sigma'$ if there exist a subfan $\hat{\Sigma} \subseteq \Sigma $ such that:
\begin{enumerate}
\item $\Phi_{\mathbb{R}}$ maps each cone $\hat{\sigma} \in \hat{\Sigma}$ bijectively to a cone $\sigma' \in \Sigma'$. Furthermore, the map $\hat{\sigma}\mapsto \sigma'$ defines a bijection between $\hat{\Sigma}$ and $\Sigma'$.
\item For every cone $\sigma\in\Sigma$ we have $\sigma=\hat{\sigma}+\sigma_0$ with $\hat{\sigma}\in\hat{\Sigma}$ and $\sigma_0\in\Sigma_0$.
\end{enumerate}
\end{de}

\begin{de} Let $X$ be a $\mathbb{Q}$-factorial projective toric variety of dimension n. 
A toric fibration on $X$ is a flat, equivariant surjective morphism $f:X\rightarrow Y$ with connected fibers such that 
$Y$ is a projective toric variety and dim $Y<n$.
\end{de}

\begin{rk} \label{fibracaoelementar} \em{ Note that a toric fibration is also a fiber space and if all fibers of $f$ are irreducible and have Picard number one, then $f:X\rightarrow Y$ is a Mori fiber space.}
\end{rk}

\begin{rk} \em{ In \cite[§3.3]{cox} it was defined the notion of a fan $\Sigma$ being split by fans 
$\Sigma_0$ and $\Sigma'$. There, it is required the additional condition $\Phi_{\mathbb{R}}(\hat{\sigma}\cap N)=\sigma'\cap N'$ 
for each $\hat{\sigma}\in\hat{\Sigma}$. It was proved that if $\Sigma$ is split by $\Sigma_0$ and $\Sigma'$, then $\phi:X_{\Sigma}\rightarrow X_{\Sigma'}$ is a locally trivial toric fibration with fiber $X_{\Sigma_0,N_0}$. Next, we give a generalization of this result.}  
\end{rk}

\begin{teo} \label{t.split}  Let $(\Sigma,N)$ and $(\Sigma',N')$ be fans defining  $\mathbb{Q}$-factorial 
projective toric varieties and $\Phi:N\rightarrow N'$ a surjective $\mathbb{Z}$-linear map compatible with $\Sigma$ and $\Sigma'$. Let $\phi:X_{\Sigma}\rightarrow X_{\Sigma'}$ be the associated toric morphism. Then, $\phi$ is a toric fibration with irreducible fibers if and only if 
$\Sigma$ is  weakly split by $\Sigma_0$ and $\Sigma'$ as in Definition \ref{d.split}. In this case, the general 
fiber of $\phi$ is isomorphic to $X_{\Sigma_0,N_0}$ .\\
\end{teo}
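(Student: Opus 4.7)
The plan is to translate the geometric hypotheses on $\phi$ into combinatorial constraints on $\Sigma$ via the orbit--cone correspondence and Proposition \ref{pr1}. The starting observation is that the preimage of the open torus $T_{N'}\subset X_{\Sigma'}$ under $\phi$ is the union of the orbits $O_\sigma$ for $\sigma\in\Sigma_0$; since the exact sequence $0\to N_0\to N\to N'\to 0$ splits ($N'$ being free), one obtains $\phi^{-1}(T_{N'})\simeq X_{\Sigma_0,N_0}\times T_{N'}$. Thus the general fiber of $\phi$ is automatically $X_{\Sigma_0,N_0}$, so the final clause of the theorem comes for free once the equivalence is established.

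For the implication $(\Leftarrow)$, I would fix $\sigma'\in\Sigma'$ with partner $\hat\sigma\in\hat\Sigma$ and analyze $\phi^{-1}(x_{\sigma'})$. By condition (2) of Definition \ref{d.split}, every cone $\sigma\in\Sigma$ whose image $\Phi_{\mathbb{R}}(\sigma)$ has $\sigma'$ as its minimal containing cone is of the form $\hat\sigma+\sigma_0$ with $\sigma_0\in\Sigma_0$; modulo $N_{\hat\sigma}$, these cones give a fan isomorphic to $\Sigma_0$ under the identification $N/N_{\hat\sigma}\simeq N_0$ coming from (1). A direct application of Proposition \ref{pr1} identifies $\phi^{-1}(x_{\sigma'})\simeq X_{\Sigma_0,N_0}$ scheme-theoretically, so every fiber over a distinguished point is irreducible of dimension $\dim N_0=\dim X-\dim Y$. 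Equivariantly spreading this identification to every $T_N$-orbit of $X_{\Sigma'}$ gives equidimensional irreducible fibers throughout; combined with surjectivity (forced by (1)) and a local flatness check on the affine charts $U_{\hat\sigma+\sigma_0}\to U_{\sigma'}$, which reduces to a purely combinatorial statement about the semigroups $S_{\hat\sigma+\sigma_0}$ and $S_{\sigma'}$, one concludes that $\phi$ is a toric fibration with irreducible fibers.

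For the implication $(\Rightarrow)$, suppose $\phi$ is a toric fibration with irreducible fibers. For each $\sigma'\in\Sigma'$ the fiber $\phi^{-1}(x_{\sigma'})$ is irreducible of dimension $\dim N_0$; by the orbit--cone correspondence applied inside $X_\Sigma$, its closure contains a unique maximal $T_N$-orbit $O_{\hat\sigma}$, and Proposition \ref{pr1} together with equivariance forces $\Phi_{\mathbb{R}}$ to send $\hat\sigma$ bijectively to $\sigma'$. I would set $\hat\Sigma:=\{\hat\sigma\mid\sigma'\in\Sigma'\}$. Part (1) of the definition holds by construction, with face-compatibility following by applying the same argument to the toric morphism $V(\sigma')\to V(\tau')$ of Proposition \ref{pr1}. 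For part (2), take an arbitrary $\sigma\in\Sigma$, let $\sigma'$ be the minimal cone of $\Sigma'$ containing $\Phi_{\mathbb{R}}(\sigma)$, and let $\hat\sigma$ be its partner; since $O_\sigma$ lies in $\phi^{-1}(O_{\sigma'})$, whose closure is $V(\hat\sigma)$, the cone $\hat\sigma$ must be a face of $\sigma$. Setting $\sigma_0:=\sigma\cap\Phi_{\mathbb{R}}^{-1}(0)$ produces a cone in $\Sigma_0$, and the equality $\sigma=\hat\sigma+\sigma_0$ follows from a dimension count combined with the fact that $\Phi_{\mathbb{R}}$ sends the relative interior of $\sigma$ into the relative interior of $\sigma'$.

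The main obstacle will be verifying condition (2) in the forward direction: one must rule out the possibility that the preimage of some $\sigma'$ contains cones whose shape cannot be untangled as $\hat\sigma+\sigma_0$. I expect this step to rely essentially on the irreducibility hypothesis applied to \emph{every} fiber (not merely the generic one), together with the equidimensionality coming from flatness, rather than on anything specific to the toric setting beyond Proposition \ref{pr1}.
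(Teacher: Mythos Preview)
Your overall strategy---translating via the orbit--cone correspondence and Proposition~\ref{pr1}---matches the paper's, but two key steps are not correct as written.

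In the $(\Leftarrow)$ direction, the claim that $\phi^{-1}(x_{\sigma'})\simeq X_{\Sigma_0,N_0}$ scheme-theoretically is false for a \emph{weak} split: the natural map $N_0\to N/N_{\hat\sigma}$ is injective between lattices of the same rank but need not be surjective (surjectivity is exactly the extra lattice condition in the stronger ``split'' of \cite[3.3]{cox}), so special fibers can differ from $X_{\Sigma_0,N_0}$. This is not fatal---you only need that each fiber is irreducible of the correct dimension, which $V(\hat\sigma)$ visibly is---but your flatness argument via ``a purely combinatorial statement about the semigroups'' is too vague to stand on its own. The paper instead proves equidimensionality of all fibers by induction on $\operatorname{rank}N'$ (restricting $\phi$ over each invariant divisor $V(v')\subset X_{\Sigma'}$) and then invokes the Cohen--Macaulay property of toric varieties to conclude flatness.

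The more serious gap is in $(\Rightarrow)$, condition~(2). After showing $\hat\sigma\prec\sigma$, you set $\sigma_0:=\sigma\cap(N_0)_{\mathbb{R}}$ and assert $\sigma=\hat\sigma+\sigma_0$ by ``a dimension count''. This does not go through: for a maximal simplicial cone $\sigma=\operatorname{cone}(w_1,\dots,w_n)$ with $\hat\sigma=\operatorname{cone}(w_1,\dots,w_{n'})$, nothing in your argument rules out some $w_i$ with $i>n'$ having $\Phi(w_i)\neq 0$, in which case $\hat\sigma+\sigma_0\subsetneq\sigma$. The paper closes this precisely by the mechanism you anticipated in your final paragraph but did not implement: if $\Phi(w_i)\neq 0$ for some $i>n'$, then (using that $\Sigma$ is simplicial) one chooses $j\leq n'$ so that $\gamma:=\operatorname{cone}(w_1,\dots,\widehat{w_j},\dots,w_{n'},w_i)$ is a face of $\sigma$, hence lies in $\Sigma$, and still satisfies $\dim\Phi_{\mathbb{R}}(\gamma)=\dim\sigma'$. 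Then $V(\gamma)$ and $V(\hat\sigma)$ are two distinct top-dimensional invariant subvarieties of the fiber $\phi^{-1}(V(\sigma'))$, contradicting irreducibility. Your ``dimension count combined with the fact that $\Phi_{\mathbb{R}}$ sends the relative interior of $\sigma$ into the relative interior of $\sigma'$'' does not substitute for this step.
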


\begin{proof} Suppose first that $\Sigma$ is  weakly split by $\Sigma_0$ and $\Sigma'$. 
In order to prove that $\phi: X_{\Sigma}\rightarrow X_{\Sigma'}$  is a toric  fibration with irreducible fibers we will show that all 
fibers are toric varieties of the same dimension. We have seen that the morphism 
$\phi: X_{\Sigma}\rightarrow X_{\Sigma'}$ is a trivial fibration over the torus $T_N'$ with fiber $X_{\Sigma_0,N_0}$.
If $p\in X_{\Sigma'}$ is an invariant point then there exists a maximal cone $\sigma'$ such that $p=V(\sigma')$. 
By proposition \ref{pr1},  $\phi^{-1}(p)_{red}=V(\hat{\sigma})$ is a toric variety of dimension dim$(X_{\Sigma})-$dim$(X_{\Sigma'})$.

 Now, consider an invariant divisor $V(v')$ on  $X_{\Sigma'}$. Since $span(\hat{v})\cap N_0=\{0\}$, the restriction 
$\phi\mid_{V(\hat{v})}:V(\hat{v})\rightarrow V(v')$ is induced by $$0\rightarrow N_0\rightarrow N/ span (\hat{v})\cap N\rightarrow N'/ span(v')\cap N'\rightarrow 0.$$ 
Note that $\Sigma_{V(\hat{v})}$ is split by $\Sigma_{V(v')}$ and the fan 
$\{\sigma\in\Sigma_{V(\hat{v})}\mid\sigma\in \Sigma_{V(\hat{v})}\cap\Sigma_0\}$. 
Hence, the fibers over the torus of $V(v')$ and over fixed points are toric varieties with the same dimension, 
which is dim$(V(\hat{v}))-$dim$(V(v'))=$ dim$(X_{\Sigma})-$dim$(X_{\Sigma'})$. By induction on the rank of $N'$, 
we conclude that all fibers of $\phi$ are irreducible of the same dimension and then $\phi$ has connected fibers. 
Since every toric variety is Cohen-Macaulay (see Theorem 9.2.9, \cite{cox}), this is enough to conclude that $\phi$ is a flat 
morphism (see Theorem 4.1.2, \cite{mauro}). 

Conversely, suppose $\phi$ is a toric fibration with irreducible fibers. Let $\sigma'$ be a maximal cone of $\Sigma'$. Since $\phi$ is a flat surjective morphism with irreducible 
fibers, there exists only one cone $\hat{\sigma}\in\Sigma$ such that $\hat{\sigma}$ has the same 
dimension as $\sigma'$ and $\Phi$ maps $\hat{\sigma}$ bijectively onto $\sigma'$. Let $\tau\in\Sigma$ be a maximal cone 
containing $\hat{\sigma}$. We write $\hat{\sigma}=\mbox{Cone}(v_1,...,v_k)$ and $\tau=\mbox{Cone}(u_1,...,u_m,v_1,...,v_k)$. 
Since $\Sigma$ is simplicial,
for each $i=1,...,m$ and $j=1,...,k$ the cone $\mbox{cone}(v_1,...,\hat{v_j},...,v_k,u_i)$ belongs to $\Sigma$.
If $u_i$ does not belong to $\Sigma_0$ for some $i=1,...,m$, there exists some $j$ such that 
$\Phi(\mbox{cone}(v_1,...,\hat{v_j},...,v_k,u_i))$ has the same dimension as $\sigma'$.  
In this case $V(\hat{\sigma})$ and $V(\mbox{cone}(v_1,...,\hat{v_j},...,v_k,u_i))$ are both contained in $\pi^{-1}(V(\sigma'))$. 
But this is  absurd since  every fiber is irreducible. Thus, $\tau=\hat{\sigma}+\mbox{cone}(u_1,...,u_m)$ and 
$\mbox{cone}(u_1,...,u_m)$ is contained in $\Sigma_0$. Note that since $\Sigma$ is simplicial, $\Phi$ maps each face of $\hat{\sigma}$ bijectively to a face of $\sigma'$. So, the set formed by the cones $\hat{\sigma}$ and their faces is clearly the desired fan $\hat{\Sigma}$.
\end{proof}

\begin{lema}\label{picard} Let $X_{\Sigma}$ be an $n$-dimensional complete $\mathbb{Q}$-factorial toric variety and $d$ the number of 1-dimensional cones of $\Sigma$. Then $d-\rho_X=n$.
 \end{lema}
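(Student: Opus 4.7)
The plan is to read off the statement directly from Proposition \ref{seq}. Since $X_\Sigma$ is $\mathbb{Q}$-factorial, the fan $\Sigma$ is simplicial by the proposition characterizing smoothness/completeness/$\mathbb{Q}$-factoriality in Section 2.1, so the hypotheses of Proposition \ref{seq} are met. That proposition supplies the exact sequence
$$0 \longrightarrow N_1(X) \longrightarrow \mathbb{R}^r \longrightarrow N_{\mathbb{R}} \longrightarrow 0,$$
with $r = |\Sigma(1)| = d$. Completeness of $\Sigma$ ensures that the rays $v_1,\dots,v_r$ span $N_{\mathbb{R}}$, so the rightmost arrow is surjective as claimed, and $\dim N_{\mathbb{R}} = n$.

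Additivity of dimensions in a short exact sequence of finite-dimensional $\mathbb{R}$-vector spaces then yields
$$\dim N_1(X) = r - n = d - n.$$
Finally, the paper records that the intersection pairing puts $N^1(X)$ and $N_1(X)$ in perfect duality, so they have the same dimension. Since $\rho_X$ is by definition $\dim N^1(X) = \dim \operatorname{Pic}(X)_{\mathbb{R}}$, we conclude $\rho_X = d - n$, i.e., $d - \rho_X = n$.

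There is no real obstacle here: the entire argument is bookkeeping on the exact sequence of Proposition \ref{seq}. The only minor point worth stating explicitly in the write-up is that the $\mathbb{Q}$-factoriality hypothesis is what lets us apply Proposition \ref{seq} (which assumed a simplicial fan), and that completeness of $X_\Sigma$ is what makes the map $\mathbb{R}^r \to N_{\mathbb{R}}$ surjective so that the dimension count is clean.
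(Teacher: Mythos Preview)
Your proof is correct. The paper takes the dual route: it invokes the divisor exact sequence
\[
0 \longrightarrow M \longrightarrow CDiv_T(X) \longrightarrow \operatorname{Pic}(X) \longrightarrow 0
\]
from \cite[4.2.1]{cox}, where $\mathbb{Q}$-factoriality makes $CDiv_T(X)\otimes\mathbb{R}\simeq\mathbb{R}^d$, and reads off $n + \rho_X = d$ directly. You instead use the exact sequence of Proposition~\ref{seq} on the curve side and then pass through the duality $N_1(X)\simeq N^1(X)^{\vee}$. The two arguments are mirror images of one another: each is a one-line dimension count on a short exact sequence already recorded in the preliminaries, and neither offers any real advantage over the other.
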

\textit{Proof}. It follows immediately from the exact sequence (see \cite{cox}, 4.2.1): $$0\rightarrow M\rightarrow CDiv_T(X)\rightarrow Pic(X)\rightarrow 0$$
\noindent where $CDiv_T(X)$ denotes the subgroup of invariants Cartier divisors on $X$.
\qed

\begin{cor} \label{picard1} If $X_{\Sigma}$ is a smooth complete toric variety with Picard number one, then $X\simeq \mathbb{P}^n$.
\end{cor}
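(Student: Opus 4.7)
The plan is to use Lemma \ref{picard} to pin down the number of rays of $\Sigma$ to $n+1$, and then leverage smoothness and completeness of $\Sigma$ to force the fan to be the standard fan of $\mathbb{P}^n$.

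First I would apply Lemma \ref{picard}: since $\rho_X=1$, the number of rays equals $d=n+1$. Write $\Sigma(1)=\{v_1,\dots,v_{n+1}\}$, where each $v_i$ is primitive in $N$. Since $\Sigma$ is complete and of full dimension, there exists at least one maximal cone, which must be $n$-dimensional and simplicial (even smooth). After reindexing, take $\sigma:=\mbox{cone}(v_1,\dots,v_n)$ to be such a maximal cone; smoothness of $\sigma$ means that $\{v_1,\dots,v_n\}$ is a $\mathbb{Z}$-basis of $N$. Since this is a basis, we may uniquely write
$$v_{n+1}=a_1v_1+\cdots+a_nv_n,\qquad a_i\in\mathbb{Z}.$$

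Next I would use completeness of $\Sigma$ to exploit the walls of $\sigma$. For each $i\in\{1,\dots,n\}$ the facet $\tau_i:=\mbox{cone}(v_1,\dots,\widehat{v_i},\dots,v_n)$ of $\sigma$ lies in the interior of $|\Sigma|=N_{\mathbb{R}}$, so it must be shared with another maximal cone $\sigma'_i\in\Sigma$. The only ray of $\Sigma$ not among $\{v_1,\dots,v_n\}$ is $v_{n+1}$, hence $\sigma'_i=\mbox{cone}(v_1,\dots,\widehat{v_i},\dots,v_n,v_{n+1})$. Smoothness forces the set $\{v_1,\dots,\widehat{v_i},\dots,v_n,v_{n+1}\}$ to be a $\mathbb{Z}$-basis of $N$, which translates to the condition $a_i\in\{-1,+1\}$ (computing the determinant of the change of basis matrix from $\{v_1,\dots,v_n\}$). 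To rule out $a_i=+1$, I would observe that in order for $\sigma$ and $\sigma'_i$ to meet along $\tau_i$ only (so that their union is convex across $\tau_i$), the rays $v_i$ and $v_{n+1}$ must lie on opposite sides of the hyperplane $\mbox{span}(\tau_i)$. Since $v_{n+1}-a_iv_i\in\mbox{span}(\tau_i)$, this forces $a_i<0$, hence $a_i=-1$.

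Running this argument for every $i\in\{1,\dots,n\}$ yields
$$v_{n+1}=-v_1-\cdots-v_n.$$
At this point the fan $\Sigma$ contains all $n+1$ cones of the form $\mbox{cone}(v_1,\dots,\widehat{v_i},\dots,v_{n+1})$, whose union already exhausts $N_{\mathbb{R}}$ (since $v_1,\dots,v_n,-v_1-\cdots-v_n$ is the standard ray configuration of the $\mathbb{P}^n$-fan). By completeness and the cone-axiom of fans, $\Sigma$ must equal the fan of $\mathbb{P}^n$, so $X\simeq \mathbb{P}^n$.

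The only subtle step is the sign analysis $a_i=-1$: the integrality/basis condition alone only gives $|a_i|=1$, and one must invoke the geometric fact that $\sigma$ and $\sigma'_i$ meet along a common facet of a fan (so their relative interiors are disjoint) to exclude $a_i=+1$. Everything else is a routine consequence of smoothness, completeness, and $\rho_X=1$.
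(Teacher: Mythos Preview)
Your proof is correct and follows essentially the same approach as the paper: use Lemma~\ref{picard} to get exactly $n+1$ rays, pick a smooth maximal cone to make $v_1,\dots,v_n$ a basis, and then argue that $v_{n+1}=-v_1-\cdots-v_n$. The paper simply asserts this last equality as a consequence of smoothness, whereas you spell out the determinant computation giving $|a_i|=1$ and the opposite-sides argument forcing $a_i=-1$; your version is thus a more careful rendering of the same idea.
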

\begin{proof} By Lemma  \ref{picard} the complete fan $\Sigma$ has exactly $n+1$ primitive vectors $v_1,...,v_{n+1}$. Therefore, the maximal cones are of the form $cone (v_1,...,\widehat{v_i},...v_{n+1})$ for every $i\in\{1,...,n+1\}$. Since $\Sigma$ is smooth we can suppose that $v_1,...,v_{n}$ is the canonical base of $\mathbb{R}^ n$ and $v_{n+1}=-v_1-...-v_n$. Thus, $X\simeq \mathbb{P}^n$.
\end{proof}

\begin{lema} \label{lema3} Let $X$ be a toric projective variety associated to a rational polytope $P$. Let $Q$ be a face of $P$ defining a cone $\sigma_Q\in \Sigma_X$ and $Z:=V(\sigma_Q)$. Then:

\begin{enumerate}

\item There exists $u\in M_{\mathbb{Q}}$ such that $Aff(Q)+u=(\sigma_Q)^{\bot}$.

\item $Q+u\subset (\sigma_Q)^{\bot}$  is the polytope associated to the pair $(Z,D_P\mid_Z)$ with respect to the lattice $M\cap (\sigma_Q)^{\bot}$.

\end{enumerate}
\end{lema}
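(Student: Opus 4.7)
The plan for part 1 is to observe that $Aff(Q)$ is parallel to the linear subspace $(\sigma_Q)^\perp$. For every facet $F$ of $P$ containing $Q$, the primitive normal $u_F$ is a ray of $\sigma_Q$ and the defining equation $\langle m, u_F\rangle = -a_F$ holds on all of $Q$; hence $\langle m_1 - m_2, u_F\rangle = 0$ for all $m_1, m_2 \in Q$, and by linearity this vanishing extends to every $v \in \sigma_Q$. A dimension count using $\dim Q = n - \dim \sigma_Q = \dim (\sigma_Q)^\perp$ then shows that $Aff(Q)$ is an affine translate of $(\sigma_Q)^\perp$. Since $P$ is rational, so is $Q$, and any vertex $m_0 \in Q \cap M_\mathbb{Q}$ yields $u := -m_0 \in M_\mathbb{Q}$ with $Aff(Q) + u = (\sigma_Q)^\perp$.

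For part 2, the strategy is to identify the facet presentations of both polytopes. First I would replace $D_P = \sum_i a_i D_i$ with the linearly equivalent divisor $D'_P := D_P + div(\chi^{m_0}) = \sum_i (a_i + \langle m_0, v_i\rangle) D_i$, invoking Proposition \ref{div}. The key observation is that for every $v_i \in \sigma_Q(1)$ the inclusion $m_0 \in Q \subset F_i$ forces $\langle m_0, v_i\rangle = -a_i$, so the coefficient of $D_i$ in $D'_P$ vanishes. Consequently $Z = V(\sigma_Q)$ is not contained in the support of $D'_P$, and $D'_P|_Z$ is a bona fide $\mathbb{Q}$-Weil divisor on $Z$ representing the Cartier class $D_P|_Z$.

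Next I would apply Proposition \ref{inter} to obtain the formula $D'_P|_Z = D'_P \cdot V(\sigma_Q) = \sum_j \frac{a_j + \langle m_0, v_j\rangle}{s_j}\, E_j$, where $j$ ranges over those primitive rays $v_j \in \Sigma(1)\setminus \sigma_Q(1)$ with $\sigma_Q + \langle v_j\rangle \in \Sigma$, $E_j = V(\sigma_Q + \langle v_j\rangle)$ is the corresponding prime invariant divisor on $Z$, and $s_j \in \mathbb{Z}_{>0}$ is the positive integer such that $\bar{v_j} = s_j e_j$ in $N(\sigma_Q)$, $e_j$ being the primitive generator of the ray of $Star(\sigma_Q)$ associated to $E_j$. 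Applied to $(Z, D'_P|_Z)$, the definition of the polytope of a Weil divisor then gives the facet presentation
\[
\{m \in (\sigma_Q)^\perp \ |\ \langle m, e_j\rangle \geq -(a_j + \langle m_0, v_j\rangle)/s_j\}
\]
inside the lattice $M \cap (\sigma_Q)^\perp$.

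Finally, I would translate the facet presentation of $Q$ by $u = -m_0$: a point $m'$ lies in $Q + u$ iff $m' + m_0 \in P$ and $\langle m' + m_0, v_i\rangle = -a_i$ for every $v_i \in \sigma_Q(1)$. The equalities force $m' \in (\sigma_Q)^\perp$, and the remaining inequalities $\langle m', v_j\rangle \geq -(a_j + \langle m_0, v_j\rangle)$, rewritten via $\langle m', v_j\rangle = \langle m', \bar{v_j}\rangle = s_j \langle m', e_j\rangle$ (valid because $m' \in (\sigma_Q)^\perp$), coincide with those obtained in the previous paragraph; the $v_j$ with $\sigma_Q + \langle v_j\rangle \notin \Sigma$ contribute redundant inequalities since they do not define facets of $Q$. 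The main obstacle I expect is precisely this last bookkeeping step, in which the multiplicities $s_j$ relating primitive generators in $N$ to primitive generators in the quotient lattice $N(\sigma_Q)$ must match on both sides, together with the preliminary need to select $D'_P$ as the correct representative of $D_P|_Z$ whose support avoids $Z$.
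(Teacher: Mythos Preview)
Your proposal is correct and follows essentially the same approach as the paper: both translate by the negative of a rational point of $Q$ (the paper translates $P$, you equivalently replace $D_P$ by $D_P+\operatorname{div}(\chi^{m_0})$), then invoke Proposition~\ref{inter} to compute $D_P|_Z$ and match the resulting coefficients $a_j/s_j$ against those of the polytope $Q+u$. The only cosmetic difference is that the paper verifies the match by evaluating the Cartier data of $D_Q$ at a vertex of $Q$, whereas you compare facet presentations directly; the bookkeeping with the multiplicities $s_j$ that you flag as the main obstacle is handled identically in both arguments.
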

\begin{proof} There is no loss in supposing that $P$ is a lattice polytope. Then $P$ has a unique facet presentation $P=\{x\in M_{\mathbb{R}} \mid \langle x,u_F\rangle \geq -a_F$ for all facets $F\prec P\}$. With this notation we have $\sigma_Q=\mbox{cone}(u_F)_{F\succ Q}$ and the smallest affine space containing $Q$ is $Aff(Q)=\{x\in M_{\mathbb{R}}\mid \langle x,u_F \rangle =-a_F$ for all facets $F\succ Q\}$. So, to prove 1 we can take $u$ to be any lattice point such that $-u\in Q\cap M$.

To prove 2 we can make a translation of $P$ and assume that $u=0$. The maximal cones of $\Sigma_X$ containing $\sigma_Q$ are given by $\sigma_v:=\mbox{cone}(u_F)_{v\prec F}$ where $v$ is a vertice of $Q$. Moreover $D_P\mid _{U_{\sigma_v}}=div(\chi^{-v})$. Since $Q\subset (\sigma_Q)^{\bot}$ we conclude that $Z\nsubseteq supp(D_P)$. In this case, by Proposition \ref{inter}, $D_P\mid_ Z=\displaystyle\sum_{\gamma}b_{\gamma}V(\gamma)$,          where $\gamma$ runs through all cones containing $\sigma_Q$ such that dim$(\gamma)=$dim($\sigma_Q)+1$, $b_{\gamma}=\frac{a_{F_i}}{s_i}$ where $u_{F_i}$ is any primitive vector in $\gamma(1)\setminus\sigma_Q(1)$ and $s_i$ is a positive integer such that the image of $u_{F_i}$ in the one-dimensional lattice $N_{\gamma}/N_{\sigma_Q}$ is $u_{F_i}=s_ie$ for $e$ a generator of $N_{\gamma}/N_{\sigma_Q}$. To finish the proof, we note that if $D_Q$ is the divisor associated to $Q\subset M_{\mathbb{R}}\cap (\sigma_Q)^\perp$ and $v$ is a vertice of $Q$ with $\gamma\subset \sigma_v$, then the coefficient of $V(\gamma)$ in $D_Q$ is $\langle-v,e\rangle=\frac{1}{s_i}\langle-v,s_ie\rangle=\frac{1}{s_i}\langle -v,u_{F_i}\rangle=\frac{a_{F_i}}{s_i}=b_{\gamma}$. It follows that $D_P\mid_Z=D_Q$.
\end{proof}

\begin{teo} \label{mori} Let $X$ be the $\mathbb{Q}$-factorial projective toric variety associated to a simple rational polytope 
$P\subset M_{\mathbb{R}}$ and $\Delta$ a sublattice of $N$. Let $\pi:M\rightarrow\Lambda$ be the dual map to the inclusion $j:\Delta\hookrightarrow N$, where $\Lambda=\Delta^{\vee}$. If $j:\Delta\hookrightarrow N$ induces a Mori fiber space $f:X\rightarrow Y$ then:

\begin{enumerate}
 \item $\pi:M\rightarrow\Lambda$ is surjective, and
\item $P\simeq P_0*P_1*...*P_k$ is a Cayley-Mori polytope for some rational polytopes $P_0,...,P_k$ such that $X_{P_i}\simeq Y$ 
and $Q:=\pi_{\mathbb{R}}(P)=conv(w_0,...,w_k)$ is the polytope associated to the general fiber of $f$. 
\end{enumerate}
Conversely, if $P=P_0*P_1*...*P_k$ then the canonical inclusion $j:\mathbb{Z}^ k\hookrightarrow\mathbb{Z}^{n+k}$ induces 
a Mori fiber space $f:X_P\rightarrow Y$, where $Y\simeq X_{P_i}$.
\end{teo}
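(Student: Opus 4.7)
My approach is to use the weak-splitting criterion for toric fibrations (Theorem \ref{t.split}) as a bridge between the ``Mori fiber space'' data on $X$ and the Cayley-Mori structure on $P$. The forward direction proceeds in two layers: first extracting combinatorial data from $f$, then translating it to the level of $P$ via Lemma \ref{lema3}.

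Forward direction. Since $f:X\to Y$ is a toric Mori fiber space, the lattice $N_Y:=N/\Delta$ of $Y$ is torsion free, so $\Delta$ is saturated in $N$; dualizing $0\to \Delta\to N\to N_Y\to 0$ and using $\mathrm{Ext}^1(N_Y,\mathbb{Z})=0$ gives the surjection $\pi:M\twoheadrightarrow \Lambda$, which is (1). By Theorem \ref{t.split} we have a weak splitting $\Sigma_X=\hat\Sigma+\Sigma_0$, with $\hat\Sigma\cong\Sigma_Y$ and $\Sigma_0\subset\Delta_{\mathbb{R}}$ the fan of the general fiber. By Proposition \ref{fibrapicard}, this fiber is a $\mathbb{Q}$-factorial toric Fano of Picard number one, so by Lemma \ref{picard} the fan $\Sigma_0$ has exactly $k+1$ rays (where $k=\mathrm{rank}\,\Delta$), and hence the fiber polytope $Q:=\pi_{\mathbb{R}}(P)$ is a $k$-simplex. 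After an affine change of coordinates on $\Lambda_{\mathbb{R}}$ we may write $Q=\mathrm{conv}(w_0,\ldots,w_k)$ with $w_0=0$ and $w_1,\ldots,w_k$ a basis of $\Lambda_{\mathbb{R}}$.

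For each vertex $w_i$ of $Q$ let $\sigma_i\in\Sigma_0$ be the corresponding maximal cone of the fiber fan, and set $P_i:=P\cap\pi_{\mathbb{R}}^{-1}(w_i)$, translated into a complement of $\Lambda_{\mathbb{R}}$. By Lemma \ref{lema3}, $P_i$ is the polytope of the pair $\bigl(V(\sigma_i),\, D_P|_{V(\sigma_i)}\bigr)$ inside the lattice $M\cap\sigma_i^{\perp}$; the weak splitting identifies $V(\sigma_i)$ with the toric variety of $\hat\Sigma\cong\Sigma_Y$, so $X_{P_i}\simeq Y$, and in particular all $P_i$ share the normal fan $\Sigma_Y$ and are strictly combinatorially equivalent. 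Because every maximal cone of $\Sigma_X$ has the form $\hat\sigma+\sigma_i$ for some $\hat\sigma\in\hat\Sigma$ and some $i$, each vertex of $P$ lies in one of the slices $P_i\times\{w_i\}$, whence $P=\mathrm{conv}\bigl(\bigcup_i P_i\times\{w_i\}\bigr)=P_0*P_1*\cdots*P_k$, proving (2).

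Converse. If $P=P_0*\cdots*P_k$, the explicit fan description in Section \ref{Cayley} shows that $\mathrm{Cone}(u_0,\ldots,\widehat{u_i},\ldots,u_k)$ assembles into a complete simplicial fan $\Sigma_0$ supported in $\Delta_{\mathbb{R}}$ with exactly $k+1$ rays, while the cones $\mathrm{Cone}(\tilde v_j\mid v_j\in\sigma(1))$ form a subfan $\hat\Sigma\cong\Sigma_{P_i}$, and every maximal cone of $\Sigma_P$ decomposes as $\hat\sigma+\sigma_0$. This is precisely the weak-splitting data of Definition \ref{d.split} for the canonical projection $N\to N/\Delta$, so Theorem \ref{t.split} produces a toric fibration $f:X_P\to Y$ with $Y\simeq X_{P_i}$, whose general fiber (associated to $Q$) has Picard number one by Lemma \ref{picard}; Remark \ref{fibracaoelementar} then concludes that $f$ is a Mori fiber space.

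The main obstacle I anticipate is the slice-by-slice reconstruction in the forward direction: extracting the Cayley-Mori shape of $P$ requires simultaneously (a) identifying each slice $P_i$ intrinsically through Lemma \ref{lema3}, (b) placing it correctly inside $P$ via the weak splitting, and (c) verifying that the convex hull of the $P_i\times\{w_i\}$ reproduces $P$ with no extra vertices. The cleanest way to handle (c) is to enumerate the vertices of $P$ via the maximal cones of $\Sigma_X$: the bijection $\hat\Sigma\leftrightarrow\Sigma_Y$ together with the $k+1$ maximal cones of $\Sigma_0$ accounts for every element of $\Sigma_X(n+k)$, and this matches exactly the vertex set of $\bigcup_i P_i\times\{w_i\}$.
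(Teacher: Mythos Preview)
Your proposal is correct and follows essentially the same route as the paper: Lemma \ref{lema3} identifies the slices $P_i$ as polytopes of the invariant sections $V(\sigma_i)\simeq Y$, Lemma \ref{picard} and Proposition \ref{fibrapicard} force the fiber to be a $k$-simplex, and a vertex count via the maximal cones recovers $P$ as the convex hull of the $P_i\times\{w_i\}$; the converse is identical to the paper's. The only organizational difference is that you invoke Theorem \ref{t.split} in the forward direction to package the fan as a weak splitting, whereas the paper works directly with the fiber fan $\Sigma_F$ and the sections $Y_i=V(\tau_i)$ and separately verifies $\pi_{\mathbb{R}}(P)=P_{D_P|_F}$ from the facet presentations---a step you take for granted but which is routine.
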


\begin{proof} Throughout the proof we suppose without loss that $P$ is a lattice polytope. Suppose that $j:\Delta\hookrightarrow N$ induces a Mori fiber space $f:X\rightarrow Y$. The first item follows from \cite[2.4]{reid}.  Let $F$ be the general fiber of $f$.  Then $F$ has the structure of toric variety given by  $\Sigma_F:=\{\sigma\in\Sigma_X\mid \sigma\subset\Delta_{\mathbb{R}}\}$. Since $D_P$ is an ample divisor on $X$, $D_P\mid_F$ is an ample divisor on F. Let $S:=P_{D_P\mid_F}\subset \Lambda_{\mathbb{R}}$ and denote by $w_0,...,w_k$ the vertices of $S$. Every vertex $w_i$ corresponds to a full dimensional cone $\tau_i\in\Sigma_F$ which defines a fixed point $p_i$ of $F$. Let $Y_i:=V(\tau_i)\subset X$ be the invariant section of $f$ passing through $p_i$.

  Let $R_i$ be the face of $P$ corresponding to $Y_i$. Note that $span(\tau_i)=\Delta_{\mathbb{R}}$, which implies $\tau_i^{\perp}=\Delta_{\mathbb{R}}^{\perp}=$ker$(\pi_{\mathbb{R}})$. By Lemma \ref{lema3} there exists $u_i\in M$ 
such that Aff$(R_i)+u_i=$ker$(\pi_{\mathbb{R}})$
and $R_i+u_i$ is the polytope associated to $(Y_i,D_P\mid_{Y_i})$ with respect to the lattice ker$(\pi)\subset M$. 
Since $Y_i\simeq Y$, we conclude that $R_0,...,R_k$ are strictly combinatorially isomorphic. Since $\Sigma_X$ is simplicial, 
the $Y_i$'s are pairwise disjoint and therefore the same holds for the $R_i$'s.\\

Let $D_P=\displaystyle\sum_{v_1i\in\Sigma(1)}a_iV(\langle v_i\rangle)$ be the invariant divisor on $X$ associated to $P$. 
Since $F$ is the a general fiber, $V(\langle v_i\rangle)\cap F\neq\emptyset$ if and only if $v_i\in\Delta$, and we have 
$D_P\mid_F=\displaystyle\sum_{v_i\in\Delta\cap\Sigma(1)}a_iV(\langle v_i\rangle)$. Using the facet presentations of $P$ 
and $S$ we see that $\pi_{\mathbb{R}}(R_i)=w_i$ and $Q=\pi_{\mathbb{R}}(P)=S$.\\

The fixed points of $X$ belong to the fibers of $f$ over the fixed points of $Y$. From \ref{fibrapicard} we have that every fiber has Picard number one. Hence, by lemma \ref{picard} every invariant fiber has $k+1$ fixed points. If $s$ is the number of fixed points of $Y$ then each $R_i$ has $s$ vertices. It follows that $X$ has 
$s(k+1)$ fixed points and therefore $P$ has $s(k+1)$ vertices. Thus, $P$ is the convex hull of $R_0,...,R_k$.
Since $F$ has $1+$dim$(F)$ fixed points, $S=conv(w_0,...,w_k)$ is a simplex in $\Lambda_{\mathbb{R}}$. Thus, we conclude that
$P=P_0*...*P_k$, where $P_i=R_i+u_i\subset \mbox{ker}(\pi_{\mathbb{R}})$.\\

Conversely, assume that $P=P_0*P_1*...*P_k$. We can suppose that $\pi$ is a canonical projection 
$\mathbb{Z}^{n+k}\rightarrow\mathbb{Z}^k$ as definition \ref{cayley}.  Since $\pi_{\mathbb{R}}(P_i)=w_i$ the affine 
subspace $Aff(P_i)$ is a translation of $ker(\pi_{\mathbb{R}})$. Let $Y$ be the projective toric variety associated to 
the polytopes $P_{i}$'s with respect to the lattice $ker(\pi)$. Then the cones of the fan $\Sigma_Y$ are contained in 
$(ker(\pi_{\mathbb{R}}))^{\vee}=\mathbb{R}^n$. We have noted in Section \ref{Cayley} that the fan $\Sigma_P$ consists of the cones 
$$\sigma_i:=\mbox{cone}\big(\tilde{v}_j\mid v_j\in\sigma(1)\big)+\mbox{cone}(u_0,...,\widehat{u_i},...,u_k)$$ and their faces, where 
$\sigma\in\Sigma_{P_j}$.  Denote by $\Sigma_0$ the fan consisting of the cones $\mbox{cone}(u_0,...,\widehat{u_i},...,u_k)$ for $i=0,...,k$ and their faces.
Then $\Sigma_P$ is split by
 $\Sigma_0$ and $\Sigma_Y$. It follows from Theorem \ref{t.split}, Lemma \ref{picard} and Remark \ref{fibracaoelementar} that 
$j:\mathbb{Z}^ k\hookrightarrow\mathbb{Z}^{n+k}$ induces a Mori fiber space $f:X_P\rightarrow Y$. \end{proof}

\begin{rk}\em{ Recall from the end of Section \ref{Cayley} that the 1-dimensional cones of $\Sigma_P$, $P=P_0*...*P_k$, are generated by the vectors $\tilde{v_i}$, $v_i\in\Sigma_{P_j}$, and $u_j$, $j\in\{0,...,k\}$. Notice that the divisors on $X$ of the form $V(\tilde{v_j})$ are pull backs of the invariant divisors $V(v_j)$ on $Y$. Moreover, the invariant divisors of the general fiber $F$ are the restrictions to $F$ of the divisors on $X$ of the form $V(u_i)$.}
\end{rk}

\begin{cor}\label{bundle} Let $X_P$ be a projective toric variety associated to the lattice polytope $P$. Then, there is a Mori fiber space $f:X_P\rightarrow Y$ with general fiber isomorphic to $\mathbb{P}^k$ if and only if there are strictly combinatorially equivalent polytopes $P_0,...,P_k$ and a positive integer $s$, such that $P\simeq Cayley^s(P_0,...,P_k)$.
Moreover, if $X_P$ is smooth we can take $s$ equal to one and therefore $X_P$ is a projective bundle $\mathbb{P}_Y(\mathcal{E})$, where $Y$ is the toric variety associated to the $P_i's$, and $\mathcal{E}$ is a decomposable vector bundle of rank $k+1$ on $Y$.
\end{cor}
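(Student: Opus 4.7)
The plan is to apply Theorem \ref{mori} to specialize the Cayley-Mori correspondence to the case of $\mathbb{P}^k$ fibers, and to use smoothness of $\Sigma_P$ together with the $s=1$ Cayley construction recalled at the start of Section \ref{Cayley} to identify $X_P$ with a projective bundle.

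For the ``if'' direction, view $Cayley^s(P_0, \ldots, P_k)$ as the Cayley-Mori polytope $P_0 * \cdots * P_k$ with $w_i = s e_i$ and apply the converse half of Theorem \ref{mori}: the canonical inclusion $\mathbb{Z}^k \hookrightarrow \mathbb{Z}^{n+k}$ yields a Mori fiber space $f: X_P \to Y$, and the general-fiber polytope computed there is $\pi_{\mathbb{R}}(P) = conv(0, s e_1, \ldots, s e_k) = s\Delta_k$, the ample polytope of $(\mathbb{P}^k, \mathcal{O}(s))$; hence the fiber is $\mathbb{P}^k$. For the ``only if'' direction, Theorem \ref{mori} provides $P \simeq P_0 * \cdots * P_m$ with $Q := conv(w_0, \ldots, w_m)$ the general-fiber polytope; since the fiber is $\mathbb{P}^k$, $Q$ is a lattice $k$-simplex, so $m = k$, and because $Pic(\mathbb{P}^k) = \mathbb{Z} H$ and $Q$ is a lattice polytope, $Q \simeq s\Delta_k$ for a unique positive integer $s$. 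Composing the Cayley-Mori isomorphism with a lattice automorphism of the vertical quotient $\Lambda$ that maps $w_i \mapsto s e_i$ (extended identically on the horizontal slice $\ker\pi \cap M$) then yields $P \simeq Cayley^s(P_0, \ldots, P_k)$.

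For the smooth assertion, Corollary \ref{picard1} makes fiber $\simeq \mathbb{P}^k$ automatic, since a smooth toric Mori fiber is smooth, projective, of Picard rank one, hence $\mathbb{P}^k$. I then inspect the primitive ray generators of $\Sigma_P$ listed at the end of Section \ref{Cayley}: the vertical generators reduce to $u_0 = -\sum_{\ell \geq 1} e_\ell$ and $u_\ell = e_\ell$ for $\ell \geq 1$, while the horizontal ones are $\tilde v_j = d_j\bigl[v_j + \sum_i ((a_{i_j} - a_{0_j})/s)\,e_i\bigr]$. Smoothness of each maximal cone $\sigma_i = \mbox{Cone}(\tilde v_{j_l}) + \mbox{Cone}(u_\ell : \ell \neq i)$ forces $d_{j_l} = 1$ and $s \mid (a_{i_j} - a_{0_j})$ for all $i, j$: the vertical generators of $\sigma_i$ already form a $\mathbb{Z}$-basis of the vertical sublattice $\mathbb{Z}^k$, so the horizontal generators must project modulo $\mathbb{Z}^k$ to a $\mathbb{Z}$-basis $\{d_{j_l} v_{j_l}\}$ of $\mathbb{Z}^n$, and the vertical components of $\tilde v_j$ must be integral. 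Setting $b_{i_j} := (a_{i_j}-a_{0_j})/s$ and $L_i := \sum_j b_{i_j} D_j$, the rewritten rays $\tilde v_j = v_j + \sum_i b_{i_j} e_i$ match exactly the fan of $\mathbb{P}_Y(\mathcal{O}_Y(L_0) \oplus \cdots \oplus \mathcal{O}_Y(L_k))$ recalled at the start of Section \ref{Cayley}, identifying $X_P$ with this projective bundle with decomposable $\mathcal{E}$ of rank $k+1$.

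The main obstacle will be the last paragraph: verifying rigorously that smoothness of every maximal cone $\sigma_i$ really does yield both the divisibility $s \mid (a_{i_j} - a_{0_j})$ and the unit primitivity constants $d_j = 1$, so that the horizontal ray descriptions collapse to the standard Cayley form and the weak splitting of Theorem \ref{t.split} lifts to a genuine direct-sum decomposition of $N$.
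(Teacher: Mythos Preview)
Your proposal is correct and follows essentially the same approach as the paper's own proof: both directions of the equivalence come straight from Theorem~\ref{mori}, and the smooth case is handled by inspecting the explicit ray generators $\tilde v_j$ and $u_i$ from the end of Section~\ref{Cayley} and using smoothness of a maximal cone $\sigma_i$ to force $d_j=1$ and $s\mid(a_{i_j}-a_{0_j})$, thereby matching $\Sigma_P$ with the projective-bundle fan. The only cosmetic difference is that the paper, after identifying $X_P\simeq\mathbb{P}_Y\bigl(\mathcal{O}\oplus\mathcal{O}(\tfrac{D_1-D_0}{s})\oplus\cdots\oplus\mathcal{O}(\tfrac{D_k-D_0}{s})\bigr)$, goes on to pick an auxiliary ample divisor $D$ on $Y$ and sets $Q_i:=P_{D+(D_i-D_0)/s}$ to exhibit $P\simeq Cayley^1(Q_0,\dots,Q_k)$ explicitly; you instead infer the ``$s=1$'' clause from the projective-bundle identification together with the $s=1$ Cayley construction recalled at the start of Section~\ref{Cayley}, which is equally valid.
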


\begin{proof} 
By Theorem \ref{mori}, there are strictly combinatorially equivalent polytopes $P_0,...,P_k$ such that $P=P_0*...*P_k$ is a Cayley-Mori polytope. Suppose that the general fiber of $f$ is $\mathbb{P}^k$. Then there exists a positive integer $s$ such that $P=Cayley^s(P_0,...,P_k)=conv\big((P_0\times {0}),(P_1\times {se_1})...,(P_k\times {se_k})\big)$, 
where $\{e_1,...,e_k\}$ is a basis for $\mathbb{Z}^k\subset\mathbb{R}^k$. 

As discussed in Section \ref{Cayley}, if $\sigma$ is a maximal cone of $\Sigma_Y$ then the cone $\sigma_0:=\mbox{cone}(\tilde{v_j}\mid v_j\in\sigma(1))+\mbox{cone}(e_1,...,e_k))$ is a maximal cone of $\Sigma_P$, where $\displaystyle \tilde{v_j}=d_j\Big[v_j+\sum_{i=1}^k(a_{i_j}-a_{0_j})\displaystyle\frac{e_i}{s}\Big]$ and $D_{P_i}:=\displaystyle\sum_{j\in\Sigma_P(1)}a_{i_j}D_j$ for $i=0,...,k$. If $X_P$ is smooth, then $d_j=1$ and $\displaystyle\frac{a_{i_j}-a_{0_j}}{s}$ is an integer for every $i=1,...,k$. It follows that $s$ divides $D_j-D_0$ in $Pic(Y)$ for every $j\in\Sigma_P(1)$, and the fan $\Sigma_P$ is exactly the fan of $\mathbb{P}_Y\big(\mathcal{O}\oplus\mathcal{O}(\frac{D_1-D_0}{s})\oplus ...\mathcal{O}(\frac{D_k-D_0}{s})\big).$ 

Let $D$ be an ample divisor on $Y$ such that $D+\displaystyle\frac{D_i-D_0}{s}$ is ample for every $i\in\{1,...,k\}$. We set $Q_i$ to be the polytope associated to  $D+\displaystyle\frac{D_i-D_0}{s}$. Then, $P\simeq Cayley^1(Q_0,...,Q_k)$.
\end{proof}

\begin{cor} \label{exc} Let $X$ be a smooth projective toric variety and $f_R:X\rightarrow Y$ a contraction of an extremal 
ray $R\in \mbox{NE}(X)$. Then the exceptional locus $Exc(f_R)$ is a projective bundle over a toric variety $Z$.\\
\end{cor}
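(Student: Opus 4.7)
The plan is to apply Proposition \ref{fibrapicard}, Corollary \ref{picard1}, Remark \ref{fibracaoelementar}, and Corollary \ref{bundle} in sequence. Set $E := \mbox{Exc}(f_R)$ and $Z := f_R(E)$. By Proposition \ref{fibrapicard}, $E$ is a $T$-invariant irreducible subvariety $V(\sigma) \subset X$ (with $\sigma = \{0\}$, $\sigma = \langle v_1\rangle$, or $\sigma = \mbox{cone}(v_1,\ldots,v_\alpha)$ according as $f_R$ is of fibering, divisorial, or flipping type). Consequently $Z$ is a $T$-invariant closed subvariety of $Y$, carrying a natural toric structure, and the restriction $g := f_R|_E:E\to Z$ is a flat equivariant surjective morphism whose reduced fibers are all isomorphic to a fixed $\mathbb{Q}$-factorial toric Fano variety $G$ with Picard number one.

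Since $\Sigma_X$ is smooth, so is $\mathrm{Star}(\sigma)$, hence $E$ is a smooth toric variety; it is also projective as a closed subvariety of $X$. Smoothness of $E$ combined with flatness of $g$ forces the reduced fibers of $g$ to be smooth, so $G$ is a smooth toric Fano variety of Picard number one, and Corollary \ref{picard1} identifies $G \simeq \mathbb{P}^k$ with $k = \dim G$. In particular every fiber of $g$ is irreducible of Picard number one, so by Remark \ref{fibracaoelementar} the toric morphism $g:E\to Z$ is a Mori fiber space with general fiber $\mathbb{P}^k$.

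With $E$ smooth and projective and $g$ a Mori fiber space whose general fiber is $\mathbb{P}^k$, Corollary \ref{bundle} applies and produces $E \simeq \mathbb{P}_Z(\mathcal{E})$ for some decomposable rank-$(k+1)$ vector bundle $\mathcal{E}$ on the toric variety $Z$, which is exactly the conclusion of the corollary. The only delicate point is the passage from "reduced fibers $\simeq G$" in Proposition \ref{fibrapicard} to "scheme-theoretic fibers $\simeq \mathbb{P}^k$" required by Remark \ref{fibracaoelementar} and Corollary \ref{bundle}; this is handled by noting that flatness of $g$ together with smoothness of $E$ and of the reduced fibers forces $g$ itself to be smooth, so the scheme-theoretic fibers coincide with their reductions.
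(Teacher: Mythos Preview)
Your proof is correct and follows essentially the same route as the paper's: invoke Proposition~\ref{fibrapicard} to see that $f_R|_E:E\to Z$ is a flat equivariant surjection with all (reduced) fibers a fixed toric Fano of Picard number one, use smoothness of $X$ to get smoothness of $E$ and hence of the general fiber, identify that fiber as $\mathbb{P}^k$ via Corollary~\ref{picard1}, and conclude by Corollary~\ref{bundle}.

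Two small remarks. First, the sentence ``smoothness of $E$ combined with flatness of $g$ forces the reduced fibers of $g$ to be smooth'' is not a valid general implication (e.g.\ $\{xy=t\}\to\mathbb{A}^1_t$); what you actually need, and what the paper uses, is only that the \emph{general} fiber is smooth (generic smoothness in characteristic~$0$, or simply that $E$ smooth makes $\Sigma_0$ smooth), and since Proposition~\ref{fibrapicard} says all reduced fibers are isomorphic to $G$, this already gives $G$ smooth. Second, your final paragraph on scheme-theoretic versus reduced fibers is unnecessary: Corollary~\ref{bundle} only requires the general fiber of the Mori fiber space to be $\mathbb{P}^k$, and the general fiber is automatically reduced.
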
 
\begin{proof} By Proposition \ref{fibrapicard} the restriction $f_R\mid_{Exc(f_R)}: Exc(f_R)\rightarrow Z$ is a flat, equivariant, surjective morphism and the fibers are
invariants varieties with Picard number one. So, this restriction is a Mori Fiber space. Since $X$ is smooth, $Exc(f_R)$ is also a smooth toric variety and therefore the general fiber of $f_R\mid_{Exc(f_R)}$ is smooth. Hence, this general fiber is isomorphic to a projective space $\mathbb{P}^k$. From Corollary \ref{bundle} we conclude that the exceptional locus $Exc(f_R)$ is a projective bundle over a toric variety $Z:=f_R(Exc(f_R))$.
\end{proof}

\chapter{Birational Polytope Geometry}\label{bpg}
In this chapter we relate the Mori theory  for toric varieties with combinatorial properties of polytopes. Part of the content exposed in this chapter comes from a joint
work in progress with Carolina Araujo, Alicia Dickenstein and Sandra Di Rocco.
\section{Spaces of Polytopes} \label{Section:P_H}

Let $v_i\in \mathbb{Z}^n$,  $1\leq i\leq r$, be distinct primitive vectors such that 
$\mbox{cone}(v_1,...,v_r)=\mathbb{R}^n$.
Set $\mathcal{H}=\big(v_1, \cdots, v_r\big)$.
For each $a=(a_1,..., a_r)\in \mathbb{R}^r$ define the set:

$$P_a=\Big\{x\in\mathbb{R}^n\Big| \langle v_i,x\rangle \geq -a_i, 1\leq i\leq r \Big\}.$$

\begin{de}\label{defn:PP_H}
We define the space of polytope presentations $\mathcal{PP}_{\mathcal{H}}$ as 
$$
\mathcal{PP}_{\mathcal{H}}=\Big\{a\in \mathbb{R}^r \ \Big| \ P_a \ \text{is a nonempty polytope} \ 
\Big\}\subset \mathbb{R}^r.
$$
\end{de}

Next we will find a suitable projective $\mathbb{Q}$-factorial
$n$-dimensional toric variety $X$, and associate to each 
$a\in \mathcal{PP}_{\mathcal{H}}$ the linear equivalence class of a real effective 
divisor on $X$.

\begin{lema}\label{PP_H_nonempty}
Let $\mathcal{H}=\big(v_1, \cdots, v_r\big)$ be as above.
Then there is an element $a_0\in \mathbb{Z}^r$ such that 
$P_{a_0}$ is an $n$-dimensional simple lattice polytope having exactly
$r$ distinct facets. 
\end{lema}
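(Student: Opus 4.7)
My plan has four steps: boundedness, construction of essential facets via polar duality and a sphere trick, perturbation to simplicity, and clearing denominators. First, for any $a\in\mathbb{R}^r$ with $P_a\ne\emptyset$, the polyhedron $P_a$ is automatically bounded. Its recession cone $\{x\in\mathbb{R}^n:\langle v_i,x\rangle\ge 0\text{ for all }i\}$ is the dual of $\mbox{cone}(v_1,\dots,v_r)=\mathbb{R}^n$, which equals $\{0\}$. So it suffices to arrange that $P_a$ is full-dimensional, simple, and that every one of the $r$ defining inequalities is essential.

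Second, I produce a real vector $a^*$ making all $r$ inequalities essential via polar duality combined with a sphere trick. Writing $\mu_i:=1/a_i>0$, the polytope $Q_\mu:=\mbox{conv}(\mu_1 v_1,\dots,\mu_r v_r)$ contains $0$ in its interior by positive spanning, and the reflection $-Q_\mu^\circ$ of its polar coincides with $P_a$; under this bijection, the essential facets of $P_a$ correspond to the vertices of $Q_\mu$. So it suffices to find $\mu$ for which every $\mu_i v_i$ is a vertex of $Q_\mu$. I take $\mu_i^*:=1/|v_i|$, placing each $\mu_i^* v_i$ on the unit Euclidean sphere. Since the sphere is strictly convex, for $j\ne i$ strict Cauchy--Schwarz gives $\langle\mu_i^* v_i,\mu_j^* v_j\rangle<1=\langle\mu_i^* v_i,\mu_i^* v_i\rangle$; hence $\mu_i^* v_i$ strictly maximizes $\langle\mu_i^* v_i,\cdot\rangle$ on $Q_{\mu^*}$ and is a vertex. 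So $P_{a^*}$ with $a_i^*=|v_i|$ has exactly $r$ essential facets.

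Third, I upgrade $a^*$ to a simple and integral polytope. The set $U\subset\mathbb{R}^r$ of $a$ yielding a full-dimensional polytope with exactly $r$ essential facets is open and contains $a^*$, and within $U$ simplicity is generic: non-simplicity occurs exactly when more than $n$ facet hyperplanes share a point, a codimension-$\ge 1$ affine condition on $a$. Thus a small generic perturbation of $a^*$ gives a simple polytope with $r$ facets, and the corresponding open dense subset $U'\subset U$ contains rational points. Picking $a\in U'\cap\mathbb{Q}^r$ and multiplying by a sufficiently divisible positive integer $m$ produces $a_0:=ma\in\mathbb{Z}^r$ with $P_{a_0}=mP_a$ a lattice polytope, its vertices being rational by Cramer's rule applied to the defining linear systems and becoming integral after scaling by $m$. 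The crux of the argument is the sphere trick in the second step; the remaining steps are standard openness and density arguments.
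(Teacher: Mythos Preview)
Your proof is correct and takes a genuinely different route from the paper. The paper proceeds entirely through toric geometry: it takes the convex hull $Q=\mbox{conv}(v_1,\dots,v_r)$, builds the fan $\Sigma$ over its faces, then performs successive star subdivisions at the missing $v_i$'s (invoking \cite[11.1.6, 11.1.7]{cox} to retain projectivity and achieve simpliciality) to obtain a simplicial projective fan $\Sigma''$ with $\Sigma''(1)=\{v_1,\dots,v_r\}$; the polytope of any ample divisor on $X_{\Sigma''}$ then does the job. Your argument, by contrast, is purely convex-geometric: the ``sphere trick'' $a_i^*=|v_i|$ forces every $\mu_i^* v_i$ to be a vertex of $Q_{\mu^*}$ by strict Cauchy--Schwarz, and polar duality converts this into all $r$ inequalities being essential for $P_{a^*}$; a standard genericity-plus-rationality perturbation then yields simplicity and integrality.

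What each approach buys: yours is more elementary and self-contained, avoiding the nontrivial projectivity-preservation results for star subdivisions; it would work equally well outside the toric setting. The paper's approach, on the other hand, directly \emph{constructs} the simplicial fan $\Sigma''$ and the $\mathbb{Q}$-factorial projective variety $X=X_{\Sigma''}$ that are used throughout the remainder of Section~\ref{Section:P_H}, so it integrates more seamlessly with the surrounding development. Both produce the same output: a simple lattice polytope $P_{a_0}$ whose normal fan is simplicial with exactly the rays $v_1,\dots,v_r$.
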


\begin{proof}
Consider the polytope $Q\subset \mathbb{R}^n$ defined as the convex hull of the vectors 
$\{ v_i \ | \  1\leq i\leq r\}$. 
Since $\mbox{cone}(v_1,...,v_r)=\mathbb{R}^n$, the polytope $Q$ contains $0\in \mathbb{R}^n$
in its interior.
Hence $Q$ determines a fan $\Sigma$, whose cones are defined to be the cones over 
the faces of $Q$, that is, $\Sigma =\Sigma_{Q^{\vee}}$.
The fan $\Sigma$ defines a projective toric variety.
The primitive vectors of $\Sigma$ form a subset of $\{v_1, \cdots, v_r\}$.
After renumbering, we may assume that $\Sigma(1)=\{v_1, \cdots, v_k\}$,
with $n+1\leq k\leq r$.

By considering subsequent star subdivisions of $\Sigma$ at $v_{k+1},\cdots ,v_r$,
we obtain a fan refinement $\Sigma'$ of $\Sigma$ such that  $\Sigma'(1)=\{v_1, \cdots, v_r\}$. 
By \cite[Proposition 11.1.6]{cox}, $\Sigma'$ still defines a projective toric variety.
By  \cite[Proposition 11.1.7]{cox}, $\Sigma'$ can be further star subdivided to produce
a \emph{simplicial} fan $\Sigma''$ such that  $\Sigma''(1)=\{v_1, \cdots, v_r\}$.
The toric variety $X$ corresponding to $\Sigma''$ is then projective, $\mathbb{Q}$-factorial
and $n$-dimensional.
Let $L$ be an ample divisor on $X$. Then the polytope $P$ associated to $L$ is an 
$n$-dimensional simple lattice polytope having exactly $r$ distinct facets. 
By construction, $P=P_{a_0}$ for some $a_0\in \mathbb{Z}^r$. 
\end{proof} 
\vspace{0.5cm}
\begin{say} \label{escolha} \em{
Let $\mathcal{H}=\big(v_1, \cdots, v_r\big)$ be as above.

By Lemma~\ref{PP_H_nonempty}, there is an element
$a_0\in \mathbb{Z}^r$ such that 
$P=P_{a_0}$ is an $n$-dimensional simple lattice polytope having exactly
$r$ distinct facets. \\

Let $X$ be the $n$-dimensional $\mathbb{Q}$-factorial projective toric variety associated to $P$. Then $\Sigma_X(1)=\{v_1,\cdots, v_r\}$.
As usual, for each $i\in\{1,\cdots, r\}$, let $D_i\subset X$ be the $T$-invariant prime Weil 
divisor associated to $v_i$.

Consider the surjective linear map:}
\end{say}
$$
\begin{array}{cccc}
\mathcal{D}: & \mathbb{R}^r & \longrightarrow & N^1(X). \\
& a=\big(a_1,...,a_r\big) & \longmapsto & \mathcal{D}_a=\Big[\displaystyle\sum_{i=1}^{r}a_iD_i\Big] \\
\end{array}
$$
 
\begin{lema}\label{lemma:PP_H<->Eff}
Let the notation be as in \ref{escolha}. Then
\begin{enumerate}
	\item $\mathcal{PP}_{\mathcal{H}}=\mathcal{D}^{-1} \Big( Eff(X)\Big)$. 
	\item $\mathcal{PP}_{\mathcal{H}}$ is an $r$-dimensional closed convex polyhedral cone in $\mathbb{R}^r$.	
\end{enumerate}
Let $a\in\mathcal{PP}_{\mathcal{H}}$. Then
\begin{enumerate}
\setcounter{enumi}{2}
	\item $P_a$ is $n$-dimensional if and only if 
		$a$ lies in the interior of $\mathcal{PP}_{\mathcal{H}}$.
	\item $P_a$ and $P$ have the same normal fan if and only if $\mathcal{D}_a\in Amp(X)$.
\end{enumerate}
\end{lema}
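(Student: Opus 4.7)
The strategy is to translate each of the four assertions into a statement about the class of the divisor $D_a := \sum_{i=1}^r a_i D_i$ on $X$ via the dictionary between rational polytopes and polarized toric varieties, and then exploit the nesting $Amp(X)\subset Big(X)=\mbox{interior of } Eff(X)$. The crucial observation enabling this translation is that, by construction, $\Sigma_X(1)=\{v_1,\ldots,v_r\}$, so the set $P_a$ is literally the polytope $P_{D_a}$ defined in the preliminaries.

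For (1) I would argue as follows. Since $\Sigma_X$ is complete, $P_{D_a}$ is automatically bounded whenever it is nonempty, so the condition $a\in\mathcal{PP}_{\mathcal{H}}$ reduces to $P_{D_a}\neq\emptyset$. By Remark \ref{r.global} this is equivalent to $D_a$ being linearly equivalent to an effective divisor, i.e.\ to $\mathcal{D}_a\in Eff(X)$. For (2) I would then use that $Eff(X)$ is a closed convex polyhedral cone in $N^1(X)$ (from the defining proposition) and that $\mathcal{D}$ is linear and surjective, so $\mathcal{PP}_{\mathcal{H}}=\mathcal{D}^{-1}(Eff(X))$ inherits these properties in $\mathbb{R}^r$. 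To see that $\mathcal{PP}_{\mathcal{H}}$ is full-dimensional, note that $P_{a_0}$ is by choice the polytope of an ample divisor on $X$, so $\mathcal{D}_{a_0}\in Amp(X)$, which lies in the interior of $Eff(X)$; by continuity of $\mathcal{D}$ a whole neighborhood of $a_0$ is contained in $\mathcal{PP}_{\mathcal{H}}$.

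For (3), because $\mathcal{D}$ is a surjective linear map between finite-dimensional real vector spaces it is open, which yields
\[
\mbox{interior of }\mathcal{PP}_{\mathcal{H}} \;=\; \mathcal{D}^{-1}\bigl(\mbox{interior of }Eff(X)\bigr) \;=\; \mathcal{D}^{-1}\bigl(Big(X)\bigr).
\]
I would then invoke the standard toric fact that a $\mathbb{Q}$-Cartier torus-invariant divisor $D$ on a projective $\mathbb{Q}$-factorial toric variety is big iff $P_D$ is $n$-dimensional, which comes from $h^0(X,\mathcal{O}(mD))=\#(mP_D\cap M)$ having maximal growth rate $m^n$ exactly when $P_D$ has maximal dimension. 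Combining these two facts, $P_a$ is $n$-dimensional iff $a$ is in the interior of $\mathcal{PP}_{\mathcal{H}}$.

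For (4), if $\mathcal{D}_a\in Amp(X)$ then $D_a$ (which is $\mathbb{Q}$-Cartier by $\mathbb{Q}$-factoriality of $X$) is ample, and by the correspondence between polarized toric varieties and full-dimensional rational polytopes the toric variety associated to $P_a=P_{D_a}$ is $X$, so $\Sigma_{P_a}=\Sigma_X=\Sigma_P$. Conversely, if $\Sigma_{P_a}=\Sigma_P$, then $P_a$ has the same $r$ facet normals $v_1,\ldots,v_r$ as $P$, its facet presentation coincides with $\{x\mid\langle v_i,x\rangle\geq -a_i\}$, and hence $D_{P_a}=D_a$; since $P_a$ is a full-dimensional rational polytope, $D_{P_a}$ is ample, so $\mathcal{D}_a\in Amp(X)$. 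The only input beyond routine unwinding of the polytope/divisor dictionary and continuity/openness of $\mathcal{D}$ is the equivalence ``$D$ big iff $P_D$ is $n$-dimensional'' used in (3); I expect this to be the main (but minor) obstacle, as the remainder of the proof is careful bookkeeping.
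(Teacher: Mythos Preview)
Your proposal is correct and follows essentially the same route as the paper: item (1) via Remark~\ref{r.global}/Proposition~\ref{p.global}, item (2) from $Eff(X)$ being polyhedral and generated by the $[D_i]$, item (3) from the equivalence ``$D$ big $\Leftrightarrow$ $P_D$ is $n$-dimensional'' together with $Big(X)=\operatorname{int}Eff(X)$, and item (4) from the standard polytope/ample-divisor dictionary (the paper simply cites \cite[Section~3.4]{fulton}). You supply more detail than the paper does, notably the openness of $\mathcal{D}$ to identify $\operatorname{int}\mathcal{PP}_{\mathcal{H}}$ with $\mathcal{D}^{-1}(Big(X))$, and the neighborhood argument at $a_0$ for full-dimensionality; the only cosmetic point is that in your converse for (4) you call $P_a$ a ``rational'' polytope, whereas $a$ is allowed to be real---but this is harmless, since both the normal-fan condition and $\mathcal{D}_a\in Amp(X)$ are open conditions and rational approximation (or working directly with $\mathbb{R}$-divisors) handles it.
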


\begin{proof}
Let $a\in \mathbb{Z}^r$. It follows from Proposition \ref{p.global} that $P_a\neq \emptyset$ if and only if $\mathcal{D}_a\in Eff(X)$, proving 1.

It follows from Propositions \ref{div} and \ref{p.global} that the effective cone of a  toric variety is generated by
the classes of the $T$-invariant prime divisors of $X$. 
So $Eff(X)$ is a $\rho_X$-dimensional closed convex polyhedral cone in $N^1(X)$,
and 2 follows. 

From the definition of bigness, we see that $\mathcal{D}_a$ is big if and only if 
$P_a$ is $n$-dimensional. 
Then item $3$ follows from the fact that $Big(X)$ is the interior of $Eff(X)$.

Item $4$ is also well known. See for instance \cite[Section 3.4]{fulton}.
\end{proof}

Notice that 2 distinct elements of $\mathcal{PP}_{\mathcal{H}}$ may define the same polytope.

\begin{ex} \em{
Set ${\mathcal{H}}=\big((1,0),(0,1),(-1,0),(0,-1),(-1,-1)\big)$, and consider the elements
$a=(1,1,1,1,2)$, and $b=(1,1,1,1,3)$. 
One can check that $P_a=P_b$.  }
\end{ex}

Hence, if one is interested in parameter spaces for polytopes, not just for
polytope presentations, then one is led to consider the quotient
$\mathcal{PP}_{\mathcal{H}}\ /\sim$, where $\sim$ is the equivalence relation that 
identifies elements $a, b\in \mathcal{PP}_{\mathcal{H}}$ such that $P_a=P_b$.  

Our next goal is to show that 
that $\mathcal{PP}_{\mathcal{H}}\ /\sim$ can be realized
as an $r$-dimensional closed convex polyhedral subcone 
of $\mathcal{PP}_{\mathcal{H}}\subset \mathbb{R}^r$.

In \cite{payne}, Payne introduced the cone $Amp^1(X)$
generated by classes of divisors on $X$ whose stable base loci
do not contain any divisorial component. There, it was proved that if $X$ is a complete $\mathbb{Q}$-factorial toric variety then $Amp^1(X)$ is the closed convex polyhedral cone $\displaystyle\bigcap_{i=1}^{r}\mbox{cone}\big(D_1,...,\widehat{D_i},...,D_r\big)$. 

The class of an invariant divisor $D_i$ on $X$ does not belong to $Amp^1(X)$ if and only if $[D_i]\not\in \mbox{cone}\big(D_1,...,\widehat{D_i},...,D_r\big)$. This is equivalent to saying that $|mD_i|=\{mD_i\}$ for any $m$ such that $mD_i$ is Cartier. We call such divisor an \textit{exceptional} divisor. One can check that a divisor $D_i$ is exceptional if and only if $\mbox{cone}(v_1,...,\widehat{v_i},...,v_r)=\mathbb{R}^n$.
 
We define the $r$-dimensional closed convex polyhedral cone:
$$
\mathcal{P}_{\mathcal{H}}:= \mathcal{D}^{-1} \Big( Amp^1(X)\Big)  \ \subset \ \mathcal{PP}_{\mathcal{H}} \ \subset \ \mathbb{R}^r.
$$

Next we show that $\mathcal{PP}_{\mathcal{H}}\ /\sim$ can be identified with $\mathcal{P}_{\mathcal{H}}$.

\begin{pr}\label{prop:P_H<->amp^1} Let the assumptions and notation be as in \ref{escolha} above.
Then the correspondence $a\mapsto P_a$ induces a bijection between 
$\mathcal{P}_{\mathcal{H}}$ and the set of polytopes $\big\{P_a \ \big| \ a\in \mathcal{PP}_{\mathcal{H}}\big\}$.
\end{pr}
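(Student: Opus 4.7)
The plan is to reduce the proposition to the following explicit criterion: for $a\in\mathcal{PP}_{\mathcal{H}}$,
$$
a\in\mathcal{P}_{\mathcal{H}} \ \Longleftrightarrow \ \text{for every } i\in\{1,\ldots,r\} \text{ there is } m\in P_a \text{ with } \langle v_i,m\rangle=-a_i,
$$
i.e.\ each of the $r$ defining inequalities of $P_a$ is active at some point of $P_a$. Once this is in hand, both directions of the bijection follow immediately. For surjectivity, given any $a\in\mathcal{PP}_{\mathcal{H}}$ I set $b_i:=-\min_{m\in P_a}\langle v_i,m\rangle$, which is finite since $P_a$ is a nonempty compact polytope. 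Then $b_i\leq a_i$ for all $i$, so $P_b\subseteq P_a$; conversely every $x\in P_a$ satisfies $\langle v_i,x\rangle\geq -b_i$, so $P_a\subseteq P_b$ and $P_a=P_b$. By construction each inequality of $b$ is saturated on $P_b$, so $b\in\mathcal{P}_{\mathcal{H}}$. For injectivity, if $a,b\in\mathcal{P}_{\mathcal{H}}$ satisfy $P_a=P_b$, the criterion applied to each forces
$$
-a_i \ = \ \min_{m\in P_a}\langle v_i,m\rangle \ = \ \min_{m\in P_b}\langle v_i,m\rangle \ = \ -b_i
$$
for every $i$, whence $a=b$.

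The substantive step is the criterion itself. Since Payne's description gives $Amp^1(X)=\bigcap_{i=1}^{r}\operatorname{cone}(D_1,\ldots,\widehat{D_i},\ldots,D_r)$, and $\mathcal{P}_{\mathcal{H}}=\mathcal{D}^{-1}(Amp^1(X))$, it is enough to show for each fixed $i$ that $\mathcal{D}_a\in\operatorname{cone}(D_1,\ldots,\widehat{D_i},\ldots,D_r)$ if and only if the $i$-th inequality is active on $P_a$. By definition, membership in the latter cone means there exist $c_j\geq 0$ ($j\neq i$) and $m\in M_{\mathbb{R}}$ such that
$$
\sum_{j}a_j D_j \ = \ \sum_{j\neq i}c_j D_j + \operatorname{div}(\chi^{-m}).
$$
Applying Proposition~\ref{div} and equating coefficients of each $D_j$ gives $\langle v_i,m\rangle=-a_i$ together with $c_j=a_j+\langle v_j,m\rangle\geq 0$ for $j\neq i$, which is exactly the statement that $m\in P_a$ saturates the $i$-th bounding hyperplane. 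Conversely, any such $m\in P_a$ yields a nonnegative combination witnessing membership in the cone.

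The main obstacle I expect is bookkeeping of sign conventions, since the inequalities defining $P_a$ use $-a_i$, principal divisors are computed via Proposition~\ref{div} with a sign depending on whether one takes $\chi^m$ or $\chi^{-m}$, and $\mathcal{D}$ is defined on linear equivalence classes rather than divisors. Beyond this bookkeeping, the argument uses only standard toric dictionary (Propositions~\ref{div} and \ref{p.global}, Remark~\ref{r.global}) together with the explicit presentation of $Amp^1(X)$ already recalled from \cite{payne} just above the statement.
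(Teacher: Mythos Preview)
Your proof is correct and follows essentially the same approach as the paper's: both establish the criterion that $a\in\mathcal{P}_{\mathcal{H}}$ if and only if every defining inequality of $P_a$ is active (equivalently, $I_a=\emptyset$), construct the same $b_i=-\min_{m\in P_a}\langle v_i,m\rangle$ for surjectivity, and use the criterion for injectivity. The only minor difference is that the paper justifies the criterion via the original definition of $Amp^1(X)$ in terms of stable base loci (using Proposition~\ref{p.global} to identify which $D_i$ lie in the base locus), whereas you use Payne's polyhedral description $Amp^1(X)=\bigcap_i\operatorname{cone}(D_1,\ldots,\widehat{D_i},\ldots,D_r)$ directly; since that description is already quoted in the paper, your route is arguably the more economical one.
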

\begin{proof}
Let $a\in\mathcal{PP}_{\mathcal{H}}\cap\mathbb{Z}^n$, and set $D_a:=\displaystyle\sum_{i=1}^{r}a_iD_i$. Consider the set $$I_a:=\{v_i\in\Sigma_X(1)\mid \langle u,v_i\rangle > -a_i \mbox{ for every }u\in P_a\}.$$
It follows from Proposition \ref{p.global} that the divisor $D_i$ appears in the stable base locus of $\mathcal{D}_a$ if and only if $v_i\in I_a$.

We claim that there is an effective divisor  $D_b:=\displaystyle\sum_{i=1}^{r}b_iD_i\in Amp^{1}(X)$ such that $P_a=P_b$. 
For each $i\in I_a$ set $d_i:=\displaystyle min_{u\in P_a} \langle u,v_i\rangle +a_i$.\\
 We take $D_b:=\displaystyle\sum_{i\notin I_a}a_iD_i+\displaystyle\sum_{i\in I_a}(a_i-d_i)D_i\in Amp^1(X)$. 
To see the equality  $P_a=P_b$ consider $i\in I_a$ and $u\in P_a$. Since $\langle u,v_i\rangle +a_i\geq d_i$ we have $u\in P_b$. The converse is obvious since $d_i\geq 0$. So, we are reduced to proving that the correspondence $a\mapsto P_a$ is injective on $\mathcal{P}_{\mathcal{H}}$. But this is obvious since $D_a\in Amp^1(X)$ if and only if $I_a=\emptyset$. 
\end{proof}

\begin{pr}\label{compativel} Let $X$ be a complete $\mathbb{Q}$-factorial toric variety. Then, the cone $Amp^1(X)$ is compatible with the $GKZ$ decomposition of $X$. More precisely, $Amp^1(X)=\displaystyle\bigcup_ {(\Delta,\emptyset)\in\mathcal{S}}GKZ(\Delta,\emptyset)$. In particular, every maximal cone of $GKZ(X)$ intersects $Amp^1(X)$ non trivially. If $X$ is projective then $Amp^1(X)$ is a $\rho_X$-dimensional cone containing $Nef(X)$.

\end{pr}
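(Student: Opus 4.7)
The plan is to prove $Amp^1(X)=\bigcup_{(\Delta,\emptyset)\in\mathcal{S}}GKZ(\Delta,\emptyset)$ by showing that both sides are characterized by the same combinatorial condition: the emptiness of the set of ``redundant'' rays of an effective representative. Given any effective $\mathbb{Q}$-representative $D=\sum a_iD_i$ of $[\xi]$, set $I_D:=\{v_i\in\Sigma(1)\mid\langle u,v_i\rangle>-a_i\text{ for every }u\in P_D\}$. Since $P_{D+\mbox{div}(\chi^m)}=P_D-m$ with bounds shifting compatibly, $I_D$ depends only on the class $[\xi]$, so I write $I_\xi$; and by density of rational classes in the closed polyhedral cones involved, it suffices to work with $\mathbb{Q}$-divisors.

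The first step is to show $[\xi]\in Amp^1(X)\Leftrightarrow I_\xi=\emptyset$. Given the decomposition $Amp^1(X)=\bigcap_{i=1}^{r}\mbox{cone}(D_1,\ldots,\widehat{D_i},\ldots,D_r)$ recalled in the excerpt, this reduces to the ray-by-ray equivalence $v_i\notin I_\xi\Leftrightarrow[\xi]\in\mbox{cone}(\{D_j\}_{j\neq i})$. For the forward direction, pick $u\in P_D\cap M_\mathbb{Q}$ with $\langle u,v_i\rangle=-a_i$; then $D+\mbox{div}(\chi^u)=\sum_{j}(a_j+\langle u,v_j\rangle)D_j$ is an effective $\mathbb{Q}$-representative of $[\xi]$ whose $D_i$-coefficient vanishes. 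The reverse is immediate: if $\xi=\sum_{j\neq i}c_jD_j$ with $c_j\geq 0$, then $0\in P_{D'}$ satisfies $\langle 0,v_i\rangle=0=-c_i$, so $v_i\notin I_\xi$.

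The second step is to show $[\xi]\in\bigcup_{(\Delta,\emptyset)}GKZ(\Delta,\emptyset)\Leftrightarrow I_\xi=\emptyset$. Since $GKZ(X)$ is a fan, $[\xi]$ lies in the relative interior of a unique cone, which by the characterization in the excerpt is $GKZ(\Delta_\xi,I_\xi)$, where $\Delta_\xi$ is the normal fan of $P_D$. By the face description, $[\xi]\in GKZ(\Delta,\emptyset)$ iff $\Delta$ refines $\Delta_\xi$ and $I_\xi\subseteq\emptyset$, forcing $I_\xi=\emptyset$. Conversely, when $I_\xi=\emptyset$ the choice $\Delta=\Delta_\xi$ works, provided $(\Delta_\xi,\emptyset)\in\mathcal{S}$: the rays of $\Delta_\xi$ are facet normals of $P_D$ and hence lie in $\Sigma(1)$, and $X_{\Delta_\xi}$ is projective as the toric variety of the rational polytope $P_D$. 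Combining the two steps yields the main identity.

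For the remaining assertions, any maximal cone $GKZ(\Delta,I)$ contains its face $GKZ(\Delta,\emptyset)=\Phi_{\Delta,\emptyset}(Nef(X_\Delta))$, which is nontrivial since $\Phi_{\Delta,\emptyset}$ is injective and $Nef(X_\Delta)\neq\{0\}$; this face is contained in $Amp^1(X)$ by the main identity, providing the required nontrivial intersection. If $X$ is projective then $(\Sigma_X,\emptyset)\in\mathcal{S}$ and $GKZ(\Sigma_X,\emptyset)=Nef(X)$ is $\rho_X$-dimensional in $N^1(X)$; combined with $Nef(X)\subseteq Amp^1(X)\subseteq N^1(X)$, this forces $Amp^1(X)$ to be $\rho_X$-dimensional as well. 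The main subtlety I anticipate is verifying $(\Delta_\xi,\emptyset)\in\mathcal{S}$ when $P_D$ is not full-dimensional and $\Delta_\xi$ is a degenerate fan; here I would invoke the excerpt's construction of $X_P$ via the full-dimensional slice $R=\mbox{Aff}(P)\cap P$ with respect to the lattice $\mbox{Aff}(P)\cap M$ to conclude projectivity of $X_{\Delta_\xi}$.
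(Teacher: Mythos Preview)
Your proof is correct and follows essentially the same line as the paper's: both reduce to the equivalence $[\xi]\in Amp^1(X)\Leftrightarrow I_\xi=\emptyset$ and then invoke the GKZ property that $[\xi]$ lies in the relative interior of $GKZ(\Delta_\xi,I_\xi)$. The only noteworthy difference is in how the equivalence is established: the paper argues via the rational map $f:X\dashrightarrow X_{\Delta}$ and the decomposition $mD=f^*A+E$ with $\operatorname{supp}(E)$ equal to the divisorial base locus, whereas you work directly from Payne's description $Amp^1(X)=\bigcap_i\operatorname{cone}(D_1,\ldots,\widehat{D_i},\ldots,D_r)$ and a ray-by-ray polytope argument; both are short and your version is slightly more self-contained combinatorially.
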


\begin{proof} Let $D$ be a divisor in $Amp^1(X)$ and let $\Delta$ be the fan defined by $P_D$.
Let $m$ be a positive integer such that $mD$ is Cartier.
The complete linear system $\big| mD\big|$
defines a rational map $f:X\dashrightarrow X_{\Delta}$, which extends to a morphism
on an open subset whose complement has codimension $\geq 2$ in $X$.
Therefore one can define a pull-back map $f^*:N^1(X_{\Delta})\to N^1(X)$.
By construction, there is an ample divisor $A$ on $X_{\Delta}$, and an effective divisor 
$E$ on $X$ such that $mD= f^*A+E$.
The support of $E$ consists of the $T$-invariant prime divisors 
on the base locus of $\big| mD\big|$. Since $D\in Amp^ 1(X)$ we have $E=0$ and $mD=f^ *A$. Thus, $D\in GKZ(\Delta,\emptyset)$. 

Conversely, given $(\Delta,\emptyset)\in\mathcal{S}$, $f:X\dashrightarrow X_{\Delta}$ the associated rational map and $A$ an ample divisor on $X_{\Delta}$, then $D:=f^*(A)$ belongs to the interior of $GKZ(\Delta,\emptyset)$. Thus, the set $I=\{v_i\mid \langle u,v_i\rangle >-a_i \textit{ for every }u\in P_D\}$ is empty. Using the same argument as in the proof of Proposition \ref{prop:P_H<->amp^1} we conclude that $D\in Amp^1(X)$. If $GKZ(\Delta,I)$ is a maximal cone then it follows from the first propriety of $GKZ$ decomposition that $GKZ(\Delta,\emptyset)$ is a non trivial face of $GKZ(\Delta,I)$. For the last claim notice that in that case $Nef(X)=GKZ(\Sigma_X,\emptyset)$.
\end{proof} 

\vspace{0.5cm}

Let $\mathcal{H}=\big(v_1,...,v_r\big)$ and $X$ be as in \ref{escolha}.
In Section \ref{gkzdecomposition}, we described 
the $GKZ$ decomposition of $X$. 
We saw that each maximal cone in this decomposition is generated
by the nef cone of a $\mathbb{Q}$-factorial birational model of $X$ and 
some classes of exceptional invariant divisors $[D_i]$'s.  This decomposition induces, through the map $\mathcal{D}$, a degenerate fan supported on $\mathcal{PP}_{\mathcal{H}}\subset\mathbb{R}^r$ which we call the \emph{GKZ-decomposition} of $\mathcal{PP}_{\mathcal{H}}$ and denote by $GKZ(\mathcal{PP}_{\mathcal{H}})$. 

Polytopes associated to elements in the relative interior of the same cone of this fan 
are strictly combinatorially equivalent. 
This is a different construction of a cell decomposition of $\mathcal{PP}_{\mathcal{H}}$  that 
has already been considered in the literature.
See for instance \cite[Section1.3]{luca}. 
There, the maximal cones of this decomposition are called \emph{
big cells},
and it is shown that the volume of the polytope $P_b$ is a polynomial function of $b$
on each big cell.

It follows from the properties of the $GKZ$ decomposition of $X$ that given a point $a\in\mathcal{PP}_{\mathcal{H}}$ there is a linear subspace $S\ni a$ with maximal dimension among those satisfying the following property: there is an open neighbourhood $U$ of $a$ such that for each $b\in S\cap U$ the polytopes $P_a$ and $P_b$ are strictly combinatorially equivalent. The closure of the set consisting of points $b\in S$ such that $P_b$ is strictly combinatorially equivalent to $P_a$ defines a cone $GKZ(a,\mathcal{PP}_{\mathcal{H}})$ of $GKZ(\mathcal{PP}_{\mathcal{H}})$ whose dimension is dim $S$. This means that
$$GKZ(\mathcal{PP}_{\mathcal{H}})=\Big\{ GKZ(a,\mathcal{PP}_{\mathcal{H}}) \ \big | \  a\in\mathcal{PP}_{\mathcal{H}}\Big\},$$
and
 $$\mathcal{PP}_{\mathcal{H}}=\displaystyle\bigcup_{a\in\mathcal{PP}_{\mathcal{H}}}GKZ(a,\mathcal{PP}_{\mathcal{H}}).$$

 Moreover, for each point $a$ in the interior of a maximal cone of $GKZ(\mathcal{PP}_{\mathcal{H}})$, the polytope $P_a$ is $n$-dimensional and simple. It follows from Proposition \ref{compativel}
that the GKZ-decomposition of $\mathcal{PP}_{\mathcal{H}}$ is compatible with $\mathcal{P}_{\mathcal{H}}$. More precisely,
$$\mathcal{P}_{\mathcal{H}}=\displaystyle\bigcup_{a}GKZ(a,\mathcal{PP}_{\mathcal{H}}),$$
\noindent where the union is over all $a\in\mathcal{PP}_{\mathcal{H}}$ such that $P_a$ is a simple polytope having exactly $r$ facets.

 Note that $\mathcal{D}(a)=\mathcal{D}\big(a+\big(\langle u,v_i\rangle,\cdots ,\langle u,v_r\rangle\big)\big)$ for every $u\in\mathbb{Z}^n$. Therefore, the fan $GKZ(\mathcal{PP}_{\mathcal{H}})$ is degenerate. Each cone of this fan contains the $n$-dimensional linear subspace defined as the image of the linear map $\mathbb{R}^n\rightarrow\mathbb{R}$ given by the matrix whose rows are the vectors $v_1,...,v_r$. \\

\begin{rk}\label{obs} \em{ It follows from the properties of the GKZ decomposition of $X$ that $GKZ(a,\mathcal{PP}_{\mathcal{H}})$ is a maximal cone if and only if $P_a$ is a simple $n$-dimensional polytope with $r$ facets.}
\end{rk} 
 
 Proposition \ref{gkz} can be rewritten as follows. 
Consider $GKZ(a,\mathcal{PP}_{\mathcal{H}})$ and $GKZ(b,\mathcal{PP}_{\mathcal{H}})$  maximal cones whose intersection is a wall $GKZ(c,\mathcal{PP}_{\mathcal{H}})$. One of the following occurs:
 \begin{enumerate}
 \item  $P_b$ has one less facet than $P_a$, $P_c$ is strictly combinatorially equivalent to $P_b$ and there is a divisorial contraction $f:X_{P_a}\rightarrow X_{P_b}$.
 \item $P_a$ has one less facet than $P_b$, $P_c$ is strictly combinatorially equivalent to $P_a$  and there is a divisorial contraction $f:X_{P_b}\rightarrow X_{P_a}$.
 \item The polytope $P_c$ is $n$-dimensional but not simple. There is a small contraction $f:X_{P_a}\rightarrow X_{P_c}$ with $f^+:X_{P_b}\rightarrow X_{P_c}$ the associated flip. The polytope $P_c$ has one less $m$-dimensional face than $P_a$, where $m$  denotes the dimension of the exceptional locus of $f$.
 \end{enumerate}
 
 In all cases it follows from Theorem \ref{mori} that the lost face (i.e., the face corresponding to $Exp(f)$) is a Cayley-Mori polytope.
 
If $GKZ(a,\mathcal{PP}_{\mathcal{H}})$ is a maximal cone and $GKZ(c,\mathcal{PP}_{\mathcal{H}})$ is an exterior wall of $GKZ(a,\mathcal{PP}_{\mathcal{H}})$, then dim $P_c<$ dim $P_a$ and there is a Mori fiber space $f:X_{P_a}\rightarrow X_{P_c}$. In this case, by Theorem \ref{mori} $P_a$ is a Cayley-Mori polytope.

\begin{rk}\label{rk3.3} \em{ In \ref{escolha} we made a choice of $a_0\in \mathbb{Z}^r$ such that 
$P_{a_0}$ is an $n$-dimensional simple lattice polytope having exactly
$r$ distinct facets. 
Suppose we choose another element $a_1\in \mathbb{Z}^n$ such that 
$P_{a_1}$ is an $n$-dimensional simple lattice polytope having exactly
$r$ distinct facets.
Let $(X',L')$ be the polarized toric variety associated to $P_{a_1}$. Then
$X$ and $X'$ are $T$-isomorphic in codimension one.
So there is a natural identification $\varphi:N^1(X)\to N^1(X')$ 
sending the class of a $T$-invariant prime divisor on $X$ to the class of 
the corresponding  $T$-invariant prime divisors on $X'$.
In particular, $\varphi$ identifies the cones $Amp^1(X)$ and $Eff(X)$ with 
$Amp^1(X')$ and $Eff(X')$, respectively.
Moreover, $\varphi$ preserves the GKZ decomposition on $Eff(X)$ and $Eff(X')$.}
\end{rk}

\section{Adjoint Polytopes}
In this section we will explain the MMP in terms of polytope geometry. Given a polarized toric variety $(X,L)$ we will define an operation on the associated polytope that, under some assumptions, describes each
step of the MMP with scaling for $(X,L)$.

\begin{de}\label{def4}
Let $P\subset\mathbb{R}^n$ be an $n$-dimensional polytope, defined by the facet presentation:
$$P:=\{u\in \mathbb{R}^n\mid \langle u,v_i\rangle\geq -a_i, 1\leq i\leq r\},$$
where each $v_i\in\mathbb{Z}^n$ is primitive and $a_i\in\mathbb{R}$ for $1\leq i\leq r$. 
The polarized toric variety corresponding to $P$ is denoted by $(X,L)$, where $L$ is an ample $\mathbb{R}$-divisor on $X$. 
For each $s\in\mathbb{R}_{\geq 0}$, define:

$$P^{(s)}=\{u\in \mathbb{R}^n\mid \langle u,v_i\rangle\geq -a_i+s, 1\leq i\leq r\}.$$

This operation was introduced in \cite{alicia} and the polytopes $P^{(s)}$ are called adjoint polytopes in \cite{adjunction}. 
\end{de}

\begin{rk} \em{ We note that in order to define the adjoint polytopes $P^{(s)}$ we need $P$ to be given by it facet presentation. This means that $P$ has exactly $r$ facets. Otherwise $P^{(s)}$ would not be well defined, i.e., it could depend on the presentation.} 
\end{rk}
\begin{ex} \em{ Consider the polytope $P$ defined by the following facet presentation :

\begin{figure}[!ht]
\begin{minipage}[b]{0.45\linewidth}
$$
\left\{
\begin{array}{lll}
\langle (501,-1000),x\rangle &\geq &-1000 \\
\langle (-1000,501),x \rangle &\geq &-499  \\
\langle (0,1),x\rangle &\geq & 1  \\
\end{array}
\right.
$$
\vspace*{0.4cm}
\end{minipage}
\begin{minipage}[b]{0.45\linewidth}
\begin{center}
\includegraphics[scale=0.18]{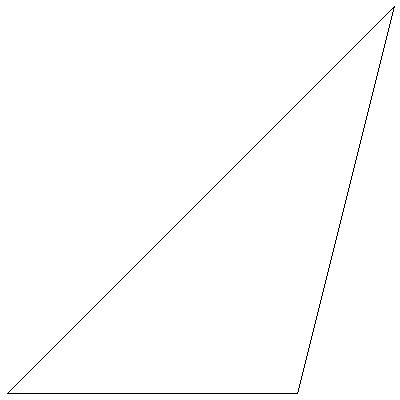}
\end{center}
\end{minipage}
\end{figure}
The polytopes $P^{(s)}$ have the same combinatorial type as $P$ until the moment in which the dimension of $P^{(s)}$ drops. In particular, $P^{(\frac{2}{5})}$ is still a triangle. On the other hand, if we add the inequality $\langle (0,-1),x\rangle\geq -2$, we still have the same polytope $P$. However, $P^{(\frac{2}{5})}$ is no longer a triangle.   

\begin{figure}[!ht]
\begin{minipage}[b]{0.45\linewidth}
$$
\left\{
\begin{array}{lll}
\langle (501,-1000),x\rangle &\geq &-1000 \\
\langle (-1000,501),x \rangle &\geq &-499  \\
\langle (0,1),x\rangle &\geq & 1  \\
\langle (0,-1),x\rangle &\geq & -2
\end{array}
\right.
$$

\vspace*{0.3cm}
\end{minipage}
\begin{minipage}[b]{0.45\linewidth}
\begin{center}
\includegraphics[scale=0.18]{exemplopresentation1.jpg}
\end{center}
\end{minipage}
\end{figure}

\newpage

\begin{figure}[!hnt]
\begin{minipage}[b]{0.45\linewidth}
$$
\left\{
\begin{array}{lll}
\langle (501,-1000),x\rangle &\geq &-1000+\displaystyle\frac{2}{5} \\
\langle (-1000,501),x \rangle &\geq &-499+\displaystyle\frac{2}{5}  \\
\langle (0,1),x\rangle &\geq & 1+\displaystyle\frac{2}{5}  \\
\langle (0,-1),x\rangle &\geq & -2+\displaystyle\frac{2}{5}
\end{array}
\right.
$$
\vspace*{0.3cm}
\end{minipage}
\begin{minipage}[b]{0.45\linewidth}
\hspace*{2cm}
\includegraphics[scale=0.16]{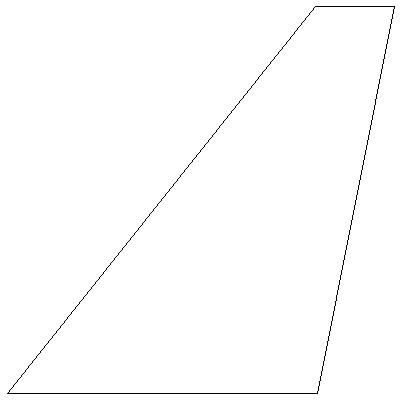}
\vspace{0.7cm}
\end{minipage}
\end{figure}

}
\end{ex}
\vspace{0,5cm}
Suppose that $P$ is given by it facet presentation as in Definition \ref{def4}. By \cite[Proposition 1.4]{adjunction}, $(P^{(s)})^{(t)}=P^{(s+t)}$.\\

The nef threshold of $P$ is the number $$\lambda(P):=sup\{s\in\mathbb{R}_{\geq 0}\mid P \mbox{ and } P^{(s)} \mbox{ have the same normal fan}\}$$ while the effective threshold of $P$ is the number $$\sigma(P):=sup\{s\in \mathbb{R}_{\geq 0} \mid P^{(s)}\neq\emptyset\}.$$ Equivalently, $\sigma(P)=\sigma(L)=sup\{s\in \mathbb{R}_{\geq 0} \mid [L+sK_X]\in Eff(X) \}$ and $\lambda(P)=\lambda(L)=sup\{s\in\mathbb{R}_{\geq 0}\mid [L+sK_X]\in Nef(X)\}$.
The polytope $P^{(\sigma(P))}$ is called the $core$ of $P$ and is denoted by $core(P)$.\\

By Lemma~\ref{lemma:PP_H<->Eff}, $\dim P^{(s)}=n$ for $0\leq s<\sigma(P)$, and 
$\dim P^{(\sigma(P))}<n$. \\

\begin{rk} \em{ By \cite[Lemma 1.13]{adjunction} if $P$ is a rational polytope, then the number $\lambda(P):=sup\{s\in\mathbb{R}_{\geq 0}\mid [L+sK_X]\in Nef(X)\}$ is positive if and only if $X$ is a $\mathbb{Q}$-Gorenstein variety. This happens for instance when $P$ is a simple polytope.}
\end{rk}

Suppose that $P=P_a$ is simple and set $\mathcal{H}=\big(v_1, \cdots, v_r\big)$. Consider the linear map:

$$
\begin{array}{cccl}
 \gamma: \  & [\ 0\ ,\ \sigma(P)\ ] & \rightarrow & \mathcal{PP}_{\mathcal{H}} \\
 & s & \mapsto & a-s\cdot \big(1,...,1\big) . \\
\end{array}
$$
In order to relate adjoint polytopes with birational geometry, we will compose
$\gamma$ with the map $\mathcal{D}: \mathbb{R}^r  \to N^1(X)$ defined in Section~\ref{Section:P_H}.
Recall that $\mathcal{D}(1, \cdots, 1) =  \big[\sum_{_{i=1}}^{^r}D_i\big] = [-K_X]$, 
which lies in the interior of $Eff(X)$.
Set $L=\sum_{_{i=1}}^{^r}a_iD_i$. Then we have 
$$
\begin{array}{cccl}
\pi=\mathcal{D}\circ\gamma: \  & [\ 0\ ,\ \sigma(P)\ ] & \rightarrow & Eff(X)  \\
 & s & \mapsto & [L+sK_X] . \\
\end{array}
$$

This means that the polytope associated to the divisor $L+sK_X$ is $P^{(s)}$.

 When we increase $s$ the adjoint polytope $P^{(s)}$ will change its combinatorial type at some critical values, the first one being $\lambda(P)$. Our aim is to describe the family of polytopes $P^{(s)}$ for values of $s$ between 0 and $\sigma(P)$.
Theorem \ref{MORI} below describes $P^{(s)}$ as we vary $s$ from 0 to $\sigma(P)$ under the assumption that $P$ is a \textit{general}
simple rational polytope. In this case we can say precisely what happens with the adjoint polytope $P^{(s)}$ each time reaches one of finitely many critical values. Moreover, we show that for $\varepsilon >0$ sufficiently small, $P^{(\sigma(P)-\varepsilon)}$ is a Cayley-Mori polytope.\\

We follow the notation introduced in Section \ref{gkzdecomposition}

\begin{lema} \label{lm} Let $(\Sigma,I)\in\mathcal{S}$ and let $f:X\dashrightarrow X_{\Sigma}$ be the associated rational map. If $A$ is a nef effective $\mathbb{R}$-$T$-divisor on $X_{\Sigma}$ and $\lambda=(\lambda_i)_i$ is in $\mathbb{R}_{\geq 0}^{|I|}$ then $P_A=P_D$ where $D=f^*(A)+\displaystyle\sum_{i\in I}\lambda_iD_i$.
\end{lema}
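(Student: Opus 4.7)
My plan is to compare the defining facet inequalities of $P_A\subset M'_{\mathbb{R}}$ and $P_D\subset M_{\mathbb{R}}$ under the identification $\pi^*:M'_{\mathbb{R}}\hookrightarrow M_{\mathbb{R}}$ dual to $\pi:N\to N'=N/(W\cap N)$. The defining inequalities of $P_D$ will split into two groups: those from rays $v_i$ with $i\notin I$, which should match the facet presentation of $P_A$ via $\pi^*$, and those from rays $v_i$ with $i\in I$, which should be shown to be redundant on $P_A$, thanks to the nefness of $A$ and the non-negativity of the $\lambda_i$.

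First I will use the support function $\varphi_A$ of $A$ to compute $f^*A$. Writing $A=\sum_j b_j V_{X_\Sigma}(v'_j)$ in its facet presentation, with the sum indexed by the rays $v'_j$ of $\Sigma$, one has $\varphi_A(v'_j)=-b_j$; the toric pullback formula then gives $f^*A=\sum_i c_iD_i$ with $c_i=-\varphi_A(\pi(v_i))$. For $i\notin I$ one may write $\pi(v_i)=s_iv'_{j(i)}$ with $s_i>0$, so $c_i=s_ib_{j(i)}$. Consequently
$$P_D=\bigl\{m\in M_{\mathbb{R}}\,:\,\langle m,v_i\rangle\ge -c_i\ \forall\,i\notin I,\ \langle m,v_i\rangle\ge -c_i-\lambda_i\ \forall\,i\in I\bigr\}.$$

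The second step is to match the $i\notin I$ inequalities with the facet presentation of $P_A$. For $m=\pi^*(m')$ the identity $\langle m,v_i\rangle=\langle m',\pi(v_i)\rangle=s_i\langle m',v'_{j(i)}\rangle$ shows that the inequality $\langle m,v_i\rangle\ge -c_i$ is equivalent to $\langle m',v'_{j(i)}\rangle\ge -b_{j(i)}$, exactly the defining facet inequality of $P_A$ at the ray $v'_{j(i)}$. Running over $i\notin I$ recovers every facet inequality of $P_A$, so this first group of inequalities alone cuts out $\pi^*(P_A)$.

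The remaining and central step is to verify that the $i\in I$ inequalities are automatically satisfied on $\pi^*(P_A)$. Here I will invoke the standard fact that, since $A$ is nef on the complete toric variety $X_{\Sigma}$, the support function $\varphi_A$ is convex on $|\Sigma|=N'_{\mathbb{R}}$ and
$$P_A=\{m'\in M'_{\mathbb{R}}\mid \langle m',v\rangle\ge\varphi_A(v)\ \text{for all}\ v\in N'_{\mathbb{R}}\}$$
(a consequence of the toric Kleiman criterion, see \cite{cox}). Applying this at $v=\pi(v_i)$ gives, for $m'\in P_A$, $\langle m,v_i\rangle=\langle m',\pi(v_i)\rangle\ge\varphi_A(\pi(v_i))=-c_i\ge -c_i-\lambda_i$, the last step using $\lambda_i\ge 0$. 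I expect the main obstacle to be precisely this appeal to the support-function description of $P_A$ for a nef divisor, which requires convexity of $\varphi_A$; once this is in hand, both inclusions $\pi^*(P_A)\subseteq P_D$ and $P_D\subseteq\pi^*(P_A)$ fall out of the bookkeeping above.
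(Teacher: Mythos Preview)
Your argument is correct and, since the paper's own proof is just the one-line citation ``follows easily from \cite[Proposition~6.2.7]{cox}'', you are in effect unpacking that citation: the key input is precisely the support-function description $P_A=\{m'\in M'_{\mathbb{R}}:\langle m',v\rangle\ge\varphi_A(v)\ \text{for all}\ v\in N'_{\mathbb{R}}\}$ valid for a nef $T$-divisor $A$, which is what that proposition provides.

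One small imprecision is worth flagging. You assert that for every $i\notin I$ the image $\pi(v_i)$ is a positive multiple of some ray $v'_{j(i)}$ of $\Sigma$. The definition of $\mathcal{S}$ only guarantees the inclusion $\Sigma(1)\subseteq\{\pi(v_i):i\notin I\}$ (up to scaling), not equality; there may well be indices $i\notin I$ for which $\pi(v_i)$ lies in the relative interior of a higher-dimensional cone of $\Sigma$. This does not damage the argument, because your convexity step in the third paragraph applies verbatim to such $i$ as well. The cleaner bookkeeping is therefore to partition $\{1,\dots,r\}$ not according to membership in $I$, but according to whether $\pi(v_i)$ spans a ray of $\Sigma$: the inequalities coming from the first group reproduce exactly the facet presentation of $P_A$, giving $P_D\subseteq\pi^*(P_A)$; all remaining inequalities are redundant on $\pi^*(P_A)$ by convexity of $\varphi_A$ (together with $\lambda_i\ge 0$ when $i\in I$), giving the reverse inclusion.
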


\begin{proof} It follows easily from \cite[Proposition 6.2.7]{cox}.
\end{proof}

\begin{de}\label{general} Let $P$ be a simple polytope and set $\mathcal{H}=\big(v_1,...,v_r\big)$, where $\{v_1,...,v_r\}$ are the primitive vectors of $\Sigma_P$.
Let the notations be as in \ref{escolha}. We say that the polytope $P$ is general if $P=P_b$ for some  
$$b\in \mathcal{D}^ {-1}\Big(Eff(X)\setminus\displaystyle\bigcup_{\Gamma}(\mbox{cone}(\Gamma,[-K_X]))\Big),$$
\noindent where $\Gamma$ runs over all $(\rho_X-2)$-dimensional cones in $GKZ(X)$ (see Remarks \ref{obs} and \ref{rk3.3}).
\end{de}

\begin{teo} \label{MORI} Let $(X,L)$ be a polarized $n$-dimensional $\mathbb{Q}$-factorial toric variety such that $P:=P_L\subset\mathbb{R}^n$ is a general rational  polytope. Then there exist sequences 
$$0=\lambda_0<\lambda_1<...<\lambda_k=\sigma(P), \ \ \ \ \ X=X_1 \stackrel{f_1}{\dashrightarrow} X_2\stackrel{f_2}{\dashrightarrow} ...\stackrel{f_k}{\dashrightarrow} X_{k+1}$$ 

of rational numbers and rational maps, such that: 
\begin{enumerate}
\item For $i\in\{1,...,k-1\}$, $f_i$ is either a divisorial contraction or a flip. 
\item For $\lambda_i <s,t<\lambda_{i+1}, \ P^{(s)}$ and $P^{(t)}$ are $n$-dimensional simple polytopes with the same normal fan. 
\item At $s=\lambda_i$, one of the following occurs. 
\begin{enumerate}

\item Either $P^{(\lambda_i)}$ is simple and $P^{(\lambda_i)}$ has one less facet than $P^{(\lambda_{i-1})}$ (equivalently, $f_i$ is a contraction of divisorial type), or 
\item $P^{(\lambda_i)}$ is not simple and $P^{(\lambda_i)}$ has the same number of facets as $P^{(\lambda_{i-1})}$ (equivalently, $f_i$ is a flip). 
\end{enumerate}
\item For $\lambda_{k-1}<s<\lambda_k=\sigma(P)$, $P^{(s)}$ is a Cayley-Mori polytope, (equivalently $f_k$ is a Mori fiber space, and $X_{k+1}$ is the toric variety associated to $P^{(\sigma(P))}$). 

\item For each $i\in\{1,...,k\}$, denote by $m_i$ the dimension of the locus where $f_i$ is not an isomorphism. Then, for $\lambda_i<s<\lambda_{i+1}$, the polytope $P^{(s)}$ has exactly one more $m_i$-dimensional face than $P^{(\lambda_{i+1})}$, and this face is a Cayley-Mori polytope.  

\item Let $K(P)$ be the linear space parallel to $Aff(Core(P))$ and consider the natural projection $\pi_P:\mathbb{R}^n\rightarrow \mathbb{R}^n/K(P)$ associated to $P$. The toric variety associated to the polytope $Q:=\pi_P(P)$ is the closure of the general fiber of the rational map $f:=f_k\circ...\circ f_1:X\dashrightarrow X_{k+1}$.
\end{enumerate}
Moreover, if $P^{(\lambda_i)}$ is simple then $X_{i+1}$ is the toric variety associated to it.
  Otherwise, if $P^{(\lambda_i)}$ is not simple, the toric variety associated to $P^{(\lambda_i)}$ is the image of the small contraction corresponding to the flip $f_i$ and $X_{i+1}$ is associated to $P^{(s)}$ for $\lambda_i<s<\lambda_{i+1}$.
\end{teo}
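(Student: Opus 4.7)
The plan is to translate the MMP with scaling for $(X,L)$ into a walk through the GKZ decomposition of $Eff(X)$. Let $\mathcal{H}=(v_1,\ldots,v_r)$ be the primitive facet normals of $P$, write $L=\sum a_i D_i$, and consider the affine segment $\pi:[0,\sigma(P)]\to Eff(X)$, $s\mapsto [L+sK_X]=\mathcal{D}(a-s(1,\ldots,1))$. By Remark~\ref{r.global}, $P^{(s)}=P_{L+sK_X}$, so the combinatorial type of $P^{(s)}$ is determined by the GKZ cell containing $\pi(s)$.

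By the genericity hypothesis of Definition~\ref{general}, $\pi((0,\sigma(P)))$ avoids every cone of $GKZ(X)$ of codimension $\geq 2$, so the segment meets only maximal cells and crosses only their walls transversally. Let $0=\lambda_0<\lambda_1<\cdots<\lambda_k=\sigma(P)$ be the values where $\pi$ crosses a wall, and let $GKZ(\Delta_{i+1},I_{i+1})$ be the maximal cone containing $\pi((\lambda_i,\lambda_{i+1}))$. Set $X_{i+1}:=X_{\Delta_{i+1}}$. For $s$ in this open interval, properties of the GKZ decomposition together with Lemma~\ref{lm} show that $P^{(s)}$ is an $n$-dimensional simple polytope with normal fan $\Delta_{i+1}$ and exactly $r-|I_{i+1}|$ facets, which yields (2). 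At each intermediate $\lambda_i$, Proposition~\ref{gkz} classifies the wall into three interior types: a divisorial contraction (cases 1--2 of that proposition, giving a drop of one facet with $P^{(\lambda_i)}$ still simple, which is (3)(a)), or a flip (case 3, with $P^{(\lambda_i)}$ non-simple and the same number of facets, which is (3)(b)). This classification yields (1) and (3), and the ``moreover'' statement identifying $X_{i+1}$ with the polytope of $P^{(\lambda_i)}$ or of $P^{(s)}$ according as the latter is or is not simple.

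Since $\pi(\sigma(P))\in\partial Eff(X)$, the final wall at $\lambda_k$ is of the exterior type (4) in Proposition~\ref{gkz}, corresponding to a Mori fiber space $f_k:X_k\to X_{k+1}$; Theorem~\ref{mori} then identifies $P^{(s)}$ for $\lambda_{k-1}<s<\sigma(P)$ as a Cayley-Mori polytope and $X_{k+1}$ as the toric variety of $P^{(\sigma(P))}$, giving (4). For (5), Proposition~\ref{fibrapicard} shows that the restriction of each intermediate $f_i$ to its exceptional locus is itself a Mori fiber space over its image, so Theorem~\ref{mori} forces the corresponding $m_i$-dimensional face of $P^{(s)}$ (for $\lambda_i<s<\lambda_{i+1}$) to be Cayley-Mori, and this face disappears in $P^{(\lambda_{i+1})}$. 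For (6), applying Theorem~\ref{mori} to the Cayley-Mori structure near $s=\sigma(P)$ identifies $K(P)$ with the direction of the fibers of $f_k$ and $Q=\pi_P(P)$ with the polytope of the general fiber of $f_k$; since the earlier $f_i$'s do not change the general fiber of the composition, $X_Q$ is the closure of the general fiber of $f$.

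The main obstacle I anticipate is the bookkeeping at flip steps: the flipped variety $X_{i+1}$ is associated to $P^{(s)}$ for $s$ strictly larger than $\lambda_i$, while the image of the small contraction corresponds to the non-simple polytope $P^{(\lambda_i)}$, so one must carefully match the wall geometry in $GKZ(X)$ with the trichotomy of Proposition~\ref{fibrapicard} on both sides. Verifying (5) at such a step requires tracking the Cayley-Mori face on both $X_i$ and $X_{i+1}$ via Theorem~\ref{mori} applied to the two opposite extremal rays, and using the isomorphism in codimension one between $X_i$ and $X_{i+1}$ to match the corresponding $m_i$-dimensional faces of the adjoint polytopes on either side of $\lambda_i$.
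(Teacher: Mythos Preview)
Your overall strategy---tracking the segment $s\mapsto [L+sK_X]$ through the GKZ chambers of $Eff(X)$---is exactly the paper's, and most of your outline lines up with the published proof. There is, however, one genuine gap.

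You write that the interior wall-crossings fall under ``cases 1--2 of Proposition~\ref{gkz}, giving a drop of one facet''. That is not correct: case~2 of Proposition~\ref{gkz} is a \emph{star subdivision}, in which one passes from $GKZ(\Delta,I)$ to $GKZ(\Delta(v),I\setminus\{v\})$, i.e.\ the adjacent model has \emph{one more} ray and the polytope \emph{gains} a facet. Your walk through the GKZ fan does not by itself rule this out, so as written you have not established item~(1). The paper closes this gap by an inductive MMP-with-scaling argument rather than a purely combinatorial one: at $s=\lambda_i$ genericity places $[L+\lambda_iK_X]$ in the relative interior of a facet of $Nef(X_i)$, giving a unique extremal ray $R_i$ with $K_{X_i}\cdot R_i<0$ (since $L_i$ is ample). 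After the contraction or flip one checks that $(f_i)_*L+\lambda_iK_{X_{i+1}}$ is \emph{ample} on $X_{i+1}$ (using Lemma~\ref{lm} and, in the divisorial case, the sign $E\cdot R_i<0$ to see that $[L+sK_X]$ lands in $GKZ(\Delta_{i+1},I_i\cup\{v\})$ rather than the star-subdivided chamber). This directionality check is precisely what forces each $f_i$ to be a contraction or flip rather than the inverse of one, and it is missing from your sketch.

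Your treatment of item~(6) is also too quick. Applying Theorem~\ref{mori} to the Cayley-Mori structure of $P^{(s)}$ for $s$ near $\sigma(P)$ identifies the general fiber of $f_k:X_k\to X_{k+1}$ with a projection of $P^{(s)}$, not of $P$; and the assertion that ``the earlier $f_i$'s do not change the general fiber'' needs justification, since $f$ is only rational on $X$. The paper instead argues directly: the rational map $f:X\dashrightarrow X_{k+1}$ is induced by the inclusion $K(P)^\perp\hookrightarrow(\mathbb{R}^n)^*$, whose dual is $\pi_P$, and one exhibits $X_Q$ as a closed subvariety of $X$ whose intersection with the big open set $U=\bigcup_{v\in\Sigma_X(1)}U_v$ is the general fiber of $f|_U$.
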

\begin{rk}\em{ The polytope $Q$ of item 6 was introduced in \cite{adjunction} but lacked this geometric interpretation.}
\end{rk}
 \begin{proof} Set $\lambda_1=\lambda(L,X)$. By definition, $L+sK_X\in Amp(X)$ for $0\leq s<\lambda_1$.  Then $P^{(s)}=P_{L+sK_X}$ has the same normal fan for $0\leq s<\lambda_1$. Since $P$ is general, $L+\lambda_1K_X$ belongs  to the interior of a wall $\tau$ of $Nef(X)$.  Since the nef cone is dual to the Mori cone, there is a unique extremal ray $R_1$ of $\mbox{NE}(X)$ such that $(L+\lambda_1K_X)\cdot R_1=0$. Since $L$ is ample, $K_X\cdot R_1<0$.  
 
 By Theorem \ref{contracao}, the complete linear system $|m(L+\lambda_1K_X)|$ defines the contraction $f:X\rightarrow Y$ of the extremal ray  $R$ for $m$ sufficiently large and divisible. Moreover, $Y$ is the projective toric variety defined by $P^{(\lambda_1)}$ and $\tau=f^*(Nef(Y))$.

We fall under one of the following 3 possibilities.
\begin{enumerate}
\item If dim $Y<$ dim $X$ then $Y$ is $\mathbb{Q}$-factorial. By Proposition \ref{gkz},  the cone $\tau=~GKZ(\Delta_{L+\lambda_1K_X},\emptyset)$ is an exterior wall of $Eff(X)$. Thus $\lambda_1=\sigma(P)$, $k=1$ and $f=f_1$.
It follows from Theorem \ref{mori} that $P^{(s)}$ is a Cayley-Mori polytope for $0\leq s<\lambda_1$.

\item If  $f:X\rightarrow Y$ is a divisorial contraction then $Y$ is also $\mathbb{Q}$-factorial. In this case, using again Proposition \ref{gkz} we conclude that $\tau$ is an interior wall and therefore $\lambda_1<\sigma(P)$.  We set $X_2=Y$ and $f_1=f$. Since $X_2$ is the toric variety associated to the polytope $P_{L+\lambda_1K_X}=P^{(\lambda_1)}$ we have that $P^{(\lambda_1)}$ has one less facet than $P$. \\

Now, consider $\lambda_2=\lambda((f_1)_*L,X_2)$. Then $(f_1)_*L+\lambda_2K_{X_2}\in\partial Nef(X_2)$. 

Notice that $$L+\lambda_1K_X=f_1^*((f_1)_*L+\lambda_1K_{X_2}).$$
This implies that $(f_1)_*L+\lambda_1K_{X_2}$ is nef since so is $L+\lambda_1K_X$. By Lemma \ref{lm} the divisor $(f_1)_*L+\lambda_1K_{X_2}$ determines the same variety as $L+\lambda_1K_X$, which is $X_2$. Thus $(f_1)_*L+\lambda_1K_{X_2}$ is ample and $\lambda_1<\lambda_2$. Moreover, 
$(f_1)_*L+sK_{X_2}\in Amp(X_2)$ for $\lambda_1\leq s<\lambda_2$.  

From Lemma \ref{lm} we conclude that for $\lambda_1\leq s<\lambda_2$, the varieties defined by $(f_1)_*L+sK_{X_2}$ and $L+sK_X$ coincide and are equal to $X_2$. Consequently, for $s$ in this interval, $L+sK_X$ is in a same cone of  $GKZ(X)$, which means that  the polytopes $P^{(s)}$ are $n$-dimensional, simple, and they have the same normal fan.\\

Since the product of the exceptional divisor $E$ of $f_1$ by the ray $R_1$ is negative we have that, for every $\lambda_1< s\leq\lambda_2$, there exists $a_s< 0$ such that $f_1^*((f_1)_*L+sK_{X_2})=L+sK_X+a_sE$. 

Since $(f_1)_*L+\lambda_2K_{X_2}\in\partial Nef(X_2)$, the variety associated to $L+\lambda_2K_X$ (which is the same as the one defined by  $(f_1)_*L+\lambda_2K_{X_2}$) is not $X_2$. So, since $P$ is general, $L+\lambda_2K_X$ is in the interior of the wall of $GKZ(X)$ which is given as the image of the map $$Nef(X_2)~\times~\mathbb{R}_{\geq 0}\rightarrow N^1(X).$$ It follows that 
$(f_1)_*L+\lambda_2K_{X_2}$ is in the interior of a wall of $Nef(X_2)$. Hence, there exists a unique extremal ray $R_2\in \mbox{NE}(X_2)$ such that the product $((f_1)_*L+\lambda_2K_{X_2})\cdot R_2$ is equal to zero. Then we have again a contraction  $g:X_2\rightarrow Z$ of an extremal ray.

\item If $f:X\rightarrow Y$ is a small contraction, then $Y$ is not $\mathbb{Q}$-factorial. Let $\psi:X\dashrightarrow X^+$ be the associated flip. Then $X^+$ is $\mathbb{Q}$-factorial. We set $X_2=X^+$ and $f_1=\psi$. 

The polytope $P^{(\lambda_1)}$  is not simple because $Y$ is not $\mathbb{Q}$-factorial. Since $f$ is an isomorphism in codimension one it follows from  Lemma \ref{picard} that $P^{(\lambda_1)}$   and $P$ have the same number of facets. 
Consider the flip diagram:

$$\xymatrix{
X\ar[rd]_f \ar@{-->}[rr]^{\psi} & & X^+ \ar[ld]^{f^+} \\ 
& Y &  
}$$ 

By Proposition \ref{gkz}, $GKZ(\Sigma^+,\emptyset)$ is a maximal cone, where $\Sigma^+$ denotes the fan of $X^+$. In addition, $\tau=GKZ(\Sigma,\emptyset)\cap  GKZ(\Sigma^+,\emptyset)$. It follows that for every $\varepsilon>0$ such that $L+(\lambda_1+\varepsilon)K_X\in GKZ(\Sigma^+,\emptyset)$, $P^{(\lambda_1+\varepsilon)}$ is associated to $X_2=X^+$. This means that if we set $\lambda_2=sup\{s\in \mathbb{R}_{\geq 0} \mid L+sK_X\in GKZ(\Sigma^+,\emptyset)\}$ then $P^{(s)}$ has the same normal fan for $\lambda_1<s<\lambda_2$.

 Since $\psi_1$ is an isomorphism in codimension one, the divisors $(\psi_1)_*L+sK_{X^+}$ and $L+sK_X=\psi_1^*((\psi_1)_*L+sK_{X^+})$ determine the same variety. Hence $\lambda_2=\lambda((\psi_1)_*L,X^+)$, and we have that $(\psi_1)_*L+\lambda_2K_{X^+}\in\partial Nef(X^+)$.

As before, $(\psi_1)_*L+\lambda_2K_{X^+}$ must belong to the interior of a wall of $Nef(X_2)$ because $P$ is a general polytope. This wall determines a contraction $g:X_2~\rightarrow Z$ of an extremal ray of $\mbox{NE}(X_2)$.
\end{enumerate}

For the cases 2 and 3, if dim $Z<$ dim $X_2$ then $\lambda_2=\sigma(P)$. We set $X_3=Z$ and $f_2=g$. From Theorem \ref{mori} we have that $P^{(s)}$ is a Cayley-Mori polytope for $\lambda_1< s<\lambda_1$. Now, we conclude the items 1 to 4 by induction. \\

To prove item $5.$ note that the exceptional locus of the extremal contraction $f_{R_i}$ corresponding to $f_i$ is, by \ref{fibrapicard}, an irreducible variety which corresponding the $m_i$-dimensional lost face of $P^{(\lambda_{i+1})}$. Since the restriction of $f_{R_i}$ to $Exp(f_{R_i})$ is a Mori fiber space, item $5$ follows from item $4$. \\

To prove $6.$ notice that the rational map $f:X\dashrightarrow X_{k+1}$ is induced by the exact sequence $$0\longrightarrow K(P)^{\perp}\stackrel{j}{\longrightarrow} (\mathbb{R}^n)^*\longrightarrow (\mathbb{R}^n)^*/K(P)^{\perp}\longrightarrow 0,$$
whose the dual exact sequence is $$0\longrightarrow K(P)\longrightarrow\mathbb{R}^n\stackrel{\pi_P}{\longrightarrow} \mathbb{R}^n/K(P)\longrightarrow 0.$$ 
Given a vertex $\bar{v}$ of $Q$ there is a vertex $v$ of $P$ such that $\pi_P(v)=\bar{v}$. It follows that the image of the cone $C_v:=\mbox{cone}(P-v)$ by $\pi_P$ is the cone $C_{\bar{v}}:=\mbox{cone}(Q-\bar{v})$. Since each maximal cone of the fan $\Sigma_Q$ determined by $Q$ is of the form $(C_{\bar{v}})^\vee$ and $j$ is the dual map to $\pi_P$, we have that $j$ induces an inclusion $X_Q\hookrightarrow X$.

Consider the toric variety $U\subset X$ given by the open subset $\displaystyle\bigcup_{v\in\Sigma_X(1)}U_{v}$ of $X$. The restriction $f\mid_U:U\rightarrow X_{k+1}$ is a morphism of toric varieties. We will see in Section \ref{fibrageral} that the general fiber of this morphism is the toric variety $F\subset U$ given by the fan $\{v\mid v\in K(P)^{\perp}\cap\Sigma_X(1)\}$, that is, $F$ is the open subset of $X_Q$ given by $\displaystyle\bigcup_{\bar{v}\in\Sigma_Q(1)}U_{\bar{v}}$. As $X_Q$ is a closed subset of $X$, we have $\overline{F}^X=X_Q$. 
\end{proof}

\begin{rk} \em{It follows from the proof of Theorem \ref{MORI} and from the description given in Section \ref{mmp} that if $P$ is general then each point of intersection between the segment $\{[L+sK_X]\mid 0\leq s\leq \sigma(P)\}$ and the walls of the $GKZ$ decomposition of $X$ corresponds to a step of the MMP of $X$ with scaling of $L$. 

We can also reformulate the definition of general polytope as follows: $P$ is general if when we vary $s$ from $0$ to $\sigma(P)$, the polytopes $P^{(s)}$ lose exactly one face at each critical value. More precisely, if $\lambda_i$ is a critical value, then all lost faces from $p^{(\lambda_i-\varepsilon)}$ to $P^{(\lambda_i)}$, for $\varepsilon$ sufficiently small, are contained in a unique face.}
\end{rk}

\begin{ex} \em{Consider $X:=Bl_p(\mathbb{P}^1\times\mathbb{P}^1)$ the toric variety associated to the fan defined by the following maximal cones: \\
$\big\{\mbox{cone}\big((1,0),(0,1)\big),$ $\mbox{cone}\big((0,1),(-1,0)\big),$ $\mbox{cone}\big((-1,0),(-1,-1)\big),$\\ $\mbox{cone}\big((-1,-1),(0,-1)\big),$ $\mbox{cone}\big((0,-1),(1,0)\big)\big\}.$

The invariant divisors corresponding to the primitive vectors are denoted by $D_1,D_2,D_3,$ $D_4,E$. The effective cone of $X$ is generated by $D_3,D_4,E$ and we have the relations $D_1\sim D_3+E$ and $D_2\sim D_4+E$. 

Let $L:=2D_1+D_2+2D_3+D_4+\frac{5}{2}E$ be an ample divisor on $X$ and $P$ the corresponding polytope.

\begin{figure}[!ht]
\begin{minipage}[b]{0.45\linewidth}
\vspace{0pt}
\includegraphics[scale=0.35]{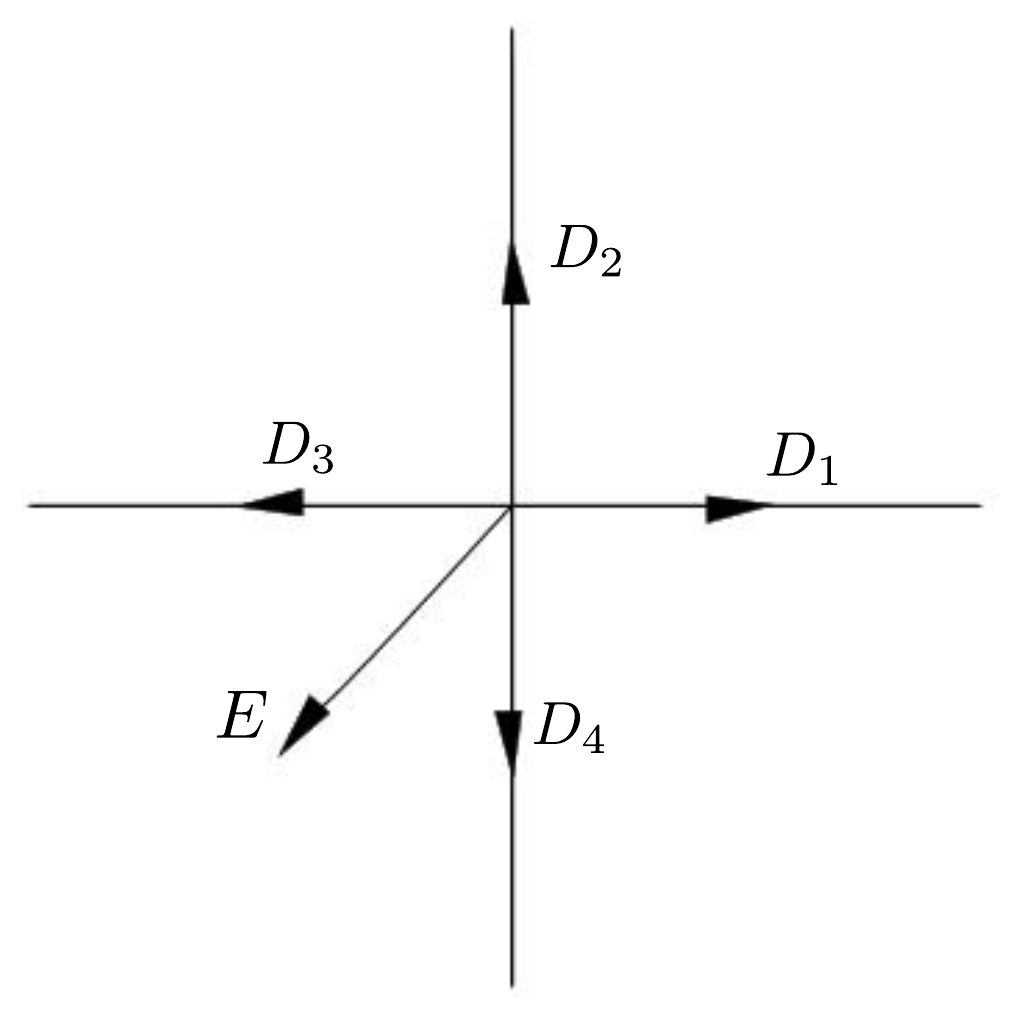} 
\end{minipage}
\begin{minipage}[b]{0.45\linewidth}
\vspace{0cm}
 \includegraphics[scale=0.40]{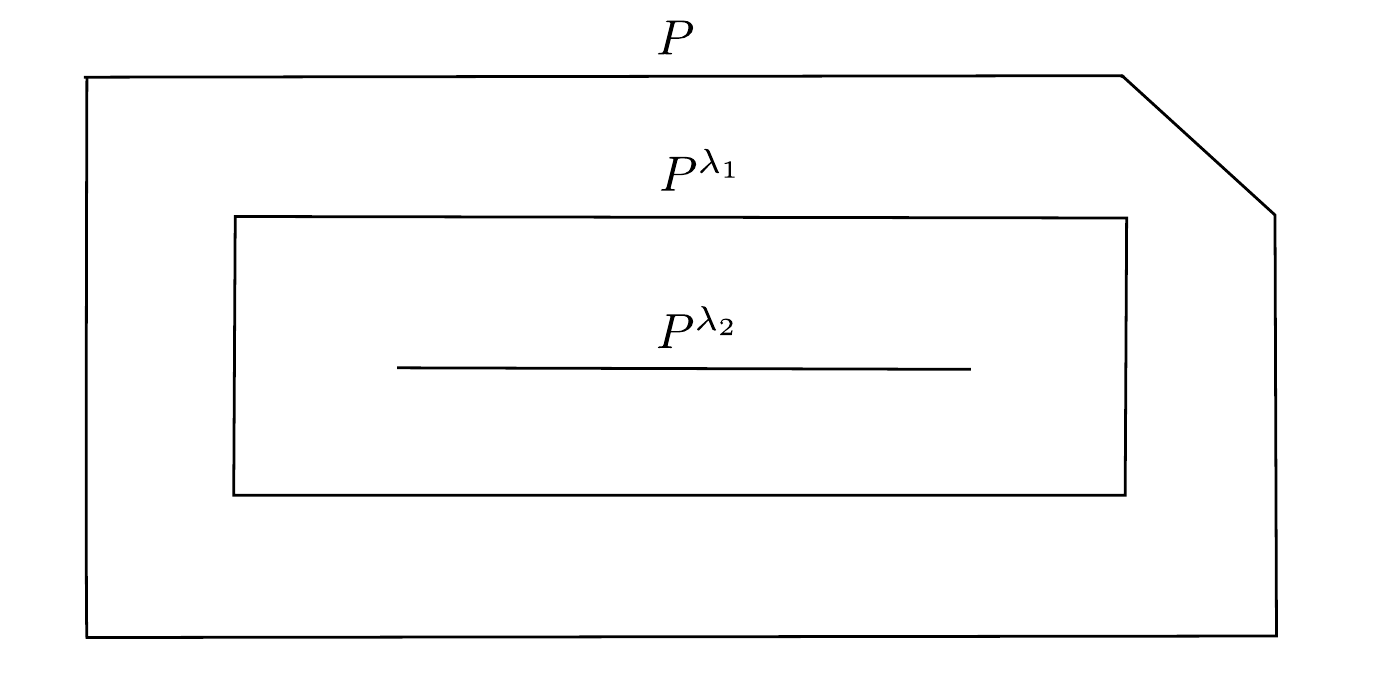}
\end{minipage}
\end{figure}

We have 2 critical values $\lambda_1=1/2$ and $\lambda_2=1$ that correspond to the maps
$$\xymatrix{
Bl_p(\mathbb{P}^1\times\mathbb{P}^1)\ar[r]^{f_1} \ar@{<->}[d] & \mathbb{P}^1\times\mathbb{P}^1  \ar[r]^{f_2} \ar@{<->}[d] & \mathbb{P}^1  \ar@{<->}[d] \\ 
P & P^{(\lambda_1)}  &  P^{(\lambda_2)}
}$$ 

The sequence of polytopes $P,P^{(\lambda_1)}$ and $P^{(\lambda_2)}$ correspond to the sequence of varieties that appear in the MMP of $X$ with scaling of $L$.

The picture that corresponds to this process in the $GKZ$ decomposition of $X$ is given below.
\begin{center}

\includegraphics[scale=0.45]{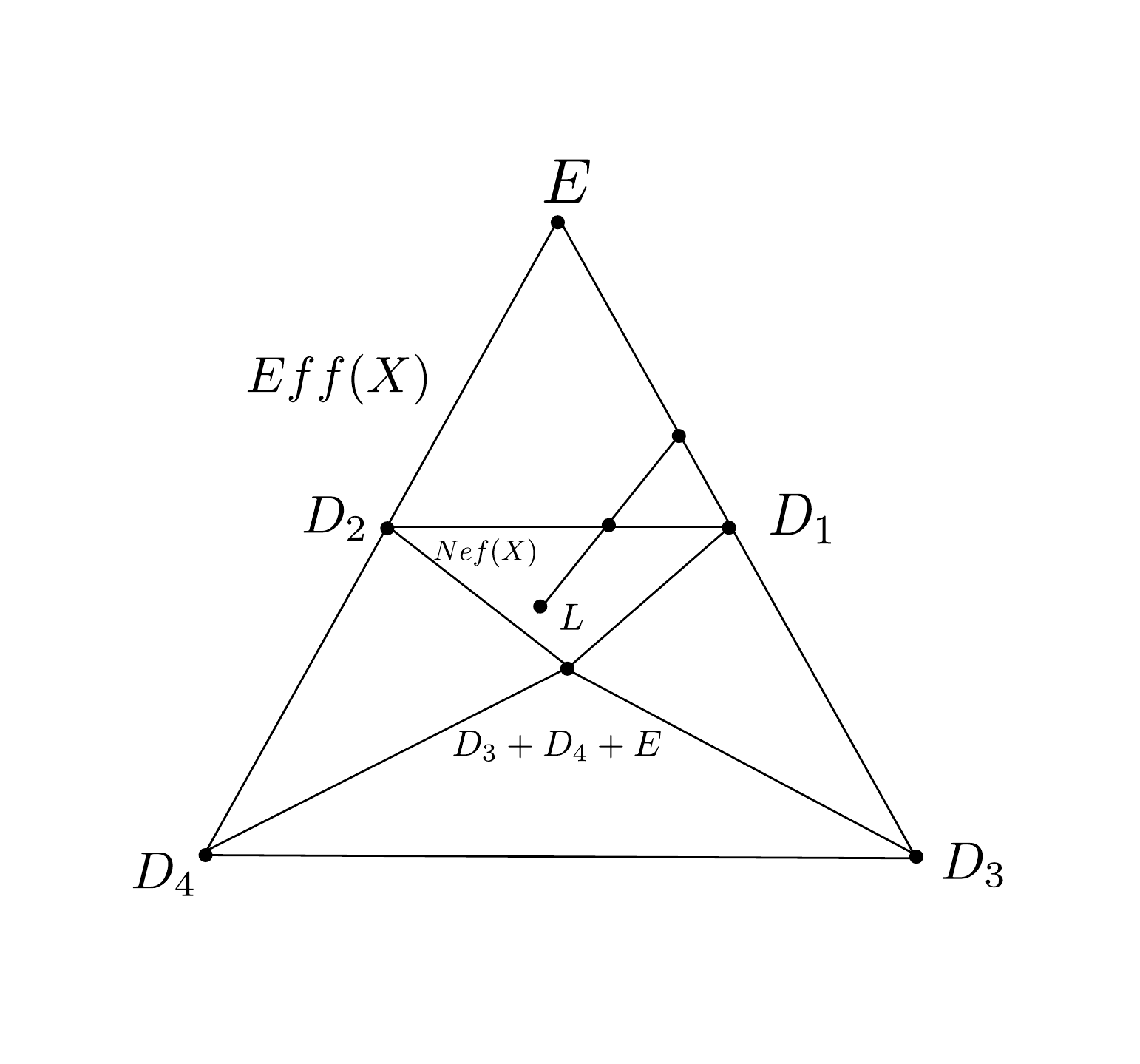}
\end{center}

If we replace $L$ by the ample divisor $6D_1+5D_2+6D_3+5D_4+2E$ then we will have 3 critical values $\lambda_1=1/2$, $\lambda_2=3/2$ and $\lambda_3=5/2$,

\begin{figure}[!ht]
\begin{minipage}[b]{0.45\linewidth}
\includegraphics[width=3.5cm]{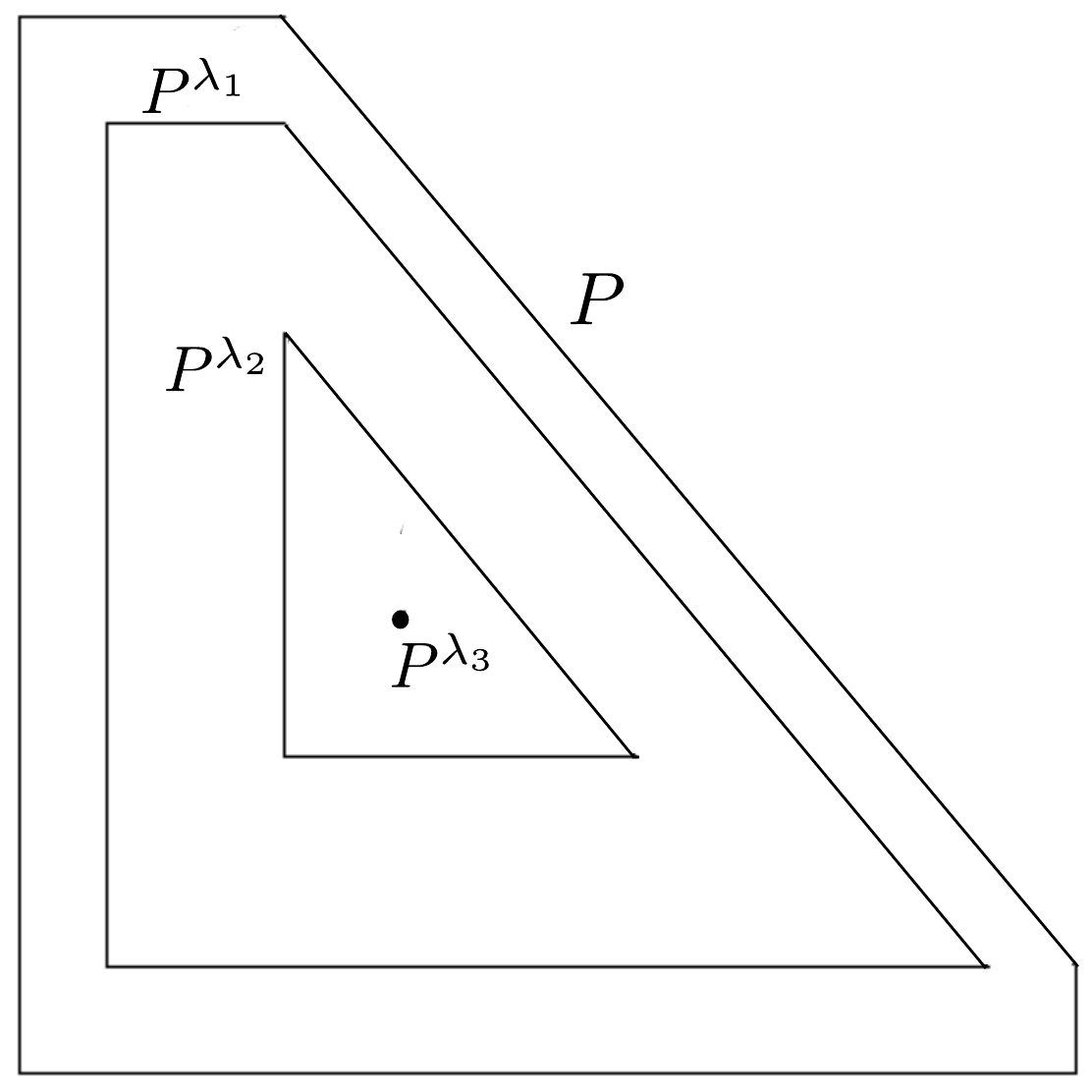}
 \vspace{1cm}
\end{minipage}
\begin{minipage}[b]{0.45\linewidth}
\vspace{0cm}
\includegraphics[scale=0.40]{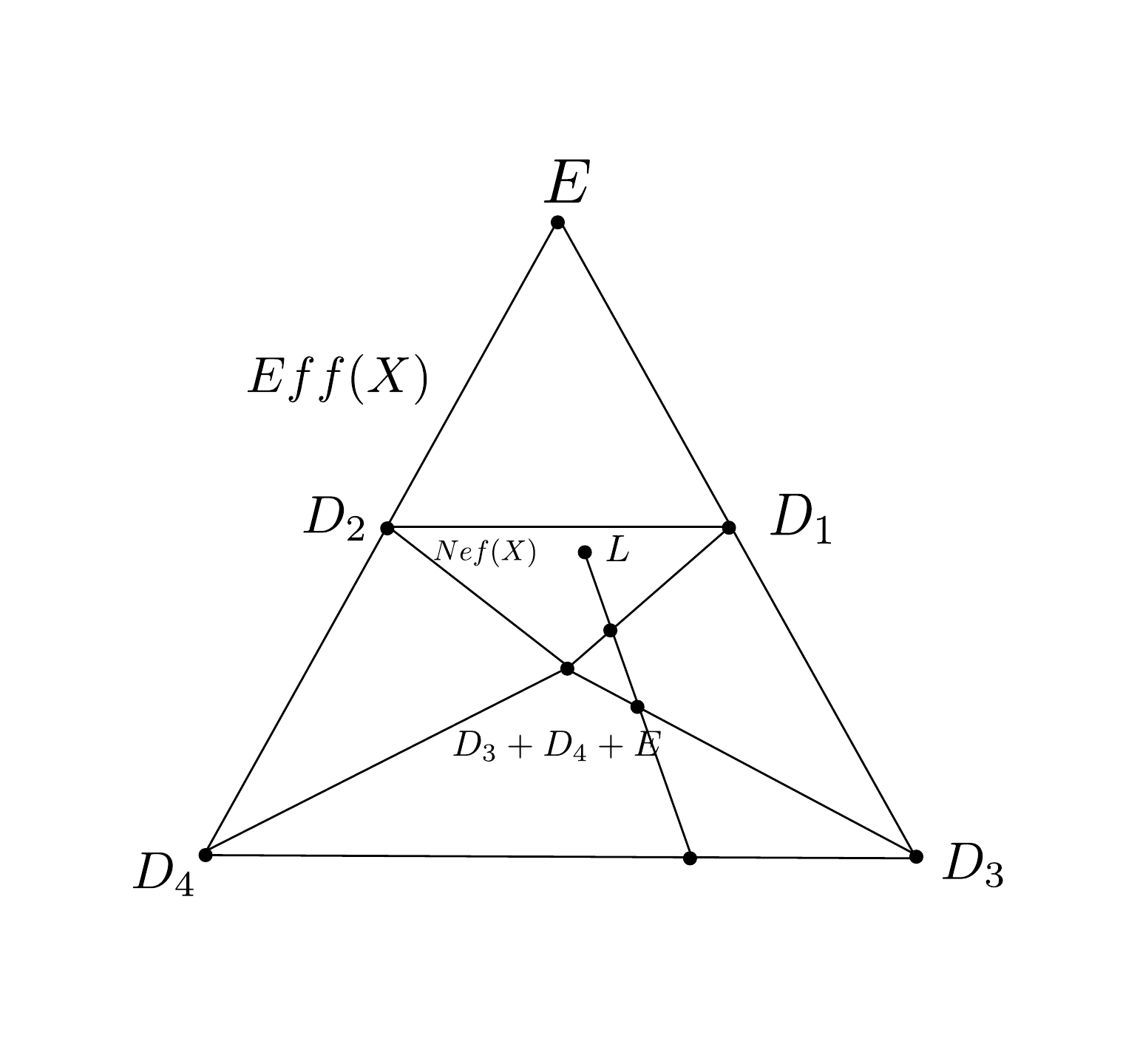}
\end{minipage}
\end{figure}

that correspond to the maps:}
$$\xymatrix{
Bl_p(\mathbb{P}^1\times\mathbb{P}^1)\ar[r]^{f_1} \ar@{<->}[d] & \mathbb{F}_1  \ar[r]^{f_2} \ar@{<->}[d]& \mathbb{P}^2 \ar[r]^{f_3} \ar@{<->}[d]   &  \{q\} \ar@{<->}[d] \\ 
P & P^{(\lambda_1)}  &  P^{(\lambda_2)} & P^{(\lambda_3)}
}$$ 

\end{ex}
 
\begin{rk}\em{ If the polarized toric variety $(X,L)$ corresponds to a non general polytope $P$, then the adjoint polytopes $P^{(s)}$ do not give complete information about the sequence of varieties that appear in the MMP of $X$ with scaling of $L$.}
 \end{rk} 
 
 \newpage
\begin{ex}\em{ Let $(X,L)$ be the polarized toric variety associated to the polytope $P$ defined by the following inequalities:}
\end{ex}
\begin{figure}[hnt!]
\begin{minipage}[b]{0.45\linewidth}
$$\left\{
\begin{array}{lll}
\langle (-1,-1),x\rangle &\geq &-1 \\
\langle (0,-1),x \rangle &\geq &-1  \\
\langle (1,0),x\rangle &\geq & -1  \\
\langle (1,1),x\rangle &\geq & -1  \\
\langle (0,1),x\rangle &\geq & -1 \\
\langle (-1,0),x\rangle &\geq & -1 \\
\end{array}
\right.$$
\vspace*{0.3cm}
\end{minipage}
\begin{minipage}[b]{0.45\linewidth}
\begin{center}
\includegraphics[scale=0.14]{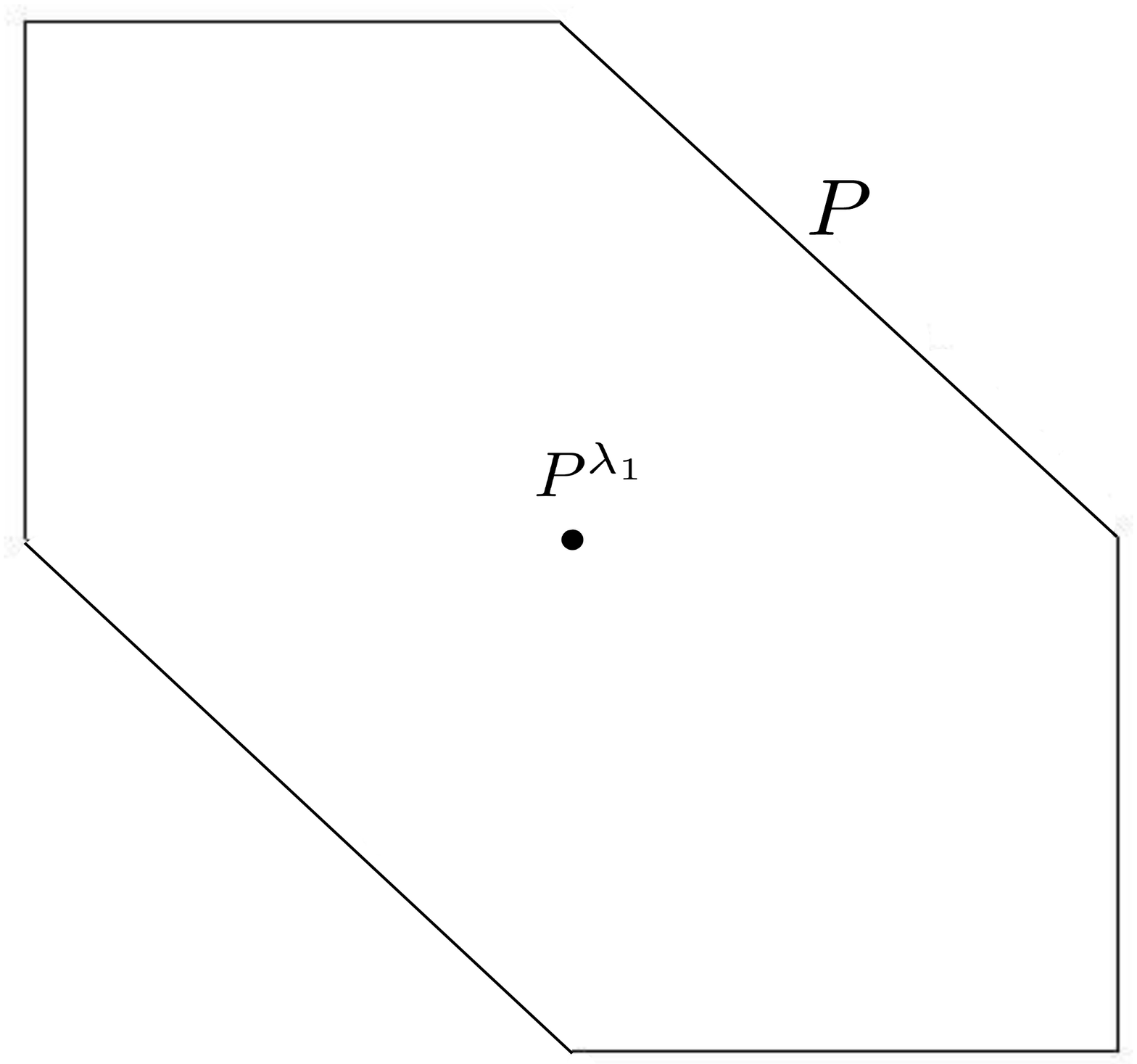}
\end{center}
\end{minipage}
\end{figure}

Notice that $L=-K_X$ and thus $\lambda(P)=\sigma(P)=1$, but the polytope $P$ is not a Cayley-Mori.

\begin{rk} \em{ Let $P:=\{u\in \mathbb{R}^n\mid \langle u,v_i\rangle\geq -a_i, 1\leq i\leq r\}$ be a polytope as in Definition \ref{def4}.
Let $(X,L)$ be the polarized toric variety corresponding to $P$.
Let $\alpha:=\big(\alpha_1,... ,\alpha_r\big)\in (\mathbb{Q}_{\geq 0})^r$.
For each $s\in\mathbb{R}_{\geq 0}$, define:

$$P^{(s\alpha)}=\{u\in \mathbb{R}^n\mid \langle u,v_i\rangle\geq -a_i+s\alpha_i, 1\leq i\leq r\}.$$

Consider the divisor $D:=-\displaystyle\sum_{i=1}^{r}\alpha_iD_i$ on $X$. We can run the analogous steps of MMP replacing $K_X$ with $D$. As in the usual MMP, the divisor $D$ cannot be made nef. So, we still end with a Mori fiber space. Then, all results of this section can be generalized replacing $K_X$ by $D$ and $P^{(s)}$ by $P^{(s\alpha)}$.} 

\end{rk}

\chapter{On the Classification of 2-Fano Toric Varieties}

 A smooth projective variety $X$ is said to be Fano if it has ample anti-canonical divisor. These varieties have been studied by several authors and play an important role in birational algebraic geometry. Fano varieties are quite rare. 
It was proved by Kollár, Miyaoka and Mori that, fixed the dimension, there exist only finitely many smooth Fano varieties up to deformation  (see \cite{kollar}, \cite{kol}). 
Further, in the toric case, there exist finitely many isomorphism classes of them. In dimension 2, smooth Fano varieties are classically known as Del Pezzo surfaces. 
There are five isomorphism classes of  toric Del Pezzo surfaces: $\mathbb{P}^2$, $\mathbb{P}^1 \times \mathbb{P}^1$ and $\mathbb{P}^2$ blown up in 1, 2 or 3 invariant points. 
In dimensions 3, 4, 5 and 6 there are 18, 124, 866 and 7622 isomorphism classes of smooth toric varieties respectively (see \cite{bat3}, \cite{bat}, \cite{nill} and \cite{obro}).\\
 
 We are interested in toric varieties known as 2-Fano varieties. A Fano variety $X$ is said to be 2-Fano if its second Chern character is positive (i.e., $ch_2(T_X)\cdot S >0$ for every surface $S\subset X$). These varieties were introduced by de Jong and Starr in \cite{starr} and \cite{jason} in connection with rationally simply connected varieties, which in turn are linked with the problem of finding rational sections for fibrations over surfaces. 2-Fano varieties are even rarer than Fano varieties. One can check from the classification of Del Pezzo surfaces that the only 2-Fano surface is $\mathbb{P}^2$. In \cite{carol} it is proved that the only 2-Fano threefolds are $\mathbb{P}^3$ and the smooth hyperquadric in $\mathbb{P}^4$. In higher dimensions, few examples are known. First de Jong and Starr gave some examples in \cite{jason}, then Araujo and Castravet found some more examples (see \cite{ana}, Section 5). Among all examples known, the only smooth toric 2-Fano varieties are projective spaces. \\

\textbf{Question 1:} Is $\mathbb{P}^n$ the only n-dimensional smooth projective toric 2-Fano variety? \\

 We will go through the classification of toric Fano 4-folds, given by Batyrev in \cite{bat}, and we will check that the only one with positive second Chern character is $\mathbb{P}^4$. 
This result was published in \cite{eu}. Then, we use a database provided by Øbro in \cite{obro} to answer Question 1 positively in dimension 5 and 6.  
We remark that in  \cite{sato2} Sato considers a similar problem. In particular, he classifies smooth toric Fano varieties with Picard number 2 whose second Chern character is nef (i.e., $ch_2(T_X)\cdot S \geq 0$ for every surface $S\subset X$). We can use Proposition \ref{hi} and Remark \ref{sato} to recover this result. 
 In Section \ref{dimensaomaior} we study the case of dimension bigger than 6 and give some partial results.\\

\section{2-Fano in Low Dimension}
\subsection{Batyrev Classification of Toric Fano 4-folds}

Let $\Sigma\subset N_{\mathbb{R}}\simeq\mathbb{R}^n$ be a simplicial complete fan. Write $\Sigma(1)=\{v_1,...,v_r\}$ and set $X:=X_{\Sigma}$.

\begin{de} A subset $\mathcal{P}=\{v_{i_1},v_{i_2},...,v_{i_d}\}$ of $\Sigma(1)$ is a \textbf{primitive collection} for $\Sigma$ if the following conditions are satisfied:\\
\begin{enumerate}
 \item 
$\mathcal{P}$ is not contained in any cone from $\Sigma$.
\item Any proper subset of $\mathcal{P}$ is contained in some cone from $\Sigma$.
 \end{enumerate}
 \end{de}

\begin{de}  Let $\mathcal{P}=\{v_{i_1},v_{i_2},...,v_{i_d}\}\subseteq\Sigma(1)$ be a primitive collection for $\Sigma$ and $\sigma_{\mathcal{P}}=\langle{v_{j_1},...,v_{j_k}}\rangle$ be the cone of minimal dimension in $\Sigma$ such that $v_{i_1}+v_{i_2}+...+v_{i_d}\in\sigma_{\mathcal{P}}$. Then, there is a unique linear relation $$v_{i_1}+...+v_{i_d}=c_1v_{j_1}+...+c_kv_{j_k}, \ \ \ c_i\in\mathbb{Q}_{>0}.$$
We call  $\mathcal{R(P)}:=v_{i_1}+...+v_{i_d}-c_1v_{j_1}-...-c_kv_{j_k}$ the \textbf{primitive relation} associated to $\mathcal{P}$.
If $X$ is smooth then $c_i\in\mathbb{Z}_{>0}$, and we define the \textbf{degree} of the primitive collection $\mathcal{P}$ by:
$$\Delta(\mathcal{P}):=d-c_1-...-c_k.$$
\end{de}
By Proposition \ref{seq}, we may interpret $N_1(X)$ as the space of linear relations among the minimal generators of $\Sigma$. Under this identification, we have that $N_1(X)$ is generated by primitive relations. Moreover, if $V(\sigma)$ is an invariant curve on $X$ then its class $[V(\sigma)]$ in $N_1(X)$ is a positive linear combination of primitive relations (see \cite{cox}, Theorem 6.3.10).\\

Hence, \  $\mbox{NE}(X)=\displaystyle\sum_{\mathcal{P} \ \substack{ primitive \\  collection}}\mathbb{R}_{\geq0}\mathcal{R(P)}$.

Note that a relation $\displaystyle\sum_{i=1}^{r}a_iv_i=0, a_i \in \mathbb{R}$, corresponds to an element $\xi\in N_1(X)$ that has intersection $a_i$ with $V(\langle v_i \rangle)$ for all $i\in \{1,...,r\}$.\\
Since $c_1(T_X)=\displaystyle \sum_{i=1}^{r}V(\langle v_i \rangle)$ (see for instance, \cite{cox} Theorem 8.2.3), if $X$ is smooth and $\mathcal{R(P)}=\displaystyle\sum_{i=1}^{r}a_iv_i$ is a primitive relation, then $$\Delta(\mathcal{P})=\displaystyle\sum_{i=1}^{r}a_i=\displaystyle\sum_{i=1}^{r}V(\langle v_i \rangle)\cdot\mathcal{R(P)}=-K_X\cdot\mathcal{R(P)}.$$
Hence, using Kleiman's Criterion of ampleness (Theorem \ref{kleiman}), we can give a characterization of smooth toric Fano varieties in terms of primitive relations:\\

A smooth toric variety $X_{\Sigma}$ is a Fano variety  if and only if $\Delta(\mathcal{P})>0$ for every primitive relation $\mathcal{P}$ of $\Sigma$.\\

From now on, $X:=X_{\Sigma}$ will denote a smooth projective toric variety.\\
In (\cite{bat}, 2.2.4) we see that Fano toric varieties can be recovered from the set of primitive relations. In that paper Batyrev gives a classification of toric Fano 4-folds by describing the possible sets of primitive relations in dimension 4. He also gives  a geometric description for these varieties. He found 123 isomorphism classes of smooth toric Fano 4-fold. Then in \cite{sato} Sato noticed one missing isomorphism class in Batyrev's classification and he described the primitive relations of this missing class, completing the classification of toric Fano 4-folds. \\
 Note that if $X_{\Sigma}$ is a smooth toric variety then any set of primitive vectors that generate a maximal cone of $\Sigma$ can be chosen to be the canonical basis of $\mathbb{Z}^n$. By definition of primitive collection, a simplicial cone $\sigma$ generated by vectors from $\Sigma(1)$ belongs to $\Sigma$ if and only if $\sigma$ does not contain any primitive collection. In the next example, we illustrate how to recover a smooth toric Fano variety from its set of primitive relations.
\\

  \begin{ex} \em{
Let $X_{\Sigma}$ be the toric Fano 4-fold given by the following primitive relations:\\
$v_1+v_2=v_8, \ v_7+v_8=v_1, \ v_1+v_6=v_7,  \ v_2+v_7=0, \ v_6+v_8=0, \ v_3+v_4+v_5=2v_1$. 

Then, the primitive collections are: $\{v_1,v_2\}$, $\{v_7,v_8\}$, $\{v_1,v_6\}$, $\{v_2,v_7\}$, $\{v_6,v_8\}$, $\{v_3,v_4,v_5\}$.
Thus, by definition of primitive collection, the fan $\Sigma$ obtained from primitive relations, satisfies:\\
$$\sigma=\langle v_i,v_j,v_k,v_l\rangle \in \Sigma \Leftrightarrow
\left\{
\begin{array}{lll}
\langle v_1,v_2\rangle & \nsubseteq & \sigma \\
\langle v_7,v_8\rangle & \nsubseteq & \sigma \\
\langle v_1,v_6\rangle & \nsubseteq & \sigma \\
\langle v_2,v_7\rangle & \nsubseteq & \sigma \\
\langle v_6,v_8\rangle & \nsubseteq & \sigma \\
\langle v_3,v_4,v_5\rangle & \nsubseteq & \sigma \\
\end{array}
\right.
$$
Since $X_{\Sigma}$ is smooth, every maximal cone in $\Sigma$ provides a basis to $N\simeq \mathbb{Z}^4$.
The cone $\langle v_1,v_2,v_3,v_4\rangle$ is maximal in $\Sigma$, so we can take $v_1=(1,0,0,0), v_2=(0,1,0,0), v_3=(0,0,1,0), v_4=(0,0,0,1)$. Thus, from the primitive relations, we get $v_5=(2,0,-1,-1), v_6=(-1,-1,0,0), v_7=(0,-1,0,0), v_8=(1,1,0,0)$.}\\
\end{ex}

 In the table below we list all smooth toric 4-folds and its primitive collections or geometric description. The last variety in our table follows Sato's notation. For the others, our notation differs from Batyrev's notation used in \cite{bat} only in the enumeration of  minimal vectors. Whenever he enumerates the vectors from $0$ to $k$, we will enumerate them from $1$ to $k+1$. We denote by $S_i$ the Del Pezzo surface obtained by the blow up of $i$ points in general position on $\mathbb{P}^2$ for $i=1,2$ and 3. It is clear that primitive collections are not enough to describe the variety. They describe only its combinatorial type.  \\ \\

\noindent
 \begin{tabular}{|p{2cm}| p{9,3cm}|}
\hline
Notation & \hspace{0,85cm} Primitive Collections or Geometric Description  \\ 

\hline
& $\mathbb{P}^4$\\ \hline

$B_1,...,B_5$ & $\{v_5,v_6\}$, $\{v_1, v_2, v_3, v_4\}$ \\ \hline

$C_1,...,C_4$ & $\{v_1, v_2, v_3\}$, $\{v_4, v_5, v_6\}$\\ \hline

$D_1,...,D_{19}$ &  $\{v_4,v_5\}$, $\{v_6, v_7\}$, $\{v_1, v_2, v_3\}$ \\ 
\hline

$E_1,E_2,E_3$ & $\{v_1,v_7\}$, $\{v_1, v_2\}$, $\{v_6, v_7\}$, $\{v_2, v_3, v_4, v_5\}$, $\{v_3, v_4, v_5, v_6\}$  \\
\hline

 $G_1,...,G_6$ & $\{v_1, v_7\}$, $\{v_2, v_3, v_4\}$, $\{v_4, v_5, v_6\}$, $\{v_5, v_6, v_7\}$, $\{v_1, v_2, v_3\}$ \\
\hline

$H_1,...,H_{10}$ & $\{v_1, v_2\}$, $\{v_7, v_8\}$, $\{v_1, v_6\}$, $\{v_2, v_7\}$, $\{v_6, v_8\}$, $\{v_3, v_4, v_5\}$\\
\hline

 $I_1,...,I_{15}$ & $\{v_1,v_2\}$, $\{v_7, v_8\}$, $\{v_3,v_6\}$, $\{v_6,v_8\}$, $\{v_3, v_4, v_5\}$, $\{v_4, v_5, v_7\}$ \\
\hline
 
$J_1,J_2$ & $\{v_3,v_6\}$, $\{v_6, v_8\}$, $\{v_7,v_8\}$, $\{v_1, v_2, v_3\}$, $\{v_1, v_2,v_7\}$, $\{v_1,v_2,v_8\}$, $\{v_3, v_4, v_5\}$, $\{v_4, v_5, v_6\}$, $\{v_4, v_5, v_7\}$ \\
\hline

$K_1,...,K_4$ & $\{v_7, v_9\}$, $\{v_1, v_8\}$, $\{v_8, v_9\}$, $\{v_2, v_8\}$, $\{v_6, v_7\}$, $\{v_1, v_6\}$, $\{v_6, v_9\}$, $\{v_1, v_2\}$, $\{ v_2, v_7\}$, $\{v_3, v_4, v_5\}$ \\
\hline

$L_1,...,L_{13}$ & $ \{v_1, v_8\}$, $\{v_2, v_3\}$, $\{v_4, v_5\}$, $\{v_6, v_7\}$  \\ \hline

$M_1,...,M_4$ & $\{v_1, v_8\}$, $\{v_4, v_5\}$, $\{v_6, v_7\}$, $\{v_1, v_2, v_3\}$, $\{v_4, v_6, v_8\}$, $\{v_2, v_3, v_5\}$, $\{v_2, v_3, v_7\}$  \\
\hline

$Q_1,...,Q_{17}$ & $\{v_1,v_2\}$, $\{v_1, v_8\}$, $\{v_2,v_7\}$, $\{v_3, v_5\}$, $\{v_4, v_6\}$, $\{v_8,v_9\}$, $\{v_7, v_9\}$ \\
\hline

$R_1,R_2,R_3$ & $\{v_7,v_9\}$, $\{v_4,v_8\}$, $\{v_8,v_9\}$, $\{v_6,v_7\}$, $\{v_3,v_5\}$, $\{v_4,v_6\}$, $\{v_1, v_2, v_9\}$, $\{v_3, v_6, v_8\}$, $\{v_1, v_2, v_5\}$, $\{v_1, v_2, v_7\}$, $\{v_1, v_2, v_4\}$ \\
\hline

$108$ & $\{v_7,v_9\}$, $\{v_8, v_9\}$, $\{v_3,v_5\}$, $\{v_4, v_6\}$, $\{v_1, v_7\}$, $\{v_3,v_6\}$, $\{v_1, v_2, v_5\}$, $\{v_1, v_2, v_4\}$, $\{v_2, v_5, v_8\}$, $\{v_2, v_4, v_8\}$ \\ \hline 

$U_1,...,U_8$ & $\{v_1, v_3\}$, $\{v_2, v_4\}$, $\{v_1, v_4\}$, $\{v_3, v_5\}$, $\{v_4, v_6\}$, $\{v_2, v_5\}$, $\{v_1, v_5\}$, $\{v_2, v_6\}$, $\{ v_3, v_6\}$,  $\{ v_7, v_8\}$,  $\{ v_9, v_{10}\}$ \\
\hline

$Z_1,Z_2$ & $\{v_1, v_8\}$, $\{v_5, v_7\}$, $\{v_1, v_2, v_5\}$, $\{v_1, v_2, v_6\}$, $\{v_2, v_4, v_5\}$, $\{v_2, v_4, v_6\}$, $\{v_3, v_7, v_8\}$, $\{v_3, v_4, v_6\}$, $\{v_3, v_4, v_7\}$, $\{v_3, v_6, v_8\}$   \\
\hline
\end{tabular}

\noindent
 \begin{tabular}{|p{2cm}| p{9,3cm}|}

\hline

$117$ & $\{v_4, v_{10}\}$, $\{v_1, v_5\}$, $\{v_2, v_6\}$, $\{v_3, v_7\}$, $\{v_8, v_9\},$ $\{v_1, v_2, v_{10}\}$, $\{v_1, v_3, v_{10}\}$, $\{v_2, v_3, v_{10}\}$, $\{v_1, v_2, v_3\}$, $\{v_1, v_9, v_{10}\}$, $\{v_2, v_9, v_{10}\}$, $\{v_3, v_9, v_{10}\},$  $\{v_1, v_2, v_9\}$, $\{v_1, v_3, v_9\}$, $\{v_2, v_3, v_9\}$, $\{v_4, v_5, v_6\}$, $\{v_4, v_5, v_7\}$, $\{v_4, v_6, v_7\}$, $\{v_5, v_6, v_7\}$, $\{v_4, v_5, v_8\}$, $\{v_4, v_6, v_8\}$, $\{v_4, v_7, v_8\}$, $\{v_5, v_6, v_8\}$, $\{v_5, v_7, v_8\}$,  $\{v_6, v_7, v_8\}$           \\
\hline

$118$ & $\{v_4, v_9\}$,  $\{v_1, v_5\}$, $\{v_2, v_6\}$, $\{v_3, v_7\}$,  $\{v_1,v_2,v_9\}$, $\{v_1, v_3, v_9\}$, $\{v_2, v_3, v_9\}$,  $\{v_1, v_2, v_3\}$, $\{v_4, v_5, v_8\}$, $\{v_4, v_6, v_8\}$, $\{v_4, v_7, v_8\}$, $\{v_5, v_6, v_8\}$, $\{v_5, v_7, v_8\}$, $\{v_6, v_7, v_8\}$   \\ \hline

$119, 120, 121$ & $S_2\times S_2, \ S_2\times S_3, \ S_3\times S_3$ \\ \hline

$124$ & $\{v_1, v_4\}$,  $\{v_2, v_5\}$, $\{v_3, v_6\}$, $\{v_1, v_2, v_3\}$,  $\{v_4,v_5,v_6\}$, $\{v_7, v_8, v_9\}$,  $\{v_1, v_2, v_9\}$, $\{v_4, v_5, v_9\}$, $\{v_1, v_3, v_8\}$, $\{v_4, v_6, v_8\}$, $\{v_2, v_3, v_7\}$, $\{v_5, v_6, v_7\}$, $\{v_1, v_8, v_9\},$ $\{v_4, v_8, v_9\}$, $\{v_2, v_7, v_9\}$, $\{v_5, v_7, v_9\}$, $\{v_3, v_7, v_8\}$, $\{v_6, v_7, v_8\}$ \\ \hline

\end{tabular}\\ \\

\subsection{Second Chern Class Computation} \label{secondchern}

In this section, we will compute $ch_2(T_X)$ in terms of the invariant divisors $D_i:=V(\langle v_i\rangle)$ and we will give an analogue of the Toric Cone Theorem \ref{teo cone}. We also give a formula for the second Chern character of a variety obtained by a blow up.\\

\begin{pr} \label{2chern} For a smooth toric variety $X$ we have: $$ ch_2(T_X)=\frac{1}{2}\left(\displaystyle \sum_{i=1}^{r}D_i^2\right).$$ 
\end{pr}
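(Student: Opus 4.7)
The plan is to derive the formula from the generalized Euler sequence for a smooth toric variety. Specifically, dualizing the standard exact sequence
\[
0 \to \Omega_X^1 \to \bigoplus_{i=1}^r \mathcal{O}_X(-D_i) \to \mathrm{Cl}(X)\otimes_{\mathbb{Z}}\mathcal{O}_X \to 0
\]
(see \cite[Theorem 8.1.6]{cox}) yields a short exact sequence
\[
0 \to \mathcal{O}_X^{\oplus (r-n)} \to \bigoplus_{i=1}^r \mathcal{O}_X(D_i) \to T_X \to 0,
\]
since $X$ is smooth and thus $\mathrm{Cl}(X)=\mathrm{Pic}(X)$ is a free abelian group of rank $r-n$ by Lemma \ref{picard}.

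Next I would apply multiplicativity (additivity) of the Chern character on short exact sequences:
\[
ch(T_X) = ch\!\left(\bigoplus_{i=1}^r \mathcal{O}_X(D_i)\right) - ch\!\left(\mathcal{O}_X^{\oplus (r-n)}\right) = \sum_{i=1}^r e^{D_i} - (r-n).
\]
Reading off the degree-two component of the expansion $e^{D_i} = 1 + D_i + \tfrac{D_i^2}{2} + \cdots$, the constant term $(r-n)$ and the linear terms $D_i$ do not contribute to $ch_2$, leaving
\[
ch_2(T_X) = \frac{1}{2}\sum_{i=1}^r D_i^2,
\]
as claimed.

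This is essentially a direct calculation once one has the Euler sequence in hand, so there is no serious obstacle; the only point requiring care is to check that the kernel $\mathcal{O}_X^{\oplus(r-n)}$ of the surjection $\bigoplus_i \mathcal{O}_X(D_i)\twoheadrightarrow T_X$ is indeed trivial of the claimed rank. This follows because the dual of the divisor class sequence $0\to M \to \mathbb{Z}^r \to \mathrm{Cl}(X)\to 0$ (exact on smooth toric varieties) sheafifies to a sequence of free sheaves, with the left term of rank $r-n$.
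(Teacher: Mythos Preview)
Your argument is correct and essentially self-contained, but it is not the route taken in the paper. The paper works instead with the log differential residue sequence
\[
0 \longrightarrow \Omega_X^1 \longrightarrow \mathcal{O}_X^{\,n} \longrightarrow \bigoplus_{i=1}^{r}\mathcal{O}_{D_i} \longrightarrow 0
\]
(coming from the identification $\Omega_X^1(\log D)\simeq \mathcal{O}_X^{\,n}$ on a smooth toric variety), together with the ideal sheaf sequences $0\to\mathcal{O}_X(-D_i)\to\mathcal{O}_X\to\mathcal{O}_{D_i}\to 0$. From these it computes $ch_2(\mathcal{O}_{D_i})=-\tfrac12 D_i^2$, and then uses $ch_2(T_X)=ch_2(\Omega_X^1)=-\sum_i ch_2(\mathcal{O}_{D_i})$. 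So the paper passes through the torsion sheaves $\mathcal{O}_{D_i}$ and needs two exact sequences, whereas you use a single sequence of locally free sheaves (the dual of the generalized Euler sequence) and read off $ch_2$ directly. Your route is a bit cleaner in that respect; the paper's route, on the other hand, avoids having to discuss the structure of $\operatorname{Cl}(X)$ at all.

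One small point: you appeal to Lemma~\ref{picard} for the rank of $\operatorname{Cl}(X)$, but that lemma is stated for \emph{complete} $\mathbb{Q}$-factorial toric varieties, while the proposition only assumes smoothness. In fact you do not need it: the rank $r-n$ (and the triviality of $\operatorname{Cl}(X)\otimes_{\mathbb{Z}}\mathcal{O}_X$) follows already from the divisor class sequence $0\to M\to\mathbb{Z}^r\to\operatorname{Cl}(X)\to 0$ that you invoke at the end, valid for any smooth toric variety without torus factors (and any $\mathbb{Z}$-torsion in $\operatorname{Cl}(X)$ is killed upon tensoring with $\mathcal{O}_X$ over $\mathbb{C}$).
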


\begin{proof} There are exact sequences (see \cite[4.0.28, 8.1.1]{cox}):\\
$$0\rightarrow\Omega_X^1\rightarrow \mathcal{O}_X^n\rightarrow \displaystyle \oplus_{i=1}^{r}\mathcal{O}_{D_i}\rightarrow0$$
$$0\rightarrow\mathcal{O}(-D_i)\rightarrow \mathcal{O}_X\rightarrow \mathcal{O}_{D_i}\rightarrow0$$ 
Where $\mathcal{O}_{D_i}$ is the structure sheaf on $D_i$ extended by zero to X.\\

Using Whitney sum we have:\\

$0=ch_2(\mathcal{O}_{X{_\Sigma}}^n)=ch_2(\Omega_X^1)+ch_2(\displaystyle \oplus_{i=1}^{r}\mathcal{O}_{D_i})$\\

$ch_2(\mathcal{O}_{D_i})=-ch_2(\mathcal{O}(-D_i)=-\frac{1}{2}D_i^2$ \ for all $i\in\{1,...,r\}$.\\

$\Rightarrow ch_2(T_X)=ch_2(\Omega_X^1)=-ch_2(\displaystyle \oplus_{i=1}^{r}\mathcal{O}_{D_i})=\frac{1}{2}\left(\displaystyle \sum_{i=1}^{r}D_i^2\right)$. 
 \end{proof}
  
By definition, a variety $X$ has positive second Chern character if for any surface $S\in X$ we have $ch_2(X)\cdot S>0$. However, in the toric case, we only need to check this inequalities for invariant surfaces, because of the following result. The proof sketched below is due to D. Monsôres.

\begin{pr} \label{dm} Let $X:= X_{\Sigma}$ be a complete toric variety of dimension $n \geq 3$. If $S$ is a surface
contained in $X$, then we have a numerical equivalence: 

\begin{center}     $S \equiv \displaystyle\sum_{\sigma \in \Sigma(n-2)} a_{\sigma} \cdot
[V(\sigma)]$

\end{center}

 \noindent with $a_{\sigma} \geq 0$, \  $\forall \sigma \in \Sigma(n-2)$.

\end{pr}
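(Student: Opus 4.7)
The strategy is to use the torus action on $X$ to deform $S$, within its numerical class, into a $T$-invariant effective $2$-cycle. Since $T$ is connected, translation by any $t\in T$ preserves numerical equivalence, so for every $t$ the translate $t\cdot S$ lies in the same class $[S]\in N_2(X)$. The flat limit of such a family of translates along a one-parameter subgroup will therefore also belong to $[S]$, and by the orbit-cone correspondence a $T$-invariant $2$-cycle on $X$ must be supported on $\bigcup_{\sigma\in\Sigma(n-2)}V(\sigma)$; if it is effective the multiplicities are automatically non-negative.

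Concretely, I would choose a one-parameter subgroup $\lambda:\mathbb{C}^*\to T$ whose cocharacter lies in the relative interior of some maximal cone of $\Sigma$, which guarantees $X^\lambda=X^T$ and in particular that all $\lambda$-limits are $T$-fixed points. Form the scheme-theoretic closure $\mathcal{S}\subset\mathbb{A}^1\times X$ of $\{(t,\lambda(t)\cdot s):t\in\mathbb{C}^*,\ s\in S\}$; this is a flat family over $\mathbb{A}^1$ whose fiber over $t\ne 0$ is $\lambda(t)\cdot S$. The fiber $S_0$ over $t=0$ is an effective $2$-cycle on $X$ with $[S_0]=[S]$ in $N_2(X)$, and it is $\lambda$-invariant, since $\lambda(s)\cdot S_0=\lim_{t\to 0}\lambda(st)\cdot S=S_0$ for every $s\in\mathbb{C}^*$.

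The main obstacle is to upgrade $\lambda$-invariance of the components of $S_0$ to full $T$-invariance, as $\lambda$ generates only a $1$-dimensional subtorus. The stabilizer in $T$ of any irreducible component of $S_0$ is a closed subgroup containing $\lambda(\mathbb{C}^*)$, so it suffices to choose $\lambda$ avoiding the (countably many) proper subtori of $T$ that arise as identity components of stabilizers of non-$T$-invariant surfaces in $X$; for such a generic $\lambda$, the stabilizer of every component of $S_0$ equals $T$, hence each component is some $V(\sigma)$ with $\sigma\in\Sigma(n-2)$. A slicker alternative, which avoids the genericity bookkeeping, is to degenerate in stages: apply successive flat limits along one-parameter subgroups $\lambda_1,\dots,\lambda_n$ whose cocharacters form a $\mathbb{Z}$-basis of $N$, so that at the final stage the cycle is invariant under all of $T$. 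Either way we end up with $S\equiv\sum_{\sigma\in\Sigma(n-2)}a_\sigma[V(\sigma)]$ with $a_\sigma\in\mathbb{Z}_{\ge 0}$, as desired.
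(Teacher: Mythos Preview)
Your staged-degeneration alternative is correct and is essentially the paper's argument. The paper also degenerates $S$ along successive one-parameter subgroups $\lambda_1,\lambda_2,\lambda_3$, observing after each step that any component still meeting the big torus needs another degeneration; after three steps no component can meet $T$ (a surface cannot carry a faithful action of a $3$-dimensional torus), and components lying in the toric boundary $\bigcup D_i$ are handled by induction on $\dim X$. Your version trades the induction for simply continuing to $\lambda_n$ so that the final limit is invariant under all of $T$; both work.

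Your first proposal, however---a single ``generic'' $\lambda$---does not go through. The family of subtori you want $\lambda(\mathbb{C}^*)$ to avoid is not merely countable but exhaustive: for $n\geq 3$, \emph{every} one-dimensional subtorus $\lambda(\mathbb{C}^*)\subset T$ is the identity component of the stabilizer of some non-$T$-invariant surface. Indeed, choose any curve $C\subset T/\lambda(\mathbb{C}^*)\cong(\mathbb{C}^*)^{n-1}$ with trivial stabilizer there (for instance $\{(t,1-t,1,\dots,1):t\neq 0,1\}$) and let $W$ be the closure in $X$ of its preimage in $T$; then $W$ is a surface with $\mathrm{Stab}_T(W)^\circ=\lambda(\mathbb{C}^*)$. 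Hence no $\lambda$ satisfies your genericity condition, and a single degeneration cannot force $T$-invariance of the limit components. This is a genuine obstruction, not bookkeeping; the iterated degeneration (or the paper's combination of three degenerations with induction on the dimension) is really needed.
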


\begin{proof} The proof is by induction on the dimension of $X$. If $n=3$ then $S$ is an efective divisor on $X$ and by \cite[Section 5.1]{fulton}, $S\sim \displaystyle\sum_{\sigma\in \Sigma(1)}a_{\sigma}\cdot [V(\sigma)]$ with $a_{\sigma}\geq 0, \forall \sigma\in \Sigma(1).$ By induction hypothesis we can suppose that $S$ intersects the torus $T$ of $X$. Consider the action $\mathbb{C}^*\times X\rightarrow X$ given by $(t,x)\mapsto t^{\lambda_1}\cdot x$, where $\lambda_1\in N$. This action induces a rational map $f:\mathbb{C}\times S\dashrightarrow X$. Consider a toric resolution on indeterminacy for this map:\\

$$\xymatrix{
Y \ar@/_0.8cm/[dd]_{\pi}
\ar[d]_p \ar[dr]^{\psi} & \\ 
\mathbb{C}\times S \ar[d]_q \ar@{-->}[r]^f  & X  \\
\mathbb{C} & & 
}$$
 By \cite[III 9.6,9.7]{hartshorne} $\pi$ is a flat morphism whose fibers have pure dimension two. Hence the cycles $\psi_*(\pi^*(0))$ and $\psi_*(\pi^*(1))$ are rationally equivalent. Since $S=\psi_*(\pi^*(1))$ we get a numerical equivalence $S\equiv \displaystyle\sum_{i=1}^ {k}a_iS_i$, where $a_i\geq 0$ and $S_i\subset \psi(\pi^{-1}(0)) \ \forall i=1,...,k$. Note that by construction $\psi_*(\pi^*(0))$ and therefore every $S_i$ is invariant by the action of $\lambda_1$. If each surface $S_i$ has empty intersection with the torus then we conclude the proposition by induction. If $S_i$ intersects the torus we take $\lambda_2\in N$ such that $\{\lambda_1, \lambda_2\}$ are linearly independent and repeat the construction above to $S_i$ and $\lambda_2$. We get $S_i\equiv \displaystyle\sum_{j=1}^{r}b_jS'_j$, where $b_j\geq 0$ and each $S'_j$ is an invariant surface by the actions of $\lambda_1$ and $\lambda_2$. If $S'_j\cap T=\emptyset \ \forall j=1,...,r$ we are done. If for some $S'_j$ it fails, we repeat the process using a parameter $\lambda_3$ such that $\{\lambda_1,\lambda_2,\lambda_3\}$ are linearly independent and obtain $S'_j\equiv \displaystyle\sum_{k=1}^{s}c_kS''_k$ where $c_k\geq 0$ and $S''_k$ are invariant surfaces by the actions of $\lambda_1, \lambda_2$ and $\lambda_3$. To finish the proof we observe that $S''_k\cap T=\emptyset \ \forall k=1,...,s$. Since $\{\lambda_1, \lambda_2,\lambda_3\}$ are linearly independent, if there were $t\in S''_k\cap T$ we would have an injective map $(\mathbb{C}^*)^3\rightarrow S''_k$ given by $(t_1,t_2,t_3)\mapsto t_1^{\lambda_1}\cdot t_2^{\lambda_2}\cdot t_3^{\lambda_3}\cdot t$. But this is absurd since $S''_k$ is a surface. 
 \end{proof}
 \vspace{0.4cm}

So, in order to check whether a smooth toric Fano variety is 2-Fano, we need to compute $\displaystyle \sum_{i=1}^{d}D_i^2 \cdot S$ for invariant surfaces $V(\sigma)$. 

If $V(\sigma)$ is not contained in the support of $D_i$ we can use  the Remark \ref{rkinter}  to compute $D_i\cdot V(\sigma)$.
 
Suppose, otherwise, that $V(\sigma)\subseteq supp(D_i)$. Since $D_i$ is a Cartier divisor, there exists $u\in M$ such that $(D_i)_{\mid_{U_{\sigma}}}=div (\chi^ u)_{\mid_{U_{\sigma}}}$. Hence,  $D_i-div(\chi^u)$ is linearly equivalent to  $D_i$ and the support of $D_i - div(\chi^u)=D_i - \displaystyle \sum_{i=1}^{r}\langle u,v_j\rangle D_j$ does not contain $V(\sigma)$. Thus, if we find an element $u\in M$ satisfying $(D_i)_{\mid_{U_{\sigma}}}=div (\chi^ u)_{\mid_{U_{\sigma}}}$ then we can use again the Remark \ref{rkinter} to compute $D_i \cdot V(\sigma)=(D_i-div(\chi^ u)) \cdot V(\sigma)$.\\

By the cone-orbit correspondence, we have:
$$U_{\sigma}\cap D_j \neq \emptyset \Leftrightarrow \langle v_j\rangle \subseteq \sigma.$$
Since $div (\chi^ u)_{\mid_{U_{\sigma}}}=\displaystyle \sum_{v_j \in \sigma}\langle u,v_j\rangle (D_j)_{\mid_{U_{\sigma}}}$, in order to have $(D_i)\mid_{U_{\sigma}}=div(\chi^u)\mid_{U_{\sigma}}$ we can take $u$ to be any element in $M$ such that $\langle u,v_i\rangle =1$ and $\langle u,v_j\rangle =0 \ \forall j \neq i$ such that $v_j \in \sigma$. With this, we are ready to compute the product of $ch_2(T_X)$ with $V(\sigma)$.\\

Next, we give a formula of the second Chern character of a variety obtained by a blow up. This formula can be also found in \cite{jason}. For the convenience of the reader we sketch the proof below.

\begin{lema}\label{blowup} Let $X$ be a smooth projective variety and $Z\subset X$ a (smooth) invariant subvariety of $X$ of codimension $c$. Denote by $\pi:\tilde{X}\rightarrow X$ the blowing up of $X$ along of $Z$, $j:E\hookrightarrow \tilde{X}$ the natural inclusion of the exceptional divisor $E$ and $f:=\pi{|_E}:E\rightarrow Z$. Then $$ch_2(T_{\tilde{X}})=\pi^*ch_2(T_X)+\frac{c+1}{2}E^2-j_*\Big(f^*\big(c_1(N_{Z|X})\big)\Big).$$
In particular, the blow up of $\mathbb{P}^n$ along of a subvariety of codimension $2$ is not $2$-Fano.  
\end{lema}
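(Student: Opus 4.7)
The plan is to derive the formula by chasing Chern characters through a standard exact sequence relating $T_{\tilde X}$ with $\pi^*T_X$, and then applying Grothendieck--Riemann--Roch to the codimension-one closed immersion $j\colon E \hookrightarrow \tilde X$.

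First I would recall (or verify in local coordinates) the short exact sequence
\begin{equation*}
0 \longrightarrow T_{\tilde X} \longrightarrow \pi^* T_X \longrightarrow j_* \mathcal{Q}_E \longrightarrow 0,
\end{equation*}
where $\mathcal{Q}_E$ is the tautological quotient bundle of rank $c-1$ on $E = \mathbb{P}(N_{Z|X}^*)$, fitting in the Euler-type sequence $0 \to \mathcal{O}_E(-1) \to f^*N_{Z|X} \to \mathcal{Q}_E \to 0$; recall also that $\mathcal{O}_E(E)|_E = N_{E/\tilde X} = \mathcal{O}_E(-1)$, so $e := c_1(N_{E/\tilde X}) = j^*E$. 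By additivity of the Chern character, $ch(T_{\tilde X}) = \pi^*ch(T_X) - ch(j_*\mathcal{Q}_E)$.

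Next, Grothendieck--Riemann--Roch for $j$ yields
\begin{equation*}
ch(j_*\mathcal{Q}_E) = j_*\bigl(ch(\mathcal{Q}_E)\cdot td(N_{E/\tilde X})^{-1}\bigr) = j_*\Bigl(ch(\mathcal{Q}_E)\cdot\tfrac{1-e^{-e}}{e}\Bigr).
\end{equation*}
The degree-$2$ part on $\tilde X$ equals $j_*$ of the degree-$1$ part on $E$, namely $j_*\bigl(c_1(\mathcal{Q}_E) - \tfrac{c-1}{2}e\bigr)$. From the tautological sequence, $c_1(\mathcal{Q}_E) = f^*c_1(N_{Z|X}) - c_1(\mathcal{O}_E(-1)) = f^*c_1(N_{Z|X}) - e$, and the self-intersection identity $j_*e = j_*j^*E = E\cdot E = E^2$ then gives $ch_2(j_*\mathcal{Q}_E) = j_*f^*c_1(N_{Z|X}) - \tfrac{c+1}{2}E^2$, which is precisely the formula in the statement.

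For the last assertion, I would exhibit a surface $S\subset\tilde X$ with $ch_2(T_{\tilde X})\cdot S\le 0$. When $\deg Z\ge 2$, a natural choice is the strict transform $\tilde\Pi$ of a general $2$-plane $\Pi\subset\mathbb{P}^n$: since $\Pi$ meets $Z$ transversally in $\deg Z$ points, $\tilde\Pi\cap E$ consists of $\deg Z$ fibers of $f$, which yields $E^2\cdot\tilde\Pi = -\deg Z$ and $j_*f^*c_1(N_{Z|X})\cdot\tilde\Pi = 0$; together with $\pi^*H^2\cdot\tilde\Pi = 1$ and $ch_2(T_{\mathbb{P}^n}) = \tfrac{n+1}{2}H^2$, the derived formula gives $ch_2(T_{\tilde X})\cdot\tilde\Pi = \tfrac{n+1-3\deg Z}{2}\le 0$ whenever $\deg Z\ge (n+1)/3$. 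For the remaining small-degree case (most notably $Z = \mathbb{P}^{n-2}$, so that $\tilde X$ is a $\mathbb{P}^{n-1}$-bundle over $\mathbb{P}^1$), I would instead take $S$ to be a ruled surface $\mathbb{P}^1\times\mathbb{P}^1\subset\tilde X$ arising from two trivial summands of the defining bundle, on which a direct intersection calculation using $ch_2(T_{\tilde X}) = \tfrac{n}{2}\xi^2 - p^*H\,\xi$ gives $ch_2(T_{\tilde X})\cdot S = -1$. The main obstacle is the absence of a single uniform choice of $S$ covering all codimension-two $Z$; the argument naturally splits into cases according to $\deg Z$, or equivalently, according to the positivity properties of $c_1(N_{Z|X})$.
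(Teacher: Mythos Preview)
Your derivation of the $ch_2$ formula is correct and essentially the same as the paper's: the paper uses the cotangent sequence $0\to\pi^*\Omega_X\to\Omega_{\tilde X}\to j_*\Omega_f\to 0$ together with the relative Euler sequence for $\Omega_f$, while you use the dual tangent sequence $0\to T_{\tilde X}\to\pi^*T_X\to j_*\mathcal{Q}_E\to 0$; both then apply Grothendieck--Riemann--Roch to $j$ and read off the degree-$2$ piece.

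For the ``in particular'' your argument has a genuine gap. Your two-plane computation only yields $ch_2(T_{\tilde X})\cdot\tilde\Pi\le 0$ when $\deg Z\ge(n+1)/3$, and for the remaining small-degree range you treat only the linear case $Z=\mathbb{P}^{n-2}$. But for $n\ge 6$ there are smooth codimension-two subvarieties of degree strictly less than $(n+1)/3$ that are not linear spaces (for instance a smooth quadric in a hyperplane $\mathbb{P}^{n-1}\subset\mathbb{P}^n$, of degree $2$), and these are not covered by either of your cases. You yourself flag this as ``the main obstacle''.

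The paper sidesteps the case split entirely by choosing the test surface inside the exceptional divisor rather than as a strict transform. For any curve $C\subset Z$ set $S:=f^{-1}(C)\subset E$. Then $\pi_*[S]=0$, so $\pi^*ch_2(T_X)\cdot S=0$, and a direct computation with the formula you proved (using $j^*E=-\xi$ and the Grothendieck relation on the $\mathbb{P}^1$-bundle $S\to C$) gives
\[
ch_2(T_{\tilde X})\cdot S \;=\; \tfrac{3}{2}\,E^2\cdot S \;-\; j_*f^*c_1(N_{Z|X})\cdot S \;=\; -\tfrac{1}{2}\deg\bigl(N_{Z|X}\big|_C\bigr).
\]
Since $T_{\mathbb{P}^n}$ is ample, its quotient $N_{Z|X}$ is ample, so $\deg(N_{Z|X}|_C)>0$ for every curve $C\subset Z$, and hence $ch_2(T_{\tilde X})\cdot S<0$. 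This one choice of $S$ works uniformly for all smooth $Z$ of codimension~$2$, with no dependence on $\deg Z$.
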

\begin{proof}
Using the exact sequence $$0\longrightarrow\pi^*\Omega_X\longrightarrow\Omega_{\tilde{X}}\longrightarrow j_*\Omega_{f}\longrightarrow 0$$ we get $$ch(\Omega_{\tilde{X}})=\pi^*ch(\Omega_X)+ch(j_*\Omega_f). \ \ \ \ \ \ \ (1)$$
Grothendieck-Riemann-Roch gives $$ch(j_*\Omega_f)\cdot td(T_{\tilde{X}})=j_*(ch(\Omega_f)\cdot td(T_E)).\ \ \ \ \ \ \ (2)$$
From the exact sequence $$0\longrightarrow T_E\longrightarrow j^*T_{\tilde{X}}\longrightarrow j^* \mathcal{O}_{\tilde{X}}(E) \longrightarrow 0$$
\noindent we have  $$td(T_E)=td(j^*T_{\tilde{X}})\cdot td(j^* \mathcal{O}_{\tilde{X}}(E))^{-1}=j^*\Big(\displaystyle\frac{E}{1-e^{-E}}\Big)^{-1}.\ \ \ \ \ \ \ (3)$$
  
Putting together $(2)$ and $(3)$ we obtain
$$ch(j_*\Omega_f)=j_*(ch(\Omega_f))\cdot \Big(\displaystyle\frac{1-e^{-E}}{E}\Big).\ \ \ \ \ \ \ (4)$$
Using the isomorphism $\mathcal{O}_E(-1)\simeq j^*\mathcal{O}_{\tilde{X}}(E)$ and the Euler sequence for $\Omega_f$,
$$0\longrightarrow\Omega_f\longleftrightarrow f^*N_{Z|X}^{\vee}\otimes\mathcal{O}_E(-1)\longrightarrow \mathcal{O}_E\longrightarrow 0,$$
we obtain
$$ch(\Omega_f)=f^*ch(N_{Z|X}^{\vee})\cdot j^*(1+E)-1.\ \ \ \ \ \ \ (5)$$

Putting together $(1),(4)$ and $(5)$ we arrive at $$ch(\Omega_{\tilde{X}})=\pi^*ch(\Omega_X)+j_*\Big(f^*ch(N_{Z|X}^{\vee})\cdot j^*(1+E)-1\Big)\cdot \left(\frac{1-e^{-E}}{E}\right).$$
Taking the degree $2$ piece we prove the first part of the lemma. To prove the last statement we take a curve $C\subset Z$ and consider the surface 
$S:=f^ {-1}(C)\simeq\mathbb{P}(N^{\vee})$ where $N$ denotes the normal bundle $N_{Z|X}|_C$. \\
Then,
$ch_2(T_{\tilde{X}})\cdot S=\displaystyle\frac{3}{2}E^2\cdot S-E\cdot f^*c_1(N_{Z|X})\cdot S=-\displaystyle\frac{1}{2}\mbox{deg}(N)=-\displaystyle\frac{1}{2}\mbox{det}(N)\cdot C$.

 If $X$ is $\mathbb{P}^n$ then $T_X$ is ample, and thus so is $N_{Z|X}$. So $\mbox{det}(N)$ is ample and $ch_2(T_{\tilde{X}})\cdot S<0$.
\end{proof}

\subsection{The Main Result}

\begin{teo} For $n\leq 6$ the only toric Fano $n$-fold with positive second Chern character is $\mathbb{P}^n$.
\end{teo}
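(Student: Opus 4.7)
The strategy is to convert the geometric inequality $ch_2(T_X)\cdot S>0$ into a purely combinatorial check on the fan, and then run that check across the existing classifications of smooth toric Fano varieties in dimensions $n\leq 6$. The first reduction is Proposition \ref{dm}: any surface $S\subset X$ is numerically equivalent to a nonnegative combination of the invariant surfaces $V(\sigma)$ with $\sigma\in\Sigma(n-2)$. Consequently $X$ is 2-Fano if and only if $ch_2(T_X)\cdot V(\sigma)>0$ for every such $\sigma$. Combining this with Proposition \ref{2chern} reduces the problem to verifying $\sum_{i=1}^{r}D_i^2\cdot V(\sigma)>0$ for each invariant 2-cycle.

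Each term $D_i^2\cdot V(\sigma)$ is then computed with the toric intersection calculus. When $V(\sigma)$ is not contained in $\operatorname{supp}(D_i)$, Remark \ref{rkinter} and Proposition \ref{inter} give the answer directly from the combinatorics of the wall structure of $\Sigma$ around $\sigma$. Otherwise, as explained in Section \ref{secondchern}, one chooses $u\in M$ with $\langle u,v_i\rangle=1$ and $\langle u,v_j\rangle=0$ for all other $v_j\in\sigma$, and replaces $D_i$ by the linearly equivalent $D_i-\operatorname{div}(\chi^u)$, whose support no longer contains $V(\sigma)$; one then applies Remark \ref{rkinter} to this replacement. This turns the computation into a finite calculation that depends only on the primitive generators of the cones containing $\sigma$.

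With the machinery in place, the proof proceeds by dimension. For $n=2,3$ the statement is immediate from the classifications of toric Del Pezzo surfaces ($5$ isomorphism classes) and smooth toric Fano threefolds ($18$ classes, due to Batyrev); in each list one inspects that only $\mathbb{P}^2$, respectively $\mathbb{P}^3$, satisfies $ch_2>0$ on every invariant surface. For $n=4$ the plan is to work through Batyrev's table (completed by Sato's missing class, as recalled in the preceding subsection) and exhibit, for each of the $124$ Fano $4$-folds other than $\mathbb{P}^4$, an explicit invariant surface on which $ch_2$ is nonpositive. Lemma \ref{blowup} accelerates this substantially: any toric Fano variety obtained as the blow-up of a smooth projective toric variety along an invariant smooth codimension-$2$ center with ample $\det(N_{Z|X})$ restricted to some invariant curve in $Z$ is eliminated at once, which kills most series in the classification.

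For $n=5,6$ the same procedure is applied to the Øbro database of $866$ and $7622$ isomorphism classes, the intersection numbers being produced by a computer routine (the Maple implementation is deferred to Chapter 6). The main obstacle is not conceptual but logistical: one must be confident that the case enumeration is exhaustive and that the intersection routine handles correctly the situation where $V(\sigma)\subset\operatorname{supp}(D_i)$ for several indices $i$ simultaneously. This is precisely where the linear-equivalence substitution must be implemented carefully, because the convenient choice of $u$ depends on $\sigma$, and so the computation of $\sum_i D_i^2\cdot V(\sigma)$ is not a single uniform formula but $r$ separate subcomputations to be assembled.
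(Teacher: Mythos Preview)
Your overall strategy coincides with the paper's: reduce via Proposition~\ref{dm} to invariant surfaces, use Proposition~\ref{2chern}, and then check the finite classifications with explicit intersection computations (Maple for $n=4,5,6$). Two points of divergence are worth noting. First, for $n=3$ the paper does not recheck the $18$ toric Fano threefolds directly but cites the general classification of 2-Fano threefolds in \cite{carol}; your direct route is of course also valid. Second, and more substantively, for $n=4$ your proposed shortcut is Lemma~\ref{blowup} on codimension-2 blow-ups, and you assert it ``kills most series'' in Batyrev's list. This is an overstatement: very few of the $124$ classes are presented as such blow-ups. The paper instead eliminates a large block of cases by the observation that neither a nontrivial product $Z\times Y$ nor a projective bundle $\mathbb{P}_Y(\mathcal{E})$ over a positive-dimensional base can be 2-Fano (the former because $ch_2\cdot(A\times B)=0$ for curves $A\subset Z$, $B\subset Y$; the latter by \cite[4.1]{jason} or Theorem~\ref{ch}). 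This disposes of the entire $B$, $C$, $D$, $L$ series and several sporadic classes at once, leaving a manageable residual list for which the paper tabulates an explicit bad surface. Your plan still terminates correctly, but the workload estimate is off.
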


\textit{Proof.} For $n\leq 2$ the theorem follows from the classification of toric Del Pezzo surfaces and Lemma \ref{blowup}. For $n=3$ the result follows from \cite{carol}.

 We claim that in the following cases $ch_2(T_X)$ is not positive: 

\begin{enumerate}
\item $X=Z\times Y$ is a product of positive-dimensional Fano manifolds.
\item $X=\mathbb{P}_Y(E)$ is a projective bundle  over a positive-dimensional Fano manifold $Y$.
\end{enumerate}
In the first case recall we have an isomorphism $T_{Z\times Y}\simeq \pi_Z^*T_Z\times \pi_Y^*T_Y$ and therefore $ch(T_X)=\pi_Z^*(ch(T_Z))+\pi_Y^*(ch(T_Y))$. If $A$ and $B$ are curves in $Z$ and $Y$ respectively, then $ch_2(T_X)\cdot (A\times B)=0$. 
The second item follows from \cite[4.1]{jason} and can also be obtained as a special case of Theorem \ref{ch}.

As consequence, the toric Fano 4-folds listed in the Batyrev's classification that are of type 1. or 2. do not have positive second Chern Character. They are (see Batyrev's description of these varieties in \cite{bat}):\\
 
 $B_1,...,B_5$,$C_1,...,C_4,D_1,...,D_{19},H_8, L_1,...,L_{13},I_7,I_{11},I_{13},Q_6,Q_8,Q_{10}$,
 $Q_{11}$, $Q_{15}$,$K_4,U_4,U_5,U_6,119,120,121$. \\
 
In the remaining 4-dimensional cases, we computed $ch_2(T_X)\cdot S$ for all invariant surfaces $S\subset X$, as described in Section \ref{secondchern}. To make the computation we used the program Maple. For all smooth toric Fano 4-folds $X\neq \mathbb{P}^4$ in Batyrev's list we found a surface $S\subset X$ such that $ch_2(T_X)\cdot S \leq 0$.\\

The next table summarizes our results. The first column lists toric Fano 4-folds according Batyrev's notation. The second column lists its primitive vectors explicity. The third column gives an invariant surface $S$ for which the intersection number $ch_2(T_X)\cdot S$ (listed on the last column) is non positive.\\ \\

\noindent
\begin{tabular}{  |c|  p{7cm} | c| c| }
\hline
 & \centering{Primitive \ Vectors} & Surface & $ch_2(T_X)\cdot S$  \\ 
\hline
$E_1$ & $v_1=e_1, v_2=e_2,v_3=e_3,v_4=e_4,v_5=2e_1-e_2-e_3-e_4,v_6=e_1+e_2,v_7=-e_1$ & $V(v_2,v_3)$ &  -2 \\ \hline

$E_2$ & $v_1=e_1, v_2=e_2,v_3=e_3,v_4=e_4,v_5=e_1-e_2-e_3-e_4,v_6=e_1+e_2,v_7=-e_1$ & $V(v_2,v_3)$ & $-\displaystyle\frac{3}{2}$ \\ \hline

$E_3$ & $v_1=e_1, v_2=e_2,v_3=e_3,v_4=e_4,v_5=-e_2-e_3-e_4,v_6=e_1+e_2,v_7=-e_1$ & $V(v_2,v_3)$ & -1 \\ \hline

$G_1$ & $v_1=e_1, v_2=e_2,v_3=e_3,v_4=e_1-e_2-e_3,v_5=e_4,v_6=e_1+e_2+e_3-e_4,v_7=-e_1$ & $V(v_1,v_5)$ & $-\displaystyle\frac{1}{2}$ \\ \hline

$G_2$ & $v_1=e_1, v_2=e_2,v_3=-e_1-e_2,v_4=e_4,v_5=e_3,v_6=2e_1-e_3-e_4,v_7=-e_1+e_4$ & $V(v_1,v_5)$ & -2 \\ \hline

$G_3$ & $v_1=e_1, v_2=e_2,v_3=e_3,v_4=-e_2-e_3,v_5=e_4,v_6=e_1+e_2+e_3-e_4,v_7=-e_1$ & $V(v_1,v_5)$ & -1 \\ \hline

$G_4$ & $v_1=e_1, v_2=e_2,v_3=-e_1-e_2,v_4=e_4,v_5=e_3,v_6=e_1+e_2-e_3-e_4,v_7=-e_1+e_4$ & $V(v_1,v_5)$ & $-\displaystyle\frac{1}{2}$ \\ \hline

$G_5$ & $v_1=e_1, v_2=e_2,v_3=-e_1-e_2,v_4=e_4,v_5=e_3,v_6=-e_3-e_4,v_7=-e_1+e_4$ & $V(v_2,v_5)$ & -2 \\ \hline

$G_6$ & $v_1=e_1, v_2=e_2,v_3=-e_1-e_2,v_4=e_4,v_5=e_3,v_6=e_1-e_3-e_4,v_7=-e_1+e_4$ & $V(v_2,v_5)$ & $-\displaystyle\frac{3}{2}$ \\ \hline
\end{tabular}
\noindent
\begin{tabular}{  |c| p{7cm} | c| c| }
\hline
& \centering{Primitive \ Vectors} & Surface & $ch_2(T_X)\cdot S$  \\ 
\hline
$H_1$ & $v_1=e_1, v_2=e_2,v_3=e_3,v_4=e_4,v_5=2e_1-e_3-e_4,v_6=-e_1-e_2,v_7=-e_2,v_8=e_1+e_2$ & $V(v_3,v_4)$ & $-\displaystyle\frac{3}{2}$ \\ \hline

$H_2$ & $v_1=e_1, v_2=e_2,v_3=e_3,v_4=e_4,v_5=2e_1+e_2-e_3-e_4,v_6=-e_1-e_2,v_7=-e_2,v_8=e_1+e_2$ & $V(v_3,v_4)$ & -1 \\ \hline

$H_3$ & $v_1=e_1, v_2=e_2,v_3=e_3,v_4=e_4,v_5=2e_1+2e_2-e_3-e_4,v_6=-e_1-e_2,v_7=-e_2,v_8=e_1+e_2$ & $V(v_3,v_4)$ & $-\displaystyle\frac{3}{2}$ \\ \hline

$H_4$ & $v_1=e_1, v_2=e_2,v_3=e_3,v_4=e_4,v_5=e_1-e_3-e_4,v_6=-e_1-e_2,v_7=-e_2,v_8=e_1+e_2$ & $V(v_3,v_4)$ & $-\displaystyle\frac{3}{2}$ \\ \hline

$H_5$ & $v_1=e_1, v_2=e_2,v_3=e_3,v_4=e_4,v_5=e_1+e_2-e_3-e_4,v_6=-e_1-e_2,v_7=-e_2,v_8=e_1+e_2$ & $V(v_3,v_4)$ & $-\displaystyle\frac{3}{2}$\\ \hline

$H_6$ & $v_1=e_1, v_2=e_2,v_3=e_3,v_4=e_4,v_5=e_1+2e_2-e_3-e_4,v_6=-e_1-e_2,v_7=-e_2,v_8=e_1+e_2$ & $V(v_3,v_4)$ & $-\displaystyle\frac{3}{2}$ \\ \hline

$H_7$ & $v_1=e_1$, $v_2=e_2$, $v_3=e_3$, $v_4=e_4$, $v_5=2e_2-e_3-e_4,$ $v_6=-e_1-e_2,$ $v_7=-e_2,$ $v_8=e_1+e_2$ & $V(v_3,v_4)$ & $-\displaystyle\frac{3}{2}$ \\ \hline

$H_9$ & $v_1=e_1, v_2=e_2,v_3=e_3,v_4=e_4,v_5=e_2-e_3-e_4,v_6=-e_1-e_2,v_7=-e_2,v_8=e_1+e_2$ & $V(v_3,v_4)$ & $-\displaystyle\frac{3}{2}$\\ \hline

$H_{10}$ & $v_1=e_1, v_2=e_2,v_3=e_3,v_4=e_4,v_5=-e_1-e_3-e_4,v_6=-e_1-e_2,v_7=-e_2,v_8=e_1+e_2$ & $V(v_3,v_4)$ & $-\displaystyle\frac{3}{2}$ \\ \hline

$I_1$ & $v_1=e_1, v_2=-e_1+e_3,v_3=e_3,v_4=e_4,v_5=-2e_2+e_3-e_4,v_6=e_2-e_3,v_7=e_2,v_8=-e_2+e_3$ & $V(v_1,v_4)$ & $-\displaystyle\frac{3}{2}$ \\ \hline

$I_2$ & $v_1=e_1, v_2=e_2,v_3=e_3,v_4=e_4,v_5=2e_1+2e_2-e_3-e_4,v_6=-e_1-e_2,v_7=-e_1-e_2+e_3,v_8=e_1+e_2$ & $V(v_1,v_4)$ & $-\displaystyle\frac{3}{2}$\\ 
\hline
$I_3$ & $v_1=e_1, v_2=e_2,v_3=e_3,v_4=e_4,v_5=2e_1+e_2-e_3-e_4,v_6=-e_1-e_2,v_7=-e_1-e_2+e_3,v_8=e_1+e_2$ & $V(v_1,v_4)$ & $-\displaystyle\frac{3}{2}$ \\ \hline

$I_4$ & $v_1=e_1, v_2=e_2,v_3=e_3,v_4=e_4,v_5=-2e_1-2e_2+e_3-e_4,v_6=e_1+e_2-e_3,v_7=e_1+e_2,v_8=-e_1-e_2+e_3$ & $V(v_1,v_4)$ & $-\displaystyle\frac{3}{2}$ \\ \hline
$I_5$ & $v_1=e_1, v_2=e_2,v_3=e_3,v_4=e_1+e_2,v_5=-e_1-e_2-e_3+2e_4,v_6=-e_4,v_7=e_3-e_4,v_8=e_4$ & $V(v_1,v_4)$ & $-\displaystyle\frac{3}{2}$ \\ \hline

$I_6$ & $v_1=e_1, v_2=e_2,v_3=e_1+e_2,v_4=e_4,v_5=e_3,v_6=-e_1-e_2-e_3-e_4,v_7=-e_3-e_4,v_8=e_1+e_2+e_3+e_4$ & $V(v_1,v_4)$ & $-\displaystyle\frac{3}{2}$ \\ \hline
\end{tabular}
\noindent
\begin{tabular}{  |c| p{7cm} | c| c| }
\hline
& \centering{Primitive \ Vectors} & Surface & $ch_2(T_X)\cdot S$  \\ 
\hline
$I_8$ & $v_1=e_1, v_2=e_2,v_3=e_3,v_4=e_4,v_5=-e_2-e_3-e_4,v_6=e_1+e_2,v_7=e_1+e_2+e_3,v_8=-e_1-e_2$ & $V(v_1,v_4)$ & $-\displaystyle\frac{3}{2}$ \\ \hline

$I_9$ & $v_1=e_1, v_2=e_2,v_3=e_3,v_4=e_4,v_5=-e_1-e_2-e_4,v_6=e_1+e_2-e_3,v_7=e_1+e_2,v_8=-e_1-e_2+e_3$ & $V(v_1,v_4)$ & $-\displaystyle\frac{3}{2}$\\ \hline

$I_{10}$ & $v_1=e_1, v_2=e_2,v_3=e_3,v_4=e_4,v_5=e_1+e_2-e_3-e_4,v_6=-e_1-e_2,v_7=-e_1-e_2+e_3,v_8=e_1+e_2$ & $V(v_1,v_4)$ & $-\displaystyle\frac{3}{2}$ \\ \hline

$I_{12}$ & $v_1=e_1, v_2=e_2,v_3=e_3,v_4=e_4,v_5=-e_1-e_2-e_3-e_4,v_6=e_1+e_2,v_7=e_1+e_2+e_3,v_8=-e_1-e_2$ & $V(v_1,v_4)$ & $-\displaystyle\frac{3}{2}$ \\ \hline

$I_{14}$ & $v_1=e_1, v_2=e_2,v_3=e_3,v_4=e_1+e_2,v_5=e_4,v_6=-e_1-e_2-e_3-e_4,v_7=-e_1-e_2-e_4,v_8=e_1+e_2+e_3+e_4$ & $V(v_1,v_4)$ & $-\displaystyle\frac{3}{2}$ \\ \hline

$I_{15}$ & $v_1=e_1, v_2=e_2,v_3=e_3,v_4=e_4,v_5=-2e_1-2e_2-e_3-e_4,v_6=e_1+e_2,v_7=e_1+e_2+e_3,v_8=-e_1-e_2$ & $V(v_1,v_4)$ & $-\displaystyle\frac{3}{2}$\\ \hline
$J_1$ & $v_1=e_1, v_2=e_2,v_3=e_3,v_4=e_4,v_5=-e_3-e_4,v_6=e_1+e_2+e_3,v_7=e_1+e_2+2e_3,v_8=-e_1-e_2-e_3$ & $V(v_1,v_3)$ & -1 \\ \hline

$J_2$ & $v_1=e_1, v_2=e_2,v_3=-e_1-e_2,v_4=e_4,v_5=e_3,v_6=e_1+e_2-e_3-e_4,v_7=-e_3-e_4,v_8=-e_1-e_2+e_3+e_4$ & $V(v_1,v_3)$ & $-\displaystyle\frac{1}{2}$ \\ \hline
$K_1$ & $v_1=e_1, v_2=e_2,v_3=e_3,v_4=e_4,v_5=2e_1+2e_2-e_3-e_4,v_6=-e_1,v_7=-e_2,v_8=-e_1-e_2,v_9=e_1+e_2$ & $V(v_3,v_4)$ & -3 \\ \hline

$K_2$ & $v_1=e_1, v_2=e_2,v_3=e_3,v_4=e_4,v_5=2e_1+e_2-e_3-e_4,v_6=-e_1,v_7=-e_2,v_8=-e_1-e_2,v_9=e_1+e_2$ & $V(v_3,v_4)$ & -3 \\ \hline

$K_3$ & $v_1=e_1, v_2=e_2,v_3=e_3,v_4=e_4,v_5=e_1+e_2-e_3-e_4,v_6=-e_1,v_7=-e_2,v_8=-e_1-e_2,v_9=e_1+e_2$ & $V(v_3,v_4)$ & -3 \\ \hline

$M_1$ & $v_1=e_1, v_2=e_2,v_3=e_3,v_4=e_4,v_5=-e_4,v_6=e_1+e_2+e_3-e_4,v_7=-e_1-e_2-e_3+e_4,v_8=-e_1$ & $V(v_2,v_4)$ & $-\displaystyle\frac{5}{2}$ \\ \hline

$M_2$ & $v_1=e_1, v_2=e_2,v_3=e_3,v_4=e_4,v_5=e_1-e_4,v_6=e_1+e_2+e_3-e_4,v_7=-e_2-e_3+e_4,v_8=-e_1$ & $V(v_2,v_4)$ & $-\displaystyle\frac{5}{2}$ \\ \hline
\end{tabular}
\noindent
\begin{tabular}{  |c| p{7cm} | c| c| }
\hline
& \centering{Primitive \ Vectors} & Surface & $ch_2(T_X)\cdot S$  \\ 
\hline
$M_3$ & $v_1=e_1, v_2=e_2,v_3=e_3,v_4=e_4,v_5=e_1-e_4,v_6=e_1+e_2+e_3-e_4,v_7=-e_2-e_3,v_8=-e_1$ & $V(v_2,v_4)$ & $-\displaystyle\frac{5}{2}$ \\ \hline
$M_4$ & $v_1=e_1, v_2=e_2,v_3=e_3,v_4=e_4,v_5=e_1-e_4,v_6=e_1+e_2+e_3-e_4,v_7=-e_1-e_2-e_3+e_4,v_8=-e_1$ & $V(v_2,v_4)$ & $-\displaystyle\frac{5}{2}$\\ \hline

$M_5$ & $v_1=e_1,$ $v_2=e_2,$ $v_3=-e_1-e_2+e_4,$ $v_4=e_3,$ $v_5=e_1-e_3-e_4,$ $v_6=e_4,$ $v_7=e_1-e_4,$ $v_8=-e_3-e_4$ & $V(v_2,v_4)$ & $-\displaystyle\frac{3}{2}$ \\ \hline 

$Q_1$ & $v_1=e_1, v_2=e_2,v_3=e_3,v_4=e_4,v_5=e_1-e_3,v_6=e_1-e_4,v_7=-e_2,v_8=-e_1-e_2,v_9=e_1+e_2$ & $V(v_3,v_4)$ & $-\displaystyle\frac{3}{2}$ \\ \hline

$Q_2$ & $v_1=e_1, v_2=e_2,v_3=e_3,v_4=e_4,v_5=e_1-e_3,v_6=e_3-e_4,v_7=-e_2,v_8=-e_1-e_2,v_9=e_1+e_2$ & $V(v_3,v_4)$ & $-\displaystyle\frac{3}{2}$ \\ \hline

$Q_3$ & $v_1=e_1, v_2=e_2,v_3=e_3,v_4=e_4,v_5=e_1+e_2-e_3,v_6=e_1+e_2-e_4,v_7=-e_2,v_8=-e_1-e_2,v_9=e_1+e_2$ & $V(v_3,v_4)$ & $-\displaystyle\frac{3}{2}$ \\ \hline

$Q_4$ & $v_1=e_1, v_2=e_2,v_3=e_3,v_4=e_4,v_5=e_1-e_3,v_6=e_1+e_2-e_4,v_7=-e_2,v_8=-e_1-e_2,v_9=e_1+e_2$ & $V(v_3,v_4)$ & $-\displaystyle\frac{3}{2}$ \\ \hline

$Q_5$ & $v_1=e_1, v_2=e_2,v_3=e_3,v_4=e_4,v_5=e_1+e_2-e_3,v_6=e_3-e_4,v_7=-e_2,v_8=-e_1-e_2,v_9=e_1+e_2$ & $V(v_3,v_4)$ & $-\displaystyle\frac{3}{2}$ \\ \hline

$Q_7$ & $v_1=e_1, v_2=e_2,v_3=e_3,v_4=e_4,v_5=e_1+e_2-e_3,v_6=-e_2-e_4,v_7=-e_2,v_8=-e_1-e_2,v_9=e_1+e_2$ & $V(v_3,v_4)$ & $-\displaystyle\frac{3}{2}$ \\ \hline

$Q_9$ & $v_1=e_1, v_2=e_2,v_3=e_3,v_4=e_4,v_5=e_1+e_2-e_3,v_6=e_2-e_4,v_7=-e_2,v_8=-e_1-e_2,v_9=e_1+e_2$ & $V(v_3,v_4)$ & $-\displaystyle\frac{3}{2}$ \\ \hline

$Q_{12}$ & $v_1=e_1, v_2=e_2,v_3=e_3,v_4=e_4,v_5=e_1-e_3,v_6=e_2-e_4,v_7=-e_2,v_8=-e_1-e_2,v_9=e_1+e_2$ & $V(v_3,v_4)$ & $-\displaystyle\frac{3}{2}$ \\ \hline

$Q_{13}$ & $v_1=e_1, v_2=e_2,v_3=e_3,v_4=e_4,v_5=e_2-e_3,v_6=e_2-e_4,v_7=-e_2,v_8=-e_1-e_2,v_9=e_1+e_2$ & $V(v_3,v_4)$ & $-\displaystyle\frac{3}{2}$ \\ \hline

$Q_{14}$ & $v_1=e_1, v_2=e_2,v_3=e_3,v_4=e_4,v_5=e_2-e_3,v_6=e_3-e_4,v_7=-e_2,v_8=-e_1-e_2,v_9=e_1+e_2$ & $V(v_3,v_4)$ & $-\displaystyle\frac{3}{2}$ \\ \hline
\end{tabular}
\noindent
\begin{tabular}{  |c| p{7cm} | c| c| }
\hline
& \centering{Primitive \ Vectors} & Surface & $ch_2(T_X)\cdot S$  \\ 
\hline
$Q_{16}$ & $v_1=e_1, v_2=e_2,v_3=e_3,v_4=e_4,v_5=e_1+e_2-e_3,v_6=-e_1-e_2-e_4,v_7=-e_2,v_8=-e_1-e_2,v_9=e_1+e_2$ & $V(v_3,v_4)$ & $-\displaystyle\frac{3}{2}$ \\ 
\hline

$Q_{17}$ & $v_1=e_1, v_2=e_2,v_3=e_3,v_4=e_4,v_5=e_2-e_3,v_6=-e_1-e_2-e_4,v_7=-e_2,v_8=-e_1-e_2,v_9=e_1+e_2$ & $V(v_3,v_4)$ & $-\displaystyle\frac{3}{2}$ \\ \hline

$R_1$ & $v_1=e_1, v_2=e_2,v_3=e_3,v_4=-e_1-e_2+e_3,v_5=-e_1-e_2,v_6=e_4,v_7=-e_4,v_8=e_1+e_2-e_3-e_4,v_9=-e_1-e_2+e_3+e_4$ & $V(v_1,v_3)$ & -4 \\ \hline

$R_2$ & $v_1=e_1, v_2=e_2,v_3=e_3,v_4=-e_1-e_2+e_3,v_5=-e_1-e_2+e_4,v_6=e_4,v_7=-e_4,v_8=e_1+e_2-e_3-e_4,v_9=-e_1-e_2+e_3+e_4$ & $V(v_1,v_3)$ & -4 \\ \hline
$R_3$ & $v_1=e_1, v_2=e_2,v_3=e_3,v_4=-e_1-e_2+e_3, v_5=-e_3, v_6=e_4, v_7=-e_4,v_8=e_1+e_2-e_3-e_4,v_9=-e_1-e_2+e_3+e_4$ & $V(v_1,v_3)$ & -4 \\ \hline

$108$ & $v_1=e_1, v_2=e_2,v_3=e_3, v_4=-e_1-e_2+e_4,v_5=-e_1-e_2-e_3+e_4,v_6=-e_3,v_7=-e_4,v_8=e_1-e_4,v_9=e_4$ & $V(v_4,v_9)$ & -1 \\ \hline

$U_1$ & $v_1=e_1, v_2=e_1+e_3,v_3=e_3,v_4=-e_1,v_5=-e_1-e_3,v_6=-e_3,v_7=e_2,v_8=e_1-e_2,v_9=e_4,v_{10}=e_1-e_4$ & $V(v_3,v_7)$ & $-\displaystyle\frac{1}{2}$ \\ \hline

$U_2$ & $v_1=e_1, v_2=e_1+e_3,v_3=e_3,v_4=-e_1,v_5=-e_1-e_3,v_6=-e_3,v_7=e_2,v_8=e_1-e_2,v_9=e_4,v_{10}=e_1-e_2-e_4$ & $V(v_3,v_7)$ & $-\displaystyle\frac{1}{2}$ \\ \hline
$U_3$ & $v_1=e_1, v_2=e_1+e_3,v_3=e_3,v_4=-e_1,v_5=-e_1-e_3,v_6=-e_3,v_7=e_2,v_8=e_1-e_2,v_9=e_4,v_{10}=e_1+e_3-e_4$ & $V(v_3,v_9)$ & $-\displaystyle\frac{1}{2}$ \\ \hline
$U_7$ & $v_1=e_1, v_2=e_1+e_3,v_3=e_3,v_4=-e_1,v_5=-e_1-e_3,v_6=-e_3,v_7=e_2,v_8=e_1-e_2,v_9=e_4,v_{10}=e_3-e_4$ & $V(v_3,v_9)$ & $-\displaystyle\frac{1}{2}$ \\ \hline
$U_8$ & $v_1=e_1, v_2=e_1+e_3,v_3=e_3,v_4=-e_1,v_5=-e_1-e_3,v_6=-e_3,v_7=e_2,v_8=e_1-e_2,v_9=e_4,v_{10}=-e_1-e_4$ & $V(v_3,v_9)$ & $-\displaystyle\frac{1}{2}$ \\ \hline

$Z_1$ & $v_1=e_1, v_2=e_2,v_3=e_3,v_4=e_4,v_5=-e_1-e_2,v_6=-e_2-e_3-e_4,v_7=e_1-e_3-e_4,v_8=-e_1+e_4$ & $V(v_1,v_3)$ & $-\displaystyle\frac{5}{2}$ \\ \hline

$Z_2$ & $v_1=e_1, v_2=e_2,v_3=e_3,v_4=e_4,v_5=-e_1-e_2,v_6=-e_3-e_4,v_7=e_1+e_2-e_3-e_4,v_8=-e_1+e_4$ & $V(v_1,v_3)$ & -2 \\ \hline

$117$ & $v_1=e_2, v_2=e_3,v_3=e_4,v_4=-e_1,v_5=-e_2,v_6=-e_3,v_7=-e_4,v_8=e_1+e_2+e_3+e_4,v_9=-e_1-e_2-e_3-e_4,v_{10}=e_1$ & $V(v_1,v_4)$ & -5 \\ \hline
\end{tabular}
\noindent
\begin{tabular}{  |c| p{7cm} | c| c| }
\hline
& \centering{Primitive \ Vectors} & Surface & $ch_2(T_X)\cdot S$  \\ 
\hline
$118$ & $v_1=e_2, v_2=e_3,v_3=e_4,v_4=-e_1,v_5=-e_2,v_6=-e_3,v_7=-e_4,v_8=e_1+e_2+e_3+e_4,v_9=e_1,$ & $V(v_1,v_4)$ & $-\displaystyle\frac{5}{2}$ \\ \hline

$124$ & $v_1=e_1, v_2=e_2, v_3=-e_1-e_2,v_4=-e_1+e_4, v_5=e_1-e_3-e_4, v_6=e_3, v_7=e_4, v_8=e_1+e_2-e_3-e_4, v_9=-e_1-e_2+e_3,$ & $V(v_1,v_7)$ & $-4$ \\ \hline
\end{tabular}\\ \\

\begin{rk}\em{There is a misprint in \cite[Proposition 3.4.1]{bat} concerning the primitive relations for the toric Fano 4-fold 108.}
\end{rk}
      
Finally, we use a database provided by Øbro in \cite{obro} and the program Maple to prove the theorem in dimension 5 and 6. 
We implement a function which, given a smooth toric variety  computes the product of the second Chern character with an invariant surface. The code is provided in the Chapter \ref{apendix}.

\begin{rk} \em{ We have concluded that there exists only one toric 2-Fano 4-fold. However, using proposition 3.2 and computations with the program Maple we see that there exist toric Fano 4-folds that have nef second Chern character (i.e., $ch_2(T_X)\cdot S\geq 0$ for every surface $S\subset X$). They are:} \\
$\mathbb{P}^4, B_1, B_2, B_3, B_4, C_4, D_1, D_2, D_3, D_5, D_6, D_8, D_9, D_{12}, D_{13}, D_{15}, L_1, L_2, L_3, L_4,$ \\
$L_5, L_6, L_7, L_8, L_9$.
\end{rk}

\section{Higher Dimensions}\label{dimensaomaior}

\subsection{Strategy}

Our goal is to classify smooth toric 2-Fano $n$-folds. Our strategy is to investigate what happens with the second Chern character of a toric variety $X$ when we run the Minimal Model Program for $X$. 
Since in the toric case  the MMP ends with a Mori fiber space, we start investigating the second Chern character of a Mori fiber space. There are examples of singular 2-Fano varieties with Picard number bigger than 1 with structure of 
Mori fiber space, see for instance \ref{ex}. However, we will show that is possible to find a birational model $X'$ of $X$ with structure of Mori fiber space, such that the general fibers are 
projective spaces. Then, in Section \ref{fibrageral}, we will see that a such variety  
 $X'$ cannot be 2-Fano.
 
\begin{pr} Let $X$ be a smooth projective toric variety. Then, making suitable choices of extremal rays, we can run MMP for $X$ to obtain a birationally equivalent variety $X'$ and a Mori fiber space 
$\pi:X'\rightarrow Y$ whose general fiber is $\mathbb{P}^k$.
\end{pr}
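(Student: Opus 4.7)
The plan is a two-step strategy, first reducing to a Mori fiber space via the standard toric MMP, and then using the Cayley--Mori polytope characterization to arrange that the general fiber be $\mathbb{P}^k$ rather than a genuine weighted projective space.

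First, I would dispose of the easy case. Suppose $X$ admits a $K$-negative extremal ray $R$ whose contraction $f_R\colon X\rightarrow Y$ is of fibering type. Since $X$ is smooth, the general fiber of $f_R$ is a smooth toric variety, and by Proposition~\ref{fibrapicard} it is Fano with Picard number one. Corollary~\ref{picard1} then forces this fiber to be isomorphic to $\mathbb{P}^k$, and we are done with $X'=X$ and $\pi=f_R$.

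In the remaining case, every $K$-negative extremal ray of $X$ gives either a divisorial or a small contraction. I would then run the toric MMP, at each step choosing a $K$-negative extremal ray and performing either a divisorial contraction or the flip of a small contraction. This produces a sequence of $\mathbb{Q}$-factorial projective toric birational models $X=X_0,X_1,\ldots$, and by Reid's theorem the process terminates after finitely many steps at a Mori fiber space $g\colon X_j\rightarrow X_{j+1}$. After the first step, $X_i$ need not be smooth, so the general fiber of $g$ is a priori only a $\mathbb{Q}$-factorial toric Fano of Picard number one, i.e.\ a weighted projective space $\mathbb{P}(a_0,\ldots,a_k)$.

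To ensure the fiber is genuinely $\mathbb{P}^k$, I would invoke the combinatorial characterization from Corollary~\ref{bundle}: the Mori fiber space $g$ has $\mathbb{P}^k$ general fiber if and only if the polytope of $X_j$ (with respect to an ample polarization) is a generalized $s$-th order Cayley polytope $\operatorname{Cayley}^s(Q_0,\ldots,Q_k)$, which is more restrictive than being a Cayley--Mori polytope. Via Theorem~\ref{MORI}, the freedom in the phrase ``suitable choices of extremal rays'' is translated into the freedom of selecting an ample polarization $L$ on $X$ and traversing the corresponding chain of adjoint polytopes $P,P^{(\lambda_1)},\ldots,P^{(\lambda_k)}$, which is controlled by the chambers of the GKZ decomposition of $X$. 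The task is to choose $L$ so that the terminal adjoint polytope $P^{(s)}$ (for $\lambda_{k-1}<s<\lambda_k$) has the generalized Cayley structure, not merely Cayley--Mori structure.

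The main obstacle is this last reduction. Concretely, I expect it requires a combinatorial argument exploiting the smoothness of $X$ (equivalently, the unimodularity of the initial polytope $P$) to rule out the possibility that every MMP path from $X$ terminates at a Cayley--Mori polytope whose basis $\{w_1,\ldots,w_k\}$ fails to be a $\mathbb{Z}$-basis after rescaling. The key intuition is that smoothness of $X$ propagates along MMP in a controlled way through the Cayley--Mori combinatorics: on the terminal adjoint polytope the vectors $w_i$ encoding the fiber directions inherit enough integrality from the primitive generators of $\Sigma_X$ that, after possibly rescaling by a common integer $s$, they span a sublattice, giving the desired generalized Cayley presentation and hence a Mori fiber space $\pi\colon X'\rightarrow Y$ with general fiber $\mathbb{P}^k$.
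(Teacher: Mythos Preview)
Your proposal has a genuine gap at the decisive step. Your first two stages are fine: if $X$ itself has a fibering contraction, smoothness plus Corollary~\ref{picard1} give $\mathbb{P}^k$-fibers; otherwise toric MMP terminates in a Mori fiber space on some singular model. But your third stage---arranging via a choice of polarization $L$ and Theorem~\ref{MORI} that the terminal Cayley--Mori polytope be a genuine generalized Cayley polytope---is not actually proved. You yourself call it ``the main obstacle'' and offer only the intuition that ``smoothness of $X$ propagates along MMP in a controlled way through the Cayley--Mori combinatorics.'' This is exactly the hard point, and nothing in the paper's polytope machinery supplies it: Theorem~\ref{MORI} tells you the terminal polytope is Cayley--Mori for \emph{any} general $L$, but gives no mechanism for forcing the fiber simplex to have its vertices on a scaled lattice basis. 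Without additional input there is no reason the unimodularity of the initial $\Sigma_X$ should survive the flips and divisorial contractions in the way you need.

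The paper's proof takes a completely different route that avoids this issue. It imports an external fact (Fu--Hwang, \cite{fu}): any smooth projective toric variety carries a $\mathbb{P}^k$-bundle structure $\pi_0\colon X_0\to Y_0$ on a torus-containing open set. One then builds an auxiliary divisor $A$ on $X$ by pulling back an ample class from the Chow closure of $Y_0$ and pushing forward, and at each MMP step chooses an extremal ray $R$ with $K_X\cdot R<0$ and $A\cdot R\le 0$. The point of $A$ is that it detects fibers of $\pi_0$: if the contraction is fibering, $R$ must be generated by a $\pi_0$-fiber curve, so the Mori fiber space restricts to $\pi_0$ on $X_0$ and has $\mathbb{P}^k$ general fiber; if it is birational, the exceptional locus misses $X_0$, so the $\mathbb{P}^k$-bundle structure on $X_0$ persists to the next model. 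Thus the $\mathbb{P}^k$-fibration is carried along the MMP from the start rather than manufactured at the end, which is precisely the idea your argument is missing.
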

\begin{proof} By \cite[Corollary 6]{fu} there is an open subset $X_0\subset X$ containing the torus, a smooth variety $Y_0$ and a $\mathbb{P}^k$-bundle structure $\pi_0:X_0\rightarrow Y_0$.
Let $Y$ be the closure of $Y_0$ in Chow$(X)$. Since $\pi_0$ is flat and proper, there is a variety $\mathcal{U}$, called universal cycle over $Y$, and universal morphisms $e:\mathcal{U}\rightarrow X$, $\pi:\mathcal{U}\rightarrow Y$ such that $e^{-1}|_{X_0}$ is an isomorphism and the diagram below is commutative.
$$\xymatrix{
X_0 \ar[d]_{\pi_0} \ar@{^{(}->}[r] & \mathcal{U}\ar[d]_{\pi} \ar[r]^e & X\\ 
Y_0 \ar@{^{(}->}[r] 
& Y \ar@{^{(}->}[r]& \mbox{Chow}(X) 
}$$ 
Consider a very ample effective divisor $A_Y$ on $Y$. Set $A:=e_*(\pi^*(A_Y))$. Since $A_Y$ is very ample, the linear system $|A|$ has no base points on $X_0$. Let $l$ be a curve in a fiber of $\pi_0$. Then, $A\cdot l=0$ and $K_X\cdot l=K_{\mathbb{P}^k}\cdot l=-k-1<0$. It follows that there is an extremal ray $R$ of $\mbox{NE}(X)$ such that $K_X\cdot R<0$ and $A\cdot R\leq 0$. We consider $f_R:X\rightarrow X_1$ the contraction of $R$.
If $f_R$ is a Mori fiber space, then there is a curve $C\subset X$ such that its class is in the ray $R$ and $C\cap X_0\neq\emptyset$. Since we choose $R$ satisfying $A\cdot R\leq 0$ and $A_Y$ very ample, we must have that $C$ is a curve in a fiber of $\pi_0$.  Thus, $R=\mathbb{R}_{\geq 0}[l]$. Since a fiber space depends only on its contracted curves, we have $f_R|_{X_0}=\pi_0$ and 
it proves the theorem in this case. Suppose that $f_R$ is a birational morphism and let $E$ be the exceptional locus of $f_R$. Then, $E\cap X_0=\emptyset$. Otherwise, as before, $R$ would be generated by $[l]$ and therefore $X_0$ would be contained in $E$. Since $f_R$ is an isomorphism outside $E$, the $\mathbb{P}^k$-bundle structure of $X_0$ is preserved in $X_1$. Continuing this process we prove the theorem.
\end{proof}

\subsection{Second Chern Computation for Mori Fiber Spaces} \label{fibrageral}

Now, we study the second Chern character of a variety associated to a Cayley-Mori polytope. Throughout this section we will say that a $\mathbb{Q}$-factorial toric variety 
(non necessarily smooth) is Fano (resp. 2-Fano) if $-K_X$ (resp. $-K_X$ and $ch_2(X)$) is positive.

\begin{lema}\label{fano} Let $X$ be the toric variety given by a  Cayley-Mori lattice polytope $P=P_0*...*P_k\subset\mathbb{R}^{1+k}$. Suppose that the polytopes $P_i's$ are
intervals in $\mathbb{R}$ and that the natural projection $\pi:\mathbb{R}^{1+k}\rightarrow \mathbb{R}^{k}$ project $P$ on a polytope $Q:=conv(0,se_1,...,se_k)$ for some positive integer $s$. Then, $ch_2(T_X)$ is not positive.
\end{lema}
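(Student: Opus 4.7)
The plan is to use Corollary~\ref{bundle} to recognize $X$ as a Mori fiber space over $\mathbb{P}^1$ with general fiber $\mathbb{P}^k$ (in fact a projective bundle when $X$ is smooth), and then exhibit a single invariant surface $S\subset X$ on which $ch_2(T_X)$ evaluates non-positively. Since each $P_i$ is a $1$-dimensional interval, the common normal fan is that of $\mathbb{P}^1$, so the toric variety associated to any $P_i$ is $\mathbb{P}^1$; the hypothesis $\pi(P)=Q=\mathrm{conv}(0,se_1,\ldots,se_k)$ is precisely the $s$-th order Cayley condition $P\simeq \mathrm{Cayley}^s(P_0,\ldots,P_k)$. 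By Corollary~\ref{bundle} this gives a Mori fiber space $f\colon X\to \mathbb{P}^1$ with general fiber $\mathbb{P}^k$, and when $X$ is smooth ($s=1$) we may write $X=\mathbb{P}_{\mathbb{P}^1}(\mathcal{E})$ with $\mathcal{E}=\bigoplus_{i=0}^k \mathcal{O}_{\mathbb{P}^1}(a_i)$; after twisting and reordering, assume $0=a_0\leq a_1\leq \cdots \leq a_k$.

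Using the explicit fan description at the end of Section~\ref{Cayley} (rays $\tilde v_1,\tilde v_2$ coming from $\Sigma_{\mathbb{P}^1}(1)$ and $u_0,\ldots,u_k$ from the fiber directions), I take
$$ S := V\bigl(\mathrm{Cone}(u_2,u_3,\ldots,u_k)\bigr). $$
The star fan of this $(k-1)$-dimensional cone has exactly four rays (the quotient images of $\tilde v_1,\tilde v_2,u_0,u_1$) and four maximal cones, so $S$ is a Hirzebruch surface; in the smooth case $S$ is identified with the sub-$\mathbb{P}^1$-bundle $\mathbb{P}_{\mathbb{P}^1}(\mathcal{O}\oplus\mathcal{O}(a_1))\subset X$, corresponding to the intersection $D_{u_2}\cap\cdots\cap D_{u_k}$ of the $k-1$ invariant ``sectional'' divisors.

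In $A^\bullet(X)$ with $\xi=c_1(\mathcal{O}_X(1))$ and $H=f^*[\mathrm{pt}]$, the relations $H^2=0$, $\int_X H\xi^k=1$, and the Grothendieck relation $\xi^{k+1}=AH\xi^k$ (with $A:=\sum_{i=0}^k a_i$) together with the relative Euler sequence and $ch_2(T_{\mathbb{P}^1})=0$ yield
$$ ch_2(T_X)=\frac{k+1}{2}\,\xi^2 - A\,H\xi. $$
Since $D_{u_l}=\xi-a_l H$ for each $l$, the class of $S$ is
$$ [S]=\prod_{l=2}^k (\xi - a_l H)=\xi^{k-1}-(A-a_1)\,H\,\xi^{k-2}. $$
A short expansion and integration give
$$ ch_2(T_X)\cdot [S]=\frac{k+1}{2}\,a_1 - A. $$
Since $a_l\geq a_1$ for $l\geq 1$, we have $A\geq k\,a_1$, so $ch_2(T_X)\cdot S\leq \tfrac{(1-k)}{2}\,a_1\leq 0$ whenever $k\geq 1$, proving the lemma in the smooth case (note for $k=1$ the product is empty, $S=X=\mathbb{F}_{a_1}$ and the bound gives the expected value $0$).

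The main obstacle is the genuinely $\mathbb{Q}$-factorial case, where $s>1$ and $X$ is not literally a projective bundle. There the plan is to work directly with the fan $\Sigma_P$, using the extended formula $ch_2(X):=\tfrac{1}{2}\sum_i D_i^2$ (the definition in the $\mathbb{Q}$-factorial case, generalizing Proposition~\ref{2chern}) together with Proposition~\ref{inter} to evaluate each intersection $D_i^2\cdot V(\sigma)$ combinatorially for the same $\sigma=\mathrm{Cone}(u_2,\ldots,u_k)$. The only change relative to the smooth computation is that the integer parameter $s$ and the multiplicities $s_i$ of Proposition~\ref{inter} appear as uniform positive rational factors, so the numerical inequality $\tfrac{k+1}{2}a_1-A\leq 0$ survives (up to a positive multiple) and yields the desired conclusion that $ch_2(T_X)$ is not positive.
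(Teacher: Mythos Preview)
Your smooth-case argument is correct and takes a cleaner, more intrinsic route than the paper: you identify $X\simeq\mathbb{P}_{\mathbb{P}^1}(\mathcal{E})$ via Corollary~\ref{bundle}, use the Chow-ring relations $H^2=0$, $\xi^{k+1}=AH\xi^k$ together with the relative Euler sequence to obtain $ch_2(T_X)=\tfrac{k+1}{2}\xi^2-AH\xi$, and evaluate on the same invariant surface $S=V(\mathrm{Cone}(u_2,\ldots,u_k))$ that the paper uses, arriving at the same number $\tfrac{k+1}{2}a_1-A\le 0$. The paper instead computes each $D_i^2\cdot S$ one by one via Proposition~\ref{inter} and Proposition~\ref{2chern}; this is more laborious but has the advantage of treating the smooth and genuinely $\mathbb{Q}$-factorial cases in a single calculation.

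That uniformity is exactly where your proposal has a genuine gap. The lemma is stated (and is applied in Theorem~\ref{ch}) for $\mathbb{Q}$-factorial $X$, so the case $s>1$ is not optional, and your projective-bundle computation simply does not apply there. Your closing paragraph asserts that the only change is the appearance of ``uniform positive rational factors'' so that the inequality survives ``up to a positive multiple''; this is not what actually happens. In the paper's computation two \emph{distinct} integers $d_u,d_v\ge 1$ (attached to the two base rays $u,v$) appear, and they enter as denominators of only the terms $\tfrac{b_{1u}-b_{0u}}{d_u}+\tfrac{b_{1v}-b_{0v}}{d_v}$, while the terms $-2(b_i-b_0)$ are unchanged. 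The resulting expression is not a single positive scalar multiple of the smooth-case quantity $\tfrac{k+1}{2}a_1-A$, and the passage to $\sum_{i\ge 2}\bigl[(b_1-b_0)-2(b_i-b_0)\bigr]\le 0$ requires its own verification. So for $s>1$ you must actually carry out the toric intersection computation on $S$ (as the paper does) rather than appeal to your smooth-case formula.
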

\begin{rk}\em{
 It follows from Theorem \ref{mori} that the hypothesis imposed in the lemma above is equivalent to saying that $X$ admits a structure of Mori fiber space $f:X\rightarrow\mathbb{P}^1$ whose the general 
fiber is  $\mathbb{P}^k$.} 
\end{rk}

\begin{proof} Recall from Section \ref{Cayley} that the fan $\Sigma$ determined by $P$ is of the following form:\\

Write $P_i=[-b_{iu},b_{iv}]$. Then $D_{P_i}=b_{iu}V(1)+b_{iv}V(-1)=\mathcal{O}_{\mathbb{P}^ 1}(b_i)$ where $b_i:=b_{iu}+b_{iv}$. We may assume that $0<b_0\leq b_1\leq ...\leq b_k$. 
The primitive vectors of $\Sigma$ are: \\

$u:=d_u(1,b_{1u}-b_{0u},...,b_{ku}-b_{0u}), v:=d_v(-1,b_{1v}-b_{0v},...,b_{kv}-b_{0v}), e_0:=(0,-1,...,-1), e_1:=(0,1,0,...,0),...,e_k:=(0,0,...,1)$, 
where $$d_u=\displaystyle\frac{s}{gcd(s,b_{1u}-b_{0u},...,b_{ku}-b_{0u})}$$ and 
$$d_v=\displaystyle\frac{s}{gcd(s,b_{1v}-b_{0v},...,b_{kv}-b_{0v})}.$$\\

The maximal cones of $\Sigma$ are: for $i\in\{0,1,...,k\}$, $\sigma_i:=\langle u,e_0,...,\widehat{e_i},...,e_k\rangle$ and $\tau_i:=\langle v,e_0,...,\widehat{e_i},...,e_k\rangle$.   

Consider the invariant surface $S:=V(e_2,...,e_k)\subset X$. We will use Propositions \ref{inter} and \ref{2chern} to compute $ch_2(X)\cdot S$.\\

Set $D_u=V(u)$, $D_v=V(v)$ and $D_i=V(e_i)$, for $i\in\{0,1,...,k\}$.\\
Since $div(\chi^{(1,0,...,0)})=d_uD_u-d_vD_v$, we have that $D_u^2\cdot S=D_v^2\cdot S=0$.\\ 

 We want to compute $D_i\cdot S$ for $i\in\{1,...,k\}$.\\

$D_0^2\cdot S=D_0\cdot V(e_0,e_2,...,e_k)=(D_0+div(\chi^{e_1}))\cdot V(e_0,e_2,...,e_k)=$\\ 
$[(b_{1u}-b_{0u})D_u+(b_{1v}-b_{0v})D_v]\cdot V(e_0,e_2,...,e_k)=\displaystyle\frac{(b_{1u}-b_{0u})}{d_u}+\displaystyle\frac{(b_{1v}-b_{0v})}{d_v}$. \\

$D_1^2\cdot S=D_1\cdot V(e_1,e_2,...,e_k)=(D_1+div(\chi^{-e_1})\cdot V(e_1,e_2,...,e_k)=$\\
$[-(b_{1u}-b_{0u})D_u-(b_{1v}-b_{0v})D_v]\cdot V(e_1,e_2,...,e_k)=-\displaystyle\frac{(b_{1u}-b_{0u})}{d_u}-\displaystyle\frac{(b_{1v}-b_{0v})}{d_v}$.\\

If $k=1$ then $2ch_2(T_X)\cdot S=(D_0^2+D_1^2+D_u^2+D_v^2)\cdot S=0$. Suppose $k>1$. 
Since $$D_2\simeq (D_2+div(\chi^{(-(b_{2v}-b_{0v}),0,-1,0,...,0)})=D_0-d_u[(b_{2v}-b_{0v})+(b_{2u}-b_{0u})]D_u,$$ we have:\\

$D_2^2\cdot S= [D_0^2-2d_u((b_{2v}-b_{0v})+(b_{2u}-b_{0u}))D_u\cdot D_0]                                                                       
\cdot V(e_2,...,e_k)=\displaystyle\frac{(b_{1u}-b_{0u})}{d_u}+\displaystyle\frac{(b_{1v}-b_{0v})}{d_v}-2[(b_{2v}-b_{0v})+(b_{2u}-b_{0u})]=$\\
$=\displaystyle\frac{(b_{1u}-b_{0u})}{d_u}+\displaystyle\frac{(b_{1v}-b_{0v})}{d_v}-2(b_{2}-b_{0}).$\\

Similarly, $D_i^2\cdot S=\displaystyle\frac{(b_{1u}-b_{0u})}{d_u}+\displaystyle\frac{(b_{1v}-b_{0v})}{d_v}-2(b_{i}-b_{0})
, i=2,...,k.$ \\

It follows that  $$2ch_2(T_X)\cdot S=\displaystyle\sum_{i=2}^{k}\Big[\displaystyle\frac{(b_{1u}-b_{0u})}{d_u}+\displaystyle\frac{(b_{1v}-b_{0v})}{d_v}-2(b_{i}-b_{0})\Big]\leq\displaystyle\sum_{i=2}^{k}\big[(b_1-b_0)-2(b_{i}-b_{0})\big] $$  \\

and then $ch_2(T_X)\cdot S\leq 0$.
\end{proof}
\vspace{0.4cm}

\begin{rk}\label{sato} \em{ Notice that if $k>1$ then $ch_2(T_X)\cdot S=0$ if and only if $b_0=b_1=...=b_k$. In other words, $ch_2(T_X)\cdot S=0$  if and only if the polytopes $P_0,...,P_k$ have the same size. In this case, it follows from Remark \ref{nef} that $ch_2(T_X)$ is nef. More generally, making a similar computation as in the previous lemma, we conclude that if  $P_i$ is an $n$-dimensional simplex (i.e., the variety defined by $P_i$ is $\mathbb{P}^n$), then $ch_2(T_X)$ is nef if and only if $P_0,...,P_k$ have the same size. This means that, up to translations, there is an integer $b$ such that $P_i=b\Delta_n$ for every $i\in\{0,...,k\}$. }
\end{rk}

\begin{lema} \label{reducao} Let $X$ be the toric variety given by a  Cayley-Mori lattice polytope $P=P_0*...*P_k\subset\mathbb{R}^{n+k}$  and let $f:X\rightarrow Y$ be the associated  fibration. If $C$ is an invariant curve on $Y$ and $X_C:=f^{-1}(C)$ then $ch_2(T_X)\cdot X_C=ch_2(T_{X_C})$.
\end{lema}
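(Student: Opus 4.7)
The key observation is that the normal sheaf $N_{X_C/X}$ is pulled back from the $1$-dimensional base $C$, and therefore $ch_{k}$ of it vanishes for $k\geq 2$; the lemma then drops out of the standard tangent-bundle sequence.

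Concretely, I would proceed as follows. Since $f$ is a toric fibration, it is flat by Theorem~\ref{t.split}, so the diagram
\[
\begin{array}{ccc}
X_C & \xrightarrow{\;i\;} & X \\
\;g\!\downarrow & & \downarrow\! f\; \\
C & \xrightarrow{\;j\;} & Y
\end{array}
\]
with $i,j$ the inclusions and $g=f|_{X_C}$ is Cartesian, and flat base change gives $N_{X_C/X}\cong g^{*}N_{C/Y}$. Now $C=V(\sigma')$ for some $\sigma'\in\Sigma_Y(n-1)$; since $Y$ is simplicial and complete, $C\simeq\mathbb{P}^{1}$, so $A^{k}(C)_{\mathbb{Q}}=0$ for $k\geq 2$. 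In particular $ch_{2}(N_{C/Y})=0$, and pulling back yields $ch_{2}(N_{X_C/X})=g^{*}ch_{2}(N_{C/Y})=0$ in $A^{2}(X_C)_{\mathbb{Q}}$.

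Invoking the normal-bundle exact sequence
\[ 0\longrightarrow T_{X_C}\longrightarrow i^{*}T_X\longrightarrow N_{X_C/X}\longrightarrow 0 \]
together with additivity of the Chern character gives
\[ ch_{2}(T_X)\big|_{X_C}=ch_{2}(i^{*}T_X)=ch_{2}(T_{X_C})+ch_{2}(N_{X_C/X})=ch_{2}(T_{X_C}). \]
Reinterpreting the left-hand side via $i_{*}\bigl(i^{*}ch_{2}(T_X)\cap[X_C]\bigr)=ch_{2}(T_X)\cdot X_C$ then delivers the stated equality.

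The main obstacle will be justifying the tangent/normal sequence in the $\mathbb{Q}$-factorial (possibly singular) setting in which $X$ and $X_C$ naturally sit. This can be handled either by restricting to their smooth loci (whose complements have codimension $\geq 2$ and hence do not affect classes in $A^{2}_{\mathbb{Q}}$), or by working throughout with the toric formula $ch_{2}(T_X)=\tfrac12\sum_{i\in\Sigma(1)}D_{i}^{2}$ of Proposition~\ref{2chern}, which is intrinsic to the rational Chow ring; in either case the vanishing of $ch_{2}(N_{X_C/X})$ is the only substantive input.
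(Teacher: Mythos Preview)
Your argument is correct and is essentially the conceptual version of what the paper does concretely. The paper, however, takes a more hands-on route: it bypasses bundles entirely and works directly with the toric formula $ch_2(T_X)=\tfrac12\sum_{v\in\Sigma_X(1)}D_v^2$ from Proposition~\ref{2chern}. Writing $C=V(\tau)$ with $\tau=\langle v_1,\dots,v_{n-1}\rangle$ and $X_C=V(\tilde\tau)$, the paper splits the prime $T$-divisors on $X$ into three cases: those with $D\cdot X_C=0$; those not containing $X_C$, which restrict precisely to the prime $T$-divisors of $X_C$ (so their squared classes reproduce $ch_2(T_{X_C})$ via the same formula applied to $X_C$); and those with $v\in\tilde\tau$, which are pull-backs $f^*V(v_i)$ and satisfy $D^2\cdot X_C=0$ because $C$ is a curve. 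This last case is exactly your $ch_2(N_{X_C/X})=0$ in disguise.

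What each approach buys: your normal-bundle argument is more conceptual and makes the role of $\dim C=1$ transparent, but, as you correctly flag, it needs some care in the $\mathbb{Q}$-factorial setting (regular embedding, exactness of the tangent sequence). The paper's divisor-by-divisor computation is less illuminating but has the advantage of staying entirely within intersection products of $\mathbb{Q}$-Cartier divisors, where Proposition~\ref{inter} applies without smoothness hypotheses, so no singularity workaround is needed.
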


\textit{Proof}. Let $C:=V(\tau)$ with $\tau=\langle v_1,...,v_{n-1}\rangle\subset\Sigma_Y$. 

It follows from Proposition \ref{pr1} and the description of the fan $\Sigma_P$ that the invariant variety $X_C:=f^{-1}(C)$ is given by $V(\tilde{\tau})$ with $\tilde{\tau}=\langle \tilde{v_1},...,\tilde{v_{n-1}}\rangle$
and $f^*(V(v_i))=V(\tilde{v_i})$.

Our aim is to compute the restriction of $ch_2(T_X)$ to $X_C$. Let $D$ be a prime $T$-invariant divisor on $X$ corresponding to the primitive vector $v\in\Sigma_X(1)$. If we restrict $D$ to $X_C$ 
we have three possibilities:

\begin{enumerate}

 \item $D\cdot X_C=0$. This happens exactly when $\langle v,\tilde{\tau}\rangle$ is not contained in any cone of $\Sigma_X$.
 
 \item $D\cdot X_C\neq 0$ and $X_C$ is not contained in $D$. This happens exactly when $\langle v,\tilde{\tau}\rangle$ is  contained in some cone of $\Sigma_X$ but $v\notin \tilde{\tau}$. In this case, $D\cdot X_C$ is the prime invariant divisor on $X_C$ corresponding to the image of $v$ in the fan $Star(\tau)$ of $X_C$. 
Conversely, every prime invariant divisor on $X_C$ appears in this way.
 
 \item $D\cdot X_C\neq 0$ and $X_C$ is contained in $D$. This happens when $v\in\tilde{\tau}$. In this case, $v=\tilde{v_i}$ for some $i\in\{1,...,n-1\}$. Thus, $D$ is the pull-back of $V(v_i)$. Since $C$ is a curve $D^2\cdot X_C=0$.
\end{enumerate}
Now, the lemma follows from Proposition \ref{2chern}. \qed \\

\begin{teo}\label{ch}
Let $X$ be a $\mathbb{Q}$-factorial projective toric variety and suppose that $f_R:X\rightarrow Y$ is a contraction of  fibering type of a ray $R$ of the Mori cone $\mbox{NE}(X)$. If the general 
fiber is a projective space, then $X$ is not 2-Fano  unless $X$ is a projective space.
\end{teo}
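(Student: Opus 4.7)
The plan is to reduce the computation of $ch_2(T_X)$ to a surface calculation on the preimage of an invariant curve in $Y$, and then invoke Lemma \ref{fano} and Lemma \ref{reducao} to conclude.

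First I will dispose of the trivial case: if $\dim Y = 0$ then $Y$ is a point, so $X$ coincides with its general fiber and is itself a projective space, and there is nothing to prove. Hence I may assume $\dim Y \geq 1$.

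Next, by Theorem \ref{mori}, the polytope $P$ of $X$ is a Cayley-Mori polytope $P_0 * \cdots * P_k$ with $X_{P_i} \simeq Y$. Since the general fiber of $f_R$ is a projective space $\mathbb{P}^k$, Corollary \ref{bundle} further refines this to $P \simeq \mathrm{Cayley}^s(P_0, \ldots, P_k)$ for some positive integer $s$. I pick any invariant curve $C \subset Y$; being a complete normal toric curve, $C \simeq \mathbb{P}^1$. Set $X_C := f_R^{-1}(C)$. From the explicit facet description of Cayley-Mori polytopes given in Section \ref{Cayley}, the polytope of $X_C$ is again a Cayley-Mori polytope of the form $R_0 * \cdots * R_k$ where each $R_i$ is a $1$-dimensional face of $P_i$, i.e., an interval, and the fiber polytope still projects to a scaled simplex $s\Delta_k$. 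This places $X_C$ precisely in the hypothesis of Lemma \ref{fano}, which yields an invariant surface $S \subset X_C$ with $ch_2(T_{X_C}) \cdot S \leq 0$.

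Finally, applying Lemma \ref{reducao} gives $ch_2(T_X)|_{X_C} = ch_2(T_{X_C})$ in $N^2(X_C)_{\mathbb{Q}}$, so
$$ch_2(T_X) \cdot S \;=\; ch_2(T_{X_C}) \cdot S \;\leq\; 0,$$
showing $X$ is not 2-Fano. The main step is the reduction to an invariant curve in the base, and the principal check is that the restricted variety $X_C$ inherits exactly the Cayley-Mori structure required by Lemma \ref{fano}; this follows from the correspondence between invariant subvarieties of a Cayley-Mori toric variety and faces of its polytope. Once this is verified, the remainder of the argument is a direct assembly of Lemma \ref{fano} and Lemma \ref{reducao}, both already established in the preceding sections.
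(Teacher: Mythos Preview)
Your proof is correct and follows essentially the same route as the paper's: reduce to $\dim Y\geq 1$, use Theorem~\ref{mori} (together with the fact that the fiber polytope is a dilated simplex when the general fiber is $\mathbb{P}^k$) to identify the Cayley-Mori structure, restrict to the preimage $X_C$ of an invariant curve $C\subset Y$ so that the base polytopes become intervals, and then combine Lemma~\ref{fano} with Lemma~\ref{reducao}. The only cosmetic difference is that you cite Corollary~\ref{bundle} for the $\mathrm{Cayley}^s$ form whereas the paper argues this directly from the projection, but the content is the same.
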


\textit{Proof}. From Theorem \ref{mori} we conclude that $X$ is the toric variety associated to a Cayley-Mori lattice polytope $P=P_0*...*P_k\subset\mathbb{R}^{n+k}$ and $f_R$ is induced by 
the natural projection $\pi:\mathbb{Z}^{n+k}\rightarrow \mathbb{Z}^{k}$.  If the general 
fiber of $f_R$ is $\mathbb{P}^k$ then the projection of $P$ by $\pi_{\mathbb{R}}$ is a polytope of the form $Q:=conv(0,se_1,...,se_k)$ for some positive integer $s$ and $\{e_1,...,e_k\}$ a basis for $\mathbb{Z}^k$.
Suppose that $X$ is not a projective space. Then $Y$ is positive dimensional. If $C$ is an invariant curve in $Y$ then $X_C=f_R^{-1}(C)$ is given by a Cayley-Mori polytope $l_0*...*l_k$ where $l_i\subset P_i$ is an edge of $P_i$. So, the theorem follows from Lemmas \ref{fano} and \ref{reducao}. \qed

\begin{cor}\label{pic2} If $X$ is a smooth projective toric variety with $\rho(X)\leq 2$ then $X$ is not 2-Fano unless $X$ is $\mathbb{P}^n$. 
\end{cor}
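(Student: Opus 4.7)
The plan is to split the argument by the Picard number of $X$. The case $\rho(X)=1$ is handled immediately by Corollary \ref{picard1}, which forces $X \simeq \mathbb{P}^n$, and $\mathbb{P}^n$ is known to be $2$-Fano. The substantive case is therefore $\rho(X)=2$.

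In that case I would produce a Mori fiber space $f\colon X \to \mathbb{P}^m$ whose general fiber is a projective space $\mathbb{P}^k$ (with $m+k=n$), and then invoke Theorem \ref{ch} to conclude that $X$ is not $2$-Fano; the point is that since $\rho(\mathbb{P}^n)=1\neq 2=\rho(X)$, the variety $X$ is not a projective space, so the theorem yields the negative conclusion. To produce the fibration I would appeal to Kleinschmidt's classification of smooth projective toric varieties of Picard number two, which writes any such $X$ as a projective bundle $\mathbb{P}_{\mathbb{P}^m}\big(\mathcal{O}\oplus \mathcal{O}(a_1)\oplus\cdots\oplus\mathcal{O}(a_k)\big)$, the bundle projection being the desired Mori fiber space. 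For a route internal to this paper, one can instead examine the two extremal rays of the Mori cone $\mbox{NE}(X)$: both are $K_X$-negative because $X$ is Fano, and by Proposition \ref{fibrapicard} combined with Corollary \ref{picard1} and the count $|\Sigma_X(1)|=n+2$ coming from Lemma \ref{picard}, at least one must give a contraction of fiber type, with target necessarily isomorphic to $\mathbb{P}^m$. Flatness of fiber-type contractions (Proposition \ref{fibrapicard}) together with smoothness of $X$ then forces the general fiber $F$ to be a smooth $\mathbb{Q}$-factorial toric variety of Picard number one, so $F \simeq \mathbb{P}^k$ by Corollary \ref{picard1}.

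The hard step will be exhibiting a fiber-type extremal contraction with projective space fibers without invoking Kleinschmidt wholesale; the key obstruction is ruling out the configuration in which both extremal rays give birational contractions, which I expect to handle by showing that such a configuration would force both targets to be projective spaces and would then be incompatible with the $(n+2)$-ray structure of $\Sigma_X$ dictated by Lemma \ref{picard}. Once that structural step is in place, the rest of the argument is a single appeal to Theorem \ref{ch}.
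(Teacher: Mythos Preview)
Your proposal is correct and your primary route---Kleinschmidt's classification \cite{classificar} to write $X$ as a projective bundle over $\mathbb{P}^m$, then Theorem~\ref{ch}---is exactly the paper's proof. The alternative internal argument you sketch is unnecessary here; the paper simply cites Kleinschmidt and applies the previous theorem.
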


\begin{proof} By Corollary \ref{picard1} $\rho(X)=1$ implies $X$ is a projective space. Otherwise, by \cite{classificar} $X$ is a projective bundle over a positive dimensional base and by the previous theorem  $X$ is not 2-Fano. \end{proof}

\begin{rk} \em{ If $X$ is a $\mathbb{Q}$-factorial projective toric variety with $\rho(X)=1$ then $\mbox{NE}(X)$ and $\mbox{NE}_2(X)$ are unidimensional. Thus, $X$ is 2-Fano.}
\end{rk}

 If we do not require smoothness there are toric varieties with Picard number 2 that are 2-Fano as in example \ref{ex}. This example also shows that the hypothesis on the general fiber of $f_R$
made in Theorem \ref{ch} is necessary. In order to explain this example we need some preliminaries on Mori fiber spaces of Picard number $2$. 

\begin{pr} \label{hi}
Let $X$ be a toric variety corresponding to a Cayley-Mori polytope $P=P_0*...*P_k$ where each $P_i$ is an n-dimensional simplex. Then the dimension of $NE_2(X)$ is at most 3. More precisely,

$$\mbox{dim } NE_2(X)=
\left\{
\begin{array}{ll}
  1  & \text{if } n=k=1,\\
    2  & \text{if } n=1 \mbox{and } k>1, \mbox{or } n>1 \mbox{and } k=1 ,\\
    3  & \text{otherwise}.\\
\end{array}
\right.
$$
\end{pr}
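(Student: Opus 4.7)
The plan is to compute $\dim NE_2(X)$ through the chain
$$\dim NE_2(X)\;=\;\dim N_2(X)\;=\;\dim N^{n+k-2}(X)\;=\;\dim N^2(X),$$
and then read off the last dimension from an explicit description of the graded ring $N^*(X)$. Since each $P_i$ is an $n$-simplex, its normal fan $\Sigma_Y$ has $n+1$ rays in $N_{\mathbb{R}}\cong\mathbb{R}^n$, so $Y:=X_{P_i}$ is a weighted projective $n$-space with $\rho(Y)=1$. By Theorem \ref{mori}, $X$ is a toric Mori fiber space $f:X\to Y$ whose general fiber $F$ is associated to $Q=\mathrm{conv}(w_0,\ldots,w_k)$, a $k$-simplex; hence $F$ is a weighted projective $k$-space with $\rho(F)=1$. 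The description of $\Sigma_X$ from Section \ref{Cayley} then shows $\Sigma_X(1)=\{\tilde v_0,\ldots,\tilde v_n,u_0,\ldots,u_k\}$, and Lemma \ref{picard} gives $\rho(X)=2$. The first equality in the chain will hold because $NE_2(X)$ generates $N_2(X)$ (a consequence of Proposition \ref{dm}), and the remaining two follow from Poincar\'e duality and Artinian--Gorenstein symmetry for the rational Chow ring of a simplicial projective toric variety.

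The key step will be computing $\dim N^2(X)$. I would let $h\in N^1(Y)$ generate, set $H=f^*h$, and let $\xi\in N^1(X)$ be any class restricting non-trivially to $F$; then $H,\xi$ form a basis of $N^1(X)\cong\mathbb{R}^2$. Because $h^{n+1}=0$ in $N^*(Y)$ and $(\xi|_F)^{k+1}=0$ in $N^*(F)$, I obtain $H^{n+1}=0$ in $N^*(X)$ and a relation $R_\xi=\xi^{k+1}+(\text{terms divisible by }H)$ of degree $k+1$, exhibiting $N^*(X)$ as a quotient of $\mathbb{Q}[H,\xi]/(H^{n+1},R_\xi)$. Now $\Sigma_X$ has exactly $(n+1)(k+1)$ maximal cones (one for each pair $(\tilde v_{j_0},u_{i_0})$ of omitted rays), which equals $\dim_{\mathbb{Q}} N^*(X)$ (Danilov's formula for the rational Chow ring of a projective simplicial toric variety) and also $\dim_{\mathbb{Q}}\mathbb{Q}[H,\xi]/(H^{n+1},R_\xi)$. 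So the surjection is an isomorphism and the Hilbert series of $N^*(X)$ is
$$\frac{(1-t^{n+1})(1-t^{k+1})}{(1-t)^2}\;=\;(1+t+\cdots+t^n)(1+t+\cdots+t^k).$$
The coefficient of $t^2$, which equals $\dim N^2(X)$, is $1$ if $n=k=1$, is $2$ if exactly one of $n,k$ equals $1$, and is $3$ otherwise, matching the three cases of the proposition.

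The hard part will be rigorously justifying the two structural inputs: that $\dim_{\mathbb{Q}} N^*(X)$ equals the number of maximal cones of $\Sigma_X$ and the Poincar\'e duality/Gorenstein symmetry used to pass from $\dim N^{n+k-2}$ to $\dim N^2$. Both are standard, so the real work is verifying that no relation of degree less than $n+1$ or less than $k+1$ exists among $H$ and $\xi$---which is exactly what the matching dimension count $(n+1)(k+1)$ on both sides delivers. A more elementary toric alternative, which I would fall back on if needed, is to enumerate the codimension-$2$ cones of $\Sigma_X$ directly, group them by type ($a$ horizontal rays and $b$ vertical rays with $a+b=n+k-2$), and compute the rank of the intersection pairing against invariant divisors; this is feasible but much less conceptual.
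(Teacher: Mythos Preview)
Your argument is correct and takes a genuinely different route from the paper's proof. The paper works on the cycle side: using the explicit rays $\tilde v_j,u_i$ of $\Sigma_P$, it writes down linear equivalences of the form $s_0D_{u_0}\sim s_iD_{u_i}+(\text{multiple of }D_{\tilde v_1})$ (via $\operatorname{div}(\chi^{A(e_i)})$) and $d_ic_iD_{\tilde v_i}\sim d_jc_jD_{\tilde v_j}$, and from these shows directly that every invariant codimension-$(n+k-2)$ cone class is a non-negative combination of three specific surfaces $S_1,S_2,S_3$ (with $S_1$ or $S_3$ absent when $k=1$ or $n=1$). You instead work on the cohomology side: you identify $N^*(X)_{\mathbb Q}$ with the complete intersection $\mathbb Q[H,\xi]/(H^{n+1},R_\xi)$ and read off $\dim N^2(X)$ from the Hilbert series $(1+t+\cdots+t^n)(1+t+\cdots+t^k)$, then transport this to $\dim NE_2(X)$ via $NE_2(X)$ spanning $N_2(X)$ and the duality $N_2\cong N^2$.

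What each buys: your approach is shorter, more conceptual, and instantly gives $\dim N^i(X)$ for every $i$; it also makes transparent that the answer depends only on $(n,k)$ and not on the specific simplices $P_0,\dots,P_k$. The paper's approach, by contrast, produces explicit extremal generators $S_1,S_2,S_3$ of the \emph{cone} $NE_2(X)$, not merely its linear span. That extra information is exactly what is exploited in Remark~\ref{nef}: once one checks $ch_2(X)\cdot S_1>0$ and $ch_2(X)\cdot S_3>0$, positivity of $ch_2$ reduces to the single number $ch_2(X)\cdot S_2$, and similarly for Remark~\ref{sato}. Your Hilbert-series proof establishes Proposition~\ref{hi} as stated, but would need to be supplemented by an identification of the extremal rays of $NE_2(X)$ before it could feed into those remarks.

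Two minor points worth tightening: (i) $Y$ and $F$ need only be \emph{fake} weighted projective spaces (finite quotients thereof), but all you actually use is $\rho(Y)=\rho(F)=1$, which follows from Lemma~\ref{picard}; (ii) your dimension-matching argument for the surjection $\mathbb Q[H,\xi]/(H^{n+1},R_\xi)\twoheadrightarrow N^*(X)$ is clean, but it is worth noting explicitly that $R_\xi$ is a non-zero-divisor modulo $H^{n+1}$ because its $\xi^{k+1}$-coefficient is a unit (each $D_{u_i}$ restricts nontrivially to $F$ while $H$ restricts to zero), so the source really has dimension $(n+1)(k+1)$.
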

\begin{proof} As discussed in Section \ref{Cayley}, writing $D_{P_i}:=\displaystyle\sum_{j=0}^{n}a_{i_j}D_j$ for $i=0,...,k$, the primitive vectors of the fan $\Sigma_P$ are:

 $u_i:=s_i(A^*)^{-1}(e_i)$ for $i\in\{0,...,k\}$ and $\displaystyle \tilde{v}_j:=d_j[v_j+\sum_{i=1}^k(a_{i_j}-a_{0_j})(A^*)^{-1}(e_i)],$ for every $j\in\{0,1,...,n\}$.
 
Denote the divisors $V(u_i)$ and $V(\tilde{v}_j)$ by $D_{u_i}$ and $D_{\tilde{v}_j}$ respectively. 

For each $i\in\{0,...,k\}$ and $j\in\{0,1,...,n\}$  there exist positive integers $c_i$ and $c_j$ such that $d_ic_iD_{\tilde{v}_i}\simeq d_jc_jD_{\tilde{v}_j}$ 
(to see it compute $div(\chi^u)$ for $u\in\mathbb{Z}^n\cap (\langle v_0,...,\widehat{v_i},...,\widehat{v_j},...,v_n\rangle)^{\perp}$). 

After renumbering the polytopes $P_i's$ we may assume that  $$\displaystyle\sum_{j=0}^{n}\frac{a_{0_j}}{c_j} \leq \displaystyle\sum_{j=0}^{n}\frac{a_{1_j}}{c_j}\leq ...\leq \displaystyle\sum_{j=0}^{n}\frac{a_{k_j}}{c_j}.$$

For each $i\in\{0,...,k\}$ we have $$div(\chi^{A(e_i)})=-s_0D_{u_0}+s_iD_{u_i}+\displaystyle\sum_{j=0}^{n}d_j(a_{i_j}-a_{0_j}) D_{\tilde{v_j}}.$$

From this it follows that for every  $i\in\{0,...,k\}$ we have:

$$
\left\{
\begin{array}{ll}
 D_{u_0}= & \frac{s_i}{s_0}D_{u_i}+\frac{d_1c_1}{s_0}\displaystyle\sum_{j=0}^{n}\frac{(a_{i_j}-a_{0_j})}{c_j} D_{\tilde{v_1}},\\
   D_{u_1}= & \frac{s_i}{s_1}D_{u_i}+\frac{d_1c_1}{s_1}\displaystyle\sum_{j=0}^{n}\frac{(a_{i_j}-a_{1_j})}{c_j} D_{\tilde{v_1}}\mbox{ and}\\
    D_{u_2}=  & =\frac{s_i}{s_2}D_{u_i}+\frac{d_1c_1}{s_2}\displaystyle\sum_{j=0}^{n}\frac{(a_{i_j}-a_{2_j})}{c_j} D_{\tilde{v_1}}.\\
\end{array}
\right.
$$

This means that for every $j\in\{0,...,n\}$:
 $$D_{u_0}\in \mbox{cone}(D_i, D_{\tilde{v_j}} )\mbox{ for } i\in\{0,...,k\},$$
  $$D_{u_1}\in \mbox{cone}(D_i, D_{\tilde{v_j}} )\mbox{ for } i\in\{1,...,k\}\mbox{ and}$$
 $$D_{u_2}\in \mbox{cone}(D_i, D_{\tilde{v_j}} )\mbox{ for } i\in\{2,...,k\}.$$  
  
  We conclude that every invariant surface can be written as a non negative linear combination of 
$S_1:=V(u_3,...,u_k,\tilde{v_0},...,\tilde{v}_{n-1})$, $S_2:=V(u_2,...,u_k,\tilde{v_0},...,\tilde{v}_{n-2})$ and $S_3:=V(u_1,...,u_k,\tilde{v_0},...,\tilde{v}_{n-3})$. 
 \\

 If $n=k=1$ then $X$ is a surface. If $n=1$ and $k>1$ then  $S_3$ does not exists and $NE_2(X)$ has $\{S_1,S_2\}$ as a basis. If $n>1$ and $k=1$ then $S_1$ does not exists and $\{S_2,S_3\}$ form a basis of $NE_2(X)$. Otherwise, $NE_2(X)$ has dimension 3 and $\{S_1,S_2,S_3\}$ is a basis.
 \end{proof}
\begin{rk}\label{nef} \em{ Let $X$ be as in Proposition \ref{hi}. Using the same notation as the proof of Proposition \ref{hi}, we conclude that the Mori cone of $X$ is generated by the curves  $C_1:=V(\tilde{v_1}, ...,\tilde{v_n},u_2,...,u_k)$ and $C_2:=V(\tilde{v_2},...,\tilde{v_n},u_1,...,u_k)$.

It is straightforward to check,  using \ref{inter}, \ref{2chern} and the relations above, that $ch_1(X)\cdot C_1=-K_X\cdot C_1$, $ch_2(X)\cdot S_1$ and  $ch_2(X)\cdot S_3$ are 
 always positive. So, the positivity of $ch_1(X)$ and $ch_2(X)$ depends only on the values $-K_X\cdot C_2$ and $ch_2(X)\cdot S_2$.}
\end{rk}

Now, we give an example of a singular 2-Fano variety with Picard number 2.
  
\begin{ex} \label{ex} \em{Let $X$ be the singular toric variety defined by the Cayley-Mori polytope $$Q:=conv\big((0,0,0),(1,0,0),(0,0,1),(2,0,1),(0,-2,1),(2,-2,1)\big).$$ 
The primitive vectors of $\Sigma_Q$ are: $u_0=(0,0,-1), u_1=(0,1,2), u_2=(0,-1,0),$  $\tilde{v}_0=~(1,0,0)$ and $\tilde{v}_1= (-1,0,1)$. 

Consider $C_2:=V(u_1,u_2)$ and $S_2=V(u_2)=D_{u_2}$. Using the relations  $D_{u_1}+div(\chi^{-e_2})=D_{u_2}$, $D_{u_0}+div(\chi^{e_3})=2D_{u_1}+D_{\tilde{v_1}}$ and the Proposition \ref{inter}, we get:
$$-K_X\cdot C_2=(D_{u_0}+D_{u_1}+D_{u_2}+D_{\tilde{v_0}}+D_{\tilde{v_1}}V(u_1,u_2))=1.$$

$$ch_2(X)\cdot S_2=\frac{1}{2}((D_{u_0})^2+(D_{u_1})^2+(D_{u_2})^2+(D_{\tilde{v_0}})^2+(D_{\tilde{v_1}})^2)V(u_2))=\displaystyle\frac{1}{4}.$$ }\\
\end{ex}

We have studied the second Chern character of a Mori fiber space, which is the variety obtained in the last step of the MMP.
The next step is to understand what happens to it when we apply a step of the MMP which is a divisorial contraction or a flip. 
This seems to be a hard problem. Next, we give some partial results in this direction.

\begin{teo} Let $X$ be a smooth toric variety and $f_R: X \rightarrow Y$ an extremal contraction of a ray $R$ of $\mbox{NE}(X)$ of divisorial type. 
It follows from Corollary \ref{exc}  that $Exc(f_R)=\mathbb{P}_{Z}(\mathcal{E})$ where $Z\subset Y$ is an invariant subvariety and $\mathcal{E}$ is a decomposable vector bundle on $Z$. 
Suppose that $Z=\mathbb{P}^1$ and $\mathcal{E}=\mathcal{O}_{\mathbb{P}^1}\oplus \mathcal {O}_{\mathbb{P}^1}(b_1)\oplus...\oplus \mathcal{O}_{\mathbb{P}^1}(b_k)$ with 
$0\leq b_1\leq ...\leq b_k$. Then $X$ is not 2-Fano.
\end{teo}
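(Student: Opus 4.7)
The plan is to exhibit an invariant surface $S\subset X$ with $\mathrm{ch}_2(T_X)\cdot S\le 0$, contradicting 2-Fano-ness. I treat first $k\ge 2$ by a direct toric computation, and dispose of $k=1$ by invoking the classification of toric 2-Fano threefolds recalled in the introduction.

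Assume $k\ge 2$. Since $X$ is smooth and $f_R$ is a divisorial extremal contraction, $\Sigma_X$ is obtained from $\Sigma_Y$ by star-subdividing at a new primitive ray $w$, with $E=V(w)$ and $f_R$ realised as the blow-up of $Y$ along the smooth toric subvariety $Z$. By the Cayley-Mori description of Section \ref{Cayley}, the primitive rays of $\Sigma_E=\mathrm{Star}(w)$ are, in suitable coordinates, $u=(1,0,\dots,0)$, $v=(-1,b_1,\dots,b_k)$, $e_0=(0,-1,\dots,-1)$, and $e_j=(0,\dots,1,\dots,0)$ with $1$ in position $j+1$ for $j\ge 1$. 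I take $S=V(\sigma)$ where $\sigma=\mathrm{Cone}(w,e_2,\dots,e_k)\in\Sigma_X$: the invariant surface realising the sub-projective-bundle $\mathbb P(\mathcal O\oplus\mathcal O(b_1))\cong\mathbb F_{b_1}\subset E$.

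By Proposition \ref{2chern}, $2\,\mathrm{ch}_2(T_X)\cdot S=\sum_{r\in\Sigma_X(1)}D_r^2\cdot S$, and only $w$ together with lifts of the rays of $\Sigma_E$ contribute (the rest satisfy $D_r\cdot S=0$). The character $\chi^{e_i^*}$ lifts from $M_E$ to $M_X$ with $\langle\,\cdot\,,w\rangle=0$, so the linear equivalences $D_{e_i}\sim D_{e_0}-b_iD_v$ of Section \ref{Cayley} persist in $X$; the intersection computation in the proof of Lemma \ref{fano}, carried out after projecting $\Sigma_E$ along $\langle e_2,\dots,e_k\rangle$ to identify $S$ with the Hirzebruch fan $\mathbb F_{b_1}$, then yields $D_u^2\cdot S=D_v^2\cdot S=0$, $D_{e_0}^2\cdot S=b_1$, $D_{e_1}^2\cdot S=-b_1$ and $D_{e_i}^2\cdot S=b_1-2b_i$ for $i\ge 2$. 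For the ray $w$, we use that $D_w|_E=N_{E/X}=\mathcal O_E(-1)$ (Grothendieck convention) and that its restriction to the sub-bundle $S=\mathbb F_{b_1}$ is $\mathcal O_S(-1)$, whose self-intersection on $\mathbb F_{b_1}$ equals $b_1$; thus $D_w^2\cdot S=b_1$. Summing,
\[
2\,\mathrm{ch}_2(T_X)\cdot S = (k-1)b_1 - 2(b_2+\cdots+b_k) + b_1 = k b_1 - 2(b_2+\cdots+b_k).
\]
Since $b_1\le b_i$ for $i\ge 2$, we have $b_2+\cdots+b_k\ge (k-1)b_1$, so $2(b_2+\cdots+b_k)\ge 2(k-1)b_1\ge k b_1$ as soon as $k\ge 2$, whence $\mathrm{ch}_2(T_X)\cdot S\le 0$.

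For $k=1$, $X$ is a smooth projective toric threefold. The classification recalled in the introduction (see \cite{eu}) shows that the only smooth projective toric 2-Fano threefold is $\mathbb P^3$, which has Picard number one and hence admits no divisorial contraction; so our $X$ cannot be 2-Fano. The main obstacle in the $k\ge 2$ argument is the identification $D_w|_E=\mathcal O_E(-1)$, which requires confirming that the star-subdivision realises $f_R$ as the blow-up of $Y$ along the smooth toric subvariety $Z$, so that $N_{E/X}$ is indeed the tautological bundle of $E=\mathbb P_Z(\mathcal E)$; once this is in place, the remaining inequalities are the toric combinatorics of Lemma \ref{fano} applied to the surface $S$, with the one extra contribution from $w$ coming out to exactly the self-intersection of $\mathcal O_S(-1)$ on $\mathbb F_{b_1}$.
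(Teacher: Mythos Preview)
There is a genuine gap. Your argument hinges on the identification $N_{E/X}=\mathcal O_E(-1)$, i.e.\ on $f_R$ being the ordinary blow-up of a smooth $Y$ along the smooth curve $Z$. But the hypothesis only says that $X$ is smooth; the target $Y$ of a toric divisorial contraction need not be smooth, and in general one only has
\[
E|_E \;=\; a\,\xi \;+\; \pi^*\mathcal O_{\mathbb P^1}(s),\qquad a\in\mathbb Z_{<0},\ s\in\mathbb Z,
\]
with no reason for $a=-1$ or $s=0$. A concrete toric example: resolve an $A_1$-curve of singularities (fan with rays $(\pm 1,0,0),(0,1,0),(0,1,2)$, completed to a projective fan) by inserting $w=(0,1,1)$; the resulting $X$ is smooth, $E\to Z\cong\mathbb P^1$ is a $\mathbb P^1$-bundle, and the wall relation $(0,1,0)+(0,1,2)-2w=0$ gives $E\cdot l=-2$, so $a=-2$. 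Thus your computation $D_w^2\cdot S=b_1$ is unjustified, and the clean inequality $2\,\mathrm{ch}_2\cdot S=kb_1-2(b_2+\cdots+b_k)\le 0$ does not follow.

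The paper's proof proceeds differently precisely to avoid this assumption: it keeps $a$ and $s$ as unknowns, uses the \emph{Fano} condition on two curves in $E$ (a fiber line and the section $V(e_1,\dots,e_k)$) to get $a>-(k+1)$ and $s\ge b_1+\cdots+b_k-1$, then uses the \emph{2-Fano} condition on the same surface $S$ together with $\mathrm{ch}_2(T_X)|_E=\mathrm{ch}_2(T_E)+\tfrac12(E|_E)^2$ to force $s<\tfrac{k+1}{2}b_1$. These combine to $b_1+\cdots+b_k-1<\tfrac{k+1}{2}b_1$, which is impossible for $k\ge 2$; the case $k=1$ is then handled, as you do, by the threefold classification. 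So the missing idea is exactly that the Fano ($c_1$) inequalities are needed to control $a$ and $s$ before the $\mathrm{ch}_2$ computation can conclude.
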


\begin{proof} Suppose that $X$ is 2-Fano. The minimal vectors of the fan of $E:=Exc(f_R)$ are: \\

 $u=(1,b_1,...,b_k), v=(-1,0,...,0)$, $e_0=(0,-1,-1,...,-1),e_1=(0,1,0,...,0), ..,$ $e_k=(0,0,...,1)$.\\

It is well known that $K_E=\pi^*(K_{\mathbb{P}^1}+\mathcal {O}_{\mathbb{P}^1}(b_1)+...+ \mathcal{O}_{\mathbb{P}^1}(b_k) )-(k+1)\xi,$ 
where $\pi:\mathbb{P}_{\mathbb{P}^1}(E)\rightarrow \mathbb{P}^1$ is the canonical projection and $\xi=\mathcal{O}_{\mathbb{P}(\mathcal{E})}(1)$ 
is the tautological line bundle.
By adjunction formula we have $-K_X|_E=-K_E+E|_E$.
Since~$\xi=V(e_0)$ (see Lema 3,\cite{sandra}), we obtain 
$$-K_X|_E=-\pi^*(\mathcal{O}_{\mathbb{P}^1}(-2)+\mathcal{O}_{\mathbb{P}^1}(b_1)+...+\mathcal{O}_{\mathbb{P}^1}(b_k))+(k+1)V(e_0)+E|_E.$$
Since $E$ is the exceptional locus of $f_R$, we can write $E|_E=a\xi +\pi^*(\mathcal{O}_{\mathbb{P}^1}(s))$, for some integer $a<0$ and $s \in \mathbb{Z}$ (see \cite{matsuki}. lemma 14.1.7).\\

If we take $C:=V(u,e_2,...,e_k)\subseteq E$ a curve contained in a fiber of $f_R|_E$, we conclude:

$$-K_X|_E\cdot C=\big[(k+1)V(e_0) + E|_E\big]\cdot C=\big[(k+1)V(e_0)+aV(e_0)\big]\cdot C >0$$ 
$$\Rightarrow a>-(k+1).  \ \ \ \ \ (1)$$

If we take $C':=V(e_1,...,e_k)\subseteq E$, we have:

$$-K_X|_E\cdot C'=2-b_1-...-b_k+\big[(k+1)V(e_0)+aV(e_0)\big]\cdot C' +\pi^*(\mathcal{O}_{\mathbb{P}^1}(s))\cdot C'$$
$$\Rightarrow -K_X|_E\cdot C'=2-b_1-...-b_k+s>0$$
$$\Rightarrow s\geq b_1+...+b_k-1.\ \ \ \ \ (2)$$

Now, we will compute the intersection number between $ch_2(T_X)$ and the surface $S:=V(e_2,...,e_k)\subseteq E$.\\

 From exact sequence $0\rightarrow T_E \rightarrow T_X|_E\rightarrow E|_E\rightarrow 0$ it follows that:
 $$ch_2(T_X|_E)=ch_2(T_E)+(E|_E)^2.$$

 Using the formula $ch_2(T_E)=\displaystyle\frac{(k+1)}{2}\xi^2-\pi^*c_1(E)\xi+\pi^*(ch_2(T_{\mathbb{P}^1})-ch_2(E))$, given in \cite[4.1]{jason}, we conclude:\\

$$ch_2(T_E)\cdot S=\Big[\frac{(k+1)}{2} V(e_0)^2-\pi^*\big( \mathcal{O}_{\mathbb{P}^1}(b_1)+...+\mathcal{O}_{\mathbb{P}^1}(b_k)\\big)\cdot V(e_0)\Big]\cdot S=$$
 $$=\Big[\frac{(k+1)}{2} V(e_0)-\pi^*( \mathcal{O}_{\mathbb{P}^1}(b_1)+...+\mathcal{O}_{\mathbb{P}^1}(b_k))\Big]\cdot V(e_0,e_2,...,e_k)=$$
$$=\displaystyle\frac{(k+1)}{2} \big(V(e_0)+div(\chi^{e_1})\big)\cdot V(e_0,e_2,...,e_k)-b_1-...-b_k=$$
$$=\displaystyle\frac{(k+1)}{2}\big(V(e_1)+b_1V(u)\big)\cdot V(e_0,e_2,...,e_k)-b_1-...-b_k=$$
$$=\displaystyle\frac{(k+1)}{2}b_1-b_1-...-b_k.$$

If $X$ is 2-Fano then, $ch_2(T_X|_E)\cdot S=ch_2(T_E)\cdot S+(E|_E)^2\cdot S>0$\\$ \Rightarrow (E|_E)^2\cdot S>-\displaystyle\frac{(k+1)}{2}b_1+b_1+...+b_k>0$.\\

On the other hand,\\
 $(E|_E)^2\cdot S=[a^2\xi^2+2a\pi^*(\mathcal{O}_{\mathbb{P}^n}(s))\cdot \xi]\cdot V(e_2,...,e_k)=a^2b_1+2as$

$$\Rightarrow a(ab_1+2s)>0\Rightarrow s<-\displaystyle\frac{a}{2}b_1\stackrel {\textrm{(1)}}{<}
\displaystyle\frac{(k+1)}{2}b_1.$$

It follows that $$b_1+...+b_k-1\stackrel {\textrm{(2)}}{\leq}s\leq \displaystyle\frac{(k+1)}{2}b_1-1 \ \ \ \ \ (3)$$
This is possible only if $k=1$. But this implies that dim $X=3$. On the other hand we know that the only toric 2-Fano 3-fold is $\mathbb{P}^3$. 
\end{proof}

\begin{pr} Let $X$ be a smooth projective toric variety and $Z\subset X$ a (smooth) invariant subvariety of $X$ of codimension $c$. Denote by $\pi:\tilde{X}\rightarrow X$ the 
blowing up of $X$ along of $Z$. If $\tilde{X}$ is 2-Fano then $Z$ is Fano.
\end{pr}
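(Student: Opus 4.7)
By the Toric Kleiman Criterion (Theorem \ref{kleiman}), it suffices to show $-K_Z\cdot C>0$ for every invariant curve $C\subset Z$. Since $C\cong\mathbb{P}^1$, the restriction of the normal bundle splits as $N_{Z|X}|_C=\bigoplus_{k=1}^{c}\mathcal{O}_C(a_k)$; set $T:=\sum_k a_k=c_1(N_{Z|X})\cdot C$. By adjunction $-K_Z\cdot C=-K_X\cdot C-T$, so the game is to control both $T$ and $-K_X\cdot C$ using the 2-Fano hypothesis on $\tilde X$ (which also forces $-K_{\tilde X}$ to be ample).

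First I would prove $T<0$. For each pair $i\neq j$ in $\{1,\dots,c\}$ consider the invariant $\mathbb{P}^1$-subbundle $S_{ij}\subset f^{-1}(C)\subset E$ obtained from the rank-$2$ summand $\mathcal{O}(a_i)\oplus\mathcal{O}(a_j)$ of $N_{Z|X}|_C$; in the toric setting such subbundles are automatic. Since $\pi(S_{ij})=C$ is a curve, $\pi_{\ast}[S_{ij}]=0$ and hence $\pi^{\ast}ch_2(T_X)\cdot S_{ij}=0$. Combining this with Lemma \ref{blowup} and the Grothendieck relation on the $\mathbb{P}^1$-bundle $S_{ij}\to C$ (which gives $E^2\cdot S_{ij}=-(a_i+a_j)$ and $j_{\ast}(f^{\ast}c_1(N_{Z|X}))\cdot S_{ij}=-T$) one computes
$$
ch_2(T_{\tilde X})\cdot S_{ij}=-\tfrac{c+1}{2}(a_i+a_j)+T.
$$
The 2-Fano hypothesis gives $T>\tfrac{c+1}{2}(a_i+a_j)$ for every pair; summing over the $\binom{c}{2}$ pairs (each $a_k$ appearing $c-1$ times on the right) yields $\binom{c}{2}T>\tfrac{(c+1)(c-1)}{2}T$, which forces $T<0$. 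In particular some $a_l$ is strictly negative.

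Next I would show $-K_X\cdot C>0$ using Fano-ness of $\tilde X$. The splitting $N_{Z|X}|_C=\bigoplus\mathcal{O}(a_k)$ provides, for each $l$, an invariant section $\sigma_l\colon C\to f^{-1}(C)$ of the projective bundle; let $\tilde C_l:=\sigma_l(C)\subset\tilde X$. Then $\pi_{\ast}\tilde C_l=C$ and $E\cdot\tilde C_l=\deg\sigma_l^{\ast}\mathcal{O}_E(-1)=-a_l$, so $K_{\tilde X}=\pi^{\ast}K_X+(c-1)E$ gives $-K_{\tilde X}\cdot\tilde C_l=-K_X\cdot C+(c-1)a_l$. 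Ampleness of $-K_{\tilde X}$ therefore yields $-K_X\cdot C>-(c-1)a_l$ for every $l$; choosing $l$ with $a_l=\min_k a_k<0$ produces $-K_X\cdot C>(c-1)|a_l|>0$. Combining with the previous step, $-K_Z\cdot C=-K_X\cdot C+(-T)$ is a sum of two positive numbers, concluding the argument.

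The main obstacle is the bookkeeping for the intersection numbers $E^2\cdot S_{ij}$ and $E\cdot\tilde C_l$: one must fix a convention identifying the exceptional divisor with $\mathbb{P}(N_{Z|X}^{\vee})$, track the sign of the tautological sheaf under restriction to the sub-$\mathbb{P}^1$-bundles $S_{ij}$ and under pullback by the sections $\sigma_l$, and check that the formula $ch_2(T_{\tilde X})\cdot S_{ij}=-\tfrac{c+1}{2}(a_i+a_j)+T$ specializes in the $c=2$ case to the computation $-\tfrac{1}{2}\det(N)\cdot C$ carried out at the end of Lemma \ref{blowup}, which gives a convincing sanity check.
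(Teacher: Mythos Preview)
Your overall strategy coincides with the paper's: test $ch_2(T_{\tilde X})$ on the ruled sub-surfaces $S_{ij}\subset E$ over $C$ via Lemma~\ref{blowup}, and combine with the Fano condition on $\tilde X$ evaluated on sections $\tilde C_l\subset E$. Your formula $ch_2(T_{\tilde X})\cdot S_{ij}=-\tfrac{c+1}{2}(a_i+a_j)+T$ is correct and your summation argument giving $T<0$ is fine.

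There is, however, a sign error in the second step. With $E=\mathbb{P}(N_{Z|X}^{\vee})$ and $j^{\ast}\mathcal{O}_{\tilde X}(E)\cong\mathcal{O}_E(-1)$, the tautological line bundle $\mathcal{O}_E(-1)$ sits as the universal \emph{sub}bundle of $f^{\ast}N_{Z|X}$; the section $\sigma_l$ determined by the summand $\mathcal{O}_C(a_l)\subset N_{Z|X}|_C$ therefore satisfies $\sigma_l^{\ast}\mathcal{O}_E(-1)=\mathcal{O}_C(a_l)$, so
\[
E\cdot\tilde C_l=a_l,\qquad -K_{\tilde X}\cdot\tilde C_l=-K_X\cdot C-(c-1)a_l.
\]
(Check: for a line $C\subset\mathbb{P}^3$ one has $a_1=a_2=1$, $-K_X\cdot C=4$, and $-K_{\tilde X}\cdot\tilde C_l=3$.) With the correct sign, ampleness of $-K_{\tilde X}$ gives $-K_X\cdot C>(c-1)a_l$ for every $l$, which does \emph{not} yield $-K_X\cdot C>0$ by choosing $a_l$ minimal; your final sentence therefore breaks.

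The fix is immediate with your own averaging idea: summing over $l$ gives $c(-K_X\cdot C)>(c-1)T$, hence
\[
c\,(-K_Z\cdot C)=c(-K_X\cdot C)-cT>(c-1)T-cT=-T>0.
\]
For comparison, the paper orders the splitting of $N^{\vee}|_C$ and uses only the single extremal surface $S_{12}$ together with the single extremal section to obtain $-K_Z\cdot C\geq -K_{\tilde X}\cdot l+a_c>0$; your summation over all $S_{ij}$ and all $\tilde C_l$ is a pleasant symmetric variant that avoids having to track which index is extremal, at the cost of the slightly more delicate sign bookkeeping above.
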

\begin{proof} Denote by $E$ the exceptional divisor and $f:=\pi{|_E}:E\rightarrow Z$.Then $E=\mathbb{P}(N^{\vee})$, where $N=N_{Z|X}$. Given an invariant (rational) curve $C\subset Z$ there are integers $a_1\leq\cdots\leq a_c$ such that $N^{\vee}{|_C}=\mathcal{O}_C(a_1)\oplus\cdots\oplus\mathcal{O}_C(a_c)$. 
Let $l\subset E$ be the minimal ``pull back" curve of $E$, that is, $f(l)=C$ and $\xi\cdot l=a_1$, where $\xi=\mathcal{O}_E(1)$. It is well known that $K_{\tilde{X}}=\pi^*K_X+(c-1)E$. This implies that $$-K_X\cdot C=-K_{\tilde{X}}\cdot l+(c-1)E\cdot l \ \ \ \ \ \ \ (1)$$
From the exact sequence $0\rightarrow T_Z\rightarrow T_{X|_Z}\rightarrow N_{Z|X}\rightarrow 0$ we conclude that $$-K_Z\cdot C=-K_X|_Z-\mbox{det}(N)\ \ \ \ \ \ \ (2)$$
Putting (1) and (2) together we have that
$$-K_Z\cdot C=-K_{\tilde{X}}\cdot l+(c-1)E\cdot l+a_1+\cdots +a_c=-K_{\tilde{X}}\cdot l-(c-1)a_1+a_1+\cdots +a_c$$
$$\Rightarrow -K_Z\cdot C\geq -K_{\tilde{X}}\cdot l+a_c.$$
In order to prove that $Z$ is Fano we will show that $a_c\geq 0$ if $\tilde{X}$ is 2-Fano. We can write $ch_2(T_{\tilde{X}})=\pi^*ch_2(T_X)+\displaystyle\frac{c+1}{2}E^2-j_*\Big(f^*\big(c_1(N)\big)\Big)$, where $j:E\hookrightarrow \tilde{X}$ is the natural inclusion (see Lemma \ref{blowup}).

Consider the invariant surface $S:=\mathbb{P}_C(\mathcal{O}_C(a_1)\oplus\mathcal{O}_C(a_2))\subset E$. By projection formula $\pi^*ch_2(T_X)\cdot S=0$. Since $E^2=\xi$ we have that $E^2\cdot S=a_1+a_2$.
It follows that:

$$
\begin{array}{ll}
 ch_2(T_{\tilde{X}})\cdot S & = \ \displaystyle\frac{(c+1)}{2}(a_1+a_2)-E\cdot \Big(f^*\big(c_1(N)\big)\Big) \\
  & = \ \displaystyle\frac{(c+1)}{2}(a_1+a_2)-c_1(N)\cdot f_*(E\cdot S) \\
  & = \ \displaystyle\frac{(c+1)}{2}(a_1+a_2)+c_1(N)\cdot C \\
  & = \ \displaystyle\frac{(c+1)}{2}(a_1+a_2)-(a_1+\cdots +a_z).
\end{array}
$$
If $\tilde{X}$ is 2-Fano then we have $0\leq\displaystyle\frac{(c+1)}{2}(a_1+a_2)-(a_1+\cdots +a_c)\leq a_2\leq\cdots\leq a_c.$
\end{proof}

\chapter{Appendix: Maple Code}\label{apendix}

In this appendix we provide the code used in the program Maple to compute $ch_2(T_X)\cdot S$ for $X$ a smooth toric varieties and $S$ an invariant surface. The code is based in the theory given in \ref{secondchern}.
The reader who wishes to obtain the file in Maple extension can access the webpage \url{http://w3.impa.br/~edilaine/ }\\

Input: The primitive vectors and maximal cones which determine the toric variety $X$; an invariant surface of $X$.\\

Output: $ch_2(T_X)\cdot S$. 

\begin{verbatim}

restart:
with(LinearAlgebra):
with(RandomTools):
with(RegularChains):
with(combinat):
with(Statistics): 
with(ArrayTools):

A:=proc(L,m)
local i,j;
for j from 1 to Count(L) do i:=union(choose(convert(L[j],set),m),i) 
end do;
return i;  
end proc:

f:=(n,E,x)->if(convert(x,set) in A(E,n-2),convert(Concatenate(2,
Vector_row(x),Vector_row([0])),list),[seq(0, i = 1 .. (n-1))]):

g:= (n, E, y)->if(convert(y, set) in A(E, n-1),y,  
[seq(0, i = 1 .. (n-1))]): 

k:= (v,z,m)->GramSchmidt([seq(v[z[i]], i = 2 .. m), v[z[1]]]):

h:= (n,v,z,r)->if(Count(k(v,z,r)) < r, Vector[column](n,i->0), 
-k(v,z, r)[r]*DotProduct(v[z[1]],
k(v,z,r)[r])^-1): 

e:=z->convert(convert(z, set)\{z[1]},list): 

z:=(n,E,y)->if(is(y[1] in [seq(y[r],r=2..(n-1))])=false or
f(n,E,convert(Vector_column(n-2,i->y[i+1]),list))=
[seq(0,i=1..(n-1))],g(n,E,y),[seq(0,i=1..(n-1))]):

q:=(n,v,E,y)->if(is(y[1] in [seq(y[r],r=2..(n-1))])=false
or f(n,E,convert(Vector_column(n-2,i->y[i+1]),list))=
[seq(0,i=1..(n-1))],g(n,E,y),seq([z(n,E,convert(Concatenate(1,r,
Vector_column(n-2,i->y[i+1])),list)),DotProduct(v_r,h(n,v,
convert(Concatenate(2,y_1,convert(e(y),vector)),list),n-2))],
r=1..Count(v))):

u:=(n,E,w)->if(convert(w,set) in A(E,n),1,0):

t:=(n,v,E,w)->if(is(w[1] in [seq(w[r],r=2..n)])=false or
g(n,E,convert(Vector_column(n-1,i->w[i+1]),list))=
[seq(0,i=1..(n-1))],u(n,E,w),add(DotProduct(v_r,h(n,v,
convert(Concatenate(2,w_1,convert(e(w),vector)),list),n-1))*
u(n,E,convert(Concatenate(1,r,Vector_column(n-1,
i->w[i+1])),list)),r=1..Count(v))):

d:=(n,v,E,y)->if(q(n,v,E,y)=g(n,E,y),t(n,v,E,
convert(Concatenate(1,y_1,Vector_column(n-1,i->y_i)),list)),
add(t(n,v,E,convert(Concatenate(1,y_1,convert(q(n,v,E,y)[p][1],
Vector)),list))*q(n,v,E,y)[p][2],p=1..Count(v))):

c:=(v,E,x)->add(d(Count(v[1]),v,E,convert(Concatenate(2,s,
convert(x,vector)),list)),s=1..Count(v)):
\end{verbatim}

\end{document}